\theoremstyle{plain}{
%\swapnumbers
  \newtheorem{thm}{Theorem}[section]
  \newtheorem{cor}[thm]{Corollary}
  \newtheorem{lem}[thm]{Lemma}
  \newtheorem{prop}[thm]{Proposition}

  }
\theoremstyle{definition}{
  \newtheorem{defn}[thm]{Definition}
  
  }
\theoremstyle{remark}{
  \newtheorem{rem}[thm]{Remark}}
\renewcommand{\subsubsection}{\sssection\rm}
\numberwithin{equation}{section}
\newcommand{\pt}{pt}
\newcommand{\id}{\mathrm{id}}
\newcommand{\Hom}{\operatorname{Hom}}
\DeclareMathOperator{\rk}{rk}
\DeclareMathOperator{\Th}{Th}
\DeclareMathOperator{\MGL}{\mathbf {MGL}}
\newcommand{\SmOp}{\mathcal Sm\mathcal Op}
\newcommand{\Sm}{\mathcal Sm}
\newcommand{\Aff}{\mathbf{A}}
\newcommand{\Gr}{Gr}
\newcommand{\HP}{HP}
\newcommand{\ZZ}{\mathbb{Z}}
\newcommand{\M}{\mathbf{M}}
\newcommand{\colim}{\operatornamewithlimits{colim}}
\newcommand \xra {\xrightarrow }
\newcommand \lra {\longrightarrow }
\newcommand \hra {\hookrightarrow }
\newcommand \xla {\xleftarrow }
\newcommand{\MSp}{\mathbf{MSp}}
\newcommand{\MSL}{\mathbf{MSL}}
\newcommand{\angles}[1]{\langle #1 \rangle}
\DeclareMathOperator{\HGr}{\mathnormal{HGr}}
\newcommand{\OO}{\mathcal{O}}
\newcommand{\parens}[1]{\textup{(}#1\textup{)}}
\DeclareMathOperator{\thom}{\mathnormal{th}}
\newcommand{\onto}{\twoheadrightarrow}
\newcommand{\sym}{\Sigma}
\DeclareMathOperator{\hocolim}{hocolim}
\newcommand{\finite}{\text{\textit{fin}}}
\newcommand{\shf}{\mathcal}
\newcommand{\homog}{\text{\textit{hom}}}
\newcommand{\sphere}{\mathbb{S}}
\newcommand{\GrSL}{SGr}
\newcommand{\NN}{\mathbb{N}}
\newcommand{\GG}{\mathbb{G}}
\newcommand{\TGL}[1]{\mathcal{T}GL_{#1}}
\newcommand{\TSL}[1]{\mathcal{T}SL_{#1}}
\newcommand{\TSp}[1]{\mathcal{T}Sp_{#1}}
\newcommand{\Ev}{\mathrm{Ev}}
\begin{document}

\title{On the algebraic cobordism spectra $\MSL$ and $\MSp$}

\author{Ivan Panin}
\address{Steklov Institute of Mathematics at St.~Petersburg, Russia}
%\and K.~Pimenov\footnotemark[1]
\author {Charles Walter}
\address{Laboratoire J.-A.\ Dieudonn\'e (UMR 6621 du CNRS)\\
 D\'epartement de math\'ematiques\\
 Universit\'e de Nice -- Sophia Antipolis\\ 06108 Nice Cedex 02\\
 France}

%\date{October 28, 2010}
\thanks{The first author gratefully acknowledge excellent working conditions and support provided by
Laboratoire J.-A. Dieudonn\'{e}, UMR 6621 du CNRS, Universite de Nice - Sophia-Antipolis,
and by the RCN Frontier Research Group Project no. 250399 “Motivic Hopf equations" at University of Oslo. 
}

%\author{I.~Panin\footnote{Steklov Institute of Mathematics at St.~Petersburg, Russia}
%%\and K.~Pimenov\footnotemark[1]
%\and Ch.~Walter\footnote{University, Nice, France}}
%
%\date{May 1, 2010\thanks{The first author thanks
%the RFFI-grant ??????, University of Nice, DFG--RFFI project
%09-01-91333-NNIO-a.}}

\begin{abstract}
We construct algebraic cobordism spectra $\MSL$ and $\MSp$.
They are commutative monoids in the
category of symmetric $T^{\wedge 2}$-spectra.
The spectrum $\MSp$ comes with a natural symplectic orientation given either
by a tautological Thom class
$th^{\MSp} \in \MSp^{4,2}(\MSp_2)$, a tautological Borel class
$b_{1}^{\MSp} \in \MSp^{4,2}(HP^{\infty})$ or any of six other equivalent structures.
For a  commutative monoid $E$ in the category
${SH}(S)$
we prove that assignment
$\varphi \mapsto \varphi(\thom^{\MSp})$
identifies the set of homomorphisms of monoids
$\varphi\colon \MSp \to E$
in the motivic stable homotopy category
$SH(S)$
with the set of tautological Thom elements of symplectic orientations of $E$.
A weaker universality result is obtained for $\MSL$ and
special linear orientations.
\end{abstract}

\maketitle

\section{Introduction}

A dozen years ago Voevodsky \cite{Voevodsky:1998kx}
constructed the algebraic cobordism spectrum $\MGL$
in the motivic stable homotopy category $SH(S)$.  This gave a new cohomology theory
$\MGL^{*,*}$ on smooth schemes and on motivic spaces.  Later Vezzosi \cite{Vezzosi:2001aa}
put a commutative monoid structure on $\MGL$.  This gave a product to $\MGL^{*,*}$.
The commutative monoid structure can even be constructed in the symmetric monoidal model category
of symmetric $T$-spectra, with $T = \Aff^{1}/(\Aff^{1}-0)$ the Morel-Voevodsky
object  (Panin, Pimenov and R\"ondigs \cite{Panin:2008db}).

In this paper we construct the algebraic special linear and symplectic cobordism
spectra $\MSL$ and $\MSp$.   The construction of $\MSL$ is straightforward although there is one slightly
subtle point.  We equip each space $BSL_{n}$ and $\MSL_{n}$ with an action of $GL_{n}$ which
is compatible with the monoid structure $BSL_{m} \times BSL_{n} \to BSL_{m+n}$ induced by the
direct sum of subbundles.  This gives an action of the subgroup $\sym_{n} \subset GL_{n}$
of permutation matrices.  But to define the unit of the monoid structure we need the action
on $BSL_{n}$ to have fixed points.  The natural action of $SL_{n}$ has fixed points, but the natural action
of $GL_{n}$ does not.  So we use an embedding $\sym_{n} \subset Sp_{2n} \subset SL_{2n}$.
This means that our $\MSL$ is a commutative monoid in the category of
symmetric $T^{\wedge 2}$-spectra.
The categories of symmetric $T$-spectra and of symmetric $T^{\wedge 2}$-spectra
are both symmetrical monoidal, and their homotopy categories are equivalent symmetric
monoidal categories  (Theorem \ref{T:comparison}).  So a symmetric $T^{\wedge 2}$-spectrum
structure is quite satisfactory, and it seems to be a natural structure for this spectrum.

Cobordism spectra and the cohomology theories they define are expected to have some universal
properties among certain classes of cohomology theories.  For instance Voevodsky's and
Levine and Morel's algebraic cobordism theories  are universal
among oriented cohomology theories \cite{Levine:2007ys,Panin:2008db,Vezzosi:2001aa}.
We should therefore expect $\MSL$ to have
some degree of universality for special linearly oriented theories.
Recall that a special linear bundle $(E,\lambda)$ over $X$ is a pair consisting of a vector bundle
$E$ and an isomorphism of line bundles $\lambda \colon \OO_{X} \cong \det E$.  A special linear
orientation on a cohomology theory $A^{*,*}$ is an assignment to every special linear bundle of
a Thom class
$\thom(E,\lambda) \in A^{2n,n}(E,E-X) = A^{2n,n}_{X}(E)$ with $n = \rk E$ which is functorial,
multiplicative, and
such that the multiplication maps ${-}\cup \thom(E,\lambda) \colon A^{*,*}(X) \to A^{*+2n,*+n}(E,E-X)$
are isomorphisms. In the motivic context we generally also require
that the Thom class of the trivial line bundle over a point be
$\Sigma_{T}1_{A} \in A^{2,1}(T) = A^{2,1}(\Aff^{1},\Aff^{1}-0)$.  Hermitian $K$-theory and
Balmer's derived Witt groups are examples of special linearly oriented theories which are not
oriented.

The universality properties we show for $\MSL$ are as follows.
%
%\begin{itemize}
%\item
A morphism of commutative monoids $\varphi \colon (\MSL,\mu^{SL},e^{SL}) \to (A,\mu,e)$
in $SH(S)$ determines naturally a special linear orientation on $A^{*,*}$ with Thom classes
written $\thom^{\varphi}(E,\lambda)$.
The compatibility of $\varphi$ with the monoid structure ensures the multiplicativity of the
Thom classes (Theorem \ref{T:thom.phi}).

%\item

Conversely, a special linear orientation on $A^{*,*}$ with Thom classes $\thom(E,\lambda)$ determines a
morphism $\varphi \colon \MSL \to A$ in $SH(S)$ with
$\thom^{\varphi}(E,\lambda) = \thom(E,\lambda)$ for all $(E,\lambda)$.  This $\varphi$ is
unique modulo a certain subgroup $\varprojlim^{1} A^{2n-1,n}(MSL_{n}^{\angles{n}}) \subset
Hom_{SH(S)}(\MSL,A)$.
The obstruction $\varphi \circ \mu^{SL} - \mu_{A} \circ (\varphi \wedge \varphi)$
to having a morphism of monoids lies in a similarly defined subgroup
of $Hom_{SH(S)}(\MSL \wedge \MSL, A)$ (Theorem \ref{T:univ.SL}).

%\end{itemize}

It would be interesting to know if these obstruction subgroups vanish for Witt groups
and hermitian $K$-theory.
The necessary calculations are likely very close to Balmer and Calm\`es's computation of Witt groups
of Grassmannians \cite{Balmer:2008fk}.

Our $\MSp$ is defined similarly with an action of $Sp_{2n}$ on the spaces
$BSp_{2n}$ and $\MSp_{2n}$.  The actions of the subgroups $\sym_{n} \subset Sp_{2n}$ make
$\MSp$ a commutative monoid in the category of symmetric $T^{\wedge 2}$-spectra.
For $\MSp$ we can do much more than for $\MSL$ because
we have the quaternionic projective bundle theorem \cite[Theorem 8.2]{Panin:2010fk}
for symplectically oriented cohomology theories.  Therefore for any symplectically oriented
cohomology theory $A^{*,*}$ we have Borel classes for symplectic bundles, and we
can compute the cohomology of quaternionic Grassmannians
\cite[\S 11]{Panin:2010fk}
and of the spaces $BSp_{2r}$ and $\MSp_{2r}$
(\S\S\ref{S:projective.bundle}--\ref{S:cohom.BSp.MSp}).
Our main result is the following theorem.

%\dots
%
%
%We use the parts of \cite{Panin:2010fk} which concern the geometry of quaternionic projective
%spaces and Grassmannians, the quaternionic projective bundle theorem, the definition of Pontryagin classes,
%the Cartan sum formula for Pontryagin classes, and the calculation of the cohomology of quaternionic
%Grassmannians.
%
%But we redo in a more motivic context the final part of \cite{Panin:2010fk} where four different
%axiomatizations of a symplectically oriented cohomology theory were shown to be equivalent.

\begin{thm}
\label{T:main.Sp}
Let $(A,\mu,e)$ be a commutative monoid in $SH(S)$.  Then the following sets are in canonical
bijection\textup{:}

\parens{a} symplectic Thom structures on the bigraded $\epsilon$-commutative ring cohomology theory
$(A^{*,*},\partial,\times,1_{A})$ such that for the trivial rank $2$
bundle $\Aff^{2} \to \pt$ we have
$\thom(\Aff^{2},\omega_{2}) = \Sigma_{T}^{2}1_{A}$ in $A^{4,2}(T^{\wedge 2})$,

\parens{b} Borel structures on
%the bigraded $\epsilon$-commutative ring cohomology theory
$(A^{*,*},\partial,\times,1_{A})$ for which
$b_{1}(\shf U_{HP^{1}},\phi_{HP^{1}}) \in A^{4,2}(HP^{1},h_{\infty}) \subset A^{4,2}(HP^{1})$
corresponds to $-\Sigma_{T}^{2}1_{A}$ in $A^{4,2}(T^{\wedge 2})$ under the canonical
motivic homotopy equivalence $(HP^{1},h_{\infty}) \simeq T^{\wedge 2}$,

\parens{c} Borel classes theories on $(A^{*,*},\partial,\times, 1_{A})$
with the same normalization condition on $b_{1}(\shf U_{HP^{1}},\phi_{HP^{1}})$ as in \parens{b},

\parens{d} symplectic Thom classes theories on
%the bigraded $\epsilon$-commutative ring cohomology theory
$(A^{*,*},\partial,\times,1_{A})$ such that for the trivial rank $2$
bundle $\Aff^{2} \to \pt$ we have
$\thom(\Aff^{2},\omega_{2}) = \Sigma_{T}^{2}1_{A}$ in $A^{4,2}(T^{\wedge 2})$,

\parens{$\alpha$} classes $\vartheta \in A^{4,2}(\MSp_{2})$ with
$\vartheta |_{T^{\wedge 2}} = \Sigma_{T}^{2}1_{A}$ in $A^{4,2}(T^{\wedge 2})$,

\parens{$\beta$} classes $\varrho \in A^{4,2}(HP^{\infty},h_{\infty})$ with
$\varrho|_{HP^{1}} \in A^{4,2}(HP^{1},h_{\infty})$ corresponding to
$-\Sigma_{T}^{2}1_{A} \in A^{4,2}(T^{\wedge 2})$ under the canonical motivic homotopy equivalence
$(HP^{1},h_{\infty}) \cong T^{\wedge 2}$,

\parens{$\delta$} sequences of classes
$\boldsymbol{\vartheta} = (\vartheta_{1},\vartheta_{2},\vartheta_{3},\dots)$
with $\vartheta_{r} \in A^{4r,2r}(\MSp_{2r})$ for each $r$ satisfying
$\mu_{rs}^{*} \vartheta_{r+s} = \vartheta_{r} \times \vartheta_{s}$ for all $r,s$,
and $\vartheta_{1} |_{T^{\wedge 2}} = \Sigma_{T}^{2}1_{A}$,

\parens{$\varepsilon$} morphisms $\varphi \colon (\MSp,\mu^{Sp},e^{Sp}) \to (A,\mu,e)$
of commutative monoids
in $SH(S)$.
\end{thm}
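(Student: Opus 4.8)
The plan is to establish the bijections in stages, connecting the eight sets through a chain of canonical identifications, with the passage $(\varepsilon) \leftrightarrow (\alpha)$ being the one genuinely new input and the rest essentially formal consequences of the quaternionic projective bundle theorem \cite[Theorem 8.2]{Panin:2010fk}.

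First I would treat the equivalences among the "classical" structures $(a)$--$(d)$: a symplectic Thom structure, a Pontryagin structure, a Pontryagin classes theory, and a symplectic Thom classes theory. These are standard unwindings in the spirit of the oriented case. The key tool is the quaternionic projective bundle theorem, which says that for a symplectically oriented $A$ the cohomology $A^{*,*}(HP^{n})$ is free over $A^{*,*}(\pt)$ on $1, p, \dots, p^{n}$, and likewise computes $A^{*,*}$ of quaternionic Grassmannians and of $BSp_{2r}$ from \cite[\S 11]{Panin:2010fk} and \S\S\ref{S:projective.bundle}--\ref{S:cohom.BSp.MSp}. One passes from a Thom structure to Pontryagin classes by the usual formula expressing $p_{r}$ via the Thom class of the tautological bundle and the projective bundle decomposition; conversely Pontryagin classes plus the projective bundle theorem reconstruct the Thom class. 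The normalization conditions match up because $(HP^{1},h_{\infty}) \simeq T^{\wedge 2}$ and $\thom(\Aff^{2},\omega_{2})$ corresponds to $-p_{1}(\shf U_{HP^{1}})$ under this equivalence. Each of these four reformulations is a bookkeeping exercise once the projective bundle theorem is in hand; I would do one direction carefully and indicate that the others are analogous.

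Next I would identify $(\alpha) \leftrightarrow (\beta)$, $(\alpha) \leftrightarrow (\delta)$, and then $(d) \leftrightarrow (\delta)$. For $(\alpha) \leftrightarrow (\delta)$: a sequence $\boldsymbol{\vartheta}$ satisfying the multiplicativity $\mu_{rs}^{*}\vartheta_{r+s} = \vartheta_{r}\times\vartheta_{s}$ is determined by $\vartheta_{1}$ together with compatibility, but actually the point is that the whole sequence is forced by $\vartheta_{1}$ through the monoid structure on $\{\MSp_{2r}\}$ and the computation of $A^{*,*}(\MSp_{2r})$; I would show the forgetful map $\boldsymbol{\vartheta}\mapsto\vartheta_{1}$ is a bijection onto the set in $(\alpha)$ by using that $A^{*,*}(\MSp_{2r})$ is detected by restriction along the bundle maps, so that a compatible system exists and is unique. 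For $(\alpha)\leftrightarrow(\beta)$: the spaces $\MSp_{2}$ and $(HP^{\infty},h_{\infty})$ are related by $\MSp_{2}\simeq (HP^{\infty},h_{\infty})$ up to the relevant shift (the Thom space of the tautological rank-$2$ symplectic bundle over $BSp_{2} = HP^{\infty}$), and under this identification the normalization $\vartheta|_{T^{\wedge 2}} = \Sigma_{T}^{2}1_{A}$ becomes $\varrho|_{HP^{1}} \leftrightarrow -\Sigma_{T}^{2}1_{A}$, matching the sign in $(\beta)$. Finally $(d)$, a full symplectic Thom classes theory, restricts to its value on the tautological bundles over the $BSp_{2r}$, giving an element of $(\delta)$; conversely the projective/Grassmannian bundle theorem shows that a Thom class on every symplectic bundle is determined by, and can be built from, its values on the universal bundles, so $(d)\leftrightarrow(\delta)$.

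The main obstacle, and the heart of the theorem, is $(\varepsilon) \leftrightarrow (\alpha)$ — identifying monoid morphisms $\varphi\colon \MSp \to A$ in $SH(S)$ with classes $\vartheta\in A^{4,2}(\MSp_{2})$ normalized on $T^{\wedge 2}$. Here I would argue as follows. Since $\MSp$ is built as a symmetric $T^{\wedge 2}$-spectrum with $2r$-th space $\MSp_{2r}$, a morphism of spectra $\MSp\to A$ amounts to a compatible family of classes $\varphi_{r}\in A^{4r,2r}(\MSp_{2r})$, i.e. exactly an element of $(\delta)$, up to the usual $\varprojlim^{1}$ ambiguity; the additional requirement that $\varphi$ respect the \emph{monoid} multiplication $\mu^{Sp}$ translates, via the defining maps $\mu_{rs}\colon \MSp_{2r}\wedge\MSp_{2s}\to\MSp_{2(r+s)}$, precisely into the relations $\mu_{rs}^{*}\vartheta_{r+s} = \vartheta_{r}\times\vartheta_{s}$, while respecting the unit $e^{Sp}$ gives $\vartheta_{1}|_{T^{\wedge 2}} = \Sigma_{T}^{2}1_{A}$. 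The delicate points are: (i) showing the $\varprojlim^{1}$ term $\varprojlim^{1} A^{4r-1,2r}(\MSp_{2r})$ actually \emph{vanishes} here (unlike the $\MSL$ case), which should follow from the explicit computation of $A^{*,*}(\MSp_{2r})$ via Pontryagin classes showing the relevant tower is Mittag-Leffler, so that a compatible family lifts \emph{uniquely} to a genuine morphism; and (ii) checking that compatibility with $\mu^{Sp}$ as a map in $SH(S)$ (not just on the nose) is equivalent to the stated cocycle condition — this uses that $A^{*,*}$ of the relevant smash products of $\MSp_{2r}$'s is again computed by the projective bundle machinery with no $\varprojlim^{1}$ obstruction, so the identity can be checked levelwise. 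Assembling: $(\varepsilon)\leftrightarrow(\delta)\leftrightarrow(\alpha)$, and combining with the first two paragraphs closes the cycle through all eight sets. I expect step (i) — the vanishing of $\varprojlim^{1}$ — to be where the real work lies, and it is exactly the point where $\MSp$ behaves better than $\MSL$.
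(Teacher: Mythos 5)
Your proposal is correct and follows essentially the same route as the paper: the bijections are assembled as a chain through the tautological classes, with $(\varepsilon)$ handled via $(\delta)$ exactly as in Theorems \ref{T:alpha.delta} and \ref{T:daleth.delta}, and the two delicate points you single out --- vanishing of $\varprojlim^{1}$ for $\MSp$ and for $\MSp \wedge \MSp$ via the surjective towers coming from the Pontryagin-class computations of $A^{*,*}(\MSp_{2r})$ and $A^{*,*}(\MSp_{2r}\wedge\MSp_{2r})$ --- are precisely the inputs the paper supplies in Theorems \ref{T:cohom.MSp.2r}, \ref{T:cohom.BSp.BSp} and \ref{T:cohom.MSp}. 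The only cosmetic difference is that you treat the equivalences among (a)--(d) directly from the quaternionic projective bundle theorem, where the paper partly reroutes them through the tautological elements $(\alpha)$, $(\beta)$ (and cites \cite{Panin:2010fk} for the rest), which changes nothing of substance.
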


The bijections are explicit and are given in a series of theorems in the last part of the paper.
The presence of $(\varepsilon)$ among them is the universality of $\MSp$ as a symplectically
oriented theory.

The equivalence of (a), (b), (c) and (d) was already shown in \cite{Panin:2010fk} in a
different axiomatic context.
The ability of the motivic language used here to handle tautological classes such as the
$(\alpha)$, $(\beta)$ and $(\delta)$ is very useful by itself.  But
our main new observation is that in the motivic unstable homotopy category $H_{\bullet}(S)$
we have a commutative diagram (Theorem \ref{T:MSp=BSp/BSp}).
\[
\xymatrix @M=5pt {
& BSp_{2r} \ar@/_/[ld]_-{\text{\textup{structure map}}} \ar@/^/[rd]^-{\text{\textup{quotient}}}
\\
\MSp_{2r} \ar@{<->}[rr]^-{\cong}_-{\text{\textup{$\Aff^{N}$-bundles and excision}}}
&& BSp_{2r}/BSp_{2r-2}.
}
\]
What is surprising about this diagram is that it is the homotopy colimit of diagrams \eqref{E:amazing.1}
of finite-dimensional schemes and their quotient spaces which have a fourth side which is an inclusion
of quaternionic Grassmannians of different dimensions
which is in no way a
motivic equivalence.
But in the infinite-dimensional colimit the fourth side becomes $\Aff^{1}$-homotopic to the identity map
of $BSp_{2r}$, and the picture simplifies significantly.

The fact that this diagram is three-sided instead of four-sided
helps us to see more conceptual proofs of two of the trickier points of
\cite{Panin:2010fk}.  One is the construction of the higher-rank symplectic Thom classes and the
proof of their multiplicativity.  The commutativity of the diagram and the computations of the
cohomology of quaternionic Grassmannians imply that given a symplectic Thom structure on $A^{*,*}$,
the pullback along the structure map gives an injection $A^{*,*}(\MSp_{2r}) \to A^{*,*}(BSp_{2r})$,
and the isomorphism $A^{*,*}(\pt)[[b_{1},\dots,b_{r}]]^{\homog} \cong A^{*,*}(BSp_{2r})$
defined by the symplectic Thom structure identifies the image of $A^{*,*}(\MSp_{2r})$
with the principal two-sided ideal generated by $b_{r}$.  This makes it easy to define
the higher-rank tautological symplectic Thom classes (the $\vartheta_{r}$ of ($\delta$)) with the classes
of $A^{*,*}(\MSp_{2r})$ identified with $(-1)^{r}b_{r} \in A^{*,*}(\pt)[[b_{1},\dots,b_{r}]]^{\homog}$.
Their multiplicativity is also easily established.

The other tricky point of \cite{Panin:2010fk} for which the diagram helps is
the reconstitution of the symplectic Thom structure
from the Borel structure.  The tautological rank $2$ Thom class is a $\vartheta \in A^{4,2}(\MSp_{2})$,
and it is tempting to identify it (up to sign) with the tautological rank $2$ Borel class
$\varrho \in A^{4,2}(BSp_{2}/BSp_{0}) = A^{4,2}(HP^{\infty},h_{\infty})$ using the horizontal
motivic homotopy equivalence.  But the Borel is actually (up to sign) the pullback of $\vartheta$
along the structure map of the Thom space.  For the three-sided diagram this is no problem:
in $H_{\bullet}(S)$ the structure map $HP^{\infty} \to \MSp_{2}$ is the composition of the
horizontal isomorphism $(HP^{\infty},h_{\infty}) \cong \MSp_{2}$
with the pointing map.

%There are four different structures on ring cohomology theories
%on $\SmOp/S$
%which are equivalent to symplectic
%orientations.  They are as follows for bigraded theories.
%
%\begin{itemize}
%\item
%A \emph{symplectic Thom structure}
%\cite[Definition 7.1]{Panin:2010fk}
%where one specifies a Thom class $\thom(E,\phi) \in A^{4,2}(E,E-X)$
%for every rank $2$ symplectic bundle.
%
%\item
%A \emph{Pontryagin structure}
%\cite[Definition 12.1]{Panin:2010fk}
%where one specifies a Pontryagin class $b_{1}(E,\phi) \in A^{4,2}(X)$
%for every rank $2$ symplectic bundle.
%
%\item
%A \emph{symplectic Thom classes theory}
%\cite[Definition 14.2]{Panin:2010fk}
%where one specifies a Thom class $\thom(U,\psi) \in A^{4r,2r}(U,U-X)$
%for every symplectic bundle. The bidegree depends on $\rk U = 2r$.
%
%
%
%\item
%A \emph{Pontryagin classes theory}
%\cite[Definition 14.1]{Panin:2010fk}
%where one specifies Pontryagin classes $b_{i}(U,\psi) \in A^{4i,2i}(X)$
%for $1 \leq i \leq r$ for every symplectic bundle.
%\end{itemize}
%
%In each structure the characteristic classes obey a certain number of axioms.
%A structure of any of the four kinds determines unique structures of each of the three other
%kinds \cite[Theorem 14.4]{Panin:2010fk}.
%
%The Thom,
%Pontryagin and Euler classes of a symplectic bundle $(U,\psi)$ of rank $2r$ over $X$
%are related by
%%
%\begin{equation}
%\label{Pontryagin.Euler}
%(-1)^{r}b_{r}(U,\psi) = e(U,\psi) = z^{*}\thom(U,\psi)
%\end{equation}
%%
%where $z \colon X = (X,\varnothing) \to (U,U-X)$ is the morphism in $\SmOp/S$
%induced by the zero section of $U$.

It is not difficult to define spectra $\mathbf{MO}$ and $\mathbf{MSO}$ which resemble formally
$\MGL$ and our $\MSL$ and $\MSp$.  However, our proof of even our
most basic result about $\MSL$ (Theorem \ref{T:thom.phi})
uses the fact that special linear bundles are locally trivial in the
Zariski topology.  So we omit
$\mathbf{MO}$ and $\mathbf{MSO}$.

\section{Preliminaries}
\label{S:prelim}
%Here we describe a ring structure on the algebraic cobordism of Voevodsky
%\cite{Voevodsky:1998kx}.
%\subsubsection{\bf Algebraic cobordism theory.}
%\label{SymmetricRingStructureOnCobordism}
%The algebraic cobordism theory of Voevodsky is a cohomology theory represented by
%the $T$-spectrum $MGL$
%(see \cite{Voevodsky:1998kx}).
%To introduce a ring
%structure on the algebraic cobordism theory
% (\ref{RepresentableTheories1})
%it would be convenient to enrich
%the $T$-spectrum $MGL$
%with a symmetric ring structure
%\cite[Sect.4]{Ja}.
%For that
%we construct another $T$-spectrum $\Bbb{MGL}$ which is a
%commutative symmetric ring spectrum by the very construction and
%which is weekly equivalent to $MGL$ as the $T$-spectrum. Now we
%are going to describe $\Bbb{MGL}$. A ring structure on the
%algebraic cobordism theory was introduced as well in
%\cite{PoHu}.

Let $S$ be a noetherian scheme of finite Krull dimension, and let $\Sm/S$ be the category of smooth
quasi-projective schemes over $S$.
%\textbf{\textit{
We will assume that $S$ admits an ample family of line bundles
%}}
so that for any $X$ in $\Sm/S$ there exists an affine bundle $Y \to X$ with $Y$ an
affine scheme.  This condition is used a number of times in this paper, and it was also used in the
proof of the symplectic splitting principle in \cite[Theorem 10.2]{Panin:2010fk}.

The category $\SmOp/S$ has objects $(X,U)$ where $X$ is in $\Sm/S$
and $U \subset X$ is an open subscheme.  A morphism $f \colon (X,U) \to (X',U')$ in $\SmOp/S$
is a morphism
$f \colon X \to X'$ of $S$-schemes with $f(U) \subset U'$.
We often write $X$ in place of $(X,\varnothing)$.

A \emph{bigraded ring cohomology theory} $(A^{*,*},\partial, \times, 1)$ on $\SmOp/S$ is
a contravariant functor $A^{*,*}$  from $\SmOp/S$ to the category of bigraded abelian groups
which satisfies \'etale excision and $\Aff^{1}$-homotopy invariance and which has
localization long exact sequences
\[
\cdots \to A^{*,*}(X,U) \to A^{*,*}(X) \to A^{*,*}(U) \xra{\partial} A^{*+1,*}(X,U) \to \cdots.
\]
The $\times$ product is assumed to be functorial, bilinear, associative, and compatible with the
bigrading with a two-sided unit $1$.

In this paper we work mainly with the motivic unstable and stable homotopy categories $H_{\bullet}(S)$
and $SH(S)$.  The former is the homotopy category of a model category $\M_{\bullet}(S)$
of pointed motivic spaces over $S$ with motivic weak equivalences.  There are several versions
of this model category with different underlying categories and with different choices of
fibrations and cofibrations.
See \cite{Dundas:2003aa,Isaksen:2005aa,Jardine:2000aa,Morel:1999ab,Panin:2009aa,Voevodsky:1998kx,Voevodsky:2007aa}
among other papers.
It is not essential which of these model category structures is used.
However, we give geometric constructions symmetric $T$- and $T^{\wedge 2}$-spectra using the
the Morel-Voevodsky object $T = \Aff^{1}/(\Aff^{1}-0)$ itself.  So the best adapted model
structures is perhaps the flasque motivic model category of \cite{Isaksen:2005aa}
which is known to be cellular
(so we can apply Hovey's results \cite{Hovey:2001aa} in the proof of Theorem \ref {T:comparison})
but for which $T$ is cofibrant.

The category $\M_\bullet(S)$ is equipped with a symmetric monoidal structure
$(\M_\bullet(S), \wedge, S^{0})$. The smash-product is taken sectionwise, and
$S^{0}$
is the constant simplicial zero-sphere. This smash-product induces a smash-product
on $H_{\bullet}(S)$ such that the natural functor
$\M_\bullet(S) \to H_{\bullet}(S)$
becomes a strict symmetric monoidal functor
$(\M_\bullet(S), \wedge, S^{0}) \to (H_\bullet(S), \wedge, S^{0})$.

We set $T=\Aff^1/(\Aff^1-0)$.
A \emph{$T$-spectrum} $E$ is a sequence
$(E_0,E_1,\dotsc)$ of pointed motivic spaces equipped with a sequence of structure maps
$\sigma_{n}\colon E_n \wedge T \to E_{n+1}$ of pointed motivic spaces.
A morphism of $T$-spectra $E \to E^{\prime}$ is a sequence of maps
$f_n\colon E_n \to E'_n$ of pointed motivic spaces which commute
with the structure maps. The category of $T$-spectra $Sp(\M_\bullet(S),T)$ can be equipped
with a motivic stable model structure as in
\cite{Jardine:2000aa,Voevodsky:1998kx, Voevodsky:2007aa}. Its homotopy category
is  $SH(S)$.
%
%A \emph{symmetric $T$-spectrum} $E$ given by a sequence of pointed motivic spaces
%$(E_0,E_1,\dotsc)$ with each $E_{n}$ equipped with a left action of the symmetric group
%$\sym_{n}$ plus structure maps $\sigma_{n} \colon E_{n} \wedge  T \to E_{n+1}$
%such that the induced maps $E_{n} \wedge T^{\wedge m} \to E_{n+m}$ are all
%$\sym_{n} \times \sym_{m}$-equivariant.  A morphism of symmetric $T$-spectra
%is a sequence of $\sym_{n}$-equivariant maps $f_n\colon E_n \to E'_n$ which commute
%with the structure maps.  The category of symmetric $T$-spectra $Sp^{\sym}(\M_\bullet(S),T)$ can be equipped
%with a motivic stable model structure as in
%\cite{Jardine:2000aa}. The functor $u \colon Sp^{\sym}(\M_\bullet(S),T) \to Sp(\M_\bullet(S),T)$
%sending a symmetric spectrum to its underlying spectrum is a right Quillen equivalence
%\cite[Theorem 4.31]{Jardine:2000aa} with left adjoint denoted $v$.  The homotopy category
%of $Sp^{\sym}(\M_\bullet(S),T)$ is therefore equivalent to $SH(S)$ via $u$ and $\mathbf{L}v$
%
This category can be equipped with a structure of a symmetric monoidal category
$(SH(S), \wedge, {\boldsymbol 1})$, satisfying the conclusions of
\cite[Theorem 5.6]{Voevodsky:1998kx}.
The unit ${\boldsymbol 1}$ of that
monoidal structure is the $T$-sphere spectrum
$\sphere =(S^{0},T,T\wedge T, \dots )$.

%All technical facts concerning $(SH(k), \wedge, {\bf 1})$
%can be found in
%\cite[Appendix]{Panin:2009aa}.
%Unfortunately, that text has dealings with a different description of $SH(k)$.
%However the resulting homotopy category is as well symmetric monoidal and it is
%naturally equivalent with $SH(k)$ as a symmetric monoidal category.
%Since we use in the present text only results concerning
%$(SH(k), \wedge, {\bf 1})$,
%it makes sense to refer to
%\cite[Appendix]{Panin:2009aa}.

Every $T$-spectrum $E = (E_0,E_1,\dotsc)$ represents a cohomology theory on the category
of pointed motivic spaces $\M_\bullet(S)$.
Namely, let $S^n_s$ and $S^n_t$ be as in
\cite[(16)]{Voevodsky:1998kx}.
Let
$S^{p,q}=S^{p-q}_s \wedge S^q_t$.
We write
$$E^{p,q}(A)=Hom_{SH(S)}(\Sigma^{\infty}_{T} A, E \wedge S^{p,q})$$
as in \cite[\S 6]{Voevodsky:1998kx}.
%Namely, for a pointed motivic space $(A,a)$ set
%\[E^{p,q}(A,a)=\Hom_{\mathrm{SH}(k)}(\Sigma^\infty_{T}(A,a), \Sigma^{p,q}(E))\]
%and
%$E^{\ast,\ast}(A,a)= \oplus_{p,q} E^{p,q}(A,a)$.
%This definition extends to unpointed motivic spaces via the functor $A\mapsto A_+$ which
%adds a disjoint base point.
%That is, for a non-pointed motivic space
%$A$ set
%$E^{p,q}(A)=E^{p,q}(A_+,+)$
%and
%$E^{\ast,\ast}(A)= \bigoplus_{p,q} E^{p,q}(A)$.
There is a canonical element in $E^{2n,n}(E_n)$, denoted as
$$\Sigma^\infty_{T}E_n(-n) \xra{u_n} E.$$
It is represented by
the canonical map $(\ast,\dotsc,\ast,E_n,E_n\wedge T,\dotsc) \to (E_0,E_1,\dotsc,E_n,\dotsc)$
of $T$-spectra.

%As proved in \cite{Voevodsky:1998kx} an arbitrary $T$-spectrum $A$ defines a cohomology theory
%$(A^{*,*}, \partial)$ on the category $\M_\bullet(k)$.
%The differential $\partial$ increases the first index by $1$
%and preserve the second index. Recall that for a non-pointed space $X$
%$A^{*,*}(X):= A^{*,*}(X_+)$. In particular,
%$A^{*,*}(\pt):= A^{*,*}(S^{0}, *)$, where $*$ is the distinguished point of $S^{0}$.

A \emph{$T$-ring spectrum} is a monoid
$(E,\mu,e)$
in
$({SH(S)},\wedge, {\bf 1})$.
The cohomology theory $E^{\ast,\ast}$ defined by a
$T$-ring spectrum is a ring cohomology theory on $\M_{\bullet}(S)$. To see recall
the standard isomorphism
$S^{i,j} \wedge S^{k,l} \cong S^{i+k,j+l}$
given by the composition
$$(S^{i-j}_s \wedge S^j_t) \wedge (S^{k-l}_s \wedge S^l_t) \cong
(S^{i-j}_s \wedge S^{k-l}_s) \wedge (S^{j}_t \wedge S^{l}_t)
\cong S^{i-j+k-l}_s \wedge S^{j+l}_t.$$
For $X, Y \in \M_\bullet(k)$
let
$\alpha \colon \Sigma^{\infty}(X) \to S^{i,j} \wedge E$
and
$\beta \colon \Sigma^{\infty}(Y) \to S^{k,l} \wedge E$
be elements of
$E^{i,j}(X)$ and $E^{k,l}(Y)$ respectively.
Following Voevodsky
\cite{Voevodsky:1998kx} define
$\alpha \times \beta \in E^{i+k,j+l}(X \wedge Y)$
as the composition
\[\Sigma^{\infty}(X \wedge Y) \cong \Sigma^{\infty}(X) \wedge \Sigma^{\infty}(Y) \xra{\alpha \wedge \beta}
E \wedge S^{i,j} \wedge E \wedge S^{k,l} \cong E \wedge E \wedge S^{i+k,j+l} \xra{id \wedge \mu}
E \wedge S^{i+k,j+l}.
\]
This gives a functorial product which is associative, has a two-sided unit, and takes cofibration sequences to
long exact sequences.

A \emph{commutative} $T$-ring
spectrum is a commutative monoid
$(E,\mu,e)$
in
$(SH(S),\wedge, {\bf 1})$.  To describe the properties
of the associated cohomology theory we make some definitions.

\begin{defn}
\label{epsilon}
Let $in_T \colon T \to T$ be a morphism of pointed motivic spaces induced by the morphism
$\Aff^1 \to \Aff^1$ sending $t \mapsto -t$. One has the equality
$$Hom_{SH(S)}(\pt_{+}, \pt_{+})=Hom_{SH(S)}(T,T).$$
We write
$in$ for $in_T$
regarded as an element of
$Hom_{SH(S)}(\pt_{+}, \pt_{+})$.
For a commutative monoid $(A,m,e)$
set $\epsilon=in^*(e) \in A^{0,0}(\pt_{+})$.
%Warning: our $\epsilon$ is the $-\epsilon$ of Morel \cite[\S 6.1]{Morel:2002aa}.
\end{defn}

\begin{rem}
\label{epsilontimesepsilon}
The morphism $\Aff^2 \to \Aff^2$ which sends
$(t_1,t_2) \mapsto (-t_1,-t_2)$ is $\Aff^1$-homotopic to the identity morphism
because
$\bigl(
\begin{smallmatrix}
-1 & 0 \\ 0 & -1
\end{smallmatrix}
\bigr)
\in SL_{2}(\mathbb Z)$
is a product of elementary matrices.
Whence we have
$\epsilon \times \epsilon= e \in A^{0,0}(\pt_{+})$.
\end{rem}

%As proved by Morel the cohomology theory $E^{\ast,\ast}$ defined by a commutative
%$T$-ring spectrum is an $\epsilon$-commutative ring cohomology theory on $\M_{\bullet}(k)$
%(see \ref{BO**asringcohomology}).

\begin{defn}
A ring cohomology theory on $\M_{\bullet}(S)$ is
\emph{$\epsilon$-commutative} if for
$\alpha \in E^{i,j}(X)$, $\beta \in E^{k,l}(Y)$
one has
$\sigma^*_{X,Y}(\alpha \times \beta)=(-1)^{ik}\epsilon^{jl}(\beta \times \alpha) \in
E^{i+k,j+l}(X \wedge Y)$
where $\sigma_{X,Y} \colon X \wedge Y \to Y \wedge X$ switches the factors.
\end{defn}

Thus $\epsilon$-commutativity is a specific form of bigraded commutativity.

\begin{thm}[Morel]
\label{BO**asringcohomology}
Let $(E,\mu,e)$ be a commutative monoid in $SH(S)$.
Then the data $(E^{*,*}, \partial, \times, e)$ is an associative and
$\epsilon$-commutative ring cohomology theory on
$\M_\bullet(k)$\textup{;}
$e \in E^{0,0}(S^{0})$ is the two-sided unit of the ring structure.
\end{thm}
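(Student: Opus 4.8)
The plan is to verify the three named properties — associativity of the $\times$ product, the two-sided unit property of $e$, and $\epsilon$-commutativity — by tracing through the definitions already set up in the Preliminaries. All three are essentially formal consequences of the fact that $(E,\mu,e)$ is a \emph{commutative} monoid in the symmetric monoidal category $(SH(S),\wedge,\boldsymbol 1)$, together with the coherence of the symmetric monoidal structure on $H_{\bullet}(S)$ and on $SH(S)$.

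First I would recall that the $\times$ product of $\alpha \in E^{i,j}(X)$ and $\beta \in E^{k,l}(Y)$ is defined as the composite $\Sigma^{\infty}(X\wedge Y) \cong \Sigma^{\infty}(X)\wedge\Sigma^{\infty}(Y) \xra{\alpha\wedge\beta} E\wedge S^{i,j}\wedge E\wedge S^{k,l} \cong E\wedge E\wedge S^{i+k,j+l} \xra{\id\wedge\mu} E\wedge S^{i+k,j+l}$. Associativity then follows by expanding $(\alpha\times\beta)\times\gamma$ and $\alpha\times(\beta\times\gamma)$ into such composites and comparing: the associativity of $\mu$ (the monoid axiom), together with the naturality of the symmetry and associativity isomorphisms of $\wedge$ and the fact that these constraints satisfy Mac Lane's pentagon/hexagon coherences, forces the two composites to agree in $SH(S)$. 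The unit property is similar but shorter: one plugs $e \in E^{0,0}(S^{0})$ (represented by the unit map $\boldsymbol 1 \to E$) into one slot of $\times$, uses the left/right unit isomorphisms of $\wedge$ together with the monoid unit axiom $\mu\circ(\id\wedge e) = \id = \mu\circ(e\wedge\id)$, and reads off $e\times\alpha = \alpha = \alpha\times e$. The compatibility with the localization long exact sequences and bilinearity are inherited directly from the additivity of $\Hom_{SH(S)}(-,-)$ and from the fact that $\wedge$ preserves cofiber sequences in each variable, exactly as asserted after the definition of $\times$ in the text.

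For $\epsilon$-commutativity — the genuinely new ingredient — I would compute $\sigma_{X,Y}^{*}(\alpha\times\beta)$ by inserting the switch $\sigma_{X,Y}\colon X\wedge Y \to Y\wedge X$ into the defining composite and pushing it past the various structure maps using naturality of the symmetry. This reduces the claim to identifying the endomorphism of $S^{i+k,j+l}$ (equivalently of the sphere spectrum, after suspension) induced by permuting the smash factors $S^{i,j}$ and $S^{k,l}$ past one another, and to using the commutativity of $\mu$ to replace $\mu\circ\sigma_{E,E}$ by $\mu$. The sign and the power of $\epsilon$ then come from the standard computation of the switch on $S^{i,j}\wedge S^{k,l} = S^{i-j}_{s}\wedge S^{j}_{t}\wedge S^{k-l}_{s}\wedge S^{l}_{t}$: transposing the simplicial spheres $S^{i-j}_{s}$ and $S^{k-l}_{s}$ contributes $(-1)^{(i-j)(k-l)}$, transposing the Tate spheres $S^{j}_{t}$ and $S^{l}_{t}$ contributes $\epsilon^{jl}$ (by the very definition of $\epsilon$ via $in_{T}$, since the transposition of two copies of $T$ equals $in$ on each factor), and the cross terms transposing an $S_{s}$ past an $S_{t}$ are trivial. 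Bookkeeping $(-1)^{(i-j)(k-l)} = (-1)^{ik}$ modulo factors that are squares or cancel against $\epsilon^{2}=e$ (Remark \ref{epsilontimesepsilon}) yields exactly $(-1)^{ik}\epsilon^{jl}(\beta\times\alpha)$.

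The main obstacle is the $\epsilon$-commutativity sign computation: one must be careful that the symmetry isomorphism of the \emph{symmetric} monoidal structure on $SH(S)$ acts on $S^{2}_{t} = T\wedge T$ not by the identity but by the transposition, which is only $\Aff^{1}$-homotopic to the identity after composing with the sign map — this is precisely why $\epsilon$ (rather than $1$) appears, and why Remark \ref{epsilontimesepsilon} is needed to keep $\epsilon^{2} = e$ under control. Everything else — associativity, the unit axiom, bilinearity, compatibility with $\partial$ — is a routine unwinding of monoid axioms and coherence diagrams in the symmetric monoidal category $(SH(S),\wedge,\boldsymbol 1)$, using that $\Sigma^{\infty}_{T}\colon H_{\bullet}(S)\to SH(S)$ is a monoidal functor. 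I would present the argument by first disposing of associativity and units in a few lines, then devoting the bulk of the proof to the diagram chase for the commutativity sign.
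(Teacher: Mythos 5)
First, a point of comparison: the paper does not prove this statement at all --- it is quoted as a theorem of Morel and only its even-degree consequence (recorded in the sentence following the theorem) is used later. So the only question is whether your sketch would actually establish the stated formula, and the one genuinely non-formal part of your argument --- the sign bookkeeping for $\epsilon$-commutativity --- is where it goes wrong. Your treatment of associativity, units, bilinearity and the long exact sequences is fine as a formal unwinding of the monoid axioms and coherence.

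The concrete problems are in the commutativity computation. You identify the Tate sphere with $T$: you write $S^{2}_{t}=T\wedge T$ and justify the contribution $\epsilon^{jl}$ by saying ``the transposition of two copies of $T$ equals $in$ on each factor.'' Neither claim is right. The Tate sphere $S^{1}_{t}$ is $\mathbb{G}_{m}$, not $T$ (one has $T\simeq S^{1}_{s}\wedge S^{1}_{t}$, so $T^{\wedge 2}=S^{4,2}$, not $S^{2}_{t}$). The switch on $T^{\wedge 2}=\Aff^{2}/(\Aff^{2}-0)$ is indeed $\epsilon$, but the correct proof is the $SL_{2}$ argument (the coordinate swap equals $\mathrm{diag}(-1,1)$ times an elementary product, hence is $\Aff^{1}$-homotopic to $in_{T}\wedge \mathrm{id}$); ``$in$ on each factor'' is $in\wedge in$, which by Remark \ref{epsilontimesepsilon} is homotopic to the \emph{identity}, so your stated justification would give $\epsilon^{2}=e$ per transposed pair, not $\epsilon$. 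Moreover, by Morel the switch on $S^{2}_{t}=\mathbb{G}_{m}^{\wedge 2}$ is $-\epsilon$ (with the paper's $\epsilon=\langle-1\rangle$), not $\epsilon$; since $\langle T\rangle=\langle S^{1}_{s}\rangle\langle S^{1}_{t}\rangle$ and $\langle S^{1}_{s}\rangle=-1$, the two switches necessarily differ by a sign. Finally, the closing step ``$(-1)^{(i-j)(k-l)}=(-1)^{ik}$ modulo factors that are squares or cancel against $\epsilon^{2}=e$'' is not valid arithmetic: $(-1)^{(i-j)(k-l)}=(-1)^{ik}(-1)^{il+jk+jl}$, and the extra factor $(-1)^{il+jk+jl}$ is a genuine parity-dependent sign (take $(i,j)=(1,1)$, $(k,l)=(2,1)$) that cannot be absorbed by $\epsilon^{2}=e$. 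It has to be produced by the computation itself --- for instance by working with $T$-suspension coordinates, where the switch honestly equals $\epsilon$ by the $SL_{2}$ argument, and then tracking the leftover simplicial circles and the chosen structure isomorphisms $S^{i,j}\wedge S^{k,l}\cong S^{i+k,j+l}$ --- rather than being waved away. As written, your bookkeeping does not yield the asserted constant $(-1)^{ik}\epsilon^{jl}$, and since this sign identification is the entire content of the theorem beyond formalities, the proof has a real gap at its center.
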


Note, that if $(i,j)=(2m,2n)$ or $(k,l)=(2m,2n)$, then
$\sigma^*_{A,B}(\alpha \times \beta)=\beta \times \alpha$.

\section{Commutative
%\texorpdfstring
{$T$- and $T^{\wedge 2}$}{T- and T\^{ }2}-monoids}

We compare the categories of symmetric $T$-spectra and symmetric $T^{\wedge 2}$-spectra.
Recall the definition for $K = T$ or $K = T^{\wedge 2}$.

\begin{defn}
A \emph{symmetric $K$-spectrum} $E$ is a sequence of pointed spaces $(E_{0},E_{1},E_{2},\dots)$
with each $E_{n}$ equipped with an action of the symmetric group $\sym_{n} \times E_{n} \to E_{n}$
and with a morphism $\sigma_{n} \colon E_{n} \wedge K \to E_{n+1}$ such that the induced maps
$E_{n} \wedge K^{\wedge m} \to E_{n+m}$ are $(\sym_{n} \times \sym_{m})$-equivariant
for all $n$ and $m$.
\end{defn}

The categories of symmetric $T$- and $T^{\wedge 2}$-spectra both have a symmetric monoidal
product $\wedge$.  They are symmetric monoidal model categories for the stable model structure
\cite{Hovey:2001aa,Jardine:2000aa}.

\begin{thm}
\label{T:comparison}
The homotopy categories of $Sp^{\sym}(\M_{\bullet}(S),T)$ and of
$Sp^{\sym}(\M_{\bullet}(S),T^{\wedge 2})$ are equivalent symmetric monoidal categories.
\end{thm}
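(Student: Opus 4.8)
The plan is to construct a symmetric monoidal Quillen equivalence between $Sp^{\sym}(\M_\bullet(S),T^{\wedge 2})$ and $Sp^{\sym}(\M_\bullet(S),T)$; passing to homotopy categories then yields the asserted equivalence of symmetric monoidal categories. By Hovey's machinery \cite{Hovey:2001aa} both categories carry stable symmetric monoidal model structures, and this is the step that uses the left properness and cellularity of the flasque structure on $\M_\bullet(S)$ together with the cofibrancy of $T$ and hence of $T^{\wedge 2}$. One further input, used throughout, is the classical fact \cite{Voevodsky:1998kx} that the cyclic permutation of $T^{\wedge 3}$ — and hence also that of $(T^{\wedge 2})^{\wedge 3}=T^{\wedge 6}$ — agrees in $\Ho(\M_\bullet(S))$ with the identity; this is what ensures that both symmetric-spectra stabilizations compute the honest homotopy categories, with no semistability pathology.

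First I would introduce the comparison adjunction. Define $U\colon Sp^{\sym}(\M_\bullet(S),T)\to Sp^{\sym}(\M_\bullet(S),T^{\wedge 2})$ by $U(E)_n=E_{2n}$, where $\sym_n$ acts through the block embedding $\sym_n\hookrightarrow\sym_{2n}$ permuting the $n$ consecutive pairs of indices, and the structure map $U(E)_n\wedge T^{\wedge 2}\to U(E)_{n+1}$ is the composite $E_{2n}\wedge T\wedge T\to E_{2n+1}\wedge T\to E_{2n+2}$. The equivariance axioms for $E$ make $U(E)$ a symmetric $T^{\wedge 2}$-spectrum; $U$ is lax symmetric monoidal, preserves level fibrations and level trivial fibrations, and, because $\Omega_{T^{\wedge 2}}=\Omega_T\circ\Omega_T$, carries $T$-$\Omega$-spectra to $T^{\wedge 2}$-$\Omega$-spectra, so it is right Quillen for the stable structures. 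Its left adjoint $F$ is the associated prolongation; being of extension-of-scalars type it is strong symmetric monoidal, so once $(F,U)$ is a Quillen equivalence the derived functor $\mathbf{L}F$ is automatically an equivalence of \emph{symmetric monoidal} homotopy categories.

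It then remains to show $(F,U)$ is a Quillen equivalence. Applying Hovey's comparison between symmetric and non-symmetric spectra \cite{Hovey:2001aa} — valid for both $T$ and $T^{\wedge 2}$ by the cyclic-permutation fact — the forgetful functors identify $\Ho(Sp^{\sym}(\M_\bullet(S),T))$ and $\Ho(Sp^{\sym}(\M_\bullet(S),T^{\wedge 2}))$ with the homotopy categories of the corresponding non-symmetric $T$- and $T^{\wedge 2}$-spectra, compatibly with $U$ and its evident non-symmetric analogue. It is therefore enough to prove the corresponding Quillen equivalence for non-symmetric spectra, where the symmetric-group actions are gone and the picture is transparent: the prolongation sending a $T^{\wedge 2}$-spectrum $(F_0,F_1,\dots)$ to the $T$-spectrum with even terms $E_{2i}=F_i$, odd terms $E_{2i+1}=F_i\wedge T$ and the evident structure maps is left Quillen, and on homotopy categories it is inverse to $U$. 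One verifies this on the compact generators $\Sigma^{\infty}_{T^{\wedge 2}}X_+$ and their suspensions and Tate twists: the prolongation carries $\Sigma^{\infty}_{T^{\wedge 2}}X_+$ to $\Sigma^{\infty}_{T}X_+$, stable maps between such objects are computed on both sides by the same colimit of $\Ho(\M_\bullet(S))$-hom-sets $[X_+\wedge T^{\wedge a},Y_+\wedge T^{\wedge b}]$ (the even $T$-suspensions being cofinal), and $\Sigma^{\infty}_{T^{\wedge 2}}T$ is already invertible, its inverse being $\Sigma^{\infty}_{T^{\wedge 2}}T\wedge(\Sigma^{\infty}_{T^{\wedge 2}}T^{\wedge 2})^{-1}$ via the identification $T\wedge T=T^{\wedge 2}$.

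\textbf{Main obstacle.} The conceptual content — that inverting $T^{\wedge 2}$ already inverts $T$, and that the prolongation is strong monoidal — is essentially formal. The genuine labor is model-categorical: checking carefully that $U$ is right Quillen for the \emph{stable}, not merely the level, model structures, and invoking Hovey's comparison precisely enough to be certain nothing is lost in passing from the symmetric spectra to their homotopy categories. This is exactly where the cellularity of the flasque model structure and the triviality in $\Ho(\M_\bullet(S))$ of the cyclic permutation of $T^{\wedge 3}$ are indispensable; the rest is routine.
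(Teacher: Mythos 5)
Your architecture (a direct adjunction between the two categories of symmetric spectra, monoidality via the left adjoint, and a Quillen-equivalence check after passing to non-symmetric spectra and compact generators) is genuinely different from the paper's proof, which never constructs any adjunction between $Sp^{\sym}(\M_{\bullet}(S),T)$ and $Sp^{\sym}(\M_{\bullet}(S),T^{\wedge 2})$: instead both are embedded into the categories of symmetric bispectra $Sp^{\sym}(Sp^{\sym}(\M_{\bullet}(S),T),T^{\wedge 2})$ and $Sp^{\sym}(Sp^{\sym}(\M_{\bullet}(S),T^{\wedge 2}),T)$, each inclusion being a Quillen equivalence by Hovey's Theorem 9.1 because smashing with the other sphere is already a Quillen self-equivalence; the two bispectra categories are then identified, and the monoidal comparison is free because the inclusions $\Sigma^{\infty}_{K}$ are symmetric monoidal. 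However, as it stands your proposal has two genuine gaps, and they sit exactly at the places you declare routine. First, in the symmetric setting the left adjoint $F$ of the even-levels functor $U$ is \emph{not} the prolongation you describe (even terms $F_{i}$, odd terms $F_{i}\wedge T$): that formula is the left adjoint only after the $\sym_{n}$-actions are forgotten. The honest $F$ is built from inductions along the block embeddings $\sym_{n}\hra\sym_{2n}$ and $\sym_{n}\times\sym_{k}\hra\sym_{2n+k}$, and $U$ is restriction along the index-doubling functor on symmetric sequences, not restriction of scalars along a map of monoids inside one fixed module category; so the assertion that $F$ is strong symmetric monoidal ``being of extension-of-scalars type'' does not literally apply and is precisely what would have to be proved. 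Since the non-symmetric categories carry no symmetric monoidal smash product, the entire monoidal half of the theorem rests on this unproved claim.

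Second, the reduction ``it is therefore enough to prove the corresponding Quillen equivalence for non-symmetric spectra'' is not innocent. Stable equivalences of symmetric spectra are not detected by the forgetful functor (the semistability problem), and Hovey's comparison of $Sp^{\sym}(\C,K)$ with ordinary $K$-spectra under the cyclic-permutation hypothesis is itself a zig-zag through an auxiliary category of (bi)spectra, not a direct Quillen equivalence induced by the forgetful functor. Consequently the claimed compatibility of that comparison ``with $U$ and its evident non-symmetric analogue'' is an unproved naturality statement; to establish it you would have to write down the full diagram of Quillen adjunctions linking the four categories and check that the relevant squares of derived functors commute, at which point you are essentially reproducing the paper's bispectra argument in a more complicated form. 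The verification on generators at the end is fine in spirit, but it only yields an equivalence of plain (triangulated) homotopy categories even if completed; without Gap A repaired it does not give the statement of the theorem. So the plan is salvageable, but the two steps above are the actual content, not the routine part.
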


\begin{proof}
The proof of this theorem is essentially the same as that given for topological $S^{1}$- and $S^{2}$-spectra in
\cite[Theorem A.44]{Panin:2009aa}.  The inclusion
$Sp^{\sym}(\M_{\bullet}(S),T) \to Sp^{\sym}(Sp^{\sym}(\M_{\bullet}(S),T),T^{\wedge 2})$
is a Quillen equivalence by \cite[Theorem 9.1]{Hovey:2001aa} because ${-}\wedge T^{\wedge 2}$
is a Quillen self-equivalence of $Sp^{\sym}(\M_{\bullet}(S),T)$.  Similarly the inclusion
$Sp^{\sym}(\M_{\bullet}(S),T^{\wedge 2}) \to Sp^{\sym}(Sp^{\sym}(\M_{\bullet}(S),T^{\wedge 2}),T)$
is a Quillen equivalence.  The two categories of symmetric
bispectra are isomorphic with the identical stable model
structure by arguments like those used in the proof of \cite[Theorem 10.1]{Hovey:2001aa}.
Hovey's work requires that the model structure have certain properties, but in the flasque model structure
\cite{Isaksen:2005aa} these properties hold, and $T$ and $T^{\wedge 2}$ are cofibrant.

The symmetric monoidal structures are the same
because (i) the inclusions of the categories of symmetric spectra in the categories of symmetric
bispectra are symmetric monoidal
functors like any inclusion $\Sigma_{K}^{\infty} \colon \mathcal C \to Sp^{\sym}(\mathcal C,K)$, and
(ii) the symmetric monoidal structures on the two isomorphic  categories of symmetric  bispectra
are the same.
\end{proof}

For natural numbers $m,n$ we denote by $c_{m,n} \in \sym_{m+n}$
the \emph{$(m,n)$-shuffle permutation}.
It acts by $c_{m,n}(i) = i+n$ for $1 \leq i \leq m$ and $c_{m,n}(i) = i-m$ for $m+1 \leq i \leq m+n$.

\begin{defn}
\label{D:T.monoid}
A \emph{commutative $K$-monoid} $E$ in $\M_{\bullet}(S)$ is a sequence of
pointed motivic spaces
$(E_{0},E_{1},E_{2},\dots)$ with each space equipped with an action $\sym_{n} \times E_{n} \to E_{n}$
of the symmetric group, plus morphisms
\begin{equation}
\label{E:T.monoid.structure}
\begin{aligned}
e_{0} \colon & \boldsymbol{1}_{\M_{\bullet}(S)} \to E_{0},
\\
e_{1} \colon & K \to E_{1}
\\
\mu_{mn} \colon & E_{m} \wedge E_{n} \to E_{m+n}
\end{aligned}
\end{equation}
in $\M_{\bullet}(S)$ such that each $\mu_{mn}$ is $(\sym_{m} \times \sym_{n})$-equivariant and such that the compositions
\begin{equation}
\label{E:T.unit}
\begin{gathered}
E_{n} \xra{\cong} E_{n} \wedge \boldsymbol{1}_{M_{\bullet}(S)} \xra{1 \wedge e_{0}} E_{n}\wedge E_{0}
\xra{\mu_{n0}} E_{n}
\\
E_{n} \xra{\cong} \boldsymbol{1}_{M_{\bullet}(S)} \wedge E_{n}  \xra{e_{0} \wedge 1} E_{0}\wedge E_{n}
\xra{\mu_{0n}} E_{n}
\end{gathered}
\end{equation}
are the identity maps, and the diagrams
\begin{equation}
%\label{E:associativity}
\vcenter{
\xymatrix @M=5pt @C=30pt {
E_{\ell } \wedge E_{m} \wedge E_{n} \ar[r]^-{\mu_{\ell m}\wedge 1} \ar[d]_-{1 \wedge \mu_{mn}}
&
E_{\ell +m} \wedge E_{n} \ar[d]^-{\mu_{\ell +m,n}}
\\
E_{\ell } \wedge E_{m+n} \ar[r]^-{\mu_{\ell ,m+n}}
&
E_{\ell +m+n}
}}
\qquad \quad
%\end{equation}
%%
%commute in $H_{\bullet}(S)$, and the diagrams
%%
%\begin{equation}
\label{E:commutativity}
\vcenter{
\xymatrix @M=5pt @C=35pt {
E_{m} \wedge E_{n}
\ar[d]_-{\text{switch}}
\ar[r]^-{\mu_{mn}}
%\ar@{}[rd]|-{h}
&
E_{m+n}
\ar[d]_-{\cong}^-{c_{m,n}}
\\
E_{n} \wedge E_{m}
\ar[r]^-{\mu_{nm}}
&
E_{n+m}
}}
\end{equation}
commute in $\M_{\bullet}(S)$ with $c_{m,n}$ the isomorphism given by the action
of the $(m,n)$-shuffle permutation.
\end{defn}

\begin{thm}
Let $E$ be a commutative $K$-monoid.  Define maps $\sigma_{n}$ as the compositions
\begin{equation}
\label{E:T.bonding}
\sigma_{n} \colon E_{n} \wedge K \xra{1 \wedge e_{1}} E_{n} \wedge E_{1} \xra{\mu_{n1}} E_{n+1}.
\end{equation}
Then the spaces $(E_{0},E_{1},E_{2},\dots)$ equipped with the actions $\sym_{n} \times E_{n} \to E_{n}$
and the bonding maps $\sigma_{n}$ form a symmetric $K$-spectrum $E$.
Moreover, the morphisms
$\mu \colon E \wedge E \to E$ induced by the $\mu_{mn}$
and $e \colon \Sigma_{K}^{\infty} \boldsymbol{1}_{\M_{\bullet}(S)}\to E$
composed of the maps $e_{n} \colon K^{\wedge n} \to E_{n}$ induced by $e_{0}$, $e_{1}$ and
the $\mu_{mn}$ make $(E,\mu,e)$
a commutative monoid in $Sp^{\sym}(\M_{\bullet}(S),K)$.
\end{thm}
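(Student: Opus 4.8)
The plan is to reduce both assertions to the levelwise diagrams of Definition~\ref{D:T.monoid}, together with the universal property of the smash product on symmetric $K$-spectra. I would organize the argument in three steps.

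\emph{Step 1: the $\sigma_{n}$ make $(E_{0},E_{1},\dots)$ a symmetric $K$-spectrum.} Iterating \eqref{E:T.bonding} presents the $m$-fold bonding map $E_{n}\wedge K^{\wedge m}\to E_{n+m}$ as $1\wedge e_{1}^{\wedge m}$ followed by an iterate of the $\mu$'s, and one must check it is $(\sym_{n}\times\sym_{m})$-equivariant, where $\sym_{n}$ acts on $E_{n+m}$ through the first $n$ letters and $\sym_{m}$ through the last $m$. For the $\sym_{n}$-factor this is immediate: each $\mu_{k,1}$ used in the iteration is $(\sym_{k}\times\sym_{1})$-equivariant and $\sym_{n}$ sits compatibly inside every $\sym_{k}$. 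For the $\sym_{m}$-factor it suffices to treat an adjacent transposition of two copies of $K$; applying the commutativity square of \eqref{E:commutativity} to the relevant inner $\mu_{1,1}$ and then the associativity square of \eqref{E:commutativity} shows the inner $c_{1,1}$ propagates to the transposition $(n+i,\,n+i+1)\in\sym_{n+m}$ acting on $E_{n+m}$. Since adjacent transpositions generate $\sym_{m}$ and, by MacLane coherence for symmetric monoidal categories, the resulting map is independent of the chosen word, this is exactly the statement that the iterated multiplication $E_{1}^{\wedge m}\to E_{m}$ realizes $E_{m}$ as the target of the $\sym_{m}$-equivariant $m$-fold product, the usual mechanism by which commutative monoids for $\wedge$ yield symmetric ring spectra \cite{Hovey:2001aa}.

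\emph{Step 2: $\mu$ and $e$ are morphisms of symmetric $K$-spectra.} I would use that in $Sp^{\sym}(\M_{\bullet}(S),K)$ a morphism $E\wedge E\to E$ is the same datum as a family of maps $E_{m}\wedge E_{n}\to E_{m+n}$ which is $(\sym_{m}\times\sym_{n})$-equivariant and compatible with the bonding maps on both sides \cite{Hovey:2001aa,Jardine:2000aa}. The $\mu_{mn}$ are equivariant by hypothesis, and compatibility with the bonding maps is a diagram chase: both $\sigma_{m+n}\circ(\mu_{mn}\wedge 1_{K})$ and $\mu_{m+1,n}\circ(\sigma_{m}\wedge 1_{E_{n}})$ are computed by inserting a copy of $e_{1}$ and applying the associativity square of \eqref{E:commutativity}, while for the variant with $\mu_{m,n+1}$ one additionally moves the new $E_{1}$-factor past $E_{n}$ using the commutativity square, which is precisely the shuffle built into the right action on $E\wedge E$. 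Hence the $\mu_{mn}$ assemble into $\mu\colon E\wedge E\to E$. Likewise $\Sigma_{K}^{\infty}\boldsymbol{1}_{\M_{\bullet}(S)}$ has $n$-th space $K^{\wedge n}$ with permutation action and identity bonding maps, so a morphism out of it is a compatible family $e_{n}\colon K^{\wedge n}\to E_{n}$; taking $e_{0},e_{1}$ as given and $e_{n}$ the iterated $\mu$ on $e_{1}^{\wedge n}$, equivariance is the special case $E_{n}\to E_{n}$ of Step~1 and bonding-compatibility follows from the unit compositions \eqref{E:T.unit} together with associativity.

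\emph{Step 3: the monoid axioms, and the main obstacle.} Associativity of $\mu$, the two unit laws, and commutativity $\mu\circ\tau=\mu$ (with $\tau$ the symmetry of $\wedge$) need only be checked after restriction to the generating bimorphism level --- on $E_{\ell}\wedge E_{m}\wedge E_{n}$, on $E_{0}\wedge E_{n}$ and $E_{n}\wedge E_{0}$, and on $E_{m}\wedge E_{n}$ --- where they become respectively the associativity square of \eqref{E:commutativity}, the compositions \eqref{E:T.unit}, and the commutativity square of \eqref{E:commutativity}, once one records that the symmetry of $\wedge$ acts on the $(m,n)$-summand at level $m+n$ by switch followed by the action of $c_{m,n}$. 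I expect the only genuinely non-formal ingredient to be this use of the universal property of the smash product of symmetric spectra --- that morphisms $E\wedge E\to E$, and identities between them, are detected on equivariant bonding-compatible bimorphisms --- which I would quote from \cite{Hovey:2001aa} (or \cite{Jardine:2000aa}); granting it, together with the coherence identification of the iterated $\mu$ with the $\sym_{n}$-equivariant product used in Step~1, everything else is routine diagram-chasing with \eqref{E:T.unit} and \eqref{E:commutativity}.
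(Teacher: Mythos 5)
Your proposal is correct and follows essentially the same route as the paper's own (sketched) proof: the paper reduces everything to checking $(\sym_{n}\times\sym_{j})$-equivariance of the induced maps $E_{n}\wedge K^{\wedge j}\to E_{n+j}$, $K$-bilinearity of the $\mu_{mn}$ in the sense of Jardine's (4.6)--(4.7) (your universal-property/bimorphism formulation), equivariance of the $e_{n}$, and the monoid axioms, declaring these verifications formal. You have simply carried out those same formal checks in more detail, so there is no substantive difference in approach.
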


\begin{proof}[Sketch of proof]
To show that $E$ is a symmetric $K$-spectrum one has to verify that the induced maps
$E_{n} \wedge K^{\wedge j} \to E_{n+j}$ are $(\sym_{n} \times \sym_{j})$-equivariant.
To show that the maps $\mu_{mn}$ define a morphism $E \wedge E \to E$
one has to verify that they are $K$-linear and $K$-bilinear in the sense of
\cite[(4.6)--(4.7)]{Jardine:2000aa}.  One has to verify that each $e_{n}$ is $\sym_{n}$-equivariant.
Finally, one has to verify the commutative monoid axioms.
All the verifications are formal, straightforward and left to the reader.
\end{proof}

\section{The symmetric
%\texorpdfstring
{$T^{\wedge 2}$}{T\^{ }2}-spectrum
%\texorpdfstring
{$\MSL$}{MSL}}

We construct a commutative $T$-monoid $\MSL$.
Each space $\MSL_{n}$ comes equipped
with an action of $GL_{n}$ such that the multiplication maps $\mu_{mn}$ of the monoid
structure are $(GL_{m} \times GL_{n})$-equivariant.  We then get actions of the $\sym_{n}$
from the embeddings $\sym_{n} \to GL_{n}$ given by permutation matrices.
The need for an action of $GL_{n}$ with fixed points --- necessary for the proper definition of the unit maps
---
rather than merely of $SL_{n}$ is the delicate part of the construction.

We begin by reviewing the construction of $\MGL$ originally done in
\cite[\S 6.3]{Voevodsky:1998kx} and of its monoid structure given in
\cite{Panin:2008db,Vezzosi:2001aa}.

For each integer $n \geq 0$ let $\Gamma_{n} = \OO_{S}^{\oplus n}$ be the trivial rank-$n$ vector bundle.
For each integer $p \geq 1$ let $Gr(n,np) =  Gr(n,\Gamma_{n}^{\oplus p})$.  Let
$\TGL{n,np} \to Gr(n,np)$ be the tautological
subbundle.   The inclusions
$(1,0) \colon \Gamma_{n}^{\oplus p} \to \Gamma_{n}^{\oplus p} \oplus \Gamma_{n}
= \Gamma_{n}^{\oplus p+1}$
%\[
%\Gamma_{n}^{\oplus p} \xra{(1,0)} \Gamma_{n}^{\oplus p} \oplus \Gamma_{n}
%= \Gamma_{n}^{\oplus p+1}
%\]
induce closed embeddings $Gr(n,np) \hra Gr(n,np+n)$ and monomorphisms
%\[
%\xymatrix @M=5pt @C=30pt {
%\TGL{n,np} \ar[r] \ar[d]
%& \TGL{n+1,np+p} \ar[d]
%\\
%Gr(n,np) \ar[r]
%& Gr(n+1,np+p)
%}
%\]
%and therefore morphisms
$\Th \TGL{n,np} \to \Th \TGL{n,np+n}$
of Thom spaces.  We set
\begin{gather*}
BGL_{n} = \colim\nolimits_{p\in \NN} Gr(n,np),
\\
\TGL{n,n\infty} = \colim\nolimits_{p\in \NN} \TGL{n,np},
\\
\MGL_{n} = \colim\nolimits_{p\in \NN} \Th \TGL{n,np}.
\end{gather*}
The diagonal action of $GL_{n} = GL(\Gamma_{n})$ on each $Gr(n,np) = Gr(n,\Gamma_{n}^{\oplus p})$
is compatible with the inclusions over increasing $p$.  Moreover, the $\TGL{n,np}$ are $GL_{n}$-equivariant vector bundles.
This induces actions
\[
\sym_{n} \times \MGL_{n} \subset GL_{n} \times \MGL_{n} \to \MGL_{n}.
\]
Concatenation of bases induces isomorphisms $\Gamma_{m} \oplus \Gamma_{n} = \Gamma_{m+n}$
which induce $(GL_{m} \times GL_{n})$-equivariant maps
\begin{equation*}
%\label{E:sum.GL}
\oplus \colon
Gr(m,mp) \times Gr(n,np) \to Gr(m+n,mp+np)
\end{equation*}
and therefore $(GL_{m} \times GL_{n})$-equivariant maps
\begin{gather*}
%\beta_{mn} \colon
\oplus \colon BGL_{m} \times BGL_{n} \to BGL_{m+n},
\\
\mu_{mn}^{GL} \colon \MGL_{m} \wedge \MGL_{n} \to \MGL_{m+n}.
\end{gather*}
Finally each $Gr(n,np)$ is pointed by the point corresponding to the trivial rank-$n$ subbundle
\[
\Gamma_{n} \xra{(1,0,\dots,0)} \Gamma_{n}^{\oplus p}.
\]
In the colimit this gives a $S$-valued point $x_{n}$ of $BGL_{n}$, which is fixed by the action of $GL_{n}$.
The Thom space of the fiber
of $\mathcal T(n,n\infty)$ over $x_{n}$ is $\Gamma_{n} \cong \Aff^{n}$, and the inclusion
$x_{n} \hra BGL_{n}$ induces a map of Thom spaces
\[
e_{n}^{GL} \colon T^{\wedge n} \to \MGL_{n}.
\]
%The spaces $\MGL_{n}$ together with the actions $\sym_{n} \times \MGL_{n}$, the maps
%$e_{0}$ and $e_{1}$ and the maps $\mu_{mn}$ satisfy the axioms of a commutative
%$T$-monoid.

\begin{defn}
The \emph{algebraic cobordism spectrum} $\MGL$ is the commutative monoid in the category of
symmetric $T$-spectra associated to the
commutative $T$-monoid composed of
the spaces $\MGL_{n}$, the actions $\sym_{n} \times \MGL_{n} \to \MGL_{n}$, the maps
$e_{0}^{GL} \colon \pt_{+} \to \MGL_{0}$ and $e_{1}^{GL} \colon T \to \MGL_{1}$
and the maps $\mu_{mn}^{GL} \colon \MGL_{m} \wedge \MGL_{n} \to \MGL_{m+n}$.
\end{defn}

We now move on to defining $\MSL$. We begin with the spaces.
For $n = 0$ we have $SL_{0} = GL_{0} = \{1\}$.  So we set $BSL_{0} = \pt$.  The Thom space of
a zero vector bundle over a scheme $X$ is the externally pointed space $X_{+}$.
So we set $\MSL_{0} = \pt_{+}$.

Now suppose $n > 0$.
Over each $Gr(n,np)$ there is the line bundle $\OO_{Gr(n,np)}(-1) = \det \TGL{n,np}$.
Removing the zero section gives a smooth scheme
\[
\GrSL(n,np) = \OO_{Gr(n,np)}(-1) - Gr(n,np).
\]
The projection
\[
\pi = \pi_{n,np} \colon \GrSL(n,np)  \to Gr(n,np)
\]
is a principal $\GG_{m}$-bundle.
Write
\[
\TSL{n,np}  = \pi^{*} \TGL{n,np}.
\]
The inclusion $\GrSL(n,np) \hra \OO_{Gr(n,np)}(-1)$ and the cartesian diagram
\[
\xymatrix @M=5pt {
\pi^{*}\OO_{Gr(n,np)}(-1) \ar[r] \ar[d] \ar@{}[rd]|{\Box}
& \OO_{Gr(n,np)}(-1) \ar[d]
\\
\GrSL(n,np) \ar[r]_-{\pi}
& Gr(n,np)
}
\]
gives a nowhere vanishing section of $\pi^{*}\OO_{Gr(n,np)}(-1) = \det \TSL{n,np}$.
The corresponding isomorphism
$\lambda_{n,np} \colon \OO_{\GrSL(n,np)} \cong \det \TSL{n,np}$
makes $(\TSL{n,np}, \lambda_{n,np})$ the \emph{tautological special linear bundle}
over $\GrSL(n,np)$.

We set
\begin{gather*}
BSL_{n} = \colim\nolimits_{p\in \NN} \GrSL(n,np),
\\
\TSL{n,n\infty} = \colim\nolimits_{p\in \NN} \TSL{n,np},
\\
\MSL_{n} = \colim\nolimits_{p\in \NN} \Th \TSL{n,np}.
\end{gather*}

We next define the multiplication maps.  Morphisms of $S$-schemes $X \to \GrSL(n,np)$ are in bijection
with pairs $(f,\lambda)$ with $f \colon X \to Gr(n,np)$ a morphism and
$\lambda \colon \OO_{X} \cong \det f^{*} \TGL{n,np}$ an isomorphism.
There are unique maps
\[
(\oplus, \otimes) \colon \GrSL(m,mp) \times \GrSL(n,np) \to \GrSL(m+n,mp+np)
\]
corresponding to the morphisms of representable functors
\[
\begin{array}{ccc}
Hom(X,\GrSL(m,mp)) \times Hom(X,\GrSL(n,np)) & \lra & \Hom(X,\GrSL(m+n,mp+np))
\\
((f,\lambda),(g,\lambda_{1})) & \longmapsto & (f \oplus g, \lambda \otimes \lambda_{1}).
\end{array}
\]
They induce maps
\begin{gather*}
(\oplus,\otimes) \colon BSL_{m} \times BSL_{n} \to BSL_{m+n}
\\
\mu^{SL}_{mn} \colon \MSL_{m} \wedge \MSL_{n} \to \MSL_{m+n}.
\end{gather*}

We now discuss the group actions.
%The $\sym_{n}$-action cannot be induced by an action of $SL_{n}$
%because $\sym_{n} \not\subset SL_{n}$.  Therefore we will give a $GL_{n}$-action.
%But there are complications
%when dealing with $GL_{n}$-actions on $BSL_{n}$.
%
Since $\TGL{n,np}$ is a $GL_{n}$-equivariant bundle over $Gr(n,np)$,
there is an induced action of $GL_{n}$ on the complement of the zero section of the
determinant line bundle.
This is an action $GL_{n} \times \GrSL(n,np) \to \GrSL(n,np)$.  In the colimit this gives an action
\(
GL_{n} \times BSL_{n} \to BSL_{n}.
\)
But there is a problem.

%The projection $BSL_{n} \to BGL_{n}$ is $GL_{n}$-equivariant.
%Nevertheless, \textbf{\textit{this is the wrong
%action}} of $GL_{n}$ and of $\sym_{n}$ on the $\GrSL(n,np)$ and on $BSL_{n}$ for the following reason.
%Recall that when one has an equivariant vector bundle, the group acts linearly on the fibers over the
%fixed points.
The unit maps $e_{n}^{GL} \colon T^{\wedge n} \to \MGL_{n}$ were defined using points
$x_{n} \colon \pt \to BGL_{n}$ which were fixed under the action of $GL_{n}$.
To define unit maps for a $T$-monoid $\MSL$ we need fixed points for the action of at least
$\sym_{n}$ on $BSL_{n}$,
preferably lying over $x_{n}$.
We have a cartesian diagram
\[
\xymatrix @M=5pt {
\GG_{m} \ar[r] \ar[d] \ar@{}[rd]|{\Box}
&
BSL_{n} \ar[d]
\\
\pt \ar[r]_-{x_{n}}
&
BGL_{n}.
}
\]
The action of $GL_{n} = GL(\Gamma_{n})$ on the fiber $\Gamma_{n}$ of $\TGL{n,n\infty}$
over the fixed point $x_{n}$ is the standard representation of $GL_{n}$.  So the induced action on the fiber
$\GG_{m}$ over $x_{n}$ is $g \cdot t = \det(g)t$.  Thus there are fixed points
for the action of the alternating group $\mathfrak A_{n} \subset SL_{n}$ on $BSL_{n}$
lying over the fixed point $x_{n} \in BGL_{n}(\pt)$ used to define the unit maps on $\MGL_{n}$
but not for the action of $\sym_{n} \subset GL_{n}$ (except in characteristic $2$).

So we use the embedding $\sym_{n} \subset Sp_{2n} \subset SL_{2n}$ which sends $\sigma \in \sym_{n}$
to the permutation matrix associated to $\bar\sigma \in \sym_{2n}$ where we have
$\bar\sigma(2i-1) = 2\sigma(i)-1$ and $\bar\sigma(2i) = 2\sigma(i)$.  This gives us an action
$\sym_{n} \times BSL_{2n} \to BSL_{2n}$ which fixes pointwise the fiber over $x_{n}$.

Therefore we define the spaces of the commutative $T^{\wedge 2}$-monoid $\MSL$ to be the
$\MSL_{2n}$.   Each is equipped with the action of $\sym_{n} \times \MSL_{2n} \to \MSL_{2n}$
induced by the action of $SL_{2n}$.

%We therefore need to modify the action of $GL_{n}$ on $BSL_{n}$.  We do this by defining
%a second action $\gamma_{2}$ of $GL_{n}$.  This action is trivial on $BGL_{n}$ and
%$\TGL{n,n\infty}$.
%The action by $\gamma_{2}$ of $g \in GL_{n}$ on the fibers $\GG_{m}$ of $BSL_{n} \to BGL_{n}$
%is by multiplication by $\det(g)^{-1}$.  The two actions $\gamma_{1}$ and $\gamma_{2}$
%commute.  So we may define a third action
%\[
%\gamma_{3} = \gamma_{1}\gamma_{2} \colon GL_{n} \times BSL_{n} \to BSL_{n}.
%\]
%It extends to a compatible action on the vector bundle $\TSL{n,n\infty}$.
%
%\begin{lem}
%The locus of fixed points for the action $\gamma_{3}$ of $GL_{n}$
%on $BSL_{n}$ is the inverse image of the locus of fixed points
%for the action of $GL_{n}$ on $BGL_{n}$.
%\end{lem}
%
%The multiplication maps $\MSL_{m} \wedge \MSL_{n} \to \MSL_{m+n}$
%are $(GL_{m} \times GL_{n})$-equivariant for all three actions $\gamma_{1}$, $\gamma_{2}$
%and $\gamma_{3}$.

We now define the unit maps.
Points $\pt \to BSL_{n}$ lifting the point $x_{n} \colon \pt \to BGL_{n}$ are in bijection
with isomorphisms
$\lambda \colon \OO_{S} \cong \det x_{n}^{*} \TGL{n,n\infty} = \Lambda^{n} \Gamma_{n}$.
Let $f_{1},\dots,f_{n}$ be the standard basis of $\Gamma_{n} = \OO_{S}^{\oplus n}$.
We let $y_{n} \colon \pt \to BSL_{n}$ be the lifting of $x_{n}$ corresponding to
$\lambda = f_{1} \wedge \cdots \wedge f_{n}$.
%By the lemma $y_{n}$ is a fixed point of the action $\gamma_{3}$ of $GL_{n}$ on $BSL_{n}$.
The fiber of $\TSL{n,n\infty}$ over $y_{n}$
is $\Gamma_{n} \cong \Aff^{n}$, and we let
\[
e_{n}^{SL} \colon T^{\wedge n} \to \MSL_{n}
\]
be the map of Thom spaces induced by $y_{n}$.  It is $SL_{n}$-equivariant.
Note that $e_{0}^{SL} \colon \pt_{+} \to \MSL_{0} = \pt_{+}$ is the identity.

Having identified the components of the structure $\MSL$, we have to assemble them.
It appears as if $\MSL$ is a
%\textit{\textbf{
commutative monoid in the category of alternating $T$-spectra.
%}}
We do not know how to work in that category.  But there is underlying structure.

\begin{defn}
The \emph{algebraic special linear cobordism spectrum} $\MSL$ refers to three
related objects.

(a)
The commutative monoid in the category of symmetric $T^{\wedge 2}$-spectra associated
to the commutative $T^{\wedge 2}$-monoid composed of
the spaces $\MSL_{2n}$, the actions $\sym_{n} \times \MSL_{2n} \to \MSL_{2n}$, the maps
$e_{0}^{SL} \colon \pt_{+} \to \MSL_{0}$ and $e_{2}^{SL} \colon T^{\wedge 2} \to \MSL_{2}$
and the maps $\mu_{2m,2n}^{SL} \colon \MSL_{2m} \wedge \MSL_{2n} \to \MSL_{2m+2n}$.

(b) The $T$-spectrum with spaces $\MSL_{n}$, bonding maps
$\MSL_{n} \wedge T \to \MSL_{n} \wedge \MSL_{1} \to \MSL_{n+1}$ induced by $e_{1}^{SL}$
and $\mu_{n,1}^{SL}$, equipped with the morphism of $T$-spectra
$e \colon \Sigma_{T}^{\infty}\pt_{+} \to \MSL$
and the structural maps $\mu_{mn}^{SL}$.

(c) Their common underlying $T^{\wedge 2}$-spectrum.
\end{defn}

The properties of the commutative monoid structure that we require are given in the following theorem.

\begin{thm}
\label{T:MSL.structure}
The $(\MSL,\mu^{SL},e^{SL})$ is a commutative monoid in $SH(S)$, and the canonical maps
$u_{n} \colon \Sigma_{T}^{\infty} \MSL_{n}(-n) \to \MSL$ and the $\mu_{mn}^{SL}$ make
the following diagram commute for all $m$ and $n$
\begin{equation}
\label{E:MSLm.wedge.MSLn}
\vcenter{
\xymatrix @M=5pt @C=50pt {
\Sigma_{T}^{\infty}\MSL_{m}(-m) \wedge \Sigma_{T}^{\infty}\MSL_{n}(-n)
\ar[r]^-{\Sigma_{T}^{\infty}\mu_{mn}^{SL}} \ar[d]_-{u_{m} \wedge u_{n}}
& \Sigma_{T}^{\infty}\MSL_{m+n}(-m-n) \ar[d]^-{u_{m+n}}
\\
\MSL \wedge \MSL \ar[r]^-{\mu^{SL}}
&
\MSL.
}}
\end{equation}
\end{thm}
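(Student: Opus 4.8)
The plan is to deduce everything from the structural results already in place. First I would record that the data assembled above --- the spaces $\MSL_{2n}$, the actions $\sym_{n} \times \MSL_{2n} \to \MSL_{2n}$ coming from $\sym_{n} \subset Sp_{2n} \subset SL_{2n}$, the maps $e_{0}^{SL} \colon \pt_{+} \to \MSL_{0}$ and $e_{2}^{SL} \colon T^{\wedge 2} \to \MSL_{2}$, and the maps $\mu_{2m,2n}^{SL} \colon \MSL_{2m} \wedge \MSL_{2n} \to \MSL_{2m+2n}$ --- form a commutative $T^{\wedge 2}$-monoid in the sense of Definition~\ref{D:T.monoid}. Almost all of the axioms are immediate from the description of $(\oplus,\otimes)$ on the $\GrSL(\cdot,\cdot)$ by representable functors: the unit diagrams \eqref{E:T.unit} hold because $e_{0}^{SL}$ is the identity of $\pt_{+}$ and $(\oplus,\otimes)$ is strictly unital, the associativity pentagon follows from associativity of direct sum of subbundles and of tensor product of the trivialising isomorphisms $\lambda$, and the $(\sym_{m}\times\sym_{n})$-equivariance of $\mu_{2m,2n}^{SL}$ is built into the construction from concatenation of bases. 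The single delicate point is the commutativity square in \eqref{E:commutativity}: the switch $\MSL_{2m}\wedge\MSL_{2n}\to\MSL_{2n}\wedge\MSL_{2m}$ corresponds to switching the two subbundles, which permutes the concatenated basis by $c_{2m,2n}$ and would introduce a sign $(-1)^{(2m)(2n)}=1$ in the comparison of $\lambda\otimes\lambda_{1}$ with $\lambda_{1}\otimes\lambda$; since moreover the image of $c_{m,n}\in\sym_{m+n}$ under $\sym_{m+n}\subset Sp_{2(m+n)}\subset SL_{2(m+n)}$ is exactly the permutation matrix of $c_{2m,2n}$ (and this has determinant $(-1)^{(2m)(2n)}=1$, so it acts trivially on the determinant data), the square commutes with no extra sign. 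This is a second dividend of the passage to even ranks that the fixed-point problem for the unit maps forced on us.

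Granting this, the theorem following Definition~\ref{D:T.monoid} produces a commutative monoid $(\MSL,\mu^{SL},e^{SL})$ in the symmetric monoidal model category $Sp^{\sym}(\M_{\bullet}(S),T^{\wedge 2})$. The localisation to the homotopy category is symmetric monoidal (strong on cofibrant objects; in the flasque structure the spectra in sight are cofibrant, being built by free functors and transfinite compositions of cofibrations from cofibrant motivic spaces, so their smash products already compute the derived ones), so the image of $\MSL$ is a commutative monoid in $\Ho(Sp^{\sym}(\M_{\bullet}(S),T^{\wedge 2}))$. By Theorem~\ref{T:comparison} this is, as a symmetric monoidal category, equivalent to $\Ho(Sp^{\sym}(\M_{\bullet}(S),T))=SH(S)$; transporting the monoid structure along the equivalence gives the first assertion.

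For the square \eqref{E:MSLm.wedge.MSLn} I would unwind the construction of $\mu^{SL}$. By construction the multiplication of the symmetric spectrum attached to a $T^{\wedge 2}$-monoid is assembled from the $\mu_{mn}^{SL}$: in the symmetric $T^{\wedge 2}$-spectrum picture, where the shifted suspension spectrum $\Sigma_{T^{\wedge 2}}^{\infty}E_{k}(-k)$ is modelled by the free spectrum $F_{k}(E_{k})$ and the canonical map by the counit $F_{k}(E_{k})\to E$ of the level-$k$ free/forgetful adjunction, the statement is precisely that $\mu\colon E\wedge E\to E$ is compatible with the structure maps $F_{m}(E_{m})\wedge F_{n}(E_{n})\to F_{m+n}(E_{m+n})$ obtained from the $\mu_{mn}$ by freeness, which is how $\mu$ is defined. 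I would then transport this along the equivalences of Theorem~\ref{T:comparison} and of symmetric versus non-symmetric $T$-spectra into the $T$-spectrum picture of Section~\ref{S:prelim} in which the $u_{n}$ live, checking that the natural isomorphism $\Sigma_{T}^{\infty}X(-m)\wedge\Sigma_{T}^{\infty}Y(-n)\cong\Sigma_{T}^{\infty}(X\wedge Y)(-m-n)$ used in \eqref{E:MSLm.wedge.MSLn} is the one implicit in the construction of $\mu^{SL}$.

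The main obstacle is bookkeeping rather than mathematics: one must keep straight the three incarnations of $\MSL$ (symmetric $T^{\wedge 2}$-spectrum, $T$-spectrum, object of $SH(S)$), confirm that the canonical maps $u_{n}$ of the $T$-spectrum correspond under Theorem~\ref{T:comparison} to the evident canonical maps of the symmetric $T^{\wedge 2}$-spectrum --- directly at even levels, and through a comparison with the bonding maps $\MSL_{n}\wedge T\to\MSL_{n+1}$ at odd levels, since $\mu_{mn}^{SL}$ is defined for all $m,n$ whereas the symmetric structure only sees even ranks --- and verify the compatibility of the various canonical identifications of suspension spectra with shifts and of smash products of free spectra. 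None of this uses anything beyond the coherences already established for the $(\oplus,\otimes)$ maps, so all the verifications are routine.
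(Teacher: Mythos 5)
Your proposal is correct and follows essentially the same route as the paper: transfer the commutative $T^{\wedge 2}$-monoid structure to a commutative monoid in $SH(S)$ via Theorem~\ref{T:comparison}, observe that the square \eqref{E:MSLm.wedge.MSLn} is formal at even levels, and handle odd levels by passing through the bonding maps (i.e.\ smashing with $\Sigma_{T}^{\infty}T$ to reach even levels and then desuspending, which is exactly the diagram the paper writes out). Your extra checks --- the verification of the commutativity axiom using that $c_{m,n}\in\sym_{m+n}$ maps to the permutation matrix of $c_{2m,2n}$, of determinant $+1$ --- are correct and merely make explicit what the paper defers to the construction of the $T^{\wedge 2}$-monoid.
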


\begin{proof}
A commutative monoid in $Sp^{\sym}(\M_{\bullet}(S),T^{\wedge 2})$ gives a commutative monoid
in $SH(S)$ by Theorem \ref{T:comparison}.  When $m$ and $n$ are even, the diagram
in $Sp^{\sym}(\M_{\bullet}(S),T^{\wedge 2})$ corresponding to \eqref{E:MSLm.wedge.MSLn}
commutes by formal arguments.  When say $m$ is even and $n$ is odd, the diagram
\[
\xymatrix @M=5pt @C=50pt {
\Sigma_{T}^{\infty}\MSL_{m}(-m) \wedge \Sigma_{T}^{\infty}\MSL_{n}(-n)
\wedge \Sigma_{T}^{\infty}T
\ar[r]^-{\Sigma_{T}^{\infty}\mu_{mn}^{SL}\wedge 1} \ar[d]_-{1 \wedge \Sigma_{T}^{\infty}\sigma_{n}}
& \Sigma_{T}^{\infty}\MSL_{m+n}(-m-n)  \wedge \Sigma_{T}^{\infty}T
\ar[d]^-{\Sigma_{T}^{\infty}\sigma_{m+1}}
\\
\Sigma_{T}^{\infty}\MSL_{m}(-m) \wedge \Sigma_{T}^{\infty}\MSL_{n+1}(-n)
\ar[r]^-{\Sigma_{T}^{\infty}\mu_{m,n+1}^{SL}} \ar[d]_-{u_{m} \wedge u_{n+1}}
& \Sigma_{T}^{\infty}\MSL_{m+n+1}(-m-n) \ar[d]^-{u_{m+n+1}}
\\
\MSL \wedge \MSL(1) \ar[r]^-{\mu^{SL}}
&
\MSL(1)
}
\]
commutes because $m$ and $n+1$ are even. One may desuspend.  The other cases are similar.
\end{proof}

%The projection maps $BSL_{2n} \to BGL_{2n}$ induce maps $\MSL_{2n} \to \MGL_{2n}$
%which are compatible with the actions and the structural maps.
%They therefore induce morphisms
%\(
%\MSL \to \MGL
%\)
%of commutative monoids in the category of symmetric $T^{\wedge 2}$-spectra.

\section{Special linear orientations}

%In \cite[Definitions 2.1 and 2.13]{Panin:2003rz} a notion of a \emph{ring cohomology theory
%$(A,\partial, \times, 1)$ on $\SmOp/S$} was axiomatized.  It is a contravariant functor
%$A \colon \SmOp/S^{op} \to \operatorname{\mathcal{A}\mathnormal{b}}$ satisfying \'etale
%excision and $\Aff^{1}$-homotopy invariance,
%with long exact sequences
%\[
%\cdots \to A(X,U) \to A(X)\to A(U) \xra{\partial_{X,U}} A(X,U) \to \cdots
%\]
%and with an exterior product
%\[
%\times \colon A(X,U) \times A(Z,W) \to A(X \times Z, X \times W \cup U \times Z)
%\]
%which is functorial, bilinear and associative with a two-sided unit $1 \in A(\pt)$, and with
%$\partial_{X \times Z, U\times Z}(a \times b) = \partial_{X,U} a \times b$ for $a \in A(U)$ and $b \in A(Z)$.
%
%A ring cohomology theory is \emph{bigraded} if the groups $A(X,U)$ are bigraded, the pullback
%maps $f^{*}$ and the product $\times$ are homogeneous of bidegree $(0,0)$, the
%connecting maps $\partial_{X,U}$ are homogeneous of bidegree $(1,0)$, and it has $1 \in A^{0,0}(\pt)$.
%A bigraded ring cohomology
%theory is \emph{$\epsilon$-commutative} if there is an $\epsilon \in A^{0,0}(\pt)$ with
%$\epsilon \times \epsilon = 1$ such that for any $a \in A^{p,q}(X,U)$ and $b \in A^{p',q'}(Z,W)$ we have
%$b \times a = (-1)^{pp'}\epsilon^{qq'} \sigma^{*}(b \times a)$ where $\sigma$ is the switch map.
%
%A commutative monoid $(E,\mu,e)$ in $SH(S)$ determines an $\epsilon$-commutative bigraded
%cohomology theory on $\SmOp/S$ with
%$E^{p,q}(X,U) = \Hom_{SH(S)}(\Sigma_{T}^{\infty} (X_{+}/U_{+}), S^{p,q} \wedge E)$

We now investigate the relationship between special linear orientations
on a ring cohomology theory $E$, as defined in
\cite[Definition 3.1]{Panin:2010aa} and homomorphisms $\varphi \colon \MSL \to A$
of commutative monoids in $SH(S)$.

A \emph{special linear vector bundle} over $X$ is a pair $(E,\lambda)$ with $E \to X$ a vector bundle
and $\lambda \colon \OO_{X} \cong \det E$ an isomorphism of line bundles.
An \emph{isomorphism} $\phi \colon (E,\lambda) \cong (E',\lambda')$ of special linear vector bundles
is an isomorphism $\phi \colon E \cong E'$ of vector bundles such that
$(\det \phi) \circ \lambda = \lambda'$.

\begin{defn}
\label{D:SL.orientation}
A \emph{special linear orientation} on a bigraded $\epsilon$-commutative
ring cohomology theory $A^{*,*}$ on $\SmOp/S$
is a rule which assigns to every special linear vector bundle $(E, \lambda)$ of rank $n$ over an $X$ in
$\Sm/S$ a class $\thom(E,\lambda) \in A^{2n,n}(E,E-X)$
satisfying the following conditions:

\begin{enumerate}
\item For an isomorphism $f \colon (E,\lambda) \cong (E_{1},\lambda_{1})$
we have $\thom(E,\lambda) = f^{*}\thom(E_{1},\lambda_{1})$.

\item For $u \colon Y \to X$
we have $u^{*}\thom(E,\lambda) = \thom(u^{*}(E,\lambda))$ in $A^{2n,n}(u^{*}E,u^{*}E - Y)$.

\item The maps ${-} \cup \thom(E,\lambda) \colon A^{*,*}(X) \to A^{*+2n,*+n}(E,E-X)$ are isomorphisms.

\item We have
\[
\thom (E_{1} \oplus E_{2}, \lambda_{1} \otimes \lambda_{2})
= q_{1}^{*}\thom(E_{1},\lambda_{1}) \cup q_{2}^{*}\thom(E_{2},\lambda_{2}),
\]
where $q_{1},q_{2}$ are the projections from $E_{1} \oplus E_{2}$
onto its summands.  Moreover, for the zero bundle $\boldsymbol 0 \to \pt$ we have
$\thom(\boldsymbol{0}) = 1_{A} \in A^{0,0}(\pt)$.
\end{enumerate}
The class $\thom(E,\lambda)$ is the \emph{Thom class} of the special linear bundle, and
$e(E,\lambda) = z^{*} \thom(E,\lambda) \in A^{2n,n}(X)$ is its \emph{Euler class}.
\end{defn}

This definition is analogous to the Thom classes theory version of the definition of an orientation
\cite[Definition 3.32]{Panin:2003rz}.

For any $n$ the functor ${-}\wedge T^{\wedge n} \colon SH(S) \to SH(S)$ is a self-equivalence.
So it induces isomorphisms
\[
{-}\wedge T^{\wedge n}  \colon
Hom_{SH(S)}(X,A \wedge S^{p,q} ) \xra{\cong}
Hom_{SH(S)}( X \wedge T^{\wedge n},  A \wedge S^{p,q} \wedge T^{\wedge n})
\]
for any $X$ and $(p,q)$ and
any cohomology theory on $\Sm/S$ defined by a commutative monoid $(A,\mu,e)$ in $SH(S)$.
We also write these isomorphisms as
\[
\Sigma_{T}^{n} \colon A^{p,q}(X)  \xra{\cong}
A^{p+2n,q+n}(X \times \Aff^{n}, X \times (\Aff^{n}-0))
\]
This isomorphism coincides with ${-} \times \Sigma_{T}^{n}1_{A}$.
Thus $A^{*,*}$ automatically has Thom classes for trivial bundles: the pullbacks of $\Sigma_{T}^{n}1_{A}$.

%We have proven:
%
%\begin{lem}
%\label{L:triv.bdl}
%For any cohomology theory on $\SmOp/S$ represented by a commutative monoid $A$ in $SH(S)$
%the class $\Sigma_{T}^{n}1_{A} \in A^{2n,n}(\Aff^{n},\Aff^{n}-0)$ has the property that the maps
%\[
%{-} \times \Sigma_{T}^{n}1_{A} \colon A^{*,*}(X) \to A^{*+2n,*+n}(X \times \Aff^{n}, X \times (\Aff^{n}-0))
%\]
%are isomorphisms for all $X$ in $\Sm/S$.
%\end{lem}

\begin{defn}
\label{D:SL.normalized}
A special linear orientation on a bigraded ring cohomology theory $A^{*,*}$ on $\SmOp/S$ which is
representable by a commutative monoid in $SH(S)$ is \emph{normalized} if
\begin{enumerate}
\addtocounter{enumi}{4}
\item
for the trivial
line bundle $\Aff^{1} \to \pt$ we have $\thom(\Aff^{1},1) = \Sigma_{T} 1_{A} \in A^{2,1}(\Aff^{1},\Aff^{1}-0)$.
\end{enumerate}
\end{defn}

From the multiplicativity and functoriality conditions (4) and (2) in the definition of a special linear
orientation one deduces the following result.

\begin{lem}
\label{L:triv.Thom}
Suppose $A^{*,*}$ is a bigraded ring cohomology theory on $\SmOp/S$ representable by a commutative
monoid in $SH(S)$ with a normalized special linear orientation.   For $X \in \Sm/S$ let
$(\OO_{X}^{\oplus n},\lambda_{n})$ be the trivial special linear bundle of rank $n$ over $X$.
Then $\thom(\OO_{X}^{\oplus n},\lambda_{n})$ is the pullback to $X$ of $\Sigma_{T}^{n}1_{A}$,
and
\[
{-} \cup \thom(\OO_{X}^{\oplus n},\lambda_{n})
%= {-} \times \Sigma_{T}^{n}1_{A} = \Sigma_{T}^{n}
\colon A^{*,*}(X)  \xra{\cong}
A^{*+2n,*+n}(X \times \Aff^{n}, X \times (\Aff^{n}-0))
\]
is an isomorphism.
\end{lem}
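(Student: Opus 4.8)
The plan is to reduce everything to the rank-one case via the multiplicativity axiom (4), and then to identify the rank-one Thom class of the trivial bundle using the normalization axiom (5) together with the functoriality axiom (2). First I would treat the case $X = \pt$: the trivial special linear bundle $(\OO_{X}^{\oplus n}, \lambda_{n})$ is the $n$-fold direct sum $(\OO_{X},1) \oplus \cdots \oplus (\OO_{X},1)$ with the product trivialization $1 \otimes \cdots \otimes 1 = \lambda_{n}$ of the determinant, so by axiom (4) applied $n-1$ times, $\thom(\OO_{X}^{\oplus n},\lambda_{n})$ is the external product $q_{1}^{*}\thom(\OO_{X},1) \cup \cdots \cup q_{n}^{*}\thom(\OO_{X},1)$. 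By the normalization axiom (5), $\thom(\Aff^{1},1) = \Sigma_{T}1_{A}$, and since $\Sigma_{T}^{n}1_{A}$ is by definition the external product of $n$ copies of $\Sigma_{T}1_{A}$ (under the identification $S^{2n,n} = (S^{2,1})^{\wedge n}$ and the multiplicativity of the $\times$-product), we get $\thom(\OO^{\oplus n}_{\pt},\lambda_{n}) = \Sigma_{T}^{n}1_{A}$ in $A^{2n,n}(\Aff^{n},\Aff^{n}-0)$.

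Next I would handle general $X \in \Sm/S$ by pulling back along the structure map $u \colon X \to \pt$. The trivial special linear bundle $(\OO_{X}^{\oplus n},\lambda_{n})$ is the pullback $u^{*}(\OO_{\pt}^{\oplus n},\lambda_{n})$, so by the functoriality axiom (2) we have $\thom(\OO_{X}^{\oplus n},\lambda_{n}) = u^{*}\thom(\OO_{\pt}^{\oplus n},\lambda_{n}) = u^{*}\Sigma_{T}^{n}1_{A}$, which is precisely the pullback to $X$ of $\Sigma_{T}^{n}1_{A}$ as claimed. (Equivalently, one may run the direct-sum argument of the previous paragraph directly over $X$, using that $\thom(\OO_{X},1)$ is the pullback to $X$ of $\Sigma_{T}1_{A}$, which again follows from axioms (2) and (5).) Finally, the isomorphism statement is immediate: axiom (3) says ${-}\cup\thom(\OO_{X}^{\oplus n},\lambda_{n})$ is an isomorphism $A^{*,*}(X) \xra{\cong} A^{*+2n,*+n}(X\times\Aff^{n}, X\times(\Aff^{n}-0))$, and this matches the displayed map because we have just identified the Thom class with the pullback of $\Sigma_{T}^{n}1_{A}$; alternatively the isomorphism is exactly the self-equivalence ${-}\wedge T^{\wedge n}$ on $SH(S)$ discussed just before Definition \ref{D:SL.normalized}, so one could also cite that directly.

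The only genuine point requiring care — the ``main obstacle,'' though it is a mild one — is matching up two a priori different descriptions of $\Sigma_{T}^{n}1_{A}$: the one as the iterated suspension isomorphism applied to $1_{A}$, and the one as the external product of $n$ copies of $\Sigma_{T}1_{A}$. This is a bookkeeping exercise with the canonical isomorphisms $S^{i,j}\wedge S^{k,l}\cong S^{i+k,j+l}$ and the associativity and unit properties of the $\times$-product recalled in Section \ref{S:prelim}; one must check that the relevant coherence isomorphisms of $T^{\wedge n}$ introduce no sign, which holds because all the relevant degrees are even (cf. the remark following Theorem \ref{BO**asringcohomology} that $\sigma^{*}_{A,B}(\alpha\times\beta) = \beta\times\alpha$ when one of the bidegrees is of the form $(2m,2n)$). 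Once this identification is in hand, the rest is a formal consequence of axioms (2), (4) and (5), and I would leave the detailed diagram chase to the reader.
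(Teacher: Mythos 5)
Your argument is correct and is essentially the paper's own (implicit) proof: the paper offers no written proof beyond the remark that the lemma follows from conditions (4) and (2), together with the normalization (5) and the identification of $\Sigma_{T}^{n}$ with ${-}\times\Sigma_{T}^{n}1_{A}$ made just before Definition \ref{D:SL.normalized}, and your reduction to the rank-one case followed by pullback from the point fleshes out exactly that. One remark: in the later application of this lemma inside the proof of Theorem \ref{T:thom.phi}, condition (3) is precisely what is being verified for the classes $\thom^{\varphi}(E,\lambda)$, so there one must justify the isomorphism by the self-equivalence ${-}\wedge T^{\wedge n}$ of $SH(S)$ (your second alternative) rather than by citing axiom (3) --- as you yourself note.
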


%Recall that for a (symmetric) $T$-spectrum $M$ with spaces $(M_{0},M_{1},M_{2}, \dots)$ there are
%canonical morphisms
%%
%\begin{equation}
%\label{E:canonical.inclusion}
%u_{n} \colon \Sigma_{T}^{\infty}M_{n}(-n) \to M
%\end{equation}
%%
%of (symmetric) $T$-spectra which are the counits of the adjunctions
%between the forgetful functors $M \mapsto M_{n}$ and their left adjoints.  When
%$(M,\mu,e)$ is a commutative monoid in the category of symmetric $T$-spectra, the
%following diagram commutes.
%%
%\begin{equation}
%\label{E:Mm.wedge.Mn}
%\vcenter{
%\xymatrix @M=5pt @C=40pt {
%\Sigma_{T}^{\infty}M_{m}(-m) \wedge \Sigma_{T}^{\infty}M_{n}(-n) \ar[r]^-{\mu_{mn}} \ar[d]_-{u_{m} \wedge u_{n}}
%& \Sigma_{T}^{\infty}M_{m+n}(-m-n) \ar[d]^-{u_{m+n}}
%\\
%M \wedge M \ar[r]^-{\mu}
%&
%M
%}}
%\end{equation}

Now suppose $\varphi \colon \MSL \to A$ is a morphism
%of commutative monoids
in $SH(S)$.
We associate to $\varphi$ and a special linear bundle $(E,\lambda)$ of rank $n$ over an
$X$ in $\Sm/S$ a class $\thom^{\varphi}(E,\lambda)$ defined as follows.  By assumption
the scheme $X$ admits an ample family of line bundles.  So there exists an
affine bundle $f \colon Y \to X$ with $Y$ an affine scheme.  Then for some $p$ there exist global sections
$s_{1},\dots,s_{np}$ of $f^{*}E^{\vee}$ generating $f^{*}E^{\vee}$.  The data
$(f^{*}E, s_{1}, \dots, s_{np})$ determine a morphism $\psi \colon Y \to Gr(n,np)$, and the data
$(\psi,f^{*}\lambda)$ determine a morphism $\widetilde \psi \colon Y \to \GrSL(n,np)$.
We have $\widetilde\psi^{*} \TSL{n,np} \cong f^{*}E$.  We deduce maps
\begin{equation}
\label{E:Th.E.to.MSLn}
\Th E \xla[\sim\text{mot}]{\overline f} \Th f^{*}E
\,\cong\, \Th \widetilde\psi^{*} \TSL{n,np}
\xra{\overline{\psi}}
\Th \TSL{n,np}
\end{equation}
of pointed motivic spaces, which can be composed with the maps
\begin{equation}
\label{E:MSLn.to.A}
\Th \TSL{n,np} \xra{\text{inclusion}}
 \MSL_{n} \xra{u_{n}} \MSL{} \wedge T^{\wedge n} \xra{1 \wedge \varphi} A \wedge T^{\wedge n}.
\end{equation}
in $SH(S)$.
The composition of \eqref{E:Th.E.to.MSLn} and \eqref{E:MSLn.to.A} gives a class
\[
\thom^{\varphi}(E,\lambda) \in Hom_{SH(S)}(\Th E, A \wedge T^{\wedge n}) = A^{2n,n}(E,E-X).
\]

\begin{lem}
\label{L:well.defined.SL}
The classes $\thom^{\varphi}(E,\lambda)$ depend only on the special linear bundle $(E,\lambda)$
and the morphism  $\varphi \colon \MSL \to A$ in $SH(S)$.
\end{lem}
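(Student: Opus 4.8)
The plan is to show that the class $\thom^{\varphi}(E,\lambda)$ does not depend on the choices made in its construction, namely (i) the affine bundle $f\colon Y\to X$ with $Y$ affine, (ii) the integer $p$ and the generating global sections $s_{1},\dots,s_{np}$ of $f^{*}E^{\vee}$. First I would observe that the pullback map $\overline f^{*}\colon A^{2n,n}(E,E-X)\to A^{2n,n}(f^{*}E,f^{*}E-Y)$ is an isomorphism by $\Aff^{1}$-homotopy invariance, since $f$ is an affine bundle; so it suffices to prove that the class obtained on $\Th f^{*}E$ by the right-hand portion of \eqref{E:Th.E.to.MSLn} composed with \eqref{E:MSLn.to.A} depends only on $(f^{*}E,f^{*}\lambda)$ and $\varphi$. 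This reduces the problem to the case $X=Y$ affine.

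Next I would handle the independence of the sections $s_{1},\dots,s_{np}$. The standard argument (as in the construction of Thom classes for oriented theories in \cite{Panin:2003rz}, or for $\MGL$ in \cite{Panin:2008db}) is that any two finite generating systems can be compared by passing to their concatenation: adding extra generators $s_{np+1},\dots,s_{np+nq}$ changes the classifying map $\psi\colon Y\to Gr(n,np)$ by composition with the standard closed embedding $Gr(n,np)\hookrightarrow Gr(n,np+nq)$, and the corresponding $\widetilde\psi$ by the compatible embedding $\GrSL(n,np)\hookrightarrow\GrSL(n,np+nq)$, which is covered by an isomorphism of tautological special linear bundles; hence the induced maps of Thom spaces $\Th\TSL{n,np}\to\Th\TSL{n,np+nq}$ commute with the inclusions into $\MSL_{n}$, so the composite with $u_{n}$ and $1\wedge\varphi$ is unchanged. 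For two arbitrary generating systems $s$ and $s'$, I compare each to the concatenation $(s,s')$, reordering the generators if necessary; a reordering is realized by a permutation matrix, which acts trivially on $\MSL_{n}$ up to $\Aff^{1}$-homotopy (the relevant permutation matrices lie in $SL_{n}$ after the usual sign adjustment, or one uses that $\MSL_{n}\to\MSL$ already coequalizes the $\sym_{n}$-action in the appropriate sense), so the two classes in $A^{2n,n}(f^{*}E,f^{*}E-Y)$ agree. One also checks that scaling a generator, or more generally applying an element of $GL_{np}$ to the generating system, changes $\psi$ within its $\Aff^{1}$-homotopy class since $GL_{np}$ is $\Aff^{1}$-connected; this is the only place where the linear-algebra bookkeeping is not entirely formal.

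Finally I would eliminate the dependence on the affine bundle $f$. Given two affine bundles $f\colon Y\to X$ and $f'\colon Y'\to X$ with $Y,Y'$ affine, form the fiber product $Y\times_{X}Y'\to X$, which is again an affine bundle over $X$, and whose total space is affine (a torsor under a vector bundle over an affine scheme). The projections $Y\times_{X}Y'\to Y$ and $Y\times_{X}Y'\to Y'$ are affine bundles, and by the section-independence established above, the class computed on $Y\times_{X}Y'$ using a generating system pulled back from $Y$ (resp.\ from $Y'$) agrees with the pullback of the class computed on $Y$ (resp.\ on $Y'$); since all three relevant pullback maps on $A^{2n,n}(-,-)$ are isomorphisms by $\Aff^{1}$-invariance and the diagram of Thom spaces commutes, the classes on $Y$ and $Y'$ have the same image in $A^{2n,n}(E,E-X)$, namely $\thom^{\varphi}(E,\lambda)$. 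I expect the main obstacle to be the verification that different generating systems of $f^{*}E^{\vee}$ produce $\Aff^{1}$-homotopic classifying maps into $\GrSL(n,np)$ after stabilization in $p$ — i.e., keeping track of the line-bundle twist $\lambda$ throughout the comparison and confirming that the maps of Thom spaces genuinely commute up to the relevant equivalences — but this is essentially the special linear refinement of the well-known argument for $\MGL$ and should go through without difficulty.
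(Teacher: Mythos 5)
Your overall architecture matches the paper's: fix the affine bundle $f$ and compare generating families of sections, then compare two affine bundles via their fiber product (that second step is verbatim the paper's argument and is fine, as is the initial reduction using that $\overline f^{*}$ is an isomorphism). The gap is in the step you yourself flag as delicate: the comparison of two generating families $(s_{1},\dots,s_{np})$ and $(t_{1},\dots,t_{nq})$. Your route through the concatenation plus a reordering requires knowing that a permutation of the $np+nq$ ambient coordinates acts on $\GrSL(n,np+nq)$ trivially up to $\Aff^{1}$-homotopy, and the justifications you offer do not work as stated. The permutation in question lies in $GL_{np+nq}$ acting on the ambient bundle $\Gamma_{np+nq}$, not in the subgroup $\sym_{n}\subset GL_{n}$ acting on the rank-$n$ tautological bundle, so neither the remark that ``the permutation matrices lie in $SL_{n}$'' nor the one about $\MSL_{n}\to\MSL$ coequalizing the $\sym_{n}$-action is relevant here. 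Moreover the claim that $GL_{np}$ is $\Aff^{1}$-connected is false: its $\Aff^{1}$-connected components are detected by the determinant, and only products of elementary matrices are $\Aff^{1}$-homotopic to the identity. Since an odd permutation does not lie in $SL_{np+nq}$, and since on $\GrSL$ one must also track the induced action on the determinant line (exactly the ``line-bundle twist'' you mention), this step is a genuine hole rather than routine bookkeeping.

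The paper sidesteps all of this. With $f$ fixed and $p\geq q$, it passes through the chain of families $(s_{1},\dots,s_{np})$, $(s_{1},\dots,s_{np},t_{1},\dots,t_{nq})$, $(t_{1},\dots,t_{nq},0,\dots,0,t_{1},\dots,t_{nq})$ and $(t_{1},\dots,t_{nq})$, where the passage from a family to its extension by zeros is literally composition with the stabilization embedding of $\GrSL(n,{-})$'s, and consecutive families of equal length are linked by linear $\Aff^{1}$-homotopies that stay within generating families because at every stage one block (the block equal to $s$ or one of the two blocks equal to $t$) already generates. No reordering, no group action, and no determinant bookkeeping occur. If you replace your reordering step by this chain, the rest of your proof goes through as written.
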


\begin{proof}
First suppose $f$ fixed.  Let $(s_{1},\dots,s_{np})$ and $(t_{1},\dots,t_{nq})$ be two families
of sections generating $f^{*}E^{\vee}$ with $p \geq q$.
There are $\Aff^{1}$-homotopies between the morphisms $\Th f^{*}E \to \MSL_{n}$ in $\M_{\bullet}(S)$
defined by the family $(s_{1},\dots, s_{np})$, the family $(s_{1},\dots,s_{np},t_{1},\dots,t_{nq})$,
the family $(t_{1},\dots,t_{nq},0,\dots,0,t_{1},\dots,t_{nq})$, and the family
$(t_{1},\dots,t_{nq})$.  So we get the same morphism $\Th f^{*}E \to \MSL_{n}$ in $H_{\bullet}(S)$
and the same morphism $\Th E \to A \wedge T^{\wedge n}$ in $SH(S)$.

Now suppose given a second affine bundle $g \colon Z \to X$ with $Z$ affine
and sections $(u_{1},\dots,u_{nr})$ generating $g^{*}E^{\vee}$.  Let $g' \colon Y \times_{X} Z \to Y$
and $f' \colon Y \times_{X} Z \to Z$ be the projections.  The morphisms $\Th E \to \MSL_{n}$
in $H_{\bullet}(S)$ defined by $f$ and $(s_{1},\dots,s_{np})$, by $g'f$ and $(g'^{*}s_{1}, \dots,
g'^{*}s_{np})$, by $f'g$ and $(f'^{*}u_{1},\dots,f'^{*}u_{nr})$ and by $g$ and $(u_{1},\dots,u_{nr})$
are then the same.  So we again get the same morphism $\Th E \to A\wedge T^{\wedge n}$ in $SH(S)$.
\end{proof}

\begin{thm}
\label{T:thom.phi}
For a homomorphism $\varphi \colon \MSL \to A$ of commutative monoids in $SH(S)$, the classes
$\thom^{\varphi}(E,\lambda)$ define a normalized special linear orientation on the bigraded ring
cohomology theory $A^{*,*}$ on
$\SmOp(S)$.
\end{thm}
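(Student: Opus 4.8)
The plan is to verify the four axioms of Definition \ref{D:SL.orientation} together with the normalization condition (5) of Definition \ref{D:SL.normalized}, using the explicit construction of $\thom^{\varphi}(E,\lambda)$ via \eqref{E:Th.E.to.MSLn}--\eqref{E:MSLn.to.A}. Lemma \ref{L:well.defined.SL} already frees us to choose any convenient affine bundle $f\colon Y\to X$ and any convenient generating family of sections when computing a given Thom class, which is what makes the individual axioms tractable.

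First I would dispose of the functoriality axioms (1) and (2). For (2), given $u\colon Y'\to X$, pull back the affine bundle $f\colon Y\to X$ and the generating sections to $Y'$: the classifying morphism $\widetilde\psi$ pulls back correspondingly, so the diagram of Thom spaces \eqref{E:Th.E.to.MSLn} for $u^{*}(E,\lambda)$ maps to the one for $(E,\lambda)$, and postcomposing with \eqref{E:MSLn.to.A} gives $u^{*}\thom^{\varphi}(E,\lambda)=\thom^{\varphi}(u^{*}(E,\lambda))$. Axiom (1), invariance under isomorphisms of special linear bundles, is similar: an isomorphism $(E,\lambda)\cong(E_{1},\lambda_{1})$ transports a generating family for $f^{*}E_{1}^{\vee}$ to one for $f^{*}E^{\vee}$ inducing the same classifying map to $\GrSL(n,np)$, hence the same composite. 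For the normalization (5), take $X=\pt$, $E=\Aff^{1}$, $f=\id$, and the single section $s_{1}=1$ generating $\OO^{\vee}$; this gives the classifying map $\pt\to \GrSL(1,1)$, whose Thom space is $T$ included into $\MSL_{1}$ as $e_{1}^{SL}$, and chasing \eqref{E:MSLn.to.A} together with the monoid unit property of $\varphi$ (which sends $e^{SL}$ to $e$) yields $\thom^{\varphi}(\Aff^{1},1)=\Sigma_{T}1_{A}$. By Lemma \ref{L:triv.Thom} one also records that trivial bundles get the expected Thom classes, which in particular gives the last sentence of axiom (4) about the zero bundle.

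The substantive steps are axioms (3) and the multiplicativity part of (4). For (4), choose a common affine bundle $f\colon Y\to X$ (using the ample family hypothesis on $X$, hence on $Y$) and generating families $(s_{i})$ for $f^{*}E_{1}^{\vee}$ and $(t_{j})$ for $f^{*}E_{2}^{\vee}$; their concatenation generates $f^{*}(E_{1}\oplus E_{2})^{\vee}$, and the pair of classifying maps to $\GrSL(m,mp)\times\GrSL(n,np)$ composed with $(\oplus,\otimes)$ is by construction the classifying map for $(E_{1}\oplus E_{2},\lambda_{1}\otimes\lambda_{2})$. So the claimed identity $\thom^{\varphi}(E_{1}\oplus E_{2},\lambda_{1}\otimes\lambda_{2})=q_{1}^{*}\thom^{\varphi}(E_{1},\lambda_{1})\cup q_{2}^{*}\thom^{\varphi}(E_{2},\lambda_{2})$ reduces to the commutativity of \eqref{E:MSLm.wedge.MSLn} from Theorem \ref{T:MSL.structure}, i.e.\ the compatibility of the maps $u_{n}$, the multiplication maps $\mu_{mn}^{SL}$, and the fact that $\varphi$ is a monoid homomorphism (so $\varphi\circ\mu^{SL}=\mu_{A}\circ(\varphi\wedge\varphi)$); this last point is exactly where the hypothesis ``homomorphism of \emph{monoids}'' rather than just of spectra is used. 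The cup product on the target side matches because the external product $\times$ in $A^{*,*}$ is defined through $\mu_{A}$.

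The main obstacle is axiom (3), the Thom isomorphism ${-}\cup\thom^{\varphi}(E,\lambda)\colon A^{*,*}(X)\to A^{*+2n,*+n}(E,E-X)$. The standard strategy is a Mayer--Vietoris/Zariski-descent argument: by the hypothesis that special linear bundles are Zariski-locally trivial (noted in the introduction as essential, and contrasted there with the $\mathbf{MO}$, $\mathbf{MSO}$ situation), cover $X$ by opens over which $(E,\lambda)$ becomes the trivial special linear bundle, where axiom (3) holds by Lemma \ref{L:triv.Thom}; then patch using the localization long exact sequences, functoriality (axiom 2), and the compatibility of cup-with-$\thom$ with restriction, via the five lemma on the Mayer--Vietoris ladder and a colimit/Noetherian-induction argument on the number of opens in the cover. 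I would also need that $\thom^{\varphi}$ restricted to a trivializing open agrees with the trivial-bundle Thom class of Lemma \ref{L:triv.Thom}, which follows from axioms (1) and (5) already proved plus the multiplicativity normalization for line bundles. The delicate point is keeping the Thom isomorphism and the boundary maps $\partial$ compatible across the cover so the five lemma applies; this is routine in spirit but is the only place where a genuine geometric input (Zariski-local triviality) enters, and it is the step I expect to write out most carefully.
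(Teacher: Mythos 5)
Your proposal is correct and follows essentially the same route as the paper's proof: functoriality (1)--(2) from the construction, normalization (5) by identifying $\thom^{\varphi}(\Aff^{1},1)$ with the composite $T \to \MSL_{1} \to \MSL \wedge T \to A \wedge T$ via the unit property, multiplicativity (4) from Theorem \ref{T:MSL.structure} plus $\varphi$ being a monoid homomorphism, and the Thom isomorphism (3) first for trivial bundles via Lemma \ref{L:triv.Thom} and then in general by Mayer--Vietoris using Zariski-local triviality of special linear bundles. Your write-up merely supplies more detail at each of these steps than the paper does.
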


In particular the identity homomorphism induces a normalized special linear orientation on
$\MSL^{*,*}$.

\begin{proof}
The functoriality conditions (1) and (2) follow easily from the construction of the
classes $\thom^{\varphi}(E,\lambda)$.  The multiplicativity condition (4) holds because
of Theorem \ref{T:MSL.structure} and because $\varphi$ is a homomorphism of monoids.
The normalization condition (5) holds because $\thom^{\varphi}(\Aff^{1},1)$ and $\Sigma_{T}1_{A}$
are both equal to the composition
\[
T \xra{e_{1}^{SL}} \MSL_{1} \xra{u_{1}} \MSL{} \wedge T\xra{\varphi \wedge 1} A \wedge T.
\]
The isomorphism condition (3) holds for trivial special linear bundles because of
the normalization condition and Lemma \ref{L:triv.Thom}.
It then holds for general special linear bundles by a Mayer-Vietoris argument
because special linear bundles are locally trivial in the Zariski topology.
\end{proof}

Now suppose that $M$ and $A$ are (symmetric) $T$-spectra.  Then we have an
inverse system of abelian groups
\begin{equation}
\label{E:inverse.system}
\cdots \to A^{2n+2,n+1}(M_{n+1}) \xra{\alpha_{n+1}} A^{2n,n}(M_{n}) \to \cdots \to
%A^{p+2,q+1}(M_{1}) \to
A^{0,0}(M_{0})
\end{equation}
where the map $\alpha_{n}$ associates to the map $v \colon M_{n+1} \to A \wedge T^{\wedge n+1}$
in $SH(S)$ the composition
\[
M_{n} \xra{\sigma_{n}^{*}} \Omega_{T}M_{n+1} \xra{v} \Omega_{T}(A \wedge T^{\wedge n+1})
\cong A \wedge T^{\wedge n}
\]
in $SH(S)$.  There is a similar inverse system
\begin{equation}
\label{E:inverse.system.2}
\cdots \to A^{4n+4,2n+2}(M_{n+1} \wedge M_{n+1}) \to A^{4n,2n}(M_{n} \wedge M_{n}) \to \cdots \to
%A^{p+2,q+1}(M_{1}) \to
A^{0,0}(M_{0} \wedge M_{0}).
\end{equation}

For the following theorem see for example
\cite
[Corollaries 3.4 and 3.5]
%[Lemma 3.3 and Corollaries 3.4 and 3.5]
{Panin:2009aa}.

\begin{thm}
\label{T:inverse.lim}
For any \parens{symmetric} $T$- or $T^{\wedge 2}$-spectra $M$ and $A$  we have exact sequences
of abelian groups
\begin{gather*}
0 \to \sideset{}{^{1}}\varprojlim A^{2n-1,n}(M_{n}) \to Hom_{SH(S)}(M,A) \to
\varprojlim A^{2n,n}(M_{n}) \to 0,
\\
0 \to \sideset{}{^{1}}\varprojlim A^{4n-1,2n}(M_{n} \wedge M_{n}) \to Hom_{SH(S)}(M \wedge M,A) \to
\varprojlim A^{4n,2n}(M_{n} \wedge M_{n}) \to 0.
\end{gather*}
\end{thm}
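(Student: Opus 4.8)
The plan is to identify $\Hom_{SH(S)}(M,A)$ with maps from a homotopy colimit and then apply the standard Milnor sequence. A ($T$- or $T^{\wedge 2}$-)spectrum $M$ with spaces $M_n$ and bonding maps $\sigma_n$ is, up to stable equivalence, the homotopy colimit of its shifted suspension spectra: $M \simeq \hocolim_n \Sigma_T^\infty M_n(-n)$, where the maps in the colimit system are the desuspensions of $\Sigma_T^\infty\sigma_n \colon \Sigma_T^\infty(M_n \wedge T)(-n-1) \to \Sigma_T^\infty M_{n+1}(-n-1)$, i.e.\ $\Sigma_T^\infty M_n(-n) \to \Sigma_T^\infty M_{n+1}(-n-1)$. (In the symmetric setting one uses that the forgetful functor to $T$-spectra, or a cofibrant replacement, does not change the homotopy type, so the same presentation holds; this is exactly the content referenced in \cite[Corollaries 3.4 and 3.5]{Panin:2009aa}.)

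First I would record the Milnor exact sequence for a homotopy colimit along a sequence: for any spectrum $A$ and any direct system $(X_n)$ there is a short exact sequence
\[
0 \to \sideset{}{^{1}}\varprojlim_n \Hom_{SH(S)}(X_n, A \wedge S^{1,0}) \to \Hom_{SH(S)}(\hocolim_n X_n, A) \to \varprojlim_n \Hom_{SH(S)}(X_n, A) \to 0,
\]
which comes from the cofiber sequence $\bigvee_n X_n \to \bigvee_n X_n \to \hocolim_n X_n$ (the first map being $1 - \mathrm{shift}$) by applying $[-,A]_{SH(S)}$ and using that $[\bigvee_n X_n, A\wedge S^{p,q}] = \prod_n [X_n, A\wedge S^{p,q}]$. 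Here $S^{1,0} = S^1_s$ is the simplicial circle, so $\Hom_{SH(S)}(X, A\wedge S^{1,0})$ is the ``degree $-1$'' part. Then I would substitute $X_n = \Sigma_T^\infty M_n(-n)$ and use the adjunction/twist identities $\Hom_{SH(S)}(\Sigma_T^\infty M_n(-n), A\wedge S^{p,q}) = \Hom_{SH(S)}(\Sigma_T^\infty M_n, A \wedge S^{p,q}\wedge T^{\wedge n}) = A^{p+2n,\,q+n}(M_n)$. Taking $(p,q) = (0,0)$ gives the $\varprojlim$ term $\varprojlim A^{2n,n}(M_n)$, and taking $(p,q) = (1,0)$ gives the $\varprojlim^1$ term $\varprojlim^1 A^{2n+1,n}(M_n) = \varprojlim^1 A^{2n-1,n}(M_n)$ after reindexing the inverse system (the bonding maps in \eqref{E:inverse.system} are precisely the maps induced by the $\sigma_n$, matching the transition maps of the colimit system). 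This establishes the first exact sequence; the second follows identically with $M$ replaced by $M \wedge M$, using that $(M\wedge M)_n$ has the homotopy type computing $\Hom_{SH(S)}(M\wedge M, A\wedge S^{p,q})$ via the analogous presentation $M\wedge M \simeq \hocolim_n \Sigma_T^\infty(M_n\wedge M_n)(-2n)$.

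The step I expect to be the main obstacle — or at least the one requiring the most care — is the passage from the ``naive'' colimit presentation $M \simeq \hocolim_n \Sigma_T^\infty M_n(-n)$ to a form where $[\hocolim X_n, A] $ fits the Milnor sequence, because this requires knowing that the colimit system is one of cofibrations (or can be replaced by one) and that $A$ is fibrant, and in the symmetric-spectrum setting one must check that the forgetful comparison does not disturb the homotopy colimit. Since the paper explicitly cites \cite[Corollaries 3.4 and 3.5]{Panin:2009aa} for exactly this, I would simply invoke those results: Corollary 3.4 there gives the $T$-spectrum case and Corollary 3.5 the $T^{\wedge 2}$-spectrum case (or the symmetric versions), so the proof reduces to quoting them and unwinding the indexing to match \eqref{E:inverse.system} and \eqref{E:inverse.system.2}. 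Thus the write-up is short: state the hocolim presentation, cite the Milnor sequence, and identify the two $\varprojlim$/$\varprojlim^1$ terms by the twist isomorphism $A^{p,q}(\Sigma_T^\infty M_n(-n)) \cong A^{p+2n,q+n}(M_n)$.
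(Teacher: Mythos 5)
Your overall route is exactly the paper's: the paper offers no independent argument, citing \cite[Corollaries 3.4 and 3.5]{Panin:2009aa} and noting that the statement is the special case of the Milnor sequence for a sequential homotopy colimit (Theorem \ref{T:hocolim}, i.e.\ \cite[Lemma 3.3]{Panin:2009aa}) applied to the presentation of $M$ (resp.\ of $M\wedge M$, via the cofinal diagonal system) as $\hocolim_{n}\Sigma_{T}^{\infty}M_{n}(-n)$. Your write-up makes that reduction explicit, and the points you flag as delicate (cofibrancy of the colimit system, fibrancy of $A$, the passage through the forgetful functor in the symmetric case, the dependence on the levelwise data of $M$) are indeed exactly what the citation is for.

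There is, however, a bookkeeping error in your identification of the $\varprojlim^{1}$ term, and the way you patch it is not valid. In the Milnor sequence obtained from the cofiber sequence $\bigvee X_{n}\xra{1-\mathrm{shift}}\bigvee X_{n}\to\hocolim X_{n}$, the $\varprojlim^{1}$ term is $\varprojlim^{1}\Hom_{SH(S)}(X_{n}\wedge S^{1}_{s},A)$, i.e.\ the group in simplicial degree one \emph{lower} than the middle term; in the paper's convention $A^{p,q}(X)=\Hom_{SH(S)}(X,A\wedge S^{p,q})$ this is $A^{-1,0}(X_{n})$, whereas your $\Hom_{SH(S)}(X_{n},A\wedge S^{1,0})=A^{1,0}(X_{n})$ is the degree $+1$ group, not the ``degree $-1$ part''. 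With $X_{n}=\Sigma_{T}^{\infty}M_{n}(-n)$ the correct choice gives $A^{2n-1,n}(M_{n})$ directly, exactly as in Theorem \ref{T:hocolim} with $(p,q)=(0,0)$. Your version produces $\varprojlim^{1}A^{2n+1,n}(M_{n})$, and the asserted equality with $\varprojlim^{1}A^{2n-1,n}(M_{n})$ ``after reindexing the inverse system'' is false: at every level $n$ these are genuinely different groups (the first degree differs by $2$), and no shift of $n$ or passage to a cofinal subsystem identifies the two towers. Once the $\varprojlim^{1}$ term is taken in degree $-1$, the rest of your argument, including the $M\wedge M$ case and the matching of the transition maps with \eqref{E:inverse.system} and \eqref{E:inverse.system.2}, goes through as in the paper.
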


This theorem is actually a special case of the following result \cite[Lemma 3.3]{Panin:2009aa}.

\begin{thm}
\label{T:hocolim}
Let $E = \hocolim_{i \in \NN} E^{\angles{i}}$ be a
sequential homotopy colimit of $T$-spectra.
%\parens{or motivic spaces}.
Then for any $T$-spectrum $A$ and any $(p,q)$ we have an exact sequence
of abelian groups
\[
0 \to \sideset{}{^{1}}\varprojlim_{i \in\NN} A^{p-1,q}(E^{\angles{i}}) \to A^{p,q}(E) \to
\varprojlim\limits_{i\in\NN} A^{p,q}(E^{\angles{i}}) \to 0.
\]
\end{thm}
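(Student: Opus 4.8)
The plan is to use the standard Milnor sequence for a sequential homotopy colimit, adapted to the category of $T$-spectra and the bigraded cohomology theory $A^{*,*}$. First I would recall that $E = \hocolim_{i} E^{\angles{i}}$ fits into a cofiber sequence in $SH(S)$ of the familiar form
\[
\bigvee_{i \in \NN} E^{\angles{i}} \xra{1 - \text{shift}} \bigvee_{i \in \NN} E^{\angles{i}} \to E,
\]
where the first map is the difference of the identity and the structure maps $E^{\angles{i}} \to E^{\angles{i+1}}$ of the direct system. This is the telescope construction; it is available because $SH(S)$ is a triangulated category with arbitrary coproducts and $E$ is by definition the colimit along these structure maps.

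Next I would apply the cohomology functor $A^{p,q}({-}) = Hom_{SH(S)}({-}, A \wedge S^{p,q})$ to this cofiber sequence. Since $A^{p,q}$ sends cofiber sequences to long exact sequences (this is part of the definition of the cohomology theory represented by a $T$-spectrum, via the triangulated structure of $SH(S)$) and sends coproducts to products, we obtain a long exact sequence relating $A^{p,q}(E)$, $A^{p,q}\bigl(\bigvee_i E^{\angles{i}}\bigr) = \prod_i A^{p,q}(E^{\angles{i}})$ in two adjacent degrees, with connecting map induced by $1 - \text{shift}$. Breaking this long exact sequence into short exact pieces in the usual way identifies the kernel of $1 - \text{shift}$ on $\prod_i A^{p,q}(E^{\angles{i}})$ with $\varprojlim_i A^{p,q}(E^{\angles{i}})$ and its cokernel on $\prod_i A^{p-1,q}(E^{\angles{i}})$ with ${\varprojlim}^{1}_i A^{p-1,q}(E^{\angles{i}})$, which is precisely the definition of $\varprojlim$ and ${\varprojlim}^{1}$ for a tower of abelian groups. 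This yields the asserted exact sequence
\[
0 \to \sideset{}{^{1}}\varprojlim_{i} A^{p-1,q}(E^{\angles{i}}) \to A^{p,q}(E) \to \varprojlim_{i} A^{p,q}(E^{\angles{i}}) \to 0.
\]

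The main obstacle, and the only point requiring genuine care rather than formal manipulation, is justifying the telescope cofiber sequence itself: one must check that the homotopy colimit in the model category $Sp(\M_{\bullet}(S),T)$ used to define $E$ agrees, in $SH(S)$, with the mapping telescope built from the countable coproduct. This is where the model-categorical input enters — one uses that filtered (in particular sequential) homotopy colimits are computed by the mapping telescope and that the countable coproduct in $SH(S)$ is the expected one, so that $A^{p,q}$ converts it to a product. Since the excerpt cites \cite[Lemma 3.3]{Panin:2009aa} for exactly this statement, I would simply invoke that reference for the telescope identification and devote the remainder of the argument to the purely algebraic extraction of the $\varprojlim$--${\varprojlim}^{1}$ sequence from the resulting long exact sequence, which is routine.
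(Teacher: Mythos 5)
Your proposal is correct: the telescope cofiber sequence $\bigvee_i E^{\angles{i}} \xra{1-\mathrm{shift}} \bigvee_i E^{\angles{i}} \to E$, combined with the facts that $A^{p,q}({-})=Hom_{SH(S)}({-},A\wedge S^{p,q})$ turns coproducts into products and triangles into long exact sequences, yields exactly the stated Milnor sequence, with the degree bookkeeping ($Hom(X[1],A\wedge S^{p,q})=A^{p-1,q}(X)$) handled correctly. The paper offers no proof of its own, deferring entirely to \cite[Lemma 3.3]{Panin:2009aa}, and your argument is the standard one underlying that reference, so the approaches coincide.
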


We wish to apply Theorem \ref{T:inverse.lim} when $A$ is a commutative monoid in $SH(S)$
with a normalized special linear orientation on $A^{*,*}$ and when $M$ is a commutative monoid
isomorphic to $\MSL$ in $SH(S)$.  (Note that the exact sequences depend on the levelwise weak
equivalence class of $M$, which is a finer invariant than its isomorphism class in $SH(S)$.)
However, the special linear orientation
provides Thom classes for special linear bundles over finite-dimensional smooth schemes
and not over the infinite-dimensional ind-schemes $BSL_{n}$.
So the orientation does not provide us with classes in
the $A^{2n,n}(\MSL_{n})$.  But we can solve this problem as follows.

For each $n$ and $p$ write
\[
MSL_{n}^{\angles{p}} = \Th \TSL{n,np}.
\]
For $n=0$ this is $MSL_{0}^{\angles{p}} = \pt_{+}$.
The actions of $\sym_{n}$ on $\MSL_{n}$ and the structural maps $e_{n}^{SL}$ and $\mu_{mn}^{SL}$
constructed in the previous section are colimits of actions and structural maps
\begin{gather*}
\sym_{n} \times MSL_{n}^{\angles{p}} \to MSL_{n}^{\angles{p}},
\\
e_{n}^{\angles{p}} \colon T^{\wedge n} \to MSL_{n}^{\angles{p}},
\\
\mu_{mn}^{\angles{p}} \colon MSL_{m}^{\angles{p}} \wedge MSL_{n}^{\angles{p}}
\to MSL_{m+n}^{\angles{p}}.
\end{gather*}
We thus get a direct system of commutative $T$-monoids
\[
MSL^{\angles{1}} \to MSL^{\angles{2}} \to \cdots \to MSL^{\angles{p}} \to \cdots
\]
whose colimit is $\MSL$.  We can now define a ``diagonal'' commutative $T$-monoid
$\MSL^{\finite}$ with spaces
\[
\MSL^{\finite}_{n} = MSL_{n}^{\angles{n}}
\]
with the actions $\sym_{n} \times MSL_{n}^{\angles{n}} \to MSL_{n}^{\angles{n}}$ and unit maps
$e_{n}^{\angles{n}}$ given above and with multiplication maps the compositions
\[
\mu_{mn}^{\finite} \colon
MSL_{m}^{\angles{m}} \wedge MSL_{n}^{\angles{n}} \xra{\text{inclusion}}
MSL_{m}^{\angles{m+n}} \wedge MSL_{n}^{\angles{m+n}} \xra{\mu_{mn}^{\angles{m+n}}}
MSL_{m+n}^{\angles{m+n}}.
\]
A cofinality argument now gives the nontrivial part of the following result.

\begin{thm}
The inclusion $\MSL^{\finite} \hra \MSL$ defines a homomorphism of commutative monoids
in the category of symmetric $T$-spectra which is a motivic stable weak equivalence.
\end{thm}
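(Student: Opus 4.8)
The plan is to prove the two assertions of the theorem separately. That the inclusion $\MSL^{\finite}\hra\MSL$ respects the commutative-monoid structures is formal. By construction $\MSL_{n}=\colim_{p}MSL_{n}^{\angles{p}}$ along the closed embeddings $\Th\TSL{n,np}\hra\Th\TSL{n,np+n}$, and the actions $\sym_{n}\times\MSL_{n}\to\MSL_{n}$ together with $e_{0}^{SL}$, $e_{1}^{SL}$ and the $\mu_{mn}^{SL}$ are the colimits over $p$ of the corresponding data for the commutative $T$-monoids $MSL^{\angles{p}}$. Hence the canonical maps $\iota_{n}\colon MSL_{n}^{\angles{n}}\hra\MSL_{n}$ are $\sym_{n}$-equivariant and compatible with $e_{0}$ and $e_{1}$. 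Compatibility with the multiplications is the one place where the precise shape of $\mu_{mn}^{\finite}$ matters: it factors as $MSL_{m}^{\angles{m}}\wedge MSL_{n}^{\angles{n}}\hra MSL_{m}^{\angles{m+n}}\wedge MSL_{n}^{\angles{m+n}}\xra{\mu_{mn}^{\angles{m+n}}}MSL_{m+n}^{\angles{m+n}}$, and since $\mu_{mn}^{\angles{p}}$ is the restriction of $\mu_{mn}^{SL}$ to finite level, composing with $\iota_{m+n}$ yields $\mu_{mn}^{SL}\circ(\iota_{m}\wedge\iota_{n})$. Chasing this square, and the analogous one for the bonding maps $\sigma_{n}$ (built from $e_{1}$ and $\mu_{n1}$), shows $\iota$ is a morphism of symmetric $T$-spectra commuting with $\mu$ and $e$, i.e.\ a homomorphism of commutative monoids. (That $\MSL^{\finite}$ is itself a commutative $T$-monoid belongs to its construction and is checked by the same kind of diagram chase used for $\MGL$ and $\MSL$ above.)

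For the second assertion I would argue by cofinality. Since $\MSL=\colim_{p}MSL^{\angles{p}}$ is a sequential colimit along levelwise cofibrations it is a homotopy colimit, $\MSL\simeq\hocolim_{p}MSL^{\angles{p}}$. To present $\MSL^{\finite}$ as the homotopy colimit of a comparable tower, for each $N$ I would let $F^{\angles{N}}$ be the $T$-spectrum with $n$-th space $MSL_{n}^{\angles{\min(n,N)}}$ and the evident bonding maps: those of $\MSL^{\finite}$ in levels below $N$ and those of $MSL^{\angles{N}}$ in levels $\geq N$. Then $\colim_{N}F^{\angles{N}}=\MSL^{\finite}$, again along levelwise cofibrations, and the maps $F^{\angles{N}}\to MSL^{\angles{N}}$ — the inclusion $MSL_{n}^{\angles{n}}\hra MSL_{n}^{\angles{N}}$ in levels $n<N$ and the identity in levels $n\geq N$ — assemble to a morphism of towers whose colimit is exactly $\MSL^{\finite}\hra\MSL$. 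Each $F^{\angles{N}}\to MSL^{\angles{N}}$ is the identity in all levels $\geq N$, hence a motivic stable weak equivalence, so passing to homotopy colimits shows that $\MSL^{\finite}\hra\MSL$ is one as well. Heuristically this is just the fact that in computing the stable homotopy of $\MSL$ one faces a double colimit over $\NN\times\NN$ — stabilization index against Grassmannian index — in which the diagonal $\{(n,n)\}$ is cofinal.

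The real work will be the bookkeeping in the cofinality step: one must verify that the bonding and multiplication maps of $\MSL^{\finite}$, which are defined through the auxiliary inclusions $MSL_{n}^{\angles{n}}\hra MSL_{n}^{\angles{m+n}}$, are genuinely compatible with the maps of the two towers, so that the comparison really is a morphism of towers. The only nonformal input is the standard fact that a map of (symmetric) $T$-spectra restricting to a levelwise weak equivalence in all sufficiently large levels is a motivic stable weak equivalence — equivalently, that the forgetful functor to $T$-spectra detects stable weak equivalences — which holds in the flasque model structure by \cite{Hovey:2001aa,Jardine:2000aa}. Granting this, the remainder is a routine diagram chase.
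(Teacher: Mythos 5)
Your proposal is correct and follows essentially the route the paper intends: the paper offers no written proof beyond the remark that ``a cofinality argument now gives the nontrivial part,'' and your interpolating spectra $F^{\angles{N}}$ with $n$-th space $MSL_{n}^{\angles{\min(n,N)}}$ are exactly a way of making that diagonal-cofinality argument precise, with the monoid compatibility being formal as you say. The only point to keep straight is that the needed direction of the comparison with symmetric spectra is that a stable equivalence of underlying $T$-spectra gives a stable equivalence of symmetric $T$-spectra, which is the true direction and the one your argument uses.
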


Thus the inclusion becomes an isomorphism of commutative monoids in $SH(S)$.  So
Theorem \ref{T:inverse.lim} gives us an exact sequence
\begin{equation}
\label{E:exact.seq}
0 \to \varprojlim\nolimits^{1} A^{2n-1,n}(MSL_{n}^{\angles{n}}) \to
Hom_{SH(S)}(\MSL,A) \to \varprojlim A^{2n,n}(MSL_{n}^{\angles{n}}) \to 0
\end{equation}
and a similar exact sequence for $Hom_{SH(S)}(\MSL \wedge \MSL,A)$.

\begin{thm}
\label{T:univ.SL}
Suppose $(A,\mu_{A},e_{A})$ is a commutative monoid in $SH(S)$ with a normalized special linear
orientation on $A^{*,*}$ given by Thom classes $\thom(E,\lambda)$.  Then there exists a
morphism $\varphi \colon \MSL \to A$ in $SH(S)$ such that
$\thom^{\varphi}(E,\lambda) = \thom(E,\lambda)$
for all special linear bundles over all $X$ in $\Sm/S$.   This $\varphi$ is unique modulo the subgroup
\[
\sideset{}{^{1}}\varprojlim A^{2n-1,n}(MSL_{n}^{\angles{n}}) \subset Hom_{SH(S)}(\MSL,A).
\]
It satisfies $\varphi(e_{\MSL}) = e_{A}$.
The obstruction
\(
\varphi \circ \mu_{\MSL} - \mu_{A} \circ (\varphi \wedge \varphi)
\)
to $\varphi$ being a
homomorphism of monoids lies in the subgroup
\[
\sideset{}{^{1}}\varprojlim A^{4n-1,2n}(MSL_{n}^{\angles{n}} \wedge MSL_{n}^{\angles{n}})
\subset Hom_{SH(S)}(\MSL \wedge \MSL,A).
\]
\end{thm}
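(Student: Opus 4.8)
The plan is to build $\varphi$ by specifying, for each $n$, a class $\vartheta_n^{\finite} \in A^{2n,n}(MSL_n^{\angles{n}})$ and then invoking the exact sequence \eqref{E:exact.seq}. First I would produce the required classes: given the normalized special linear orientation, the tautological special linear bundle $(\TSL{n,n^2},\lambda_{n,n^2})$ over $\GrSL(n,n^2)$ has a Thom class $\thom(\TSL{n,n^2},\lambda_{n,n^2}) \in A^{2n,n}(\Th\TSL{n,n^2}) = A^{2n,n}(MSL_n^{\angles{n}})$; call it $\vartheta_n^{\finite}$. The compatibility $\alpha_n(\vartheta_{n+1}^{\finite}) = \vartheta_n^{\finite}$ needed to get an element of $\varprojlim A^{2n,n}(MSL_n^{\angles{n}})$ follows from the functoriality axiom (2) applied to the classifying map $\GrSL(n,n^2) \to \GrSL(n,(n+1)^2)$ together with the bonding-map description: the structural map $\sigma_n$ of $\MSL^{\finite}$ is, by construction, induced by $e_1^{SL}$ and $\mu_{n,1}^{SL}$, and under the classifying maps the bundle $\TSL{n+1,(n+1)^2}$ restricted along the relevant map pulls back to $\TSL{n,n^2} \oplus \OO$ with matching $\lambda$; axiom (4) then gives $\sigma_n^*(\vartheta_{n+1}^{\finite}) = \vartheta_n^{\finite} \times \Sigma_T 1_A$, which is exactly what $\alpha_n$ computes. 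So $(\vartheta_n^{\finite})_n$ lives in the inverse limit, and \eqref{E:exact.seq} yields a $\varphi \colon \MSL \to A$ with $u_n^* (1\wedge\varphi) = \vartheta_n^{\finite}$ up to the image of $u_n$; uniqueness modulo $\varprojlim^1 A^{2n-1,n}(MSL_n^{\angles{n}})$ is immediate from the same exact sequence.

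Next I would check that this $\varphi$ reproduces the given Thom classes, i.e.\ $\thom^{\varphi}(E,\lambda) = \thom(E,\lambda)$ for every special linear bundle $(E,\lambda)$ over every $X \in \Sm/S$. Unwinding the definition of $\thom^{\varphi}$ in \eqref{E:Th.E.to.MSLn}--\eqref{E:MSLn.to.A}: one chooses an affine bundle $f\colon Y\to X$, generating sections of $f^*E^\vee$, and a resulting classifying map $\widetilde\psi\colon Y \to \GrSL(n,np)$ with $\widetilde\psi^*\TSL{n,np} \cong f^*E$; then $\thom^\varphi(E,\lambda)$ is the pullback along $\overline\psi$ and $\overline f$ of the image of the tautological class in $A^{2n,n}(MSL_n^{\angles{p}})$ under $u_n$ and $\varphi$ — which by construction of $\varphi$ is $\widetilde\psi^* \thom(\TSL{n,np},\lambda_{n,np})$ transported across $\widetilde\psi^*\TSL{n,np}\cong f^*E$ and the motivic equivalence $\overline f$. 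By the functoriality axiom (2) this equals $\thom(f^*E, f^*\lambda)$, and since $f$ is an affine bundle the map $\overline f$ is a motivic equivalence and $f^*$ identifies $A^{2n,n}(E,E-X)$ with $A^{2n,n}(f^*E, f^*E - Y)$, so we recover $\thom(E,\lambda)$. (Here I need the tautological classes at the finite level $p = n^2$ to be compatible, as $p$ grows, with the ones at level $n$ used to define $\varphi$; this is again axioms (1), (2), (4) applied to the inclusions $\GrSL(n,np)\hookrightarrow\GrSL(n,np+n)$, exactly the stabilization already used in Lemma \ref{L:well.defined.SL}.) The identity $\varphi(e_{\MSL}) = e_A$ follows because, restricting everything along the unit maps, $\vartheta_1^{\finite}$ evaluated on the trivial bundle is $\Sigma_T 1_A$ by the normalization axiom (5), which is precisely the condition pinning down the unit.

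Finally I would locate the obstruction to multiplicativity. The two morphisms $\varphi\circ\mu_{\MSL}$ and $\mu_A\circ(\varphi\wedge\varphi)$ both lie in $Hom_{SH(S)}(\MSL\wedge\MSL, A)$, which sits in the second exact sequence of Theorem \ref{T:inverse.lim} with middle term mapping onto $\varprojlim A^{4n,2n}(MSL_n^{\angles{n}} \wedge MSL_n^{\angles{n}})$. To see the difference dies in $\varprojlim A^{4n,2n}$, I would compare the images of both morphisms there: for $\varphi\circ\mu_{\MSL}$ the relevant class is $\mu_{nn}^{\finite *}$ applied to $\vartheta_{2n}^{\finite}$ pulled back along $u_{2n}$, while for $\mu_A\circ(\varphi\wedge\varphi)$ it is $\vartheta_n^{\finite} \times \vartheta_n^{\finite}$; and these agree by the multiplicativity axiom (4) for the orientation together with the identification, via the classifying maps, of $\mu_{nn}$-pullback of the tautological bundle $\TSL{2n,\cdot}$ with the external direct sum of the two tautological bundles (tensoring the $\lambda$'s), which is exactly how $\mu_{mn}^{SL}$ was defined. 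Hence the difference maps to $0$ in $\varprojlim A^{4n,2n}(MSL_n^{\angles{n}}\wedge MSL_n^{\angles{n}})$, so it lies in the $\varprojlim^1$ subgroup, as claimed. The main obstacle I anticipate is the bookkeeping in the second paragraph: carefully tracking the motivic equivalences $\overline f$, the bundle isomorphisms $\widetilde\psi^*\TSL{n,np}\cong f^*E$, and the stabilization in $p$, so that the composite genuinely equals $\thom(E,\lambda)$ on the nose rather than up to some uncontrolled automorphism — this is where axioms (1) and (2) must be applied with care, and it is essentially the content already distilled into Lemma \ref{L:well.defined.SL}, so I would lean on that lemma to keep the argument short.
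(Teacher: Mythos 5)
Your proposal follows essentially the same route as the paper's proof: take the Thom classes of the tautological bundles $(\TSL{n,np},\lambda_{n,np})$, check compatibility under the stabilization in $p$ and the bonding maps (functoriality, multiplicativity, normalization), lift the resulting inverse-limit element through the $\varprojlim^{1}$ exact sequence \eqref{E:exact.seq}, and detect the multiplicativity obstruction by comparing $\thom_{n,n^{2}}\times\thom_{n,n^{2}}$ with $\mu_{nn}^{\finite *}\thom_{2n,4n^{2}}$ in $\varprojlim A^{4n,2n}(MSL_{n}^{\angles{n}}\wedge MSL_{n}^{\angles{n}})$. The only cosmetic difference is that you pin down $\varphi(e_{\MSL})=e_{A}$ via the normalization at level $1$ rather than via $\thom(\boldsymbol{0})=1_{A}$ at level $0$, which is an equally valid minor variant.
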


\begin{proof}
%The first $\varprojlim^{1}$ group is the kernel of a surjection
%%\[
%%\xymatrix @M=5pt @C=25pt {
%%Hom_{SH(S)}(\MSL,A) \ar@{->>}[r]
%%&
%%\varprojlim\limits_{n} A^{2n,n}(MSL_{n}^{\angles{n}})
%%= \varprojlim\limits_{n,p} A^{2n,n}(MSL_{n}^{\angles{p}}).
%%}
%%\]
%\[
%Hom_{SH(S)}(\MSL,A) \lra
%\Bigl[ \varprojlim\limits_{n} A^{2n,n}(MSL_{n}^{\angles{n}})
%= \varprojlim\limits_{n,p} A^{2n,n}(MSL_{n}^{\angles{p}}) \Bigr]
%\lra 0.
%\]
For every $n$ and $p$  the tautological special linear bundle
$(\TSL{n,np}, \lambda_{n,np})$
over the scheme $\GrSL(n,np)$
has a Thom class, which we will abbreviate to
%$\thom(\TSL{n,np}, \lambda_{n,np})
$\thom_{n,np}
\in A^{2n,n}(MSL_{n}^{\angles{p}})$.
%= Hom_{SH(S)}(MSL_{n}^{\angles{p}}, A \wedge T^{\wedge n}).
Pullback along the inclusion $MSL_{n}^{\angles{p-1}} \hra MSL_{n}^{\angles{p}}$ sends
\begin{equation}
\label{E:th(n,np).1}
\thom_{n,np} \mapsto \thom_{n,n(p-1)}.
%\thom(\TSL{n,np}, \lambda_{n,np})
%\mapsto
%\thom(\TSL{n,n(p-1)}, \lambda_{n,n(p-1)})
%\thom(n,np) \in A^{2n,n} \bigl( MSL_{n}^{\angles{p}} \bigr) \Bigr] \longmapsto
%\Bigl[ \thom(n,n(p-1)) \in A^{2n,n} \bigl( MSL_{n}^{\angles{p-1}} \bigr) \Bigr]
\end{equation}
Pullback along the bonding map $MSL_{n-1}^{\angles{p}} \wedge T \to MSL_{n}^{\angles{p}}$
induced by $e_{1}^{\angles{p}}$ and $\mu_{n-1,1}^{\angles{p}}$ sends
\begin{equation}
%\begin{multline}
\label{E:th(n,np).2}
%\thom(\TSL{n,np}, \lambda_{n,np})
%\mapsto
%\thom(\TSL{n{-}1,(n{-}1)p}, \lambda_{n-1,(n-1)p}) \times \thom(\Aff^{1},1)
%\\ =
%\Sigma_{T} \thom(\TSL{n{-}1,(n{-}1)p}, \lambda_{n-1,(n-1)p}).
\thom_{n,np} \mapsto
\thom_{n-1,(n-1)p} \times \thom(\Aff^{1},1) =
\Sigma_{T}\thom_{n-1,(n-1)p}.
\end{equation}
%%\end{aligned}
%\end{multline}
%\begin{multline}
%\label{E:th(n,np).2}
%%\begin{aligned}
%\Bigl[ \thom(n,np) \in A^{2n,n} \bigl( MSL_{n}^{\angles{p}} \bigr) \Bigr]
%\longmapsto
%\Bigl[ \thom(n-1,np-p) \times \Sigma_{T}1 \in A^{2n,n} \bigl( MSL_{n-1}^{\angles{p}} \wedge T \bigr) \Bigr]
%\\ \longmapsto
%\Bigl[ \thom(n-1,np-p) \in A^{2n-2,n-1} \bigl( MSL_{n-1}^{\angles{p}} \bigr) \Bigr]
%%\end{aligned}
%\end{multline}
%
So as $n$ and $p$ vary, we get an element
\[
\bar\varphi = (
\thom_{n,np}
%\thom(\TSL{n,np}, \lambda_{n,np})
)
_{n,p}
%_{(n,p) \in \NN^{2}}
\in \varprojlim\limits_{n,p} A^{2n,n}(MSL_{n}^{\angles{p}})
= \varprojlim\limits_{n} A^{2n,n}(MSL_{n}^{\angles{n}}).
\]
Let $\varphi \in Hom_{SH(S)}(\MSL,A)$ be an element mapping onto $\bar\varphi$ under the
surjection in the exact sequence \eqref{E:exact.seq}.

The image of $\varphi$ under the composition
\[
Hom_{SH(S)}(\MSL,A) \to \varprojlim A^{2n,n}(MSL_{n}^{\angles{n}}) \to
A^{2n,n}(MSL_{n}^{\angles{n}})
\]
is the composition
\[
\Th \TSL{n,n^{2}} = MSL_{n}^{\angles{n}} \xra{u_{n}} \MSL^{\finite}{}  \wedge T^{\wedge n}
\xra{\sim}
\MSL{}  \wedge T^{\wedge n} \xra{\varphi \wedge 1} A \wedge T^{\wedge n}
\]
which is the $\thom^{\varphi}(\TSL{n,n^{2}},\lambda_{n,n^{2}})$ defined
by \eqref{E:Th.E.to.MSLn}--\eqref{E:MSLn.to.A}.
%
%The surjective map in the exact sequence sends $\varphi \in Hom_{SH(S)}(\MSL,A)$ to
%\[
%\bigl( \thom^{\varphi}(\TSL{n,n^{2}},\lambda_{n,n^{2}}) \bigr)_{n \in \NN} \in
%\varprojlim A^{2n,n}(MSL_{n}^{\angles{n}})
%\]
%
%
Thus we have
$\thom^{\varphi}(E,\lambda) = \thom(E,\lambda)$ for
$(E,\lambda) = (\TSL{n,n^{2}}, \lambda_{n,n^{2}})$.
The Thom classes for the $(\TSL{n,n^{2}}, \lambda_{n,n^{2}})$ determine the
Thom classes for all $(\TSL{n,np}, \lambda_{n,np})$ by
formulas \eqref{E:th(n,np).1}--\eqref{E:th(n,np).2}.
These in turn determine the Thom classes for all $(E,\lambda)$ by formulas
\eqref{E:Th.E.to.MSLn}--\eqref{E:MSLn.to.A}.
So we have $\thom^{\varphi}(E,\lambda) = \thom(E,\lambda)$ for
all special linear bundles.

Similarly for $\psi \colon \MSL \to A$ we have
$\thom^{\psi}(E,\lambda) = \thom^{\varphi}(E,\lambda)$ for all special linear bundles
if and only if $\psi$ and $\varphi$ have the same image in
$\varprojlim A^{2n,n}(MSL_{n}^{\angles{n}})$.  This happens if and only if
$\psi - \varphi$ is in the kernel, which is the first $\varprojlim^{1}$ of the statement of the theorem.
% $\sideset{}{^{1}}\varprojlim A^{2n-1,n}(MSL_{n}^{\angles{n}})$.

By construction $e_{\MSL}$ is the canonical map
$\Sigma_{T}^{\infty} \pt_{+} = \Sigma_{T}^{\infty} \MSL_{0} \to \MSL$.
Therefore we have $\varphi(e_{\MSL}) = \thom_{0,0} = \thom(\boldsymbol{0}) = e_{A} \in A^{0,0}(\pt)$
as declared.

%Write $\thom_{n} = \thom(\TSL{n,n^{2}},\lambda_{n,n^{2}})$ for all $n$.
%
By multiplicativity and functoriality we have an equality
%
%\begin{multline*}
%\thom(\TSL{n,n^{2}},\lambda_{n,n^{2}}) \times
%\thom(\TSL{n,n^{2}},\lambda_{n,n^{2}}) =
%\thom(\TSL{n,n^{2}} \oplus \TSL{n,n^{2}},\lambda_{n,n^{2}} \otimes \lambda_{n,n^{2}})
%\\ =
%\mu_{nn}^{\finite *}
%\thom(\TSL{2n,4n^{2}},\lambda_{2n,4n^{2}}).
%\end{multline*}
%
\[
\thom_{n,n^{2}} \times \thom_{n,n^{2}} =
\thom(b_{1}^{*}\TSL{n,n^{2}} \oplus p_{2}^{*}\TSL{n,n^{2}},b_{1}^{*}\lambda_{n,n^{2}} \otimes p_{2}^{*}\lambda_{n,n^{2}})
=
\mu_{nn}^{\finite *}
\thom_{2n,4n^{2}}
\]
of members of
$A^{4n,2n}(MSL_{n}^{\angles{n}} \wedge MSL_{n}^{\angles{n}})$.  This equality means that the
outer perimeter of the diagram
\[
\xymatrix @M=5pt {
\Sigma_{T}^{\infty} MSL_{n}^{\angles{n}} (-n)
\wedge
\Sigma_{T}^{\infty} MSL_{n}^{\angles{n}} (-n)
\ar[r]^-{\mu_{nn}^{\finite}}
\ar[d]^-{u_{n} \wedge u_{n}}
\ar@/_25mm/[ddd]_-{\thom_{n,n^{2}} \wedge \thom_{n,n^{2}}}
%{\thom(\TSL{n,n^{2}},\lambda_{n,n^{2}}) \times
%\thom(\TSL{n,n^{2}},\lambda_{n,n^{2}})}
&
\Sigma_{T}^{\infty} MSL_{2n}^{\angles{2n}} (-2n)
\ar[d]^-{u_{2n}}
\ar@/^25mm/[ddd]^-{\thom_{2n,4n^{2}}}
%{\thom(\TSL{2n,4n^{2}},\lambda_{2n,4n^{2}})}
\\
\MSL^{\finite} \wedge \MSL^{\finite}
\ar[r]^-{\mu_{\MSL}^{\finite}}
\ar[d]
%_-{\sim\text{stable}}
%^-{i \wedge i}
^-{\text{inclusion}}
&
\MSL^{\finite}
\ar[d]
%_-{\sim\text{stable}}
%^-{i}
^-{\text{inclusion}}
\\
\MSL \wedge \MSL
\ar[r]^-{\mu_{\MSL}}
\ar[d]^-{\varphi \wedge \varphi}
&
\MSL
\ar[d]^-{\varphi}
\\
A \wedge A \ar[r]^-{\mu_{A}}
& A
}
\]
commutes.  The half-circles commute by the previous calculations, and the top two squares commute.
Therefore we have
\[
\bigl( \varphi \circ \mu_{\MSL} - \mu_{A} \circ (\varphi \wedge \varphi) \bigr)
\circ
%{\sim\text{stable}}
\text{inclusion}
\circ (u_{n} \wedge u_{n}) = 0
\]
for all $n$.  So the image of the obstruction class
$\varphi \circ \mu_{\MSL} - \mu_{A} \circ (\varphi \wedge \varphi)$
under the surjection
\[
Hom_{SH(S)}(\MSL \wedge \MSL,A)
\lra
\varprojlim A^{4n,2n}(MSL_{n}^{\angles{n}} \wedge MSL_{n}^{\angles{n}})
\lra 0
\]
vanishes. Therefore the obstruction class lies in the kernel, which is the second $\varprojlim^{1}$
of the statement of the theorem.
\end{proof}

\section{The symmetric
%\texorpdfstring
{$T^{\wedge 2}$}{T\^{ }2}-spectrum
%\texorpdfstring
{$\MSp$}{MSp}}

We now define the commutative $T^{\wedge 2}$-monoid and
symmetric $T^{\wedge 2}$-spectrum $\MSp$.

We write the standard symplectic form on the trivial vector bundle of rank $2n$ as
\[
\omega_{2n} =
\begin{pmatrix}
\begin{matrix}
0&1 \\ -1&0
\end{matrix}
&& 0 \\
& \ddots & \\
0 &&
\begin{matrix}
0&1 \\ -1&0
\end{matrix}
\end{pmatrix}
\]
From the symplectic isometry
$(\OO_{S}^{\oplus 2n},\omega_{2n}) \cong (\OO_{S}^{\oplus 2},\omega_{2})^{\oplus n}$
we see that the action of $\sym_{n}$ given by permutations of the $n$ orthogonal direct summands
$(\OO_{S}^{\oplus 2},\omega_{2})$
gives an embedding $\sym_{n} \to Sp_{2n}$.  Hence $Sp_{2n}$-actions restrict to
$\sym_{n}$-actions.

In \cite{Panin:2010fk} we defined the quaternionic Grassmannian $HGr(r,n)$ as the open subscheme
of the Grassmannian $Gr(2r,2n)$ parametrizing rank $2r$ subspaces of $\OO_{S}^{\oplus 2n}$ on which
the restriction of $\omega_{2n}$ is nondegenerate.  The restriction of the tautological subbundle
over the Grassmannian is the tautological symplectic subbundle $\TSp{r,n}$.  It is equipped with
the symplectic form $\phi_{r,n} = \omega_{2n} |_{\TSp{r,n}}$.
For
$r=1$ we write $HP^{n} = HGr(1,n+1)$ and $HP^{\infty} = \colim_{n} HP^{n}$.

To construct $\MSp$ we look at the particular schemes
$HGr(n,np) = HGr(n, (\OO_{S}^{\oplus 2n}, \omega_{2n})^{\oplus p})$.  Each has a natural action
of $Sp_{2n}$ induced by the diagonal action of $Sp_{2n}$ on the $p$ summands of
$(\OO_{S}^{\oplus 2n}, \omega_{2n})^{\oplus p}$.  The vector bundles
$\TSp{n,np} \to HGr(n,np)$ and the inclusions $HGr(n,np) \to HGr(n,np+n)$
are $Sp_{2n}$-equivariant.  We set
\begin{gather*}
BSp_{2n} = \colim\nolimits_{p \in \NN} HGr(n,np), \\
\TSp{n,n\infty} = \colim\nolimits_{p \in \NN} \TSp{n,np}, \\
\MSp_{2n} = \colim\nolimits_{p \in \NN} \Th \TSp{n,np}.
\end{gather*}

As with $\MGL$ and $\MSL$ the isomorphisms
\[
(\OO_{S}^{\oplus 2m}, \omega_{2m}) \oplus (\OO_{S}^{\oplus 2n}, \omega_{2n})
\cong (\OO_{S}^{\oplus 2m+2n}, \omega_{2m+2n})
\]
and the direct sum induce $(Sp_{2m} \times Sp_{2n})$-equivariant maps
\begin{equation}
\label{E:monoid.Sp}
\begin{gathered}
\oplus \colon BSp_{2m} \times BSp_{2n} \to BSp_{2m+2n},
\\
\mu^{Sp}_{mn} \colon \MSp_{2m} \wedge \MSp_{2n} \to \MSp_{2m+2n}.
\end{gathered}
\end{equation}

Each $HGr(n,np)$ is pointed by the point corresponding to the symplectic subbundle
which is the first direct summand
$(\OO_{S}^{\oplus 2n}, \omega_{2n}) \oplus 0^{\oplus p-1} \subset
(\OO_{S}^{\oplus 2n}, \omega_{2n}) ^{\oplus p}$.  In the colimit this yields
points $z_{2n} \colon \pt \to BSp_{2n}$.  The point $z_{2n}$ is fixed by the
$Sp_{2n}$-action.  The action of $Sp_{2n}$ on the fiber of $\TSp{n,n\infty}$
over $z_{2n}$ is the standard representation of $Sp_{2n}$.  The inclusion of the
fiber induces an inclusion of Thom spaces
\begin{equation}
\label{E:e.Sp}
e_{2n}^{Sp} \colon T^{\wedge 2n} \to \MSp_{2n}
\end{equation}
which is $Sp_{2n}$-equivariant.  The action of the subgroup $\sym_{n} \subset Sp_{2n}$
on $T^{\wedge 2n} = (T^{\wedge 2})^{\wedge n}$ permutes the $n$ factors $T^{\wedge 2}$.

The spaces $\MSp_{2n}$ with the actions and structural maps verify the axioms of a
commutative $T^{\wedge 2}$-monoid.

\begin{defn}
The \emph{algebraic symplectic cobordism spectrum} $\MSp$ is the
commutative monoid in the category of symmetric $T^{\wedge 2}$-spectra associated
to the commutative $T^{\wedge 2}$-monoid composed of
the spaces $\MSp_{2n}$, the actions $\sym_{n} \times \MSp_{2n} \to \MSp_{2n}$
%induced by the action of $Sp_{2n}$,
the maps
$e_{0}^{Sp} \colon \pt_{+} \to \MSp_{0}$ and $e_{2}^{Sp} \colon T^{\wedge 2} \to \MSp_{2}$
and the maps $\mu_{mn}^{Sp} \colon \MSp_{2m} \wedge \MSp_{2n} \to \MSp_{2m+2n}$.
\end{defn}

This $\MSp$ defines a commutative monoid in $SH(S)$ by Theorem \ref{T:comparison}.

\section{Quaternionic Grassmannian bundles}

We review the geometry of quaternionic projective bundles and Grassmannian bundles studied
in \cite[\S\S 3--5]{Panin:2010fk}.  We then translate some of the results into a more motivic language.

Given $(E,\phi)$ a symplectic bundle of rank $2n$ over a scheme $X$ and an integer $0 \leq r \leq n$,
there is a quaternionic Grassmannian bundle $p \colon HGr(r,E,\phi) \to X$ whose fiber over
$x \in X$ is the quaternionic Grassmannian parametrizing $2r$-dimensional subspaces of $E_{x}$
on which $\phi_{x}$ is nondegenerate.
We write $\shf U_{r,E} \subset p^{*}E$ for the tautological rank
$2r$ subbundle over $HGr(r,E,\phi)$.
Morphisms $f \colon Y \to HGr(r,E,\phi)$ are in bijection
with pairs $(g,U)$ where $g \colon Y \to X$ is a morphism and $U \subset g^{*}(E,\phi)$ is a symplectic
subbundle of rank $2r$ over $Y$.

Since $\shf U_{r,E}$ is a subbundle on which the symplectic form is fiberwise nondegenerate, it has
an orthogonal complement such that $\shf U_{r,E} \oplus \shf U_{r,E}^{\perp} = p^{*}E$.  The symplectic
subbundle $\shf U_{r,E}^{\perp } \subset p^{*}E$ classifies an isomorphism
\begin{equation}
\label{E:perp}
HGr(r,E,\phi) \xra{\cong} HGr(n-r,E,\phi).
\end{equation}

Now let $(F,\psi) = (\OO_{X}^{\oplus 2},\omega_{2}) \oplus (E,\phi)$.  We have a natural embedding
\begin{equation}
\label{E:embed.1}
HGr(r,E,\phi) \hra HGr(r,F,\psi)
\end{equation}
classified by the symplectic subbundle $0 \oplus \shf U_{r,E} \subset \OO_{X}^{\oplus 2} \oplus E = F$.
The normal bundle of this embedding can be naturally identified with the vector bundle
$N = \shf Hom(\shf U_{r,E},\OO_{X}^{\oplus 2})$ over $HGr(r,E,\phi)$.  This bundle is a direct sum
decomposition $N = N^{+} \oplus N^{-}$ where
\begin{align*}
N^{+} & = \shf Hom(\shf U_{r,E}, \OO_{X} \oplus 0),
&
N^{-} & = \shf Hom(\shf U_{r,E}, 0 \oplus \OO_{X}).
\end{align*}
The basic result concerning the geometry of the closed embedding \eqref{E:embed.1} is the following.

%We work in a relative situation.  Our basic setup is as follows.
%%We return to the basic situation of \eqref{E:basic.setup.1}.
%Suppose $(E,\phi)$ is a symplectic bundle of rank $2n-2$ over $S$, and let $(F,\psi)$
%be the symplectic bundle of rank $2n$ with
%%
%\begin{align}
%\label{E:basic.setup}
%F & = \OO_{S} \oplus E \oplus \OO_{S}, &
%\psi & =
%\begin{pmatrix}
%0 & 0 & 1 \\ 0 & \phi & 0 \\ -1 & 0 & 0
%\end{pmatrix}.
%\end{align}
%%
%We will consider the natural embedding of
%$\HGr_{S}(r;E,\phi)$ in $\HGr_{S}(r;F,\psi)$.
%Let $(\shf U_{E},\phi_{\shf U_{E}})$ and $(\shf U_{F},\psi_{\shf U_{F}})$ be the
%tautological symplectic subbundles of rank $2r$.
%We will abbreviate
%%
%\begin{align}
%\label{E:HGr.abbrv}
%\HGr(E) & = \HGr_{S}(r;E,\phi)
%&
%\HGr(F) & = \HGr_{S}(r;F,\psi).
%\end{align}
%%\end{setup}
%%
%We now study the embedding of $\HGr(E)$ in $\HGr(F)$.

\begin{thm}[\protect{\cite[Theorem 4.1]{Panin:2010fk}}]
\label{T:normal.geom}
\parens{a} The normal bundle of the embedding \eqref{E:embed.1} has
a canonical open embedding $\nu \colon N \hra Gr(2r,F)$.  The zero section is sent
identically onto $HGr(r,E,\phi)$.

\parens{b} We have $\nu(N^{+}) = HGr(r,F,\psi) \cap \Gr_{S}(2r,\OO_{X}\oplus 0\oplus E)$.
Consequently
$\nu(N^{+}) \subset HGr(r,F,\psi)$ is a closed subscheme, as is $\nu(N^{-}) \subset HGr(r,F,\psi)$.

\parens{c} There are natural isomorphisms of vector bundles
$N^{+} \cong N^{-} \cong \shf U_{r,E}^{\vee} \cong \shf U_{r,E}$.

\parens{d} There is a natural section $s_{+}$ of $\shf U_{r,F}$
intersecting the zero section transversally in $N^{+}$ and similarly for $N^{-}$.

\parens{e} Let $\pi_{+} \colon N^{+} \to HGr(r,E,\phi)$ be the structural map.  Then
$\pi_{+}^{*}(\shf U_{r,E},\phi|_{\shf U_{r,E}})$ is isometric to
$(\shf U_{r,F},\psi|_{\shf U_{r,F}})|_{N^{+}}$ and similarly for $N^{-}$.
\end{thm}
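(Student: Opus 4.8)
The plan is to reduce the whole theorem to one explicit affine chart on the Grassmannian, built from the canonical splitting $F = \OO_{X}^{\oplus 2}\oplus E$. First I would record the standard fact that the locus $\Omega\subset Gr(2r,F)$ of rank-$2r$ subbundles $W$ with $W\cap(\OO_{X}^{\oplus 2}\oplus 0)=0$ is open, and that projection to $E$ identifies $\Omega$ with the total space of the vector bundle $\shf Hom(\mathrm{taut},\OO_{X}^{\oplus 2})$ over $Gr(2r,E)$: to $W$ one attaches the pair $(U:=\mathrm{pr}_{E}(W),\ A:=\mathrm{pr}_{\OO^{\oplus 2}}|_{W}\circ(\mathrm{pr}_{E}|_{W})^{-1})$, and then $W=\Gamma_{A}:=\{(Au,u):u\in U\}$, with the zero homomorphism giving $W=0\oplus U$. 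Restricting the base from $Gr(2r,E)$ to its open subscheme $HGr(r,E,\phi)$ turns this into the desired open embedding $\nu\colon N=\shf Hom(\shf U_{r,E},\OO_{X}^{\oplus 2})\hra Gr(2r,F)$, with the zero section going isomorphically onto the image of \eqref{E:embed.1}. That is (a), and it simultaneously re-proves the identification of the normal bundle quoted before the theorem, since $HGr(r,F,\psi)$ is an open subscheme of $Gr(2r,F)$ containing that image.

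For (b), (c) and (e) I would then just compute fiberwise with $\psi=\omega_{2}\oplus\phi$. Writing $A=(A_{1},A_{2})$ for the components of $A\colon U\to\OO_{X}e_{1}\oplus\OO_{X}e_{2}$, one has on $\Gamma_{A}$ the identity $\psi\bigl((Au,u),(Au',u')\bigr)=\phi(u,u')+(A_{1}u)(A_{2}u')-(A_{2}u)(A_{1}u')$. Hence on $N^{+}=\{A_{2}=0\}$ the projection $\mathrm{pr}_{E}$ is an isometry $(\shf U_{r,F},\psi|_{\shf U_{r,F}})|_{\nu(N^{+})}\cong\pi_{+}^{*}(\shf U_{r,E},\phi|_{\shf U_{r,E}})$, which is (e); in particular $\psi|_{W}$ is nondegenerate exactly when $[U]\in HGr(r,E,\phi)$, so $\nu(N^{+})\subset HGr(r,F,\psi)$, and visibly $\nu(N^{+})\subset\Gr_{S}(2r,\OO_{X}\oplus 0\oplus E)$. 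The reverse inclusion holds because if $W\in HGr(r,F,\psi)$ is contained in $\OO_{X}\oplus 0\oplus E$ then $W\cap(\OO_{X}^{\oplus 2}\oplus 0)$ is a $\psi$-isotropic subspace of the nondegenerate $W$, hence zero, so $W=\Gamma_{A}$ with $A_{2}=0$. This proves (b); closedness is automatic since $\Gr_{S}(2r,\OO_{X}\oplus 0\oplus E)\hra Gr(2r,F)$ is a closed immersion, and the case of $N^{-}$ is symmetric under swapping $e_{1}$ and $e_{2}$. Part (c) is then immediate: $N^{+}$ and $N^{-}$ are canonically $\shf Hom(\shf U_{r,E},\OO_{X})=\shf U_{r,E}^{\vee}$, and the nondegenerate alternating form $\phi|_{\shf U_{r,E}}$ gives $\shf U_{r,E}\cong\shf U_{r,E}^{\vee}$.

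For (d) I would take $s_{+}\in\Gamma(HGr(r,F,\psi),\shf U_{r,F})$ to be the $\psi$-orthogonal projection of the constant section $e_{1}$ of $p^{*}F$ onto the symplectic subbundle $\shf U_{r,F}$ (well defined since $p^{*}F=\shf U_{r,F}\oplus\shf U_{r,F}^{\perp}$), i.e.\ the unique section with $\psi|_{\shf U_{r,F}}(s_{+},-)=\psi(e_{1},-)|_{\shf U_{r,F}}$, and $s_{-}$ the projection of $e_{2}$. Since $\psi(e_{1},-)$ reads off the $e_{2}$-component, the zero locus of $s_{+}$ is set-theoretically $\{W\subset\OO_{X}\oplus 0\oplus E\}=\nu(N^{+})$ by (b). The only real work is transversality: pulling $s_{+}$ back along the chart $\nu$, identifying $\nu^{*}\shf U_{r,F}|_{N^{+}}$ with $\pi_{+}^{*}\shf U_{r,E}$ by (e) and the normal bundle of $\nu(N^{+})$ inside $HGr(r,F,\psi)$ with $N/N^{+}=N^{-}=\shf U_{r,E}^{\vee}$, one differentiates the defining relation $\phi(v,u)+(A_{1}v)(A_{2}u)-(A_{2}v)(A_{1}u)=A_{2}u$ for $s_{+}=(Av,v)$ in the $N^{-}$-direction at a point of $N^{+}$ and finds that the derivative is precisely the isomorphism $\shf U_{r,E}^{\vee}\cong\shf U_{r,E}$ induced by $\phi|_{\shf U_{r,E}}$; hence $s_{+}$ vanishes transversally along $\nu(N^{+})$, and symmetrically $s_{-}$ along $\nu(N^{-})$. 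The hard part, and the only step that is not formal bookkeeping around the single chart of (a), is this last derivative computation establishing transversality in (d).
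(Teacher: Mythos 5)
Your argument is sound, but note first that this paper does not actually prove Theorem \ref{T:normal.geom}: it is imported verbatim from \cite[Theorem 4.1]{Panin:2010fk}, so there is no internal proof to compare against. Taken on its own terms, your chart-based proof works: the open locus $\Omega=\{W\in Gr(2r,F): W\cap(\OO_X^{\oplus 2}\oplus 0)=0\}$ is indeed the total space of $\shf Hom(\mathrm{taut},\OO_X^{\oplus 2})$ over $Gr(2r,E)$, restricting the base to $HGr(r,E,\phi)$ gives $\nu$ and part (a); the fiberwise identity $\psi((Au,u),(Au',u'))=\phi(u,u')+(A_1u)(A_2u')-(A_2u)(A_1u')$ gives (e), the inclusion $\nu(N^{\pm})\subset HGr(r,F,\psi)$, and (c); and your choice $s_+=$ the $\psi$-orthogonal projection of $e_1$ onto $\shf U_{r,F}$ has zero locus exactly $\{W\subset \OO_X\oplus 0\oplus E\}\cap HGr(r,F,\psi)$, with the differentiation of $\phi(v,u)+(A_1v)(A_2u)-(A_2v)(A_1u)=A_2u$ at $A_2=0$, $v=0$ yielding $\phi(\dot v,u)=\dot A_2u$, i.e.\ exactly the isomorphism $N^-=\shf U_{r,E}^{\vee}\to\shf U_{r,E}$ induced by $\phi$, which is the surjectivity needed for transversality in (d).

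One justification needs repair, though it is easily fixed. In the converse inclusion of (b) you say that $W\cap(\OO_X^{\oplus 2}\oplus 0)$ is ``a $\psi$-isotropic subspace of the nondegenerate $W$, hence zero''; isotropy alone does not force a subspace of a nondegenerate symplectic space to vanish (Lagrangians are isotropic). The correct observation, immediate from your own formula for $\psi$, is stronger: a vector $(a,0,0)$ pairs to zero under $\psi$ with every element of $\OO_X\oplus 0\oplus E\supset W$, so $W\cap(\OO_X^{\oplus 2}\oplus 0)$ lies in the radical of $\psi|_W$, which is zero by nondegeneracy. With that one-line correction the proof of (b), and hence the whole argument, is complete.
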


The second basic result about the geometry of symplectic Grassmannian bundles involves the
following embeddings
\begin{equation}
\label{E:embed.2}
\xymatrix @M=5pt @C=40pt {
HGr(r-1,E,\phi) \ar@{^{(}->}[r]^-{\sigma}_-{\text{closed}} &
HGr(r,F,\psi) - \nu(N^{+}) \ar@{^{(}->}[r]^-{\text{open}} & HGr(r,F,\psi).
}
\end{equation}
The composition is the closed embedding classified by the symplectic subbundle
$\OO^{\oplus 2} \oplus \shf U_{r-1,E} \subset \OO^{\oplus 2} \oplus E = F$.

\begin{thm}[\protect{\cite[Theorems 5.1 and 5.2]{Panin:2010fk}}]
\label{T:affine.bundles}
There are morphisms over $X$
\[
HGr(r,F,\psi) - \nu(N^{+}) \xleftarrow{g_{1}} Y_{1} \xleftarrow{g_{2}}
Y_{2} \xrightarrow{q} HGr(r-1,E,\phi)
\]
with $g_{1}$ an $\Aff^{2r-1}$-bundle, $g_{2}$ an $\Aff^{2r-2}$-bundle, and
$q$ an $\Aff^{4n+1}$-bundle.  Moreover,
there is a section $s$
%$s \colon HGr(r-1,E,\phi) \to Y_{2}$
of $q$ such that the composition
$g_{1}g_{2}s \colon HGr(r-1,E,\phi) \to HGr(r,F,\psi)- \nu(N^{+})$
is the closed embedding $\sigma$ of \eqref{E:embed.2}.
\end{thm}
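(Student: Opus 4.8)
Here is how I would prove Theorem \ref{T:affine.bundles}.

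The plan is to realise $g_{1}$, $g_{2}$ and $q$ as a tower of torsors under vector bundles, built by successively choosing splitting data for the tautological symplectic subbundle, and then to recover $\sigma$ from the ``standard'' such data. Write $e_{1},e_{2}$ for the standard basis of the summand $\OO_{X}^{\oplus 2}\subset F$, so that $\psi=\omega_{2}\perp\phi$ and, inside $F$ and its pullbacks, $e_{1}^{\perp}=\OO_{X}e_{1}\oplus E$. First I would pin down the open set: by Theorem \ref{T:normal.geom}(b), $\nu(N^{+})=HGr(r,F,\psi)\cap\Gr_{S}(2r,\OO_{X}\oplus 0\oplus E)$ is exactly the locus where $\shf U_{r,F}\subseteq e_{1}^{\perp}$, i.e. where the functional $u\mapsto\psi(e_{1},u)$ on $\shf U_{r,F}$ vanishes identically. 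Write $s_{+}$ for this functional regarded, via $\psi|_{\shf U_{r,F}}\colon\shf U_{r,F}\cong\shf U_{r,F}^{\vee}$, as a section of $\shf U_{r,F}$; its zero locus is $\nu(N^{+})$, so on $W\defeq HGr(r,F,\psi)\setminus\nu(N^{+})$ the section $s_{+}$, and hence the functional $\psi(e_{1},-)|_{\shf U_{r,F}}\colon\shf U_{r,F}\to\OO_{W}$, is nowhere zero. A short computation gives $\psi(e_{1},s_{+})=\psi(s_{+},s_{+})=0$ and $\psi(s_{+},u)=\psi(e_{1},u)$ for all $u\in\shf U_{r,F}$, so over $W$ the section $s_{+}$ lies in the rank $2r-1$ subbundle $K\defeq\shf U_{r,F}\cap e_{1}^{\perp}$ and spans the radical of $\psi|_{K}$.

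Next I would construct the three bundles. Let $g_{1}\colon Y_{1}\to W$ have, over each point, the fibre $\{e\in\shf U_{r,F}:\psi(e_{1},e)=1\}$; this is a torsor under $K$, hence an $\Aff^{2r-1}$-bundle, and over $Y_{1}$ one has $\shf U_{r,F}=\langle e\rangle\oplus K$. Let $g_{2}\colon Y_{2}\to Y_{1}$ have fibre the set of rank $2r-2$ subbundles $U'\subset K$ complementary to $\langle s_{+}\rangle$; this is a torsor under $\shf Hom(U',\OO)$, hence an $\Aff^{2r-2}$-bundle. Because $\langle s_{+}\rangle$ is the radical of $\psi|_{K}$, every such $U'$ is a symplectic subbundle and $\shf U_{r,F}=U'\oplus\langle s_{+}\rangle\oplus\langle e\rangle$; moreover $U'\subseteq e_{1}^{\perp}$ and $U'\cap\OO_{X}e_{1}=0$ (using $s_{+}=e_{1}$ where $e_{1}\in\shf U_{r,F}$), so the projection $F\to E$ along $\OO_{X}^{\oplus 2}$ carries $U'$ isomorphically onto a rank $2r-2$ subbundle $V\subset E$ with $\psi|_{U'}=\phi|_{V}$; in particular $\phi|_{V}$ is nondegenerate. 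The rule $(U,e,U')\mapsto V$ is the desired morphism $q\colon Y_{2}\to HGr(r-1,E,\phi)$.

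Then I would show $q$ is an $\Aff^{4n+1}$-bundle and exhibit $s$. Over a fixed $V\in HGr(r-1,E,\phi)$, a point of $Y_{2}$ with $q=V$ amounts, writing $U'=\{v+\gamma(v)e_{1}:v\in V\}$ and $U=U'\oplus\langle s_{+}\rangle\oplus\langle e\rangle$, to a triple $(\gamma,e,s_{+})$: first $\gamma\in\shf Hom(V,\OO)$; then $e\in F$ with $\psi(e_{1},e)=1$; then $s_{+}\in F$ with $\psi(e_{1},s_{+})=0$, $\psi(s_{+},u')=0$ for all $u'\in U'$, and $\psi(s_{+},e)=1$. These constraints exhibit the fibre of $q$ over $V$ as a tower of torsors under $\shf Hom(V,\OO)$, $e_{1}^{\perp}$ and $(\OO_{X}e_{1}\oplus U')^{\perp}\cap e^{\perp}$, of ranks $2r-2$, $2n+1$ and $2n-2r+2$, whose sum $4n+1$ agrees with $\dim_{X}W-\dim_{X}HGr(r-1,E,\phi)+(2r-1)+(2r-2)=4(n+1-r)+4r-3$. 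Taking the standard choices $\gamma=0$, $e=e_{2}$, $s_{+}=e_{1}$ defines a section $s$ of $q$; for it $U=\OO_{X}^{\oplus 2}\oplus V$, so $g_{1}g_{2}s$ is exactly the closed embedding $\sigma$ of \eqref{E:embed.2}.

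The hard part will be this last step: verifying that the reconstruction $(\gamma,e,s_{+})\mapsto(U,e,U')$ genuinely realises every fibre of $q$ as an iterated torsor under vector bundles of the stated ranks, with no jumping behaviour — concretely, that perturbing $\OO_{X}^{\oplus 2}\oplus V$ inside $F$ by an arbitrary $\shf Hom(V,\OO_{X}^{\oplus 2})$-valued term and then choosing $e$ and $s_{+}$ subject to the displayed affine constraints always keeps $\psi$ nondegenerate on the resulting rank $2r$ span (the Gram matrix of $\psi$ on $U'\oplus\langle s_{+}\rangle\oplus\langle e\rangle$ is invertible precisely because $\phi|_{V}$ is and $\psi(s_{+},e)=1$), and that the chosen $s_{+}$ is then the $\psi$-orthoprojection of $e_{1}$ onto $U$. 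The remaining points — that torsors under vector bundles are $\Aff^{k}$-bundles, base change in $X$, and the identity $g_{1}g_{2}s=\sigma$ — are routine.
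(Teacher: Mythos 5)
The paper does not actually prove Theorem \ref{T:affine.bundles}: it is imported verbatim from \cite[Theorems 5.1 and 5.2]{Panin:2010fk}, the only added remark being that the section $s$ is extracted from the end of the proof of Theorem 5.2 there. So there is no in-paper argument to compare against; judged on its own, your reconstruction is correct and is essentially the construction of the cited reference: the section $s_{+}$ dual to $\psi(e_{1},-)|_{\shf U_{r,F}}$ vanishes exactly on $\nu(N^{+})$, $Y_{1}$ is the torsor $\{e\in\shf U_{r,F}:\psi(e_{1},e)=1\}$ under the rank $2r-1$ kernel $K$, $Y_{2}$ is the torsor of complements to $\langle s_{+}\rangle=\mathrm{rad}(\psi|_{K})$ in $K$, and $q$ projects the complement isomorphically into $E$; your fibre analysis of $q$ (including the Gram-matrix check of nondegeneracy and the count $(2r-2)+(2n+1)+(2n-2r+2)=4n+1$) and the section $\gamma=0$, $e=e_{2}$, $s_{+}=e_{1}$ recovering $\sigma$ are all sound. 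The only point I would make explicit is that your $q$ is a composite of three torsors under vector bundles rather than a single one; since each stage trivializes over an affine open of the base (vector-bundle torsors over affines are trivial), the composite is still a Zariski-locally trivial $\Aff^{4n+1}$-fibration, which is all that is used.
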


The section $s$ appears at the end of the proof of \cite[Theorem 5.2]{Panin:2010fk}.  The statement
of the theorem only contains the consequence that $\sigma$ induces isomorphisms of cohomology groups.

These two theorems have the following consequence.

\begin{thm}
\label{T:HGr/HGr}
Let $(E,\phi)$ be a symplectic bundle of rank $2n$ over a smooth $S$-scheme $X$, and let
$(F,\psi) = (\OO_{X}^{\oplus 2}, \omega_{2}) \oplus (E,\phi)$.  For $1 \leq r \leq n$ let
$\shf U_{r,E}$ be the tautological symplectic subbundle over $HGr(r,E,\phi)$.
Let $HGr(r-1,E,\phi) \hra HGr(r,F,\psi)$ be the closed embedding of \eqref{E:embed.2}.
Then there is a canonical zigzag of motivic weak equivalences
\[
\Th \shf U_{r,E}
%\cong N^{-}/(N^{-}-HGr(r,E,\phi))
\xra{\sim} HGr(r,F,\psi)/(HGr(r,F,\psi) - \nu(N^{+}))
\xla{\sim}
HGr(r,F,\psi) / HGr(r-1,E,\phi)
\]
inducing an isomorphism in the motivic unstable homotopy category $H_{\bullet}(S)$.
These isomorphisms commute with the maps induced by inclusions
$(E,\phi) \hra (E,\phi) \oplus (E_{1},\phi_{1})$ of symplectic bundles and with the maps
induced by base changes $Y \to X$.
\end{thm}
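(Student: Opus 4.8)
The plan is to build the two maps of the zigzag from the geometric input of Theorems \ref{T:normal.geom} and \ref{T:affine.bundles}, check they are motivic weak equivalences, and then verify compatibility with stabilization inclusions and base change by naturality. First I would produce the left-hand map. By Theorem \ref{T:normal.geom}(a) the normal bundle $N$ of the embedding \eqref{E:embed.1} has a canonical open embedding $\nu \colon N \hra Gr(2r,F)$ whose image meets $HGr(r,F,\psi)$ in an open subscheme; restricting, $\nu$ identifies the total space of $N$ with an open neighbourhood of $HGr(r,E,\phi)$ inside $HGr(r,F,\psi)$. Homotopy purity (the Morel--Voevodsky deformation-to-the-normal-cone argument, which here is immediate since the tubular neighbourhood is literally $N$ via $\nu$) gives a motivic weak equivalence $\Th N \xra{\sim} HGr(r,F,\psi)/(HGr(r,F,\psi)-HGr(r,E,\phi))$. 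But Theorem \ref{T:normal.geom}(c) gives a canonical isomorphism $N^{+} \cong \shf U_{r,E}$, and one checks using part (d)--(e) that the section $s_{+}$ exhibits $HGr(r,F,\psi) - \nu(N^{+})$ as the complement to be used; excising $\nu(N^{-})$ and using that $\Th N \simeq \Th(N^{+}\oplus N^{-})$ deformation-retracts onto $\Th N^{+} \cong \Th\shf U_{r,E}$ in the relevant quotient, one obtains the stated weak equivalence
\[
\Th \shf U_{r,E} \xra{\sim} HGr(r,F,\psi)/(HGr(r,F,\psi)-\nu(N^{+})).
\]
The cleanest route is probably to quote the precise form of this already proved in \cite[\S 4]{Panin:2010fk} and simply reorganize it as a quotient in $H_{\bullet}(S)$.

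Next I would produce the right-hand map. By Theorem \ref{T:affine.bundles} the closed embedding $\sigma \colon HGr(r-1,E,\phi) \hra HGr(r,F,\psi)-\nu(N^{+})$ factors as $g_1 g_2 s$ through a chain of affine bundles $Y_1 \leftarrow Y_2 \to HGr(r-1,E,\phi)$, with $s$ a section of the $\Aff^{4n+1}$-bundle $q$. Since affine bundles are $\Aff^1$-weak equivalences and $s$ is a section of one, $\sigma$ is a motivic weak equivalence; passing to quotients by the common closed subscheme $HGr(r-1,E,\phi)$ (mapped in via $\sigma$ on one side and via the composite closed embedding \eqref{E:embed.2} on the other — note these agree by the last sentence of Theorem \ref{T:affine.bundles}) and using that a weak equivalence of pairs induces a weak equivalence on quotients, I get
\[
HGr(r,F,\psi)/HGr(r-1,E,\phi) \xra{\sim} HGr(r,F,\psi)/\bigl(HGr(r,F,\psi)-\nu(N^{+})\bigr).
\]
Composing the inverse of the first equivalence with the second in $H_{\bullet}(S)$ yields the desired canonical isomorphism $\Th\shf U_{r,E} \cong HGr(r,F,\psi)/HGr(r-1,E,\phi)$.

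For the compatibility assertions, everything in sight — the open embedding $\nu$, the decomposition $N = N^{+}\oplus N^{-}$, the isomorphism $N^{+}\cong\shf U_{r,E}$, the affine bundles $g_1,g_2,q$ and the section $s$ — was constructed in \cite{Panin:2010fk} as a natural transformation of functors in the symplectic bundle $(E,\phi)$, so a map $(E,\phi)\hra(E,\phi)\oplus(E_1,\phi_1)$ or a base change $Y\to X$ induces a commutative ladder between the two zigzags, and the induced maps on the quotient spaces and on $\Th\shf U_{r,E}$ fit into it. I expect the main obstacle to be purely bookkeeping: matching up which open complement and which orthogonal summand ($N^{+}$ versus $N^{-}$) gets excised at each stage so that the homotopy-purity equivalence lands in exactly $\Th\shf U_{r,E}$ rather than its sibling, and checking that the closed subscheme $HGr(r-1,E,\phi)$ really is the same one on both sides of the right-hand quotient. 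Once the conventions of \cite[\S\S4--5]{Panin:2010fk} are fixed these are formal, but getting the signs/summands consistent is where care is needed; there is no genuinely hard homotopy-theoretic step beyond homotopy purity and the $\Aff^1$-invariance of affine bundles.
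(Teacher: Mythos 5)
Your right-hand equivalence is exactly the paper's: $\sigma$ is a motivic weak equivalence because $g_{1}$, $g_{2}$, $q$ are affine bundles and $s$ is a section of $q$, and then the induced map $HGr(r,F,\psi)/HGr(r-1,E,\phi) \to HGr(r,F,\psi)/(HGr(r,F,\psi)-\nu(N^{+}))$ is an equivalence. The left-hand equivalence, however, has a genuine gap. First, $\nu$ is an open embedding of $N$ into $Gr(2r,F)$, not into $HGr(r,F,\psi)$; only $\nu(N)\cap HGr(r,F,\psi)$ is an open neighbourhood of $HGr(r,E,\phi)$ in $HGr(r,F,\psi)$, so the tubular neighbourhood is not ``literally $N$''. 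More importantly, purity applied to the closed embedding $HGr(r,E,\phi)\hra HGr(r,F,\psi)$ produces $\Th N=\Th(N^{+}\oplus N^{-})$, a rank-$4r$ Thom space, mapped to the quotient by the complement of $HGr(r,E,\phi)$ --- neither is the object in the statement, which involves $\Th \shf U_{r,E}$ (rank $2r$) and the complement of $\nu(N^{+})$. Your proposed repair, that ``$\Th N$ deformation-retracts onto $\Th N^{+}$ in the relevant quotient'', is not a valid step: the Thom space of a direct sum is not equivalent to the Thom space of a summand (already over a point, $T^{\wedge 2}\not\simeq T$). A purity argument that does work would have to be applied to the closed subscheme $\nu(N^{+})\subset HGr(r,F,\psi)$, whose normal bundle is $\shf U_{r,F}|_{N^{+}}\cong \pi_{+}^{*}\shf U_{r,E}$ by Theorem \ref{T:normal.geom}(d)--(e), followed by the $\Aff^{1}$-equivalence $\pi_{+}$; you gesture at (d)--(e) but never set this up, and the mechanism you do describe fails.

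The paper avoids purity altogether and its left-hand map is an honest map of spaces: using Theorem \ref{T:normal.geom}(b)--(c) one identifies $\Th\shf U_{r,E}\cong N^{-}/(N^{-}-HGr(r,E,\phi))$ and maps it into $HGr(r,F,\psi)/(HGr(r,F,\psi)-\nu(N^{+}))$ via the closed inclusion $\nu(N^{-})\subset HGr(r,F,\psi)$ (this uses that $N^{-}$ meets $N^{+}$ only in the zero section, so $N^{-}-HGr(r,E,\phi)$ lands in the complement of $\nu(N^{+})$; note your choice of $N^{+}$ as the carrier of the Thom space would not even give a well-defined map of quotients). That this map is an equivalence is then seen by comparing, through $N/(N-N^{+})$ and $(N\cap HGr(r,F,\psi))/(N\cap HGr(r,F,\psi)-N^{+})$, via two Zariski excision equivalences and the equivalence coming from the vector-bundle projection $N\to N^{-}$ (of which $N^{-}\hra N$ is the zero section), and invoking 2-out-of-3. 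This explicitness is also what makes the final functoriality claim ``straightforward'': all arrows are inclusions or bundle projections natural in $(E,\phi)$, whereas your purity route would additionally require naturality of the purity isomorphism. So the overall architecture of your proposal is right, but the left-hand equivalence needs to be rebuilt along one of these two correct lines.
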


\begin{proof}
In the geometry of Theorem \ref{T:normal.geom}
we identify $N$, $N^{+}$ and $N^{-}$ with their images in $Gr(2r,F)$ under the open embedding $\nu$.
Then there
%is an isomorphism
%\[
%\Th \shf U_{r,E} \cong
%%\Th N^{-} =
%N^{-}/(N^{-}-HGr(r,E,\phi)) \overset{\text{v.b.}}{\underset{\sim}{\longleftrightarrow}}
% N/(N-N^{+}) \cong \nu(N)/(\nu(N)-\nu(N^{+})),
%\]
are motivic weak equivalences
\[
\xymatrix @M=5pt @C=20pt {
N/(N-N^{+}) & \ar[l]_-{\sim}^-{\text{excision}}
(N \cap HGr(r,F,\psi))/(N \cap HGr(r,F,\psi) - N^{+}) \ar[d]_-{\sim}^-{\text{excision}}
\\
\Th \shf U_{r,E} \cong
N^{-}/(N^{-}-HGr(r,E,\phi))
\ar[u]_-{\sim}^-{\text{section of vector bundle}}
\ar[ru] \ar[r]_-{\text{inclusion}}
&
HGr(r,F,\psi)/(HGr(r,F,\psi)-N^{+})
}
\]
The arrows not explicitly labeled $\sim$ are nevertheless motivic weak equivalences by the 2-out-of-3
axiom.  The morphisms $g_{1}$, $g_{2}$ and
$q$ of Theorem \ref{T:affine.bundles} are affine bundles, so they, the section $s$,
the composition $\sigma$, and the map
\[
HGr(r,F,\psi)/HGr(r-1,E,\phi) \xra{\sim} HGr(r,F,\psi)/(HGr(r,F,\psi)-N^{+})
\]
of quotient spaces induced by $\sigma$ are all motivic weak equivalences.

The functoriality is straightforward.
\end{proof}

Because of \eqref{E:perp} the quotient appearing in Theorem \ref{T:HGr/HGr} is also isomorphic to
\[
HGr(n-r+1,F,\psi)/HGr(n-r+1,E,\phi).
\]
The case $r=n$ of Theorem \ref{T:HGr/HGr} therefore gives the following result.
The corresponding result for ordinary Grassmannian
bundles is \cite[Proposition 3.2.17(3)]{Morel:1999ab}.

\begin{thm}
Suppose that $(E,\phi)$ is a symplectic bundle of rank $2n$ over a smooth $S$-scheme $X$.
Let $HP(E,\phi) \hra HP(\OO_{X}^{\oplus 2} \oplus E, \omega_{2} \oplus \phi)$
be the natural closed embedding.  Then we have isomorphisms in $H_{\bullet}(S)$
\[
\Th E \cong HGr(n,(\OO_{X}^{\oplus 2}, \omega_{2}) \oplus (E,  \phi))/HGr(n-1,E,\phi) =
HP((\OO_{X}^{\oplus 2}, \omega_{2}) \oplus (E,  \phi)) / HP(E,\phi).
\]
\end{thm}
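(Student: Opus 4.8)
The plan is to apply Theorem~\ref{T:HGr/HGr} in the special case $r = n$, and then to rewrite the resulting quotient using the perpendicular-complement isomorphism \eqref{E:perp}. First I would observe that for $r = n$ the tautological symplectic subbundle $\shf U_{n,E}$ over $HGr(n,E,\phi)$ is all of $E$ pulled back along an isomorphism: indeed $HGr(n,E,\phi)$ parametrizes rank-$2n$ nondegenerate subbundles of the rank-$2n$ bundle $E$, so the structural map $HGr(n,E,\phi) \to X$ is an isomorphism and $\shf U_{n,E}$ is identified with $E$ itself. Hence $\Th \shf U_{n,E} \cong \Th E$, and Theorem~\ref{T:HGr/HGr} with $r = n$, $(F,\psi) = (\OO_X^{\oplus 2},\omega_2) \oplus (E,\phi)$ gives an isomorphism in $H_\bullet(S)$
\[
\Th E \cong HGr(n,F,\psi)/HGr(n-1,E,\phi),
\]
where the embedding $HGr(n-1,E,\phi) \hra HGr(n,F,\psi)$ is the one from \eqref{E:embed.2}, classified by the symplectic subbundle $\OO_X^{\oplus 2} \oplus \shf U_{n-1,E} \subset F$.

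The second step is to identify the quotient on the right with the quaternionic projective bundle quotient in the statement. Here I would use \eqref{E:perp}: the perp-complement isomorphism $HGr(n,F,\psi) \xra{\cong} HGr((n+1)-n,F,\psi) = HGr(1,F,\psi) = HP(F,\psi)$, since $F$ has rank $2(n+1)$. Under this isomorphism the closed subscheme $HGr(n-1,E,\phi)$, embedded via $\OO_X^{\oplus 2} \oplus \shf U_{n-1,E}$, should go over to $HGr(1, (n-1)+1 \text{ complement inside } E,\cdot)$; more precisely, taking orthogonal complements inside $F$ sends the subbundle $\OO_X^{\oplus 2}\oplus \shf U_{n-1,E}$ to a rank-$2$ symplectic subbundle of $0 \oplus E = E$, i.e.\ to a point of $HP(E,\phi)$, and this identifies the restricted embedding with the natural closed embedding $HP(E,\phi) \hra HP(F,\psi)$. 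Thus $HGr(n,F,\psi)/HGr(n-1,E,\phi) \cong HP(F,\psi)/HP(E,\phi)$ in $H_\bullet(S)$, which is exactly the asserted isomorphism, and the middle term $HGr(n,F,\psi)/HGr(n-1,E,\phi)$ in the displayed chain is written in the equivalent form $HGr(n,(\OO_X^{\oplus 2},\omega_2)\oplus(E,\phi))/HGr(n-1,E,\phi)$.

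The routine part is checking that the perp-complement isomorphism \eqref{E:perp} really does carry the embedding \eqref{E:embed.2} of $HGr(n-1,E,\phi)$ onto the embedding of $HP(E,\phi)$; this is a direct unwinding of the modular descriptions of the two Grassmannian bundles, using that $(\OO_X^{\oplus 2}\oplus \shf U_{n-1,E})^\perp$ inside $(\OO_X^{\oplus 2},\omega_2)\oplus(E,\phi)$ equals $0 \oplus \shf U_{n-1,E}^\perp$ where the last perp is taken inside $(E,\phi)$, and that $HGr(n-1,E,\phi)\cong HGr(1,E,\phi) = HP(E,\phi)$ via \eqref{E:perp} again. The only mild subtlety — and the point I would treat most carefully — is compatibility of the two uses of \eqref{E:perp} (once for $F$, once for $E$) so that the square of embeddings commutes on the nose; this follows from the naturality of the orthogonal-complement construction with respect to inclusions of symplectic bundles, already noted in Theorem~\ref{T:HGr/HGr}. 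There is no real obstacle here: the whole statement is a clean corollary of Theorem~\ref{T:HGr/HGr} together with \eqref{E:perp}, in exact parallel with \cite[Proposition 3.2.17(3)]{Morel:1999ab} for ordinary Grassmannians.
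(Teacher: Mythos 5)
Your proposal is correct and follows exactly the route the paper takes: the paper derives this theorem as the case $r=n$ of Theorem~\ref{T:HGr/HGr} (where $HGr(n,E,\phi)\cong X$ and $\shf U_{n,E}\cong E$), combined with the remark immediately preceding the statement that \eqref{E:perp} rewrites the quotient as $HGr(n-r+1,F,\psi)/HGr(n-r+1,E,\phi)$, i.e.\ $HP(F,\psi)/HP(E,\phi)$ when $r=n$. Your extra care in checking that the two uses of the perpendicular-complement isomorphism are compatible with the closed embeddings is a point the paper leaves implicit, but it is the same argument.
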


For $X = \pt$ and trivial $E$ Theorem \ref{T:HGr/HGr} gives the following result.

\begin{thm}
\label{T:BSp/BSp}
There are canonical isomorphisms
\[
\Th \shf U_{HGr(r,n)} \cong HGr(r,1+n)/HGr(r-1,n)
\]
in $H_{\bullet}(S)$.  These isomorphisms are compatible with the inclusions of quaternionic projective spaces.
Therefore we have commutative diagrams of
inclusions and isomorphisms in $H_{\bullet}(S)$
\begin{equation}
\label{E:BSp/BSp}
\vcenter{
\xymatrix @M=5pt @C=40pt {
T^{\wedge 2r} \ar[r]^-{e_{r}^{Sp}} \ar[d]_-{\cong}
& \MSp_{2r} \ar[d]^-{\cong}
\\
HP^{r}/HP^{r-1} \ar[r]_-{\text{\textup{inclusion}}}
& BSp_{2r}/BSp_{2r-2}
}}
\end{equation}
\end{thm}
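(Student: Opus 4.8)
The plan is to deduce Theorem \ref{T:BSp/BSp} from Theorem \ref{T:HGr/HGr} by specializing to the base $X = \pt$ with $E$ the trivial symplectic bundle $(\OO_{S}^{\oplus 2n}, \omega_{2n})$, and then passing to the colimit over the parameter $p$. First I would record that for $X = \pt$ and $(E,\phi) = (\OO_{S}^{\oplus 2(r-1)+2p\cdot 2r?},\dots)$ --- more precisely, taking the $r$-th quaternionic Grassmannian of the standard symplectic space of dimension $2n$ with $n = r$ and running $p$ over $\NN$ as in the construction of $BSp_{2r}$ --- the scheme $HGr(r,E,\phi)$ becomes $HGr(r,rp)$, the tautological subbundle $\shf U_{r,E}$ becomes $\TSp{r,rp}$, and the ambient space $HGr(r,F,\psi)$ with $(F,\psi) = (\OO^{\oplus 2},\omega_{2}) \oplus (E,\phi)$ becomes $HGr(r,rp+1)$. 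The closed embedding $HGr(r-1,E,\phi) \hra HGr(r,F,\psi)$ of \eqref{E:embed.2} then becomes the embedding $HGr(r-1,rp) \hra HGr(r,rp+1)$ classified by $\OO^{\oplus 2} \oplus \TSp{r-1,\cdot}$. So Theorem \ref{T:HGr/HGr} gives, for each $p$, a canonical isomorphism $\Th \TSp{r,rp} \cong HGr(r,rp+1)/HGr(r-1,rp)$ in $H_{\bullet}(S)$, together with the functoriality statement (compatibility with inclusions $(E,\phi) \hra (E,\phi) \oplus (E_{1},\phi_{1})$) which tells us these isomorphisms commute with the transition maps over increasing $p$.

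Next I would take the homotopy colimit over $p \in \NN$. On the left-hand side $\colim_{p} \Th \TSp{r,rp} = \MSp_{2r}$ by the very definition of $\MSp_{2r}$. On the right-hand side $\colim_{p} HGr(r,rp+1) = BSp_{2r}$ and $\colim_{p} HGr(r-1,rp) = BSp_{2r-2}$ (using that the indexing is cofinal in the colimit defining $BSp_{2r-2}$), so $\colim_{p} HGr(r,rp+1)/HGr(r-1,rp) = BSp_{2r}/BSp_{2r-2}$. Since homotopy colimits of motivic weak equivalences are motivic weak equivalences, and $H_{\bullet}(S)$-isomorphisms assemble in the colimit, we obtain the canonical isomorphism $\Th \shf U_{HGr(r,n)} \cong HGr(r,1+n)/HGr(r-1,n)$ in the limit, i.e. $\MSp_{2r} \cong BSp_{2r}/BSp_{2r-2}$ in $H_{\bullet}(S)$. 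The compatibility with inclusions of quaternionic projective spaces --- the statement that the diagrams for varying $r$ commute --- again follows from the functoriality clause of Theorem \ref{T:HGr/HGr}, applied this time to the inclusions of symplectic summands relating $HGr(r,\cdot)$ for different $r$.

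It remains to identify the bottom arrow of the square \eqref{E:BSp/BSp} and check the square commutes. The point $z_{2r} \colon \pt \to BSp_{2r}$ used to define $e_{r}^{Sp}$ corresponds, at finite level $p$, to the first symplectic summand $(\OO^{\oplus 2r},\omega_{2r}) \oplus 0^{\oplus p-1}$; its image under the excision/affine-bundle zigzag of Theorem \ref{T:HGr/HGr} is the inclusion of $HGr(r-1,\cdot)$ into $HGr(r,\cdot)$ up to the identifications, and the induced map of Thom spaces $T^{\wedge 2r} = \Th(\text{fiber over } z_{2r}) \to \Th \TSp{r,rp}$ is precisely $e_{r}^{\angles{p}}$. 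Tracing through the zigzag --- the section of $\TSp{r,F}$ from Theorem \ref{T:normal.geom}(d) and the identification $N^{-}/(N^{-} - HGr) \cong \Th \shf U_{r,E}$ --- one sees that on the $HGr(r,F,\psi)/HGr(r-1,E,\phi)$ side the composite is the quotient map $HP^{r}/HP^{r-1} \to BSp_{2r}/BSp_{2r-2}$ induced by the inclusion $HGr(1,\cdot)$-tower $\hra HGr(r,\cdot)$-tower; taking $r$-fold colimits and invoking Theorem \ref{T:BSp/BSp}'s own compatibility-with-$HP$-inclusions clause recovers the lower horizontal arrow. The main obstacle I anticipate is not any of the colimit formalism --- which is routine given Theorems \ref{T:HGr/HGr} and \ref{T:hocolim} --- but rather this last bookkeeping step: verifying that the explicit zigzag of Theorem \ref{T:HGr/HGr}, when restricted to the distinguished point $z_{2r}$ and its Thom-space fiber, genuinely carries $e_{r}^{Sp}$ to the claimed inclusion $HP^{r}/HP^{r-1} \hra BSp_{2r}/BSp_{2r-2}$, and in particular that the sign/normalization conventions (the section $s_{+}$ versus $s_{-}$, and which orthogonal summand $N^{\pm}$ is used) are matched so that the square commutes on the nose rather than up to an automorphism of $T^{\wedge 2r}$.
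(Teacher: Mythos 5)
Your proposal is correct and follows essentially the same route as the paper, which gives no separate argument beyond the remark that Theorem \ref{T:HGr/HGr} with $X=\pt$ and trivial $(E,\phi)$ yields the statement, the compatibilities and the square \eqref{E:BSp/BSp} coming from the functoriality clause of that theorem and passage to the colimit. The final ``bookkeeping'' point you flag is likewise handled by that functoriality clause: the left column of \eqref{E:BSp/BSp} is the case $n=r$ (where $HGr(r,r)=\pt$ and $\Th\shf U = T^{\wedge 2r}$), and $e_{r}^{Sp}$ is induced by the inclusion of the trivial bundle $(\OO^{\oplus 2r},\omega_{2r})$ into $(\OO^{\oplus 2r},\omega_{2r})^{\oplus p}$, which is exactly an inclusion of symplectic summands to which the clause applies.
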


The motivic weak equivalences of Theorem \ref{T:BSp/BSp} fit into a commutative diagram
\begin{equation}
\label{E:amazing.1}
\vcenter{
\xymatrix @M=5pt @C=30pt {
HGr(r,n) \ar[rr]^-{\text{inclusion}} \ar[d]_-{\text{section map}}
&&
HGr(r,1+n) \ar[d]^-{\text{quotient}}
\\
\Th \shf U_{HGr(r,n)} \ar[r]^-{\sim \, mot}
&
Y_{r,n}
&
HGr(r,1+n)/HGr(r-1,n) \ar[l]_-{\sim \,mot}
}}
\end{equation}
The \emph{section map} is the structure map of the Thom space induced by a section of the vector bundle.
We have a colimit as $n \to \infty$:
\begin{equation}
\label{E:amazing.2}
\vcenter{
\xymatrix @M=5pt @C=30pt {
BSp_{2r} = HGr(r,\infty) \ar[rr]^-{\text{shift}}
\ar[d]_-{\text{section map}}
&&
BSp_{2r} = HGr(r,1+\infty) \ar[d]^-{\text{quotient}}
\\
\MSp_{2r} \ar[r]^-{\sim \, mot}
&
Y_{r,\infty}
&
BSp_{2r}/BSp_{2r-2} \ar[l]_-{\sim \,mot}
}}
\end{equation}
The \emph{shift map} is the map induced by the \emph{shift endomorphism} of
$(\OO^{\oplus 2},\omega_{2})^{\oplus \infty}$ acting on sequences of sections of $\OO$ by
\[
(s_{1},s_{2},s_{3},s_{4},\dots) \mapsto (0,0,s_{1},s_{2},s_{3},s_{4},\dots).
\]

\begin{lem}
There is an $\Aff^{1}$-homotopy of symplectic endomorphisms of
$(\OO^{\oplus 2},\omega_{2})^{\oplus \infty}$ linking the
shift endomorphism of $(\OO^{\oplus 2},\omega_{2})^{\oplus \infty}$ to the identity.
\end{lem}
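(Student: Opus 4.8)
The plan is to produce the homotopy by a completely explicit formula, writing the shift endomorphism of $(\OO^{\oplus 2},\omega_2)^{\oplus \infty}$ as a finite product of symplectic transvections (elementary symplectic matrices), each of which is visibly $\Aff^1$-homotopic to the identity, and then to compose these homotopies. Concretely, if we index a basis of $(\OO^{\oplus 2})^{\oplus \infty}$ as $e_1, f_1, e_2, f_2, \dots$ with $\omega_2(e_i,f_i) = 1$, the shift sends $e_i \mapsto e_{i+2}$, $f_i \mapsto f_{i+2}$ on the corresponding sequences of sections. The first step is to observe that the shift, restricted to any finite stage $(\OO^{\oplus 2},\omega_2)^{\oplus 2k}$, need not be symplectic onto itself, but the ``double shift'' viewed as an endomorphism of the \emph{infinite} sum is an injective symplectic map, and the key algebraic fact I would invoke is that in $Sp_{2\infty}(\ZZ) = \colim_n Sp_{2n}(\ZZ)$ — or more precisely among symplectic endomorphisms of the infinite sum — the shift lies in the subgroup generated by elementary symplectic matrices $E_{ij}(a)$, exactly as in Remark \ref{epsilontimesepsilon} where $\sm{-1&0\\0&-1}\in SL_2(\ZZ)$ was handled as a product of elementary matrices.

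Second, I would recall the standard Whitehead-style trick: the infinite shift $\mathrm{sh}$ satisfies $\mathrm{sh} \oplus \mathrm{sh} \sim \mathrm{sh} \oplus \id$ and an Eilenberg swindle argument shows $\mathrm{sh}$ is ``stably'' homotopic to the identity. More usefully here, since we are allowed to use \emph{all} of the infinitely many coordinates, the conjugate $\mathrm{sh}' = c \circ \mathrm{sh} \circ c^{-1}$ by a suitable permutation of summands together with $\mathrm{sh}$ itself can be recombined so that the pair $(\mathrm{sh}, \id)$ on two copies is symplectically conjugate to $(\id, \mathrm{sh})$; spelling this out on the single infinite copy gives an explicit symplectic matrix $g$ with $g \cdot \mathrm{sh} \cdot g^{-1}$ having a block form that is an elementary-matrix product. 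Each elementary symplectic matrix $E(a)$ with $a \in \OO$ fits into the $\Aff^1$-family $t \mapsto E(ta)$ connecting $E(a)$ at $t=1$ to $\id$ at $t=0$, all members being symplectic; composing finitely many such families (and the conjugating homotopy, which likewise comes from a product of elementaries) yields the desired $\Aff^1$-homotopy of symplectic endomorphisms.

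The third step is bookkeeping: one must check that at every point of $\Aff^1$ the intermediate map is genuinely a symplectic endomorphism of $(\OO^{\oplus 2},\omega_2)^{\oplus \infty}$ (not merely of a finite stage), which is automatic because each elementary transvection and each conjugating permutation involves only finitely many coordinates and preserves $\omega$ identically for every value of the parameter, so the composite preserves $\omega_{\infty} = \bigoplus \omega_2$. One also checks the homotopy starts at $\id$ and ends at the shift; this is immediate from the construction since at $t=1$ each factor equals its target value and at $t=0$ each equals the identity.

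The main obstacle is the second step — finding the explicit recombination exhibiting the shift as a product of elementary symplectic matrices. The cleanest route I foresee is the classical identity that for the shift $S$ on an infinite-rank free module, the block matrix $\begin{pmatrix} S & 0 \\ 0 & S^{*}\end{pmatrix}$ (with $S^{*}$ the adjoint/retraction) is symplectic and, via the ``Whitehead lemma'' manipulation $\begin{pmatrix} S & 0 \\ 0 & S^{*}\end{pmatrix} = \begin{pmatrix} 1 & S \\ 0 & 1\end{pmatrix}\begin{pmatrix} 1 & 0 \\ -S^{*} & 1\end{pmatrix}\begin{pmatrix} 1 & S \\ 0 & 1\end{pmatrix}\begin{pmatrix} 0 & -1 \\ 1 & 0\end{pmatrix}$, is a product of (generalized) symplectic transvections once one notes $\begin{pmatrix} 0&-1\\1&0\end{pmatrix}$ is itself elementary over the relevant ring of sections; identifying our $(\OO^{\oplus 2},\omega_2)^{\oplus\infty}$ with such a hyperbolic decomposition and carrying $\omega_2$ through the identification is the delicate but routine part. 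Everything else — the per-factor $\Aff^1$-scaling homotopies and their composition — is formal, and I would leave the coordinate verifications to the reader.
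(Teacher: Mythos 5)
Your proposal has a genuine gap at its core. The shift endomorphism of $(\OO^{\oplus 2},\omega_{2})^{\oplus \infty}$ is injective but \emph{not surjective} (the first summand is not in its image), so it is not an automorphism. A finite product of elementary symplectic matrices is always an automorphism, so the shift simply cannot be written as such a product; your "key algebraic fact" that the shift lies in the subgroup generated by the $E_{ij}(a)$ is false. The same problem sinks the Whitehead-lemma step: with $S$ the shift and $S^{*}$ its one-sided retraction one has $S^{*}S=1$ but $SS^{*}\neq 1$, and a direct computation of your proposed factorization gives
$\bigl(\begin{smallmatrix}1&S\\0&1\end{smallmatrix}\bigr)
\bigl(\begin{smallmatrix}1&0\\-S^{*}&1\end{smallmatrix}\bigr)
\bigl(\begin{smallmatrix}1&S\\0&1\end{smallmatrix}\bigr)
\bigl(\begin{smallmatrix}0&-1\\1&0\end{smallmatrix}\bigr)
=\bigl(\begin{smallmatrix}S&SS^{*}-1\\0&S^{*}\end{smallmatrix}\bigr)$,
which is not the block-diagonal matrix you want precisely because $SS^{*}\neq 1$. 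So the "delicate but routine" identification does not go through, and no amount of conjugation or stabilization by permutations involving finitely many coordinates can repair it, since the composite would still be invertible.

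The fix — and the route the paper takes — is to use an \emph{infinite} product. One first finds $M(t)\in Sp_{4}(\ZZ[t])$ with $M(0)=I_{4}$ and $M(1)$ the swap of two hyperbolic summands (this is where the "product of elementary symplectic matrices over $\ZZ$, scaled by $t$" idea is legitimately used, since the swap \emph{is} invertible). Setting $f_{n}(t)$ to act by $M(t)$ on the $n$-th and $(n+1)$-st summands and by the identity elsewhere, the infinite product $F(t)=f_{1}(t)f_{2}(t)f_{3}(t)\cdots$ is well defined because each column stabilizes after finitely many factors; each $F(t)$ preserves $\omega_{2}^{\oplus\infty}$, $F(0)=\id$, and $F(1)$ is the shift (the finite partial products at $t=1$ are cyclic permutations of the first $N+1$ summands, and only in the infinite limit does the non-surjective shift appear). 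Your per-factor $\Aff^{1}$-scaling idea is sound, but the passage from finitely many factors to a locally finite infinite product is the essential missing ingredient.
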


%More precisely there is a compatible system of $\Aff^{1}$-homotopies of symplectic embeddings
%$F_{n} \colon (\OO^{\oplus 2},\omega_{2})^{\oplus n} \times \Aff^{1} \to (\OO^{\oplus 2},\omega_{2})^{\oplus n+1}$ such that $F_{n}(0)$ is $(x_{1},\dots, x_{2n}) \mapsto (x_{1},\dots, x_{2n},0,0)$ and
%$F_{n}(1)$ is $(x_{1},\dots, x_{2n}) \mapsto (0,0,x_{1},\dots, x_{2n})$.

\begin{proof}
%We describe an explicit well-defined infinite product of infinite elementary symplectic
%matrices providing the $\Aff^{1}$-homotopy of symplectic
%endomorphisms of $(\OO^{\oplus 2},\omega_{2})^{\oplus \infty}$.
%
%When we have a sequence of endomorphisms $f_{1},f_{2},f_{3},\dots$ of a set $V$
%such that for all $v \in V$ there is an $N(v) \in \NN$ such that $f_{n}(v) = v$ for all $n > N(v)$,
%then we may define the infinite composition $F = f_{1} \circ f_{2} \circ f_{3} \circ \cdots$ by the formula
%$F(v) = f_{1} \circ f_{2} \circ \cdots \circ f_{N(v)} (v)$.  If the $f_{n}$ are all injective, then so is $F$.
%For example the infinite composition of bijections $\NN \to \NN$ given by
%\[
%F = (1\,2) \circ (2\,3) \circ (3\,4) \circ (4\,5) \circ \cdots
%\]
%is the shift map of $\NN$ with $F(n) = n+1$.

Any matrix in any $Sp_{2r}(\ZZ)$ is a product of elementary symplectic matrices.  So there
exists an $M(t) \in Sp_{4}(\ZZ[t])$ with
\begin{align*}
M(0) & =
\begin{pmatrix}
1 & 0 & 0 & 0 \\0 & 1 & 0 & 0 \\0 & 0 & 1 & 0 \\0 & 0 & 0 & 1
\end{pmatrix},
&
M(1) & =
\begin{pmatrix}
0 & 0 & 1 & 0 \\0 & 0 & 0 & 1 \\1 & 0 & 0 & 0 \\0 & 1 & 0 & 0
\end{pmatrix},
\end{align*}
for example
\[
M(t) =
\begin{pmatrix}
1-t^{2} & 0 & -2t+13t^{3}-14t^{5}+4t^{7} & 8t^{2}-12t^{4}+4t^{6}
\\
0 & 1-t^{2} & -2t^{2}+2t^{4} & -t + 2t^{3}
\\
t & 0 & 1-7t^{2}+10t^{4}-4t^{6} & -4t + 8t^{3} -4t^{5}
\\
0 & 2t-t^{3} & 2t-4t^{3}+2t^{5} & 1 - 3t^{2} + 2t^{4}
\end{pmatrix}.
\]

Consider the sequence of infinite matrices
\[
f_{n}(t) =
\begin{pmatrix}
I_{2n-2} & 0 & 0 \\ 0 & M(t) & 0 \\ 0 & 0 & I_{\infty}
\end{pmatrix}.
\]
Then $f_{n}(1)$ acts on  $(\OO^{\oplus 2},\omega_{2})^{\oplus \infty}$ by exchanging the
$n^{\text{th}}$ and $(n+1)^{\text{st}}$ summands $(\OO^{\oplus 2},\omega_{2})$, while
$f_{n}(0)$ is the identity.  The infinite product
\[
F(t) = f_{1}(t) f_{2}(t) f_{3}(t) \cdots
\]
is well-defined
because the first $2n$ columns of $f_{1}f_{2}\cdots f_{N}$ are independent of $N$ for all $N \geq n$.
Each column of $F(t)$ contains only a finite number of nonzero entries.  (More precisely, writing
$F(t) = (a_{ij}(t))$, we have $a_{i,2n-1}(t) = a_{i,2n}(t)= 0$ for $i > 2n+2$.)
The endomorphism $F(t)$ is
preserves the symplectic form $\omega_{2}^{\oplus \infty}$ because the automorphisms $f_{n}(t)$ all do.
Clearly $F(0)$ is the identity.  The finite product $f_{1}(1)f_{2}(1) \cdots f_{N}(1)$ permutes cyclically
the first $N+1$ summands of $(\OO^{\oplus 2},\omega_{2})^{\infty}$ and fixes the others.
The infinite product $F(1)$ is the shift map.
\end{proof}

\begin{thm}
\label{T:MSp=BSp/BSp}
In the motivic unstable homotopy category $H_{\bullet}(S)$ we have a commutative diagram
\[
\xymatrix @M=5pt {
& BSp_{2r} \ar@/_/[ld]_-{\text{\textup{structure map}}} \ar@/^/[rd]^-{\text{\textup{quotient}}}
\\
\MSp_{2r} \ar@{<->}[rr]^-{\cong}_-{\text{\textup{$\Aff^{N}$-bundles and excision}}}
&& BSp_{2r}/BSp_{2r-2}.
}
\]
\end{thm}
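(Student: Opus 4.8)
The plan is to read the theorem off from the colimit diagram~\eqref{E:amazing.2} and the preceding Lemma on the shift endomorphism, so that the only real work is to check that passing to the colimit is harmless. First I would observe that~\eqref{E:amazing.2} is the filtered colimit over~$n$ of the commutative squares~\eqref{E:amazing.1}. In the flasque motivic model category the colimits defining $BSp_{2r}$, $\MSp_{2r}$ and $BSp_{2r}/BSp_{2r-2}$ are taken along cofibrations --- the closed immersions $HGr(r,n) \hookrightarrow HGr(r,1+n)$ and the maps they induce on tautological bundles, Thom spaces and quotient spaces --- so these colimits are homotopy colimits and commute with the Thom-space and quotient functors appearing in~\eqref{E:amazing.1}. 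Consequently the colimit of the bottom row of~\eqref{E:amazing.1} is again a zigzag of motivic weak equivalences $\MSp_{2r} \xra{\sim} Y_{r,\infty} \xla{\sim} BSp_{2r}/BSp_{2r-2}$; by construction this is the zigzag underlying the isomorphisms of Theorems~\ref{T:HGr/HGr} and~\ref{T:BSp/BSp}, so it realizes in $H_{\bullet}(S)$ the canonical isomorphism $j \colon \MSp_{2r} \xrightarrow{\cong} BSp_{2r}/BSp_{2r-2}$. With this identification,~\eqref{E:amazing.2} says, in $H_{\bullet}(S)$,
\[
j \circ (\text{structure map}) = (\text{quotient}) \circ s ,
\]
where $s \colon BSp_{2r} \to BSp_{2r}$ is the shift map.

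It then remains to show that $s = \id$ in $H_{\bullet}(S)$. The shift map is induced by the shift endomorphism $F(1)$ of $(\OO^{\oplus 2},\omega_{2})^{\oplus\infty}$. Since $F(1)$ is a symplectic endomorphism, it carries any fibrewise nondegenerate rank $2r$ subbundle $U$ to a subbundle $F(1)(U)$ isometric to $U$ --- if $F(1)v = 0$ for $v \in U$ then $\omega_{2}^{\oplus\infty}(v,w) = \omega_{2}^{\oplus\infty}(F(1)v, F(1)w) = 0$ for every $w \in U$, forcing $v = 0$, so $F(1)|_{U}$ is fibrewise injective and $F(1)(U)$ is again a nondegenerate subbundle --- so $U \mapsto F(1)(U)$ is a well-defined self-map of $HGr(r,\infty) = BSp_{2r}$. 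The whole $\Aff^{1}$-family $F(t)$ provided by the Lemma consists of symplectic endomorphisms, so $U \mapsto F(t)(U)$ defines an $\Aff^{1}$-homotopy from the self-map induced by $F(0) = \id$, namely $\id_{BSp_{2r}}$, to the one induced by $F(1)$, namely $s$. Hence $s = \id$ in $H_{\bullet}(S)$, and the square~\eqref{E:amazing.2} collapses to the triangle $j \circ (\text{structure map}) = \text{quotient}$, which is precisely the diagram of the theorem, the horizontal isomorphism being $j$, implemented as in~\eqref{E:amazing.2} by the $\Aff^{N}$-bundle and excision maps.

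The step I expect to be the genuine obstacle is the first one: one must be sure that the colimit of the commutative squares~\eqref{E:amazing.1} is still commutative in $H_{\bullet}(S)$, that the colimit of a zigzag of motivic weak equivalences is again a zigzag of motivic weak equivalences, and that the resulting isomorphism $\MSp_{2r} \cong BSp_{2r}/BSp_{2r-2}$ coincides with the one of Theorem~\ref{T:BSp/BSp}. Each of these is a formal consequence of working in a model structure --- the flasque one --- in which filtered colimits along cofibrations are homotopy colimits and commute with the quotient and Thom-space constructions; but it is the point that must be treated carefully. Once it is granted, all of the geometric content sits in~\eqref{E:amazing.1} and in the $\Aff^{1}$-homotopy of the Lemma.
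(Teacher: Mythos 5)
Your proposal is correct and follows essentially the same route as the paper, which (without a formal proof environment) deduces the theorem from the colimit diagram \eqref{E:amazing.2} of the finite-level diagrams \eqref{E:amazing.1} together with the Lemma showing the shift endomorphism of $(\OO^{\oplus 2},\omega_{2})^{\oplus\infty}$ is $\Aff^{1}$-homotopic to the identity through symplectic endomorphisms. The details you add --- that the passage to the colimit is a homotopy colimit compatible with the Thom-space and quotient constructions, and that $F(t)$ induces an $\Aff^{1}$-homotopy of self-maps of $BSp_{2r}$ because a form-preserving endomorphism is fibrewise injective on nondegenerate subbundles --- are exactly the points the paper leaves implicit.
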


\section{The quaternionic projective bundle theorem}
\label{S:projective.bundle}

The most basic form a symplectic orientation is a symplectic Thom structure
\cite[Definition 7.1]{Panin:2010fk}.  The version of the definition for bigraded
$\epsilon$-commutative theories is as follows.

\begin{defn}
\label{D:sp.thom}
A \emph{symplectic Thom structure} on a bigraded $\epsilon$-commutative
ring cohomology theory $(A^{*,*},\partial,\times,1_{A})$ on $\SmOp/S$ is a
rule which assigns to each rank $2$ symplectic bundle $(E,\phi)$ over an $X$
in $\Sm/S$ an element
$\thom(E,\phi) \in A^{4,2}(E,E-X)$ with the following properties:
\begin{enumerate}
\item For an isomorphism $u \colon (E,\phi) \cong (E_{1},\phi_{1})$
one has $\thom(E,\phi) = u^{*}\thom(E_{1},\phi_{1})$.

\item For a morphism $f \colon Y \to X$ with pullback map $f_{E} \colon f^{*}E \to E$
one has $f_{E}^{*}\thom(E,\phi) = \thom(f^{*}E,f^{*}\phi)$.

\item For the trivial rank $2$ bundle $\Aff^{2} \to \pt$ with the symplectic form
$\omega_{2} =  \bigl( \begin{smallmatrix} 0 & 1 \\ -1 & 0 \end{smallmatrix} \bigr)$
the map
\[
{-}\times \thom(\Aff^{2},\omega_{2}) \colon
A^{*,*}(X) \to A^{*+4,*+2}(X \times \Aff^{2},X \times (\Aff^{2}-0))
\]
is an isomorphism for all $X$.
\end{enumerate}
The \emph{Borel class} of $(E,\phi)$ is $b_{1}(E,\phi) = -z^{*} \thom(E,\phi) \in A^{4,2}(X)$ where
$z \colon X \to E$ is the zero section.
\end{defn}

The quaternionic projective bundle theorem is proven in \cite{Panin:2010fk} using the symplectic Thom
structure and not any other version of a symplectic orientation.  It is proven first for trivial bundles.

\begin{thm}[\protect{\cite[Theorem 8.1]{Panin:2010fk}}]
\label{T:H.proj.bdl.1}
Let $(A^{*,*},\partial,\times,1_{A})$ be a bigraded $\epsilon$-commutative ring cohomology theory with
a symplectic Thom structure.
Let $(\mathcal U_{HP^{n}}, \phi_{HP^{n}})$ be the tautological rank $2$
symplectic subbundle over $HP^n$ and
$\zeta = b_{1}(\mathcal U_{HP^{n}}, \phi_{HP^{n}})$ its Borel class.
Then for any $X$ in $\Sm/S$ we have an isomorphism of bigraded rings
\[
A^{*, *}(HP^n \times X) \cong A^{*,*}(X)[\zeta]/(\zeta^{n+1}).
\]
\end{thm}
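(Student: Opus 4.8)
The plan is to prove the statement by induction on $n$, following the classical Leray--Hirsch pattern. The base case $n=0$ is trivial: $HP^{0} = \pt$, so $A^{*,*}(HP^{0} \times X) = A^{*,*}(X) = A^{*,*}(X)[\zeta]/(\zeta)$. For the inductive step, the key geometric input is the closed/open decomposition of $HP^{n}$ coming from the quaternionic projective bundle geometry of \cite{Panin:2010fk}: the complement $HP^{n} \setminus HP^{n-1}$ is (by Theorem \ref{T:normal.geom}, applied with $r = 1$ and trivial $(E,\phi)$ of rank $2n$) an affine bundle over a point, hence $\Aff^{1}$-contractible, while $HP^{n-1} \hookrightarrow HP^{n}$ has as a tubular neighborhood the total space of the normal bundle, whose Thom space is identified via Theorem \ref{T:HGr/HGr} with a quotient $HGr(1,F,\psi)/HGr(0,E,\phi)$, i.e. essentially $\Th(\shf U_{HP^{n-1}} \otimes \text{(line bundle)})$ up to the identifications there. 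First I would write down the localization long exact sequence for the pair $(HP^{n} \times X,\ (HP^{n}\setminus HP^{n-1}) \times X)$, identify the relative term with a Thom cohomology group using the symplectic Thom structure, and feed in the inductive hypothesis for $HP^{n-1} \times X$.

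The heart of the argument is the usual splitting of that long exact sequence into short exact sequences. Let $\zeta = p_{1}(\shf U_{HP^{n}},\phi_{HP^{n}}) \in A^{4,2}(HP^{n})$. By functoriality (property (2) of a symplectic Thom structure), $\zeta$ restricts to the corresponding Pontryagin class on every $HP^{m}$ with $m \le n$, and to $0$ on the $\Aff^{1}$-contractible complement $HP^{n}\setminus HP^{n-1}$. The Thom isomorphism built into the symplectic Thom structure identifies the relative group $A^{*,*}(HP^{n}\times X,\ (HP^{n}\setminus HP^{n-1})\times X)$ with $A^{*-4,*-2}(HP^{n-1}\times X)$, and under this identification the map to $A^{*,*}(HP^{n}\times X)$ becomes cup product with $\zeta$ followed by restriction (this is where one uses that the Euler/Thom class of the relevant bundle matches $\zeta$ up to sign, exactly as in the definition $p_{1} = -z^{*}\thom$). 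The inductive hypothesis gives $A^{*,*}(HP^{n-1}\times X) \cong A^{*,*}(X)[\zeta]/(\zeta^{n})$, so the relative group is a free $A^{*,*}(X)$-module on $\zeta, \zeta^{2}, \dots, \zeta^{n}$. Since the restriction $A^{*,*}(HP^{n}\times X) \to A^{*,*}((HP^{n}\setminus HP^{n-1})\times X) = A^{*,*}(X)$ is split surjective (split by the unit, or by pulling back along the projection, using $\Aff^{1}$-invariance), the long exact sequence breaks into the short exact sequence $0 \to A^{*,*}(X)\langle \zeta,\dots,\zeta^{n}\rangle \to A^{*,*}(HP^{n}\times X) \to A^{*,*}(X) \to 0$, which is split, yielding that $A^{*,*}(HP^{n}\times X)$ is free on $1,\zeta,\dots,\zeta^{n}$ as an $A^{*,*}(X)$-module.

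It then remains to promote this additive statement to the ring isomorphism $A^{*,*}(HP^{n}\times X) \cong A^{*,*}(X)[\zeta]/(\zeta^{n+1})$. The ring map from left to right is the obvious one sending the formal $\zeta$ to the actual class $\zeta$; it is well-defined once we know $\zeta^{n+1} = 0$, which follows because $\zeta^{n+1}$ restricted to $HP^{n}\setminus HP^{n-1}$ vanishes (that space is $\Aff^{1}$-contractible), so $\zeta^{n+1}$ comes from the relative group, i.e. $\zeta^{n+1} = \zeta \cdot (\text{something in } A^{*,*}(HP^{n-1}\times X))$, and the inductive hypothesis $\zeta^{n} = 0$ on $HP^{n-1}\times X$ forces $\zeta^{n+1} = 0$ on $HP^{n}\times X$. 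The map is then surjective because $1,\zeta,\dots,\zeta^{n}$ generate, and injective by comparing with the freeness just established. The step I expect to be the main obstacle is the precise identification of the boundary/Thom map in the localization sequence with multiplication by $\zeta$ --- one has to carefully match the normal bundle of $HP^{n-1}\hookrightarrow HP^{n}$ and its Thom class (via Theorem \ref{T:HGr/HGr} and Theorem \ref{T:normal.geom}(c), which identify this normal bundle with $\shf U^{\vee}$ of the tautological subbundle) against the Pontryagin class $\zeta = -z^{*}\thom(\shf U_{HP^{n}},\phi_{HP^{n}})$, keeping track of the sign and of the twist, so that the module generators come out as honest powers of $\zeta$ rather than powers twisted by Euler classes of auxiliary line bundles.
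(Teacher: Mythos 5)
You should first note that this paper does not prove Theorem \ref{T:H.proj.bdl.1} at all: it is imported from \cite[Theorem 8.1]{Panin:2010fk}, where the proof is indeed a localization-sequence induction of the general shape you describe. However, your version inserts a false geometric input at the decisive point. You claim that $HP^{n}\setminus HP^{n-1}$ is an affine bundle over a point, hence $\Aff^{1}$-contractible, citing Theorem \ref{T:normal.geom} with $r=1$; that theorem asserts nothing of the kind, and the claim is false already for $n=1$. Indeed, by homotopy purity $HP^{n}/(HP^{n}\setminus HP^{n-1})$ is the Thom space of the normal bundle $N\cong \shf U_{HP^{n-1}}\oplus \shf U_{HP^{n-1}}$ of $HP^{n-1}$, which has rank $4$, whereas Theorem \ref{T:HGr/HGr} (case $r=1$) identifies $HP^{n}/\pt$ with $\Th \shf U_{HP^{n-1}}$, a rank-$2$ Thom space; if the complement of $HP^{n-1}$ were contractible these two would coincide, which for $n=1$ would give $T^{\wedge 2}\simeq T^{\wedge 4}$. (Equivalently, specializing the theorem following Theorem \ref{T:HGr/HGr}, or the left vertical isomorphism of \eqref{E:BSp/BSp}, gives $HP^{n}/HP^{n-1}\simeq T^{\wedge 2n}$, not a rank-$4$ Thom space over $HP^{n-1}$.) This non-contractibility is exactly where the quaternionic case departs from the ordinary projective bundle theorem, and it is the reason for the elaborate geometry of Theorems \ref{T:normal.geom} and \ref{T:affine.bundles}: in the correct decomposition the closed piece is $\nu(N^{+})$, an $\Aff^{2}$-bundle over $HP^{n-1}$, and it is its open complement $HP^{n}-\nu(N^{+})$ --- strictly smaller than $HP^{n}\setminus HP^{n-1}$ --- that is $\Aff^{1}$-equivalent to a point, via the zigzag of affine bundles of Theorem \ref{T:affine.bundles}.

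Consequently both identifications on which your short exact sequence rests fail: $A^{*,*}((HP^{n}\setminus HP^{n-1})\times X)$ is not $A^{*,*}(X)$, and the relative group of the pair $(HP^{n}\times X,(HP^{n}\setminus HP^{n-1})\times X)$ is, by purity, the reduced cohomology of a rank-$4$ Thom space, so any Thom-type isomorphism (which you would in any case still have to construct, since axiom (3) of Definition \ref{D:sp.thom} provides the Thom isomorphism only for the trivial rank-$2$ bundle) would shift degrees by $(8,4)$, not $(4,2)$; with generators in degrees $(8,4),\dots,(4n+4,2n+2)$ the sequence can never yield a free module on $1,\zeta,\dots,\zeta^{n}$ --- already $\zeta\in A^{4,2}(HP^{n}\times X)$ could not be accounted for. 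The repair is to run your argument for the pair $(HP^{n}\times X,(HP^{n}-\nu(N^{+}))\times X)$: Theorem \ref{T:HGr/HGr} identifies the relative term with the reduced cohomology of $\Th \shf U_{HP^{n-1}}$ smashed with $X_{+}$, a $(4,2)$-shift of $A^{*,*}(HP^{n-1}\times X)$ once the Thom isomorphism for the nontrivial rank-$2$ bundle $\shf U_{HP^{n-1}}\times X$ has been established by Mayer--Vietoris from Zariski-local triviality of rank-$2$ symplectic bundles; Theorem \ref{T:normal.geom}\parens{e} guarantees that $\shf U_{HP^{n}}$ restricts on the closed piece to the pullback of $\shf U_{HP^{n-1}}$, so $\zeta$ restricts to the class appearing in the inductive hypothesis; and Theorem \ref{T:affine.bundles} gives $A^{*,*}$ of the open piece equal to $A^{*,*}(X)$. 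With these substitutions your splitting argument, the nilpotence of $\zeta$, and the passage from module freeness to the ring isomorphism go through essentially as you wrote them, and this is the actual line of proof of \cite[Theorem 8.1]{Panin:2010fk}.
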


A Mayer-Vietoris argument gives the more general theorem \cite[Theorem 8.2]{Panin:2010fk}.

\begin{thm}[\protect{Quaternionic projective bundle theorem}]
\label{T:H.proj.bdl.2}
Let $(A^{*,*},\partial,\times,1_{A})$ be a bigraded $\epsilon$-commutative ring cohomology theory with
a symplectic Thom structure.
Let $(E,\phi)$  be a symplectic bundle of rank $2n$ over $X$, let
$(\mathcal U, \phi|_{\mathcal U})$ be the tautological rank $2$
symplectic subbundle over
the quaternionic projective bundle $HP(E, \phi)$, and let
$\zeta = b_{1}(\mathcal U, \phi|_{\mathcal U})$
be its Borel class. Then
we have an isomorphism of bigraded $A^{*,*}(X)$-modules
\[
(1, \zeta, \dots , \zeta^{n-1}) \colon
 A^{*,*}(X) \oplus A^{*,*}(X) \oplus \dots \oplus
A^{*,*}(X) \to A^{*,*}(HP(E, \phi)).
\]
\end{thm}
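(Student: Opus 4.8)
The plan is to reduce the statement to the case of a trivial symplectic bundle, which is Theorem \ref{T:H.proj.bdl.1}, by an induction over a finite Zariski open cover of $X$ trivializing $(E,\phi)$, using Mayer--Vietoris sequences. Write $p \colon HP(E,\phi) \to X$ for the structural projection and $\theta$ for the map $(1,\zeta,\dots,\zeta^{n-1})$, so that $\theta$ sends $(a_{0},\dots,a_{n-1})$ to $\sum_{i} p^{*}a_{i} \cdot \zeta^{i}$.

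First I would record that a symplectic bundle is Zariski-locally trivial. Near a point $x \in X$ pick a section $e_{1}$ of $E$ with $e_{1}(x) \neq 0$; nondegeneracy of $\phi$ gives a section $f$ with $\phi(e_{1},f)(x) \neq 0$, and $e_{2} = \phi(e_{1},f)^{-1}f$ satisfies $\phi(e_{1},e_{2}) = 1$ near $x$. Then $\langle e_{1},e_{2}\rangle$ is a rank $2$ symplectic subbundle near $x$ with a rank $2n-2$ symplectic orthogonal complement, so one concludes by induction on $n$. Since $X$ is noetherian, it therefore admits a finite open cover $X = U_{1} \cup \dots \cup U_{m}$ with each $(E,\phi)|_{U_{j}}$ isometric to the trivial symplectic bundle, and the same holds over every intersection of the $U_{j}$.

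Over such an open $U$ one has $p^{-1}(U) = HP((E,\phi)|_{U}) \cong HP^{n-1} \times U$, and by the functoriality conditions (1) and (2) of Definition \ref{D:sp.thom} the restriction of $\zeta = p_{1}(\mathcal U, \phi|_{\mathcal U})$ is the pullback of $p_{1}(\mathcal U_{HP^{n-1}}, \phi_{HP^{n-1}})$; hence Theorem \ref{T:H.proj.bdl.1} says precisely that $\theta$ becomes an isomorphism after restriction to $U$, and likewise over every intersection $U_{j_{1}} \cap \dots \cap U_{j_{k}}$. In particular the case $m = 1$ is done, and I would induct on $m$. For $m \geq 2$ set $V = U_{1}$ and $W = U_{2} \cup \dots \cup U_{m}$; then $V$, $W$ and $V \cap W = (V \cap U_{2}) \cup \dots \cup (V \cap U_{m})$ all carry trivializing covers with fewer than $m$ members, so by the inductive hypothesis $\theta$ is an isomorphism over each of $V$, $W$ and $V \cap W$. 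The \'etale excision and localization axioms for $A^{*,*}$ yield a Mayer--Vietoris long exact sequence for the open cover $HP(E,\phi) = p^{-1}(V) \cup p^{-1}(W)$, and a direct sum of $n$ appropriately shifted copies of the Mayer--Vietoris sequence for $X = V \cup W$; since $\zeta$ and the connecting homomorphisms are natural for restriction, the maps $\theta$ over $X$, $V$, $W$ and $V \cap W$ assemble into a commutative ladder between these two sequences. Three of the four vertical families of arrows being isomorphisms, the five lemma shows that $\theta$ is an isomorphism over $X$, which closes the induction.

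The substantive content — that the trivial-bundle cohomology is free with the stated basis — is already contained in Theorem \ref{T:H.proj.bdl.1}, so the remaining points are routine: the Zariski-local symplectic triviality above; the fact that the stated axioms for $A^{*,*}$ really do produce Mayer--Vietoris exact sequences (obtained by applying \'etale excision to the open immersion $W \hookrightarrow X$ and splicing the two localization sequences); and the bigrading bookkeeping in $\bigoplus_{i=0}^{n-1} A^{*,*}(X)$, where the $i$-th summand is shifted by $(4i,2i)$ so that $\theta$ is bigraded. The only step requiring genuine care is the five-lemma argument, which needs the ladder to be honestly commutative; this follows from naturality of the Mayer--Vietoris connecting maps together with the compatibility of Pontryagin classes with base change.
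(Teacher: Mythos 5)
Your proposal is correct and is essentially the argument the paper has in mind: the paper deduces Theorem \ref{T:H.proj.bdl.2} from the trivial-bundle case (Theorem \ref{T:H.proj.bdl.1}) by exactly such a Mayer--Vietoris induction over a finite Zariski-trivializing cover, citing \cite[Theorem 8.2]{Panin:2010fk} for the details. Your spelled-out version (local symplectic triviality, induction on the number of cover members, commuting ladder and five lemma, with the boundary maps commuting with cup product by the globally defined class $\zeta$) matches that route.
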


\begin{defn}
\label{D:PontryaginClasses}
Under the hypotheses of Theorem
\ref{T:H.proj.bdl.2}
there are unique elements
$b_i(E, \phi) \in A^{4i,2i}(X)$ for $i=1,2, \dots , n$
such that
$$\zeta^{n}-b_1(E, \phi)\cup \zeta^{{n-1}} + b_2(E, \phi)\cup \zeta^{{n-2}} - \dots + (-1)^n b_n(E, \phi)=0.$$
The classes
$b_i(E, \phi)$
are called the \emph{Borel classes}
of $(E, \phi)$ with respect to the symplectic Thom structure of the cohomology theory $(A, \partial)$.
For $i > n$ one sets $b_i(E, \phi)$ = 0, and one sets $b_0(E, \phi) = 1$.
\end{defn}

For a rank $2$ symplectic bundle $(E,\phi)$ the classes $b_{1}(E,\phi)$ defined by Definitions
\ref{D:sp.thom}   and \ref{D:PontryaginClasses} coincide.

\begin{cor}
\label{C:pont.triv}
The Borel classes of a trivial symplectic bundle vanish.
\end{cor}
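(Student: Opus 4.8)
The plan is to deduce the vanishing directly from the quaternionic projective bundle theorem (Theorem \ref{T:H.proj.bdl.1}) together with the defining relation of Definition \ref{D:PontryaginClasses}. Let $(E,\phi) = (\OO_X^{\oplus 2n},\omega_{2n})$ be the trivial symplectic bundle of rank $2n$; since the relation of Definition \ref{D:PontryaginClasses} is preserved under isometries, it suffices to treat this model. Then the quaternionic projective bundle is trivial, $HP(E,\phi) \cong HP^{n-1}\times X$, and the tautological rank $2$ symplectic subbundle $(\mathcal U,\phi|_{\mathcal U})$ over $HP(E,\phi)$ is the pullback along the projection $\mathrm{pr}\colon HP^{n-1}\times X \to HP^{n-1}$ of the tautological subbundle $(\mathcal U_{HP^{n-1}},\phi_{HP^{n-1}})$. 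By functoriality of the symplectic Thom structure (property (2) of Definition \ref{D:sp.thom}), and hence of the Pontryagin class $p_1$ of a rank $2$ symplectic bundle, the class $\zeta = p_1(\mathcal U,\phi|_{\mathcal U})$ equals $\mathrm{pr}^{*}p_1(\mathcal U_{HP^{n-1}},\phi_{HP^{n-1}})$.

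First I would apply Theorem \ref{T:H.proj.bdl.1} with $HP^{n-1}$ in place of $HP^{n}$: it yields a ring isomorphism $A^{*,*}(HP^{n-1}\times X) \cong A^{*,*}(X)[\zeta]/(\zeta^{n})$. In particular $\zeta^{n}=0$ in $A^{*,*}(HP(E,\phi))$, while $1,\zeta,\dots,\zeta^{n-1}$ is an $A^{*,*}(X)$-module basis of $A^{*,*}(HP(E,\phi))$ --- which is also exactly the content of Theorem \ref{T:H.proj.bdl.2} for this bundle.

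Then I would substitute $\zeta^{n}=0$ into the defining relation
\[
\zeta^{n} - p_1(E,\phi)\cup\zeta^{n-1} + p_2(E,\phi)\cup\zeta^{n-2} - \dots + (-1)^{n}p_n(E,\phi) = 0
\]
of Definition \ref{D:PontryaginClasses}, which becomes $\sum_{i=1}^{n}(-1)^{i}p_i(E,\phi)\cup\zeta^{n-i}=0$ with each $p_i(E,\phi)\in A^{*,*}(X)$. Matching coefficients against the basis $1,\zeta,\dots,\zeta^{n-1}$ forces $p_i(E,\phi)=0$ for $1 \leq i \leq n$; the classes $p_i(E,\phi)$ with $i>n$ vanish by convention. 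There is no real difficulty here: the only point requiring a moment's care is the identification of $\zeta$ over the trivial bundle with the pullback of the universal class over $HP^{n-1}$, so that Theorem \ref{T:H.proj.bdl.1} applies with this specific $\zeta$, and that is immediate from the functoriality built into a symplectic Thom structure.
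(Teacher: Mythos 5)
Your proof is correct and is exactly the argument the paper intends: for a trivial bundle $HP(E,\phi)\cong HP^{n-1}\times X$ with $\zeta$ pulled back from $HP^{n-1}$, Theorem \ref{T:H.proj.bdl.1} gives $\zeta^{n}=0$ together with the basis $1,\zeta,\dots,\zeta^{n-1}$, and the uniqueness in Definition \ref{D:PontryaginClasses} then forces all $p_i(E,\phi)=0$. The paper states the corollary without proof precisely because it follows from Theorems \ref{T:H.proj.bdl.1}--\ref{T:H.proj.bdl.2} in this way, so there is nothing to add.
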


Among the consequences of the quaternionic projective bundle theorem is the symplectic splitting
principle \cite[Theorem 10.2]{Panin:2010fk}.
We used it to prove the Cartan sum formula for Borel classes \cite[Theorem 10.5]{Panin:2010fk}.

\begin{thm}
%[\protect{Cartan sum formula }]
\label{T:Cartan}
Let $(A^{*,*},\partial,\times,1_{A})$ be a bigraded $\epsilon$-commutative ring cohomology theory with
a symplectic Thom structure.
Suppose $(F, \psi) \cong (E_{1},\phi_{1}) \oplus (E_{2},\phi_{2})$ is an orthogonal direct sum of symplectic bundles over an $X$ in $\Sm/S$.  Then for all $i$ we have
\begin{equation}
\label{E:sum.formula}
b_{i}(F,\psi) = b_{i}(E_{1},\phi_{1}) + \sum_{j=1}^{i-1} b_{i-j}(E_{1},\phi_{1}) b_{j}(E_{2},\phi_{2}) +
b_{i}(E_{2},\phi_{2}).
\end{equation}
\end{thm}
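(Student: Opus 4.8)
This is \cite[Theorem 10.5]{Panin:2010fk} transported into the $\epsilon$-commutative bigraded framework, and the plan is to adapt the argument given there. It is convenient to repackage \eqref{E:sum.formula} as the single identity $p(F,\psi) = p(E_1,\phi_1) \cdot p(E_2,\phi_2)$ between \emph{total Pontryagin classes} $p(E,\phi) = \sum_{i\geq 0} p_i(E,\phi)$ (a finite sum, with $p_0(E,\phi) = 1_A$), the product being formed degreewise. Every $p_i$ lies in bidegree $(4i,2i) = (2\cdot 2i,\,2\cdot i)$, so by the observation following Theorem \ref{BO**asringcohomology} products of Pontryagin classes are strictly commutative and carry no $\epsilon$-signs; hence every ring-theoretic computation in \cite{Panin:2010fk} remains valid here verbatim, and only its overall structure has to be recalled.

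First I would reduce to rank $2$ summands. The symplectic splitting principle \cite[Theorem 10.2]{Panin:2010fk}, applied to $(E_1,\phi_1)\oplus(E_2,\phi_2)$, produces a morphism $w \colon Z \to X$ in $\Sm/S$ with $w^{*} \colon A^{*,*}(X) \to A^{*,*}(Z)$ injective and with $w^{*}(E_1,\phi_1)$ and $w^{*}(E_2,\phi_2)$ isometric to orthogonal direct sums of rank $2$ symplectic subbundles. Pontryagin classes are natural (they are built, via Definition \ref{D:PontryaginClasses}, from the functorial symplectic Thom structure), so \eqref{E:sum.formula}, being an identity in $A^{*,*}(X)$, follows once it is verified in $A^{*,*}(Z)$; thus we may assume $(E_1,\phi_1) = \bigoplus_{a=1}^{m}(L_a,\kappa_a)$ and $(E_2,\phi_2) = \bigoplus_{b=1}^{n}(M_b,\lambda_b)$ with all summands of rank $2$, so that $(F,\psi)$ is the orthogonal sum of all of them. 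It then suffices to prove the \emph{splitting formula}: for any orthogonal direct sum $(G,\theta) = \bigoplus_{c=1}^{N}(P_c,\rho_c)$ of rank $2$ symplectic bundles one has $p(G,\theta) = \prod_{c=1}^{N}\bigl(1 + p_1(P_c,\rho_c)\bigr)$ --- applied to $F$, $E_1$ and $E_2$ this gives $p(F,\psi) = p(E_1,\phi_1)\,p(E_2,\phi_2)$, since $\prod_{\text{all }c} = \prod_{a}\cdot\prod_{b}$.

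The splitting formula is the real content, and the plan for it mirrors Grothendieck's derivation of the Whitney sum formula for Chern classes: the quaternionic projective bundle theorem (Theorem \ref{T:H.proj.bdl.2}) plays the role of the projective bundle formula, and the excision and affine-bundle geometry of Theorems \ref{T:normal.geom} and \ref{T:affine.bundles} --- relating a quaternionic Grassmannian bundle, with a distinguished closed subbundle removed, to one of lower rank --- plays the role of the Gysin sequence of a linear subspace in a projective space. One inducts on $N$: over $HP(G,\theta)$, with tautological rank $2$ symplectic subbundle $\mathcal U$ and $\xi := p_1(\mathcal U,\theta|_{\mathcal U})$, this geometry exhibits $\xi - \pi^{*}p_1(P_1,\rho_1)$ (up to sign) as the Gysin pushforward of $1$ from the sub-bundle $HP(P_2\oplus\dots\oplus P_N)$, and then the projection formula together with the inductive hypothesis give $\prod_{c=1}^{N}\bigl(\xi - \pi^{*}p_1(P_c,\rho_c)\bigr) = 0$; since $1,\xi,\dots,\xi^{N-1}$ is an $A^{*,*}(X)$-basis of $A^{*,*}(HP(G,\theta))$, comparison with the defining relation of Definition \ref{D:PontryaginClasses} identifies $p_i(G,\theta)$ with the $i$-th elementary symmetric function of the $p_1(P_c,\rho_c)$. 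The main obstacle is the bookkeeping needed to make this yield the splitting formula --- and hence the Cartan formula --- without circularly presupposing Cartan: the induction must be arranged so that each step uses the formula only in strictly smaller rank, and a preliminary faithful base change may be needed to trivialise the extra summand so that Theorems \ref{T:normal.geom} and \ref{T:affine.bundles} apply as stated. This is carried out in \cite[\S 10]{Panin:2010fk}, and since no $\epsilon$-signs intervene in the bidegrees involved, that argument transfers unchanged.
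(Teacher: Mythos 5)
Your proposal is correct and follows essentially the same route as the paper, which gives no independent proof here but simply transports \cite[Theorem 10.5]{Panin:2010fk}, itself proved via the quaternionic projective bundle theorem and the symplectic splitting principle \cite[Theorem 10.2]{Panin:2010fk} (the latter being where the ample-family hypothesis on $S$ enters). Your added observation that all classes involved sit in bidegrees $(4i,2i)$, so that by the remark following Theorem \ref{BO**asringcohomology} no $\epsilon$-signs arise and the argument of \cite[\S 10]{Panin:2010fk} carries over verbatim to the bigraded $\epsilon$-commutative setting, is exactly the point that makes the citation legitimate.
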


The quaternionic projective bundle theorem also allowed us to compute the cohomology of
quaternionic Grassmannians.  To explain our results we need to recall a number of facts
about symmetric polynomials.
They may be found in  for example \cite[Chap.\ 1, \S\S 1--3]{Macdonald:1995zl}.

Let $\Lambda_{r} \subset \ZZ[x_{1},\dots,x_{r}]$ be the ring of symmetric polynomials in $r$ variables.
Let $e_{i}$ denote the $i^{\text{th}}$ elementary symmetric polynomial, and
$h_{i}$ the $i^{\text{th}}$ \emph{complete symmetric polynomial}, the sum of all the monomials of degree $i$.
Set $e_{0} = h_{0} = 1$
and $e_{i} = h_{i} = 0$ for $i < 0$ and also $e_{i} = 0$ for $i > r$.
%For $i > r$ we have $e_{i} = 0$ but $h_{i} \neq 0$.
We have $\Lambda_{r} = \ZZ[e_{1},\dots,e_{r}]$.
% with $e_{1},\dots,e_{r}$ algebraically independent.
There is a recurrence relation
% generating functions are
%%
%\begin{align}
%\label{E:EH}
%E(t) & = \sum_{i \geq 0} e_{i}t^{i} = \prod_{j=1}^{r}(1+x_{i}t), &
%H(t) & = \sum_{i \geq 0} h_{i}t^{i} = \prod_{j=1}^{r}(1-x_{i}t)^{-1}.
%\end{align}
%%
%So we have
%%
%\begin{equation}
%\label{E:H.recur}
%E(t)H(-t) & = 1, &
$h_{m} + \sum_{i=1}^{r} (-1)^{r} e_{i}h_{m-i}  = 0$.
%\end{equation}
%
%So we also have $\Lambda_{r} = \ZZ[h_{1},\dots,h_{r}]$.
%%with $h_{1},\dots,h_{r}$ algebraically independent.
%The $h_{i}$ with $i > r$ are nonzero but are dependent on $h_{1},\dots,h_{r}$.

Let
\[
\Pi_{r} = \{ \text{partitions $\lambda = (\lambda_{1},\lambda_{2},\dots,\lambda_{r})$
of length $l(\lambda)  \leq r$} \}.
\]
Write $\delta = (r-1,r-2, \dots,1,0)$.
For $\lambda \in \Pi_{r}$ let
$a_{\lambda + \delta} = \det(x_{i}^{\lambda_{j}+r-j})_{1\leq i,j \leq r}$.  Then $a_{\lambda+\delta}$ is
a skew-symmetric polynomial and therefore divisible by the Vandermonde determinant $a_{\delta}$.
The quotient $s_{\lambda} = a_{\lambda+\delta}/a_{\delta}$ is
the \emph{Schur polynomial} for $\lambda$.
It is symmetric of degree $\lvert \lambda \rvert = \sum \lambda_{i}$.
One has $s_{(1^{i})} = e_{i}$ and $s_{(i)} = h_{i}$.
The $a_{\lambda+\delta}$ with $l(\lambda) \leq r$ form a $\ZZ$-basis of the skew-symmetric polynomials
in $r$ variables, so the $s_{\lambda}$ with $l(\lambda) \leq r$ form a $\ZZ$-basis of $\Lambda_{r}$.
Denote by $\lambda'$ the partition dual to $\lambda$.  We have formulas
\begin{equation}
\label{E:schur.det}
s_{\lambda} = \det(e_{\lambda'_{i}-i+j})_{1 \leq i,j \leq m}
= \det(h_{\lambda_{i}-i+j})_{1\leq i,j, \leq r},
\end{equation}
for $m \geq l(\lambda')$ and $r \geq l(\lambda)$.
%
%The ring of \emph{symmetric functions} $\Lambda$ in an infinite number of variables is the
%inverse limit of the $\Lambda_{r}$ in the
%category of graded rings.  It is a polynomial ring in infinitely many indeterminates
%\[
%\Lambda = \ZZ[e_{1},e_{2}, \dots] = \ZZ[h_{1},h_{2}, \dots]
%\]
%It has an involution $\omega$ with $\omega(e_{i}) = h_{i}$ and $\omega(h_{i}) = e_{i}$ for all $i$
%and $\omega(s_{\lambda}) = s_{\lambda'}$ for all partitions $\lambda$.
%The quotient map $\Lambda \onto \Lambda_{r}$ is the quotient
%by the ideal $(e_{r+1},e_{r+2}, \dots)$.  The ring $\Lambda$
%has as $\ZZ$-basis the Schur functions
%$s_{\lambda}$ with $\lambda$ ranging over all partitions.  If, however,
%$l(\lambda) = \lambda'_{1} > r$, then the first row of the first determinant of \eqref{E:schur.det} is
%$(e_{\lambda'_{1}}, e_{\lambda'_{1}}+1, \dots, e_{\lambda'_{1}}+m-1)$.  All these entries
%are sent to $0$ in $\Lambda_{r}$.
%%
Set
\[
\Pi_{r,n-r} = \{ \text{partitions $\lambda$ of length $l(\lambda) = \lambda'_{1} \leq r$
and with  $\lambda_{1} \leq n-r$} \}
\]
The set $\Pi_{r,n-r}$ has $\binom{n}{r}$ members.
We will use the following results.

\begin{prop}
\label{P:sym.func.1}
The quotient map
\[\ZZ[e_{1},\dots,e_{r}] \to \ZZ[e_{1},\dots,e_{r}]/(h_{n-r+1},\dots,h_{n})
\]
sends
$\{s_{\lambda} \mid \lambda \in \Pi_{r} - \Pi_{r,n-r} \} \mapsto 0$,
and it sends
$\{s_{\lambda} \mid \lambda \in \Pi_{r,n-r} \}$ onto a homogeneous $\ZZ$-basis of the quotient ring.
\end{prop}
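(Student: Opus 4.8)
The plan is to work entirely in the polynomial ring $\Lambda_{r} = \ZZ[e_{1},\dots,e_{r}] = \ZZ[h_{1},\dots,h_{r}]$ and to exploit the fact that the Schur polynomials $\{s_{\lambda} \mid l(\lambda) \leq r\}$ form a $\ZZ$-basis. The quotient is by the ideal $I = (h_{n-r+1},\dots,h_{n})$, and the two halves of the statement — that $s_{\lambda} \mapsto 0$ for $\lambda \notin \Pi_{r,n-r}$, and that the images of $s_{\lambda}$ for $\lambda \in \Pi_{r,n-r}$ are a basis — are best handled together by exhibiting a clean ``straightening'' of arbitrary $s_{\lambda}$ modulo $I$.

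First I would record the key vanishing input: in $\Lambda_{r}/I$ one has $h_{m} = 0$ for $n-r+1 \leq m \leq n$ by definition, and then by the recurrence relation $h_{m} + \sum_{i=1}^{r}(-1)^{r}e_{i}h_{m-i} = 0$ (applied with care to signs — I would rewrite it in the standard Newton form $\sum_{i=0}^{r}(-1)^{i}e_{i}h_{m-i}=0$) an easy induction on $m$ shows $h_{m} = 0$ in $\Lambda_{r}/I$ for \emph{all} $m > n-r$, i.e. $h_{m} \in I$ for all $m \geq n-r+1$. Next I would invoke the Jacobi–Trudi formula $s_{\lambda} = \det(h_{\lambda_{i}-i+j})_{1\leq i,j\leq r}$ from \eqref{E:schur.det}. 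If $\lambda \notin \Pi_{r,n-r}$, then (since $l(\lambda) \leq r$ already, the failure must be $\lambda_{1} > n-r$) the first row of this $r\times r$ matrix is $(h_{\lambda_{1}}, h_{\lambda_{1}+1}, \dots, h_{\lambda_{1}+r-1})$, every entry of which has index $\geq \lambda_{1} > n-r$, hence lies in $I$; expanding the determinant along the first row shows $s_{\lambda} \in I$. This proves the first assertion.

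For the second assertion I would argue that the $\binom{n}{r}$ classes $\{s_{\lambda} \bmod I \mid \lambda \in \Pi_{r,n-r}\}$ both span and are independent. Spanning: the $s_{\lambda}$ with $l(\lambda)\leq r$ span $\Lambda_{r}$ over $\ZZ$, and by the first part all those with $\lambda \notin \Pi_{r,n-r}$ die, so the remaining ones span $\Lambda_{r}/I$. Independence: this is the step I expect to be the main obstacle, since one needs to know the quotient ring is free of rank exactly $\binom{n}{r}$ with no unexpected relations among the surviving Schur polynomials. The cleanest route is a Gröbner/monomial-count argument: order monomials in $e_{1},\dots,e_{r}$ suitably and show that $\{h_{n-r+1},\dots,h_{n}\}$ has leading terms generating an initial ideal whose standard monomials number exactly $\binom{n}{r}$ (equivalently, use that $\Lambda_{r}/(h_{n-r+1},\dots,h_{n}) \cong H^{*}(\Gr(r,n);\ZZ)$, a free abelian group of rank $\binom{n}{r}$ with the Schubert classes $\sigma_{\lambda}$, $\lambda\in\Pi_{r,n-r}$, as a basis — a standard fact for which I would cite \cite[Chap.\ 1, \S 3]{Macdonald:1995zl} or a reference on the cohomology of Grassmannians). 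Given that the quotient is free of rank $\binom{n}{r}$ and is spanned by the $\binom{n}{r}$ elements $\{s_{\lambda}\bmod I\mid \lambda\in\Pi_{r,n-r}\}$, a spanning set of the right cardinality in a free $\ZZ$-module of that rank must be a basis — here one must check it is a basis over $\ZZ$ and not merely over $\QQ$, which follows because the transition matrix between $\{s_{\lambda}\}$ and the Schubert basis is unitriangular with respect to the dominance order on $\Pi_{r,n-r}$, hence invertible over $\ZZ$. Finally, homogeneity of the basis is automatic since each $s_{\lambda}$ is homogeneous of degree $|\lambda|$ and the ideal $I$ is homogeneous.
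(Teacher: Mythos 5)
Your proposal is correct, and for the two main steps it follows the paper's own argument: the recurrence forces $h_{m}\in(h_{n-r+1},\dots,h_{n})$ for all $m\geq n-r+1$, the Jacobi--Trudi formula $s_{\lambda}=\det(h_{\lambda_{i}-i+j})$ then kills $s_{\lambda}$ for $\lambda\in\Pi_{r}-\Pi_{r,n-r}$ by expansion along the first row, and the surviving $s_{\lambda}$ span the quotient because the $s_{\lambda}$ with $l(\lambda)\leq r$ span $\ZZ[e_{1},\dots,e_{r}]$. The only real divergence is how the rank of the quotient is pinned down. The paper does this by the elementary count $\prod\deg h_{i}/\prod\deg e_{i}=\binom{n}{r}$, implicitly using that $h_{n-r+1},\dots,h_{n}$ is a regular sequence, so the quotient is finite free with Hilbert series $\prod_{i=n-r+1}^{n}(1-t^{i})\big/\prod_{j=1}^{r}(1-t^{j})$; no topology or Gr\"obner bases enter. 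Your second alternative, quoting $H^{*}(\Gr(r,n);\ZZ)$ with its Schubert basis and the identification $s_{\lambda}\mapsto\sigma_{\lambda}$, is uncomfortably close to citing the proposition itself (that standard fact is exactly the algebraic statement being proved), so the degree count or your Gr\"obner-type count is the cleaner self-contained route. Finally, your closing detour through unitriangularity of a transition matrix is unnecessary: once the quotient is known to be a free $\ZZ$-module of rank $\binom{n}{r}$, any spanning set of $\binom{n}{r}$ elements is automatically a $\ZZ$-basis (the induced surjection from $\ZZ^{\binom{n}{r}}$ has free kernel of rank $0$, hence is an isomorphism), which is precisely the paper's concluding step.
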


\begin{proof}
%[Sketch of proof]
For $\lambda \in \Pi_{r} - \Pi_{r,n-r}$ the first line of the determinant
$s_{\lambda} = \det(h_{\lambda_{i}-i+j})$ consists of $h_{k}$ with $k \geq n-r+1$.  These are all
in $(h_{n-r+1},\dots,h_{n})$ because of the recurrence relation satisfied by the $h_{k}$.  So
they are sent to $0$ in the quotient.

The rank of the quotient as a $\ZZ$-module is $\prod \deg h_{i} / \prod \deg e_{i} =
\binom{n}{r}$.  Since this is the same as the cardinality of $\Pi_{r,n-r}$, and since the images of
the $s_{\lambda}$ with $\lambda \in \Pi_{r,n-r}$ generate the quotient as a $\ZZ$-module,
they form a $\ZZ$-basis of the quotient.
\end{proof}

Let the $\overline e_{i}$ and the $\overline h_{i}$ be, respectively, the elementary and complete
symmetric polynomials in $r-1$ variables.  The natural quotient map sends
$e_{i} \mapsto \overline e_{i}$ for $i < r$ and $e_{r} \mapsto 0$, while it sends
$h_{i} \mapsto \overline h_{i}$ for all $i$.

\begin{prop}
\label{P:sym.func.2}
The kernel of the quotient map
\[
\ZZ[e_{1},\dots,e_{r}]/(h_{n-r+1},\dots,h_{n})
\to
\ZZ[\overline e_{1},\dots, \overline e_{r-1}]/(\overline h_{n-r+1},\dots, \overline h_{n-1})
\to 0
\]
is the image of the injection
\[
0 \to
\ZZ[e_{1},\dots,e_{r}]/(h_{n-r},\dots,h_{n-1})
\xra{ e_{r} \times {-}}
\ZZ[e_{1},\dots,e_{r}]/(h_{n-r+1},\dots,h_{n}).
\]
\end{prop}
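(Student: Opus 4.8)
The plan is to recognize the map $e_r\times{-}$ of the statement, together with the quotient map above it, as fitting into a short exact sequence
\[
0 \to R/J \xra{\;e_r\times{-}\;} R/I \xra{\;\pi\;} \overline R/\overline I \to 0,
\]
where $R=\ZZ[e_1,\dots,e_r]$, $\overline R=\ZZ[\overline e_1,\dots,\overline e_{r-1}]$, $I=(h_{n-r+1},\dots,h_n)$, $J=(h_{n-r},\dots,h_{n-1})$, $\overline I=(\overline h_{n-r+1},\dots,\overline h_{n-1})$, and $\pi$ is induced by $e_i\mapsto\overline e_i$ for $i<r$ and $e_r\mapsto 0$, i.e.\ is reduction modulo the ideal $(e_r)\subset R$. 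Exactness at $R/I$ is precisely the assertion that $\ker\pi$ equals the image of $e_r\times{-}$, and exactness at $R/J$ is the injectivity; so proving the three exactness statements settles everything.

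First I would check that the two maps descend to the quotients. For $\pi$ one needs $\pi(h_m)\in\overline I$ for $n-r+1\le m\le n$; since $\pi(h_m)=\overline h_m$, only $m=n$ is a concern, and there the recurrence relation for the complete symmetric polynomials in $r-1$ variables expresses $\overline h_n$ through $\overline h_{n-1},\dots,\overline h_{n-r+1}$, all generators of $\overline I$. For $e_r\times{-}$ one needs $e_rJ\subseteq I$; as $h_{n-r+1},\dots,h_{n-1}$ already lie in $I$, the only point is $e_rh_{n-r}\in I$, which follows by rewriting the recurrence relation at index $n$, since it exhibits $e_rh_{n-r}$ as an $R$-combination of $h_{n-r+1},\dots,h_n$.

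Next I would verify exactness at $R/I$ and at $\overline R/\overline I$. Surjectivity of $\pi$ is immediate, and $\pi\circ(e_r\times{-})=0$ because $\pi(e_r)=0$. For the reverse inclusion, if $a\in R$ has $\pi(a)\in\overline I$, write $\pi(a)=\sum_{m=n-r+1}^{n-1}\overline b_m\overline h_m$, lift the $\overline b_m$ to elements $b_m\in\ZZ[e_1,\dots,e_{r-1}]\subset R$, so that $a-\sum b_mh_m\in\ker\pi=(e_r)$, say $a-\sum b_mh_m=e_rc$; reducing modulo $I$, which contains all the $h_m$ occurring, shows that the class of $a$ in $R/I$ is $(e_r\times{-})$ applied to the class of $c$ in $R/J$. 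Hence $\ker\pi=\operatorname{im}(e_r\times{-})$ and $(R/I)/\operatorname{im}(e_r\times{-})\cong\overline R/\overline I$.

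The one substantive step is the injectivity of $e_r\times{-}\colon R/J\to R/I$, and I expect this to be where the real work is; I would obtain it by a rank count rather than by manipulating relations among the $h_m$ directly. By Proposition \ref{P:sym.func.1}, applied with $n$ replaced by $n-1$ for $R/J$ and with $(r,n)$ replaced by $(r-1,n-1)$ for $\overline R/\overline I$, the three rings are free $\ZZ$-modules of ranks $\binom nr$, $\binom{n-1}{r}$ and $\binom{n-1}{r-1}$ respectively. As a direct summand of the free module $R/I$ with complement isomorphic to $\overline R/\overline I$, the image of $e_r\times{-}$ is free of rank $\binom nr-\binom{n-1}{r-1}=\binom{n-1}{r}$. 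Thus $e_r\times{-}$ is a surjection of free $\ZZ$-modules of equal finite rank, hence an isomorphism onto its image, and in particular injective. This gives exactness at $R/J$ and completes the proof.
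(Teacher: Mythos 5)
Your proof is correct, but it takes a genuinely different route from the paper's. The paper works entirely inside the Schur calculus set up for Proposition \ref{P:sym.func.1}: the kernel of the quotient map is the free $\ZZ$-module on the Schur classes $s_{\lambda}$ with $\lambda \in \Pi_{r,n-r}$, $\lambda_{r} \geq 1$, and the dual Jacobi--Trudi formula $s_{\lambda} = \det(e_{\lambda'_{i}-i+j})$ factors each such class as $e_{r}s_{\mu}$ with $\mu \in \Pi_{r,n-r-1}$; since the $s_{\mu}$ form a basis of $\ZZ[e_{1},\dots,e_{r}]/(h_{n-r},\dots,h_{n-1})$, the map $e_{r}\times{-}$ carries a basis bijectively onto a basis of the kernel, which gives injectivity and the kernel identification in one stroke. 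You instead use only the numerical output of Proposition \ref{P:sym.func.1} (freeness and the ranks $\binom{n}{r}$, $\binom{n-1}{r}$, $\binom{n-1}{r-1}$), prove $\ker\pi = \operatorname{im}(e_{r}\times{-})$ by an elementary lifting argument modulo the ideal $(e_{r})$, and get injectivity from a rank count (a surjection of free $\ZZ$-modules of equal finite rank is injective); you also make explicit the well-definedness checks $e_{r}h_{n-r} \in (h_{n-r+1},\dots,h_{n})$ and $\overline h_{n} \in (\overline h_{n-r+1},\dots,\overline h_{n-1})$ via the recurrence, which the paper leaves implicit. Your argument is more elementary and makes the split short exact sequence of $\ZZ$-modules transparent; the paper's argument is shorter given the machinery already in place and yields the extra information of an explicit Schur basis of the kernel compatible with $e_{r}\times{-}$, which is in the spirit of how these presentations are used in the cohomology computations that follow. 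Either way the statement is established; the only mild caveat in your version is the degenerate case $r=n$, where $\ZZ[e_{1},\dots,e_{r}]/(h_{n-r},\dots,h_{n-1})$ is the zero ring and the rank count holds trivially.
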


\begin{proof}
The kernel is the free $\ZZ$-module with basis
$\{ s_{\lambda} \mid \lambda \in \Pi_{r,n-r} - \Pi_{r-1,n-r} \}$.  These are the $\lambda$
with $\lambda_{r} \geq 1$ and thus $\lambda'_{1} = r$.  The formula
$s_{\lambda} = \det(e_{\lambda'_{i}-i+j})$ shows that for such $\lambda$ one has
$s_{\lambda} = e_{r}s_{\mu}$ with $\mu = (\lambda_{1}-1,\dots,\lambda_{r}-1) \in \Pi_{r,n-r-1}$.
These $s_{\mu}$ form a basis of the ring on the left of the second displayed line.
\end{proof}

\begin{thm}[\protect{\cite[Theorem 11.2]{Panin:2010fk}}]
\label{T:Grass}
Let $(\shf U_{r,n},\phi_{r,n})$ be the tautological symplectic bundle of rank $2r$ on $\HGr(r,n)$.
%, and let
%$(\shf U^{\perp},\psi)$ be its orthogonal complement.
Then for any
bigraded $\epsilon$-commutative ring cohomology theory $(A^{*,*},\partial,\times,1_{A})$ with
a symplectic Thom structure and any $X$ in $\Sm/S$
% and any $X$
the map
\begin{equation}
\label{E:HGr.cohom.1}
%\gamma \colon
A^{*,*}(X)[e_{1},\dots,e_{r}] / (h_{n-r+1},\dots,h_{n}) \xrightarrow{\cong} A^{*,*}(\HGr(r,n) \times X)
\end{equation}
sending $e_{i} \mapsto b_{i}(\shf U_{r,n},\phi_{r,n})$ for all $i$
is an isomorphism of bigraded rings.
%, the map
%%
%\begin{equation}
%\label{E:HGr.cohom.2}
%A(S)^{\oplus \binom{n}{r}} \xrightarrow{(s_{\lambda}(\shf U,\phi))_{\lambda\in\Pi_{r,n-r}}}
%A(\HGr(r,n) \times S)
%\end{equation}
%%
%is an isomorphism of two-sided $A(S)$-modules, and
%for all partitions $\lambda$ we have
%%
%\begin{equation}
%\label{E:schur.complement}
%s_{\lambda}(\shf U,\phi) = (-1)^{\lvert \lambda \rvert} s_{\lambda'}(\shf U^{\perp},\psi).
%\end{equation}
\end{thm}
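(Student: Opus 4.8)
The plan is to induct, primarily on $r$ and for each $r$ secondarily on $n\ge r$, matching the asserted isomorphism with the algebraic exact sequences of Propositions \ref{P:sym.func.1} and \ref{P:sym.func.2}. Throughout I fix $X$ and abbreviate $A^{*,*}(HGr(r,n))$ for $A^{*,*}(HGr(r,n)\times X)$ and similarly for the other schemes that occur; the geometry of Theorems \ref{T:normal.geom}, \ref{T:affine.bundles} and \ref{T:HGr/HGr} applies verbatim over $X$, as it is stated for arbitrary symplectic bundles over arbitrary smooth bases. The base cases are $r=0$, where $HGr(0,n)\cong\pt$ and both rings equal $A^{*,*}(X)$, and, for each $r\ge1$, the case $n=r$, where $HGr(r,r)\cong\pt$ and $A^{*,*}(X)[e_{1},\dots,e_{r}]/(h_{1},\dots,h_{r})$ collapses to $A^{*,*}(X)$ by the Newton recursion; the case $r=1$ is moreover exactly the quaternionic projective bundle theorem (Theorem \ref{T:H.proj.bdl.1}), with $p_{1}(\shf U_{1,n})$ in the role of $\zeta$.

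For the inductive step fix $1\le r<n$ and assume the theorem for $(r,n-1)$ and for $(r-1,n-1)$. With $F=\OO^{\oplus 2n}$ and $E=\OO^{\oplus 2n-2}$ we have $HGr(r,F,\psi)=HGr(r,n)$ and $HGr(r,E,\phi)=HGr(r,n-1)$; let $\nu(N^{+})\subset HGr(r,n)$ be the closed subscheme of Theorem \ref{T:normal.geom}, the total space of a rank-$2r$ vector bundle over $HGr(r,n-1)$. By Theorem \ref{T:affine.bundles} the open complement $HGr(r,n)-\nu(N^{+})$ is an iterated affine bundle over $HGr(r-1,n-1)$ realizing the embedding \eqref{E:embed.2}, and by Theorem \ref{T:HGr/HGr} the quotient $HGr(r,n)/(HGr(r,n)-\nu(N^{+}))$ is identified in $H_{\bullet}(S)$ with the Thom space $\Th\shf U_{r,n-1}$. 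Using the higher symplectic Thom isomorphism for the rank-$2r$ symplectic bundle $(\shf U_{r,n-1},\phi_{r,n-1})$ (itself a consequence of the quaternionic projective bundle theorem and the symplectic splitting principle, \cite{Panin:2010fk}) together with $\Aff^{1}$-invariance for the complement, the localization long exact sequence of the pair $(HGr(r,n),\nu(N^{+}))$ becomes an exact sequence of bigraded $A^{*,*}(X)$-modules
\[
\cdots\to A^{*-4r,*-2r}(HGr(r,n-1))\xra{\iota_{*}}A^{*,*}(HGr(r,n))\xra{j^{*}}A^{*,*}(HGr(r-1,n-1))\to\cdots,
\]
in which $j^{*}$ is restriction along the embedding of \eqref{E:embed.2} and $\iota_{*}$ is the Thom isomorphism for $\shf U_{r,n-1}$ followed by the map forgetting supports along $\nu(N^{+})\subset HGr(r,n)$.

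Next I would show $j^{*}$ is surjective, which forces the sequence to split into short exact sequences. The embedding of \eqref{E:embed.2} classifies $(\OO^{\oplus2},\omega_{2})\oplus(\shf U_{r-1,n-1},\phi_{r-1,n-1})$, so $\shf U_{r,n}$ restricts along it to that orthogonal sum; the Cartan sum formula (Theorem \ref{T:Cartan}) and the vanishing of Pontryagin classes of trivial symplectic bundles (Corollary \ref{C:pont.triv}) give $j^{*}p_{i}(\shf U_{r,n})=p_{i}(\shf U_{r-1,n-1})$ for all $i$, and by the hypothesis for $(r-1,n-1)$ these generate the target $A^{*,*}(X)$-algebra. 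The same two facts, applied to the orthogonal splitting $\shf U_{r,n}\oplus\shf U_{r,n}^{\perp}=\OO^{\oplus2n}$ over $HGr(r,n)$, show that the graded ring homomorphism
\[
\Phi_{n}\colon A^{*,*}(X)[e_{1},\dots,e_{r}]/(h_{n-r+1},\dots,h_{n})\to A^{*,*}(HGr(r,n)),\qquad e_{i}\mapsto p_{i}(\shf U_{r,n}),
\]
is well defined: $\shf U_{r,n}^{\perp}$ has rank $2(n-r)$, so $h_{m}(p_{\bullet}(\shf U_{r,n}))$ equals, up to sign, $p_{m}(\shf U_{r,n}^{\perp})=0$ for $m\ge n-r+1$, and the recursion disposes of the higher $h_{m}$.

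It remains to place $\Phi_{n}$ in a ladder whose top row is the short exact sequence of Proposition \ref{P:sym.func.2} tensored over $\ZZ$ with $A^{*,*}(X)$ — still exact, since Proposition \ref{P:sym.func.1} makes all three terms $\ZZ$-free — and whose bottom row is the short exact sequence above; the outer top terms are precisely the rings the theorem assigns to $HGr(r,n-1)$ (shifted by $(4r,2r)$) and to $HGr(r-1,n-1)$, the connecting map is multiplication by $e_{r}$, and the left and right verticals are isomorphisms by the inductive hypothesis. The right-hand square commutes by $j^{*}p_{i}(\shf U_{r,n})=p_{i}(\shf U_{r-1,n-1})$ (and $p_{r}(\shf U_{r-1,n-1})=0=\overline e_{r}$); the left-hand square commutes up to the sign $(-1)^{r}$, because Theorem \ref{T:normal.geom}(d),(e) exhibits $\shf U_{r,n}$ as carrying a section vanishing transversally exactly along $\nu(N^{+})$, whence $\iota_{*}(1)$ is the Euler class $e(\shf U_{r,n})=z^{*}\thom(\shf U_{r,n})$, and the splitting principle with the relation $p_{1}=-e$ for rank-$2$ symplectic bundles (Definition \ref{D:sp.thom}) gives $e(\shf U_{r,n})=(-1)^{r}p_{r}(\shf U_{r,n})$. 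Absorbing this sign into one outer vertical (which remains an $A^{*,*}(X)$-module isomorphism) and applying the five lemma shows $\Phi_{n}$ is an isomorphism of $A^{*,*}(X)$-modules, hence a ring isomorphism, closing the induction. The step I expect to be the real obstacle is not the symmetric-function bookkeeping, already packaged in Propositions \ref{P:sym.func.1} and \ref{P:sym.func.2}, but securing the geometric inputs: the higher symplectic Thom isomorphism that identifies the left term of the localization sequence, and the identification (Theorems \ref{T:normal.geom} and \ref{T:affine.bundles}) of the complement of $\nu(N^{+})$ with an affine bundle over the smaller quaternionic Grassmannian. Finally, Proposition \ref{P:sym.func.1} records the output in the form used afterwards: $A^{*,*}(HGr(r,n))$ is $A^{*,*}(X)$-free on $\{s_{\lambda}(p_{\bullet}(\shf U_{r,n}))\mid\lambda\in\Pi_{r,n-r}\}$, which passes to the colimit for $BSp_{2r}$.
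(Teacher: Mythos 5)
Your overall skeleton --- the double induction, the localization sequence attached to the closed subscheme $\nu(N^{+})\subset HGr(r,n)$ with complement an iterated affine bundle over $HGr(r-1,n-1)$ (Theorems \ref{T:normal.geom}, \ref{T:affine.bundles}), the identification of the relative term with $\Th \shf U_{r,n-1}$ (Theorem \ref{T:HGr/HGr}), surjectivity of the restriction via the Cartan formula, and the comparison with Propositions \ref{P:sym.func.1}--\ref{P:sym.func.2} by the five lemma --- is the right shape of argument; note that the present paper does not prove this statement at all but quotes it from \cite[Theorem 11.2]{Panin:2010fk}. The genuine gap is the step you yourself flag as ``the real obstacle'' and then dispose of in a parenthesis: you invoke a Thom isomorphism $A^{*-4r,*-2r}(HGr(r,n-1))\cong A^{*,*}(\Th\shf U_{r,n-1})$ for the rank $2r$ bundle $(\shf U_{r,n-1},\phi_{r,n-1})$, the $A^{*,*}(HGr(r,n))$-linearity of $\iota_{*}$, the identification $\iota_{*}(1)=e(\shf U_{r,n})$ via the transversal section of Theorem \ref{T:normal.geom}(d), and the relation $e(\shf U_{r,n})=(-1)^{r}p_{r}(\shf U_{r,n})$. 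A symplectic Thom structure (Definition \ref{D:sp.thom}) supplies Thom classes and Thom isomorphisms in rank $2$ only. The extension to a symplectic Thom classes theory in all ranks, the tautological classes $\vartheta_{r}$, and the identity $p_{r}(F,\psi)=(-1)^{r}z^{*}\thom(F,\psi)$ of \eqref{E:top.p} are obtained, both in this paper and in \cite{Panin:2010fk}, \emph{downstream} of the Grassmannian computation: they rest on the computations of $A^{*,*}(BSp_{2r})$, $A^{*,*}(\Th\shf U_{HGr(r,n)})$ and $A^{*,*}(\MSp_{2r})$ (Theorems \ref{T:cohom.MSp.2r}, \ref{T:cohom.MSp.2r.ideal}, \ref{T:d.delta}, \ref{T:alpha.delta}), which use Theorem \ref{T:Grass}. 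As written, your induction is therefore circular.

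The appeal to the splitting principle does not repair this. The splitting principle lets one define Pontryagin classes and prove the Cartan sum formula, but it does not produce a Thom class on the base: a class defined after pullback to the splitting variety has no reason to descend (this descent is exactly the ``tricky point'' the introduction of this paper refers to), and without a globally defined class there is no global map $A^{*,*}(X)\to A^{*+4r,*+2r}(F,F-X)$ to which a Mayer--Vietoris argument could even be applied; the rank-$2$ case works only because condition (3) of Definition \ref{D:sp.thom} plus Zariski-local triviality gives such a global comparison map. Nor can you sidestep the problem by computing $A^{*,*}(\Th\shf U_{r,n-1})$ from the cofiber sequence of Theorem \ref{T:HGr/HGr}, since $\Th\shf U_{r,n-1}\simeq HGr(r,n)/HGr(r-1,n-1)$ involves precisely the space you are trying to compute at that stage of the induction. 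To make your ladder honest you would have to construct, in advance and by hand, Thom classes for the tautological bundles $\shf U_{r,n}$ with enough functoriality and the Euler-class property $\iota_{*}(1)=(-1)^{r}p_{r}$, or reorganize the induction so that only rank-$2$ Thom data are ever used; that is the substantive content hidden behind your citation of \cite{Panin:2010fk}, and it is where the actual proof of \cite[Theorem 11.2]{Panin:2010fk} does its work.
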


\begin{thm}[\protect{\cite[Theorem 11.4]{Panin:2010fk}}]
\label{T:HGr.lim.cohom.1}
Let $\alpha_{r,n} \colon \HGr(r,n) \hra \HGr(r,n+1)$
%and $\beta_{r,n} \colon \HGr(r,n)  \to \HGr(r+1,n+1)$
be the usual inclusion.
For any bigraded $\epsilon$-commutative ring cohomology theory $(A^{*,*},\partial,\times,1_{A})$ with
a symplectic Thom structure and any $X$ in $\Sm/S$
%and any $X$
the map
\begin{equation*}
(\alpha_{r,n} \times 1)^{*} \colon A^{*,*}(\HGr(r,n+1) \times X) \to A^{*,*}(\HGr(r,n) \times X)
%\\
%(\beta_{r,n} \times 1)^{*} \colon A^{*,*}(\HGr(r+1,n+1) \times X) \to A^{*,*}(\HGr(r,n) \times X)
\end{equation*}
is a surjection which
the isomorphisms \eqref {E:HGr.cohom.1} identify
with the natural surjection
\[
A^{*,*}(X)[e_{1},\dots,e_{r}] / (h_{n-r+2},\dots,h_{n},h_{n+1})
\to
A^{*,*}(X)[e_{1},\dots,e_{r}] / (h_{n-r+1},h_{n-r+2},\dots,h_{n}).
\]
\end{thm}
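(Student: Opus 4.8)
The plan is to reduce the statement to Theorem~\ref{T:Grass} together with the compatibility of the isomorphisms \eqref{E:HGr.cohom.1} with inclusions, and then to identify the resulting map of presentations with the claimed natural surjection of quotient rings. First I would observe that the inclusion $\alpha_{r,n} \colon \HGr(r,n) \hra \HGr(r,n+1)$ pulls back the tautological symplectic subbundle $(\shf U_{r,n+1},\phi_{r,n+1})$ to $(\shf U_{r,n},\phi_{r,n})$, so by the functoriality of Pontryagin classes (condition (2) in Definition~\ref{D:sp.thom}, propagated to all $p_i$ via Definition~\ref{D:PontryaginClasses}) we get $(\alpha_{r,n}\times 1)^{*} p_i(\shf U_{r,n+1},\phi_{r,n+1}) = p_i(\shf U_{r,n},\phi_{r,n})$ for all $i$. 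Hence the square relating the two copies of \eqref{E:HGr.cohom.1} (for $n+1$ and for $n$) and the map $(\alpha_{r,n}\times 1)^{*}$ commutes, because both composites send $e_i \mapsto p_i(\shf U_{r,n},\phi_{r,n})$ and the source rings are generated over $A^{*,*}(X)$ by the $e_i$.

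Next I would make the two presentations explicit. By Theorem~\ref{T:Grass}, $A^{*,*}(\HGr(r,n+1)\times X) \cong A^{*,*}(X)[e_1,\dots,e_r]/(h_{n-r+2},\dots,h_{n+1})$ and $A^{*,*}(\HGr(r,n)\times X) \cong A^{*,*}(X)[e_1,\dots,e_r]/(h_{n-r+1},\dots,h_{n})$. (Here one must be slightly careful with the indexing: for the Grassmannian of $2r$-planes in $2(n+1)$-space the relations are $h_{(n+1)-r+1},\dots,h_{n+1} = h_{n-r+2},\dots,h_{n+1}$, matching the statement.) The commuting square of the previous paragraph then identifies $(\alpha_{r,n}\times 1)^{*}$ with the unique $A^{*,*}(X)$-algebra map $A^{*,*}(X)[e_1,\dots,e_r]/(h_{n-r+2},\dots,h_{n+1}) \to A^{*,*}(X)[e_1,\dots,e_r]/(h_{n-r+1},\dots,h_{n})$ fixing each $e_i$. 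To see that this is well defined and is exactly the ``natural surjection'' of the statement, I would note that the complete symmetric polynomials $h_m \in \ZZ[e_1,\dots,e_r] \subset A^{*,*}(X)[e_1,\dots,e_r]$ satisfy the recurrence $h_m + \sum_{i=1}^{r}(-1)^i e_i h_{m-i} = 0$ recalled in the excerpt, so $h_{n+1}$ already lies in the ideal $(h_{n-r+1},\dots,h_{n})$; therefore the ideal $(h_{n-r+2},\dots,h_{n+1})$ is contained in $(h_{n-r+1},\dots,h_{n})$ and the quotient map is the obvious surjection. Surjectivity of $(\alpha_{r,n}\times 1)^{*}$ is then immediate, or alternatively follows from Theorem~\ref{T:HGr/HGr} (or directly from the quaternionic projective bundle theorem applied fibrewise), since each generator $p_i(\shf U_{r,n},\phi_{r,n})$ lifts.

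I do not expect any genuine obstacle here: the result is essentially a bookkeeping corollary of Theorem~\ref{T:Grass} plus naturality of Pontryagin classes. The one point that needs care is the index shift between $\HGr(r,n)$ and $\HGr(r,n+1)$ in the relations $h_{\bullet}$, together with the use of the recurrence relation to check that passing from $n+1$ to $n$ drops exactly the relation $h_{n-r+1}$ and adjoins nothing new (equivalently, that the top relation $h_{n+1}$ is redundant modulo the lower ones). Once that is in place, the identification of $(\alpha_{r,n}\times 1)^{*}$ with the natural quotient map is forced by the universal property of the polynomial algebra, since both are $A^{*,*}(X)$-algebra homomorphisms agreeing on the generators $e_i$.
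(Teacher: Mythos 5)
Your argument is correct. Note that the paper itself gives no proof of this statement: it is quoted from \cite[Theorem 11.4]{Panin:2010fk}, so there is nothing internal to compare against, but your derivation is the expected one and it is sound. The three ingredients you use are exactly the right ones: $(\alpha_{r,n}\times 1)^{*}$ carries $p_{i}(\shf U_{r,n+1},\phi_{r,n+1})$ to $p_{i}(\shf U_{r,n},\phi_{r,n})$ by functoriality of the Pontryagin classes, so the square of $A^{*,*}(X)$-algebra maps commutes since both composites agree on the central generators $e_{i}$ (they are central because they sit in bidegree $(4i,2i)$, so $\epsilon$-commutativity causes no trouble); the presentation of Theorem \ref{T:Grass} for $\HGr(r,n+1)$ has relations $h_{n-r+2},\dots,h_{n+1}$, matching the index shift you flag; and the recurrence $h_{m}=\sum_{i=1}^{r}(-1)^{i+1}e_{i}h_{m-i}$ (the paper's displayed version has a typo, $(-1)^{r}$ for $(-1)^{i}$, which you correctly repair) shows $h_{n+1}\in(h_{n-r+1},\dots,h_{n})$, so the ideal $(h_{n-r+2},\dots,h_{n+1})$ is contained in $(h_{n-r+1},\dots,h_{n})$ and the induced map is precisely the natural surjection, whence surjectivity of $(\alpha_{r,n}\times 1)^{*}$ follows.
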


\begin{thm}[\protect{\cite[Theorem 11.4]{Panin:2010fk}}]
\label{T:HGr.lim.cohom.2}
Let
%$\alpha_{r,n} \colon \HGr(r,n) \hra \HGr(r,n+1)$ and
$\beta_{r,n} \colon \HGr(r,n)  \to \HGr(1+r,1+n)$
be the usual inclusion.
For any bigraded $\epsilon$-commutative ring cohomology theory $(A^{*,*},\partial,\times,1_{A})$ with
a symplectic Thom structure and any $X$ in $\Sm/S$
%and any $X$
the map
\begin{equation*}
(\beta_{r,n} \times 1)^{*} \colon A^{*,*}(\HGr(1+r,1+n) \times X) \to A^{*,*}(\HGr(r,n) \times X)
%\\
%(\beta_{r,n} \times 1)^{*} \colon A^{*,*}(\HGr(r+1,n+1) \times X) \to A^{*,*}(\HGr(r,n) \times X)
\end{equation*}
is a surjection which
the isomorphisms \eqref {E:HGr.cohom.1} identify
with the surjection
\[
A^{*,*}(\pt)[e_{1},\dots,e_{r},e_{r+1}] / (h_{n-r+1},\dots,h_{n},h_{n+1})
\to
A^{*,*}(\pt)[e_{1},\dots,e_{r}] / (h_{n-r+1},\dots,h_{n})
\]
of $A^{*,*}(\pt)[e_{1},\dots,e_{r}]$-algebras sending $e_{r+1} \mapsto 0$.
\end{thm}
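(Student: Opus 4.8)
The plan is to reduce Theorem~\ref{T:HGr.lim.cohom.2} to the cohomology computation of Theorem~\ref{T:Grass} together with the naturality already built into it. First I would recall that $\beta_{r,n} \colon \HGr(r,n) \to \HGr(1+r,1+n)$ is the inclusion classified by sending a $2r$-dimensional symplectic subspace $V \subset (\OO^{\oplus 2n},\omega_{2n})$ to $(\OO^{\oplus 2},\omega_{2}) \oplus V \subset (\OO^{\oplus 2},\omega_{2}) \oplus (\OO^{\oplus 2n},\omega_{2n}) = (\OO^{\oplus 2n+2},\omega_{2n+2})$. The key geometric observation is that this identification exhibits the tautological symplectic subbundle over $\HGr(1+r,1+n)$, restricted along $\beta_{r,n}$, as an orthogonal direct sum
\[
\beta_{r,n}^{*}(\shf U_{1+r,1+n},\phi_{1+r,1+n}) \cong (\OO^{\oplus 2},\omega_{2}) \oplus (\shf U_{r,n},\phi_{r,n}).
\]
So the whole statement comes down to tracking what this does to Pontryagin classes and then invoking the two presentations from Theorem~\ref{T:Grass}.

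The steps, in order, would be: (1)~Verify the direct-sum identification of tautological bundles above; this is a straightforward matter of unwinding the classifying-map description of $\HGr$ and its tautological subbundle recalled before Theorem~\ref{T:normal.geom}. (2)~Apply the Cartan sum formula, Theorem~\ref{T:Cartan}, to the orthogonal decomposition. Since the Pontryagin classes of the trivial rank $2$ bundle $(\OO^{\oplus 2},\omega_{2})$ all vanish by Corollary~\ref{C:pont.triv}, the formula degenerates: $\beta_{r,n}^{*} p_{i}(\shf U_{1+r,1+n},\phi_{1+r,1+n}) = p_{i}(\shf U_{r,n},\phi_{r,n})$ for every $i$. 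In particular the top class $p_{r+1}$ of the rank-$(2r+2)$ bundle pulls back to $p_{r+1}$ of a rank-$2r$ bundle, which is $0$ by definition (Definition~\ref{D:PontryaginClasses}). (3)~Translate this through the isomorphisms \eqref{E:HGr.cohom.1}: on the source side the generator $e_{i}$ maps to $p_{i}(\shf U_{1+r,1+n},\phi_{1+r,1+n})$, on the target side $e_{i}$ maps to $p_{i}(\shf U_{r,n},\phi_{r,n})$, and by step~(2) the ring map $(\beta_{r,n}\times 1)^{*}$ therefore carries $e_{i} \mapsto e_{i}$ for $i \leq r$ and $e_{r+1} \mapsto 0$. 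This is exactly the described algebraic surjection. (4)~Confirm surjectivity: the map is visibly surjective on the algebra side since it hits all the generators $e_{1},\dots,e_{r}$, and the presentations are compatible (one checks the relation ideal $(h_{n-r+1},\dots,h_{n},h_{n+1})$ on the source maps into $(h_{n-r+1},\dots,h_{n})$ on the target under $e_{r+1}\mapsto 0$, which holds because $h_{n+1}$ already lies in the target ideal by the recurrence relation among the $h_{k}$, and the other $h$'s are unchanged).

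The main obstacle I anticipate is step~(1): being careful about which copy of $(\OO^{\oplus 2},\omega_{2})$ is split off and checking that the splitting is compatible with the \emph{stabilization} inclusions used to form the colimits, i.e.\ that the identification is natural enough that the Cartan formula can be applied uniformly in $n$ and then passed to the limit if needed. In fact for this particular theorem we only need the finite-level statement, so the colimit compatibility is not strictly required here; but one still wants the direct-sum decomposition of tautological bundles to be the \emph{canonical} one, so that the resulting ring map is exactly the stated one and not merely isomorphic to it. Everything else is a formal consequence of results already in hand: Theorem~\ref{T:Grass} supplies both presentations, Theorem~\ref{T:Cartan} plus Corollary~\ref{C:pont.triv} supply the behaviour of Pontryagin classes, and the symmetric-function bookkeeping (the recurrence $h_{m} + \sum_{i=1}^{r}(-1)^{i}e_{i}h_{m-i}=0$) shows the two relation ideals match up under $e_{r+1}\mapsto 0$.
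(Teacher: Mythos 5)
Your proposal is correct. Note that this theorem is not proved in the present paper at all --- it is imported from \cite[Theorem 11.4]{Panin:2010fk} --- and your argument (identify $\beta_{r,n}^{*}(\shf U_{1+r,1+n},\phi_{1+r,1+n})$ with the orthogonal sum $(\OO^{\oplus 2},\omega_{2})\oplus(\shf U_{r,n},\phi_{r,n})$, apply the Cartan formula of Theorem \ref{T:Cartan} together with Corollary \ref{C:pont.triv} to get $e_{i}\mapsto e_{i}$, $e_{r+1}\mapsto 0$, and match the two presentations of Theorem \ref{T:Grass}, with the recurrence for the $h_{k}$ handling $h_{n+1}$) is exactly the intended derivation and uses only results stated earlier, so there is no circularity.
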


\section{The cohomology of
%\texorpdfstring
{$BSp_{2r}$ and $\MSp_{2r}$}
{BSp\_2r and MSp\_2r}
}
\label{S:cohom.BSp.MSp}

\begin{thm}
\label{T:cohom.BSp}
Let $(A,\mu,e)$ be a commutative $T$-ring spectrum with a symplectic Thom structure on $A^{*,*}$.
Then the isomorphism $BSp_{2r} = \colim_{n} HGr(r,n)$ induces isomorphisms
\begin{equation*}
%\label{E:A(BSp.2r)}
A^{*,*}(BSp_{2r})
\xrightarrow{\cong}
\varprojlim\limits_{n \to \infty} A^{*,*}(\HGr(r,n) )
\xleftarrow{\cong}
A^{*,*}(\pt) [[b_{1},\dots,b_{r}]]^{\homog}
%\\
%\label{E:HGr(infty,2.infty)}
%A(S)[[b_{1},p_{2},  p_{3}, \dots ]]
%\xrightarrow{\cong}
%\varprojlim
%\limits_{n\to \infty}
%A(\HGr(n,2n) \times S)
\end{equation*}
of bigraded rings.
The second isomorphism
sends the variable $b_{i}$ to the inverse system of $i^{\text{th}}$ Borel classes
$(b_{i}(\shf U_{r,n}))_{n \geq r}$.
\end{thm}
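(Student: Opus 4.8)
The plan is to prove the two isomorphisms separately, the first being a formal consequence of the $\varprojlim$--$\varprojlim^{1}$ machinery and the second a bookkeeping exercise on the symmetric‑function computations of Section~\ref{S:projective.bundle}. For the first isomorphism, since $BSp_{2r} = \colim_{n}\HGr(r,n)$ is a sequential homotopy colimit along the closed embeddings $\alpha_{r,n}\colon \HGr(r,n)\hra\HGr(r,n+1)$, Theorem~\ref{T:hocolim} applies and gives, in each bidegree $(p,q)$, an exact sequence
\[
0 \to \sideset{}{^{1}}\varprojlim_{n} A^{p-1,q}(\HGr(r,n)) \to A^{p,q}(BSp_{2r}) \to \varprojlim_{n} A^{p,q}(\HGr(r,n)) \to 0 .
\]
By Theorem~\ref{T:HGr.lim.cohom.1} the restriction maps $A^{*,*}(\HGr(r,n+1))\to A^{*,*}(\HGr(r,n))$ are surjective, so the inverse system is Mittag--Leffler and the $\varprojlim^{1}$ term vanishes; the surviving map is then an isomorphism, and a ring isomorphism because each restriction map is a ring homomorphism. (The colimit in the definition of $BSp_{2r}$ is indexed by the cofinal subsystem $\HGr(r,rp)$, $p\in\NN$, which changes neither the homotopy colimit nor the inverse limit.)

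For the second isomorphism I would feed in Theorems~\ref{T:Grass} and~\ref{T:HGr.lim.cohom.1}. Applied with $X=\pt$, Theorem~\ref{T:Grass} identifies $A^{*,*}(\HGr(r,n))$ with $A^{*,*}(\pt)[e_{1},\dots,e_{r}]/(h_{n-r+1},\dots,h_{n})$ via $e_{i}\mapsto p_{i}(\shf U_{r,n})$, and Theorem~\ref{T:HGr.lim.cohom.1} identifies the restriction map with the natural surjection $A^{*,*}(\pt)[e_{1},\dots,e_{r}]/(h_{n-r+2},\dots,h_{n+1}) \to A^{*,*}(\pt)[e_{1},\dots,e_{r}]/(h_{n-r+1},\dots,h_{n})$, compatibly with $p_{i}(\shf U_{r,n+1})\mapsto p_{i}(\shf U_{r,n})$. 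It then remains to compute $\varprojlim_{n} A^{*,*}(\pt)[e_{1},\dots,e_{r}]/(h_{n-r+1},\dots,h_{n})$, and I would do this one bidegree at a time using the Schur basis: by Proposition~\ref{P:sym.func.1} the $n$‑th quotient is free over $A^{*,*}(\pt)$ on $\{s_{\lambda}:\lambda\in\Pi_{r,n-r}\}$, and under this basis the transition map sends the vectors $s_{\lambda}$ with $\lambda\in\Pi_{r,n+1-r}\setminus\Pi_{r,n-r}$ to $0$ and fixes the rest. Since $\bigcup_{n}\Pi_{r,n-r}=\Pi_{r}$ and each $s_{\lambda}$ has the fixed bidegree $(4|\lambda|,2|\lambda|)$, the limit of this system of coordinate projections is, in bidegree $(a,b)$, the full product $\prod_{\lambda\in\Pi_{r}}A^{a-4|\lambda|,\,b-2|\lambda|}(\pt)\,s_{\lambda}$. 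Finally, for each fixed weighted degree the monomials $e^{\mu}=e_{1}^{\mu_{1}}\cdots e_{r}^{\mu_{r}}$ (with $d(\mu)=\sum i\mu_{i}$) and the Schur polynomials $s_{\lambda}$ of that degree are two $\ZZ$‑bases of the same finite free $\ZZ$‑module, related by an invertible integral (indeed unitriangular) change of basis, so the above product is identified with $\prod_{\mu}A^{a-4d(\mu),\,b-2d(\mu)}(\pt)\,e^{\mu}$, which is by definition the bidegree $(a,b)$ part of $A^{*,*}(\pt)[[p_{1},\dots,p_{r}]]^{\homog}$ under $e_{i}\leftrightarrow p_{i}$. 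Multiplicativity is inherited from the ring isomorphisms of Theorem~\ref{T:Grass} at finite level, the product on $A^{*,*}(\pt)[[p_{1},\dots,p_{r}]]^{\homog}$ being the evident one, well defined since only finitely many monomials contribute to each bidegree.

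The routine parts are the homotopy‑colimit formalism and the Mittag--Leffler vanishing. The step I expect to require the most care is the inverse‑limit computation in the second isomorphism: one must track the transition maps precisely on the Schur basis (this is exactly what Proposition~\ref{P:sym.func.1} was set up to do), carry the bigrading through the whole argument, and keep in mind that the superscript $\homog$ denotes the \emph{unrestricted} product over all monomials within each fixed bidegree rather than a direct sum. Given Theorems~\ref{T:Grass} and~\ref{T:HGr.lim.cohom.1} together with Proposition~\ref{P:sym.func.1}, none of this is deep, but it is the place where an error would most easily creep in.
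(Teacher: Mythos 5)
Your proof is correct, and its first half (the $\varprojlim$--$\varprojlim^{1}$ exact sequence from Theorem \ref{T:hocolim} plus the surjectivity of the $\alpha_{r,n}^{*}$ from Theorem \ref{T:HGr.lim.cohom.1} to kill the $\varprojlim^{1}$) is exactly the paper's argument; your parenthetical remark about cofinality of the subsystem $\HGr(r,rp)$ is a point the paper passes over silently. Where you diverge is in identifying $\varprojlim_{n} A^{*,*}(\pt)[e_{1},\dots,e_{r}]/(h_{n-r+1},\dots,h_{n})$ with $A^{*,*}(\pt)[[p_{1},\dots,p_{r}]]^{\homog}$: the paper does this by a cofinality-of-ideals trick, introducing the degree-truncation ideals $I_{d}$ generated by monomials of weighted degree $\geq d$ and observing $I_{r(n-r)+1} \subset (h_{n-r+1},\dots,h_{n}) \subset I_{n-r+1}$ (since every $\lambda \in \Pi_{r,n-r}$ has $\lvert\lambda\rvert \leq r(n-r)$), so the two inverse systems interleave and $\varprojlim_{d} A^{*,*}(\pt)[p_{1},\dots,p_{r}]/I_{d} = A^{*,*}(\pt)[[p_{1},\dots,p_{r}]]^{\homog}$ by definition. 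You instead compute the limit head-on: Proposition \ref{P:sym.func.1} makes each term free on the Schur classes $s_{\lambda}$, $\lambda \in \Pi_{r,n-r}$, the transition maps become coordinate projections, the bidegreewise limit is the full product over $\Pi_{r}$, and an invertible integral change of basis (Schur functions versus $e$-monomials in each fixed weighted degree) converts this into the unrestricted product over monomials, i.e.\ the bidegree part of the homogeneous power series ring. Both arguments are sound; the paper's is shorter and avoids any basis bookkeeping, while yours is more explicit about what $[[\,\cdot\,]]^{\homog}$ is degreewise and does not even need the degree bound $\lvert\lambda\rvert \leq r(n-r)$, only that $\bigcup_{n}\Pi_{r,n-r} = \Pi_{r}$. (The ``unitriangular'' aside is not needed and is slightly delicate in finitely many variables; the invertibility you actually use already follows from both families being $\ZZ$-bases of the same finite free module, as you say.)
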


%The theorem follows from the explicit generators and relations for the $A(\HGr(r,n) \times S)$
%given in Theorem \ref{T:Grass} in the same way as for ordinary Grassmannians.

Here $A^{*,*}(\pt) [[b_{1},\dots,b_{r}]]^{\homog}$ is the bigraded ring of homogeneous power series.
The $\varprojlim$ is taken in the category of bigraded rings.

\begin{proof}
We have $BSp_{2r} = \colim_{n} HGr(r,n)$, so by Theorem \ref{T:hocolim} we have an exact sequence
\[
0 \to \sideset{}{^{1}}\varprojlim_{n\in\NN} A^{*-1,*}(HGr(r,n))
\to A^{*,*}(BSp_{2r}) \to
\varprojlim_{n\in\NN} A^{*,*}(HGr(r,n)) \to 0.
\]
The connecting maps are the $\alpha_{r,n}^{*}$ of Theorem \ref {T:HGr.lim.cohom.1}, which are surjective.
So the $\varprojlim^{1}$ vanishes, and the first map of the statement of the theorem is an isomorphism.

Let $I_{d} \subset A^{*,*}(\pt)[b_{1},\dots,b_{r}]$ be the two-sided ideal generated by the monomials
$b_{1}^{a_{1}}p_{2}^{a_{2}} \dots b_{r}^{a_{r}}$ with $\sum i a_{i} \geq d$.  We have inclusions
\(
I_{r(n-r)+1} \subset (h_{n-r+1},\dots,h_{n}) \subset I_{n-r+1}
\)
because all partitions $\lambda \in \Pi_{r,n-r}$ have
$\lvert \lambda \rvert \leq r(n-r)$.
So we have
\begin{multline*}
\varprojlim_{n} A^{*,*}(HGr(r,n)) \cong
\varprojlim_{n} A^{*,*}(\pt)[b_{1},\dots,b_{r}]/(h_{n-r+1},\dots,h_{n})
\\ \cong \varprojlim_{d} A^{*,*}(\pt)[b_{1},\dots,b_{r}]/I_{d}
= A^{*,*}(\pt) [[b_{1},\dots,b_{r}]]^{\homog}.
\tag*{\qedhere}
\end{multline*}
\end{proof}

\begin{thm}
\label{T:cohom.MSp.2r}
Let $(A,\mu,e)$ be a commutative $T$-ring spectrum with a symplectic Thom structure on $A^{*,*}$.
Then for any $r$ and $n$ the motivic homotopy equivalence
\[
\Th \shf U_{HGr(r,n)} \cong HGr(r,1+n)/HGr(r-1,n)
\]
of Theorem \ref{T:BSp/BSp} induces isomorphisms
\begin{equation}
\label{E:cohom.Thom}
A^{*,*}(\pt)[b_{1},\dots,b_{r}]/(h_{n-r+1},\dots,h_{n})
\xra[\cong]{\cup b_{r}}
A^{*+4r,*+2r}(\Th \shf U_{HGr(r,n)})
\end{equation}
of two-sided bigraded modules over
$A^{*,*}(HGr(r,n)) \cong A^{*,*}(\pt)[b_{1},\dots,b_{r}]/(h_{n-r+1},\dots,h_{n})$.
Moreover, the isomorphism $\MSp_{2r} = \colim_{n} \Th \shf U_{HGr(r,n)}$ induces
isomorphisms of bigraded modules over
$A^{*,*}(BSp_{2r}) \cong A^{*,*}(\pt)[[b_{1},\dots,b_{r}]]^{\homog}$
\begin{equation}
\label{E:A(MSp.2r)}
A^{*+4r,*+2r}(\MSp_{2r})
\xra{\cong}
\varprojlim\limits_{n \to \infty} A^{*+4r,*+2r}(\Th \shf U_{HGr(r,n)} )
\xla{\cong}
b_{r} A^{*,*}(\pt) [[b_{1},\dots,b_{r}]]^{\homog}
\end{equation}
\end{thm}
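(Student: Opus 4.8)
The plan is to establish the finite-level isomorphism \eqref{E:cohom.Thom} first, then pass to the colimit. For the finite level, I would combine the motivic homotopy equivalence $\Th \shf U_{HGr(r,n)} \cong HGr(r,1+n)/HGr(r-1,n)$ of Theorem \ref{T:BSp/BSp} with the cohomology computations of Section \ref{S:projective.bundle}. Applying $A^{*,*}$ to the cofiber sequence $HGr(r-1,n) \hra HGr(r,1+n) \to HGr(r,1+n)/HGr(r-1,n)$ gives a long exact sequence, and by Theorems \ref{T:Grass}, \ref{T:HGr.lim.cohom.1} and \ref{T:HGr.lim.cohom.2} the restriction map $A^{*,*}(HGr(r,1+n)) \to A^{*,*}(HGr(r-1,n))$ is the composite of the surjection $\beta_{r-1,n}^{*}$ (setting $e_{r} \mapsto 0$) with the surjection $\alpha_{r-1,n}^{*}$; in particular it is surjective, so the long exact sequence breaks into short exact sequences, and $A^{*+4r,*+2r}(\Th \shf U_{HGr(r,n)})$ is identified with the kernel of that restriction map. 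By Proposition \ref{P:sym.func.2} that kernel is exactly the principal ideal generated by $e_{r}$ (equivalently by $p_{r}$, since the Thom structure sends $e_{i} \mapsto p_{i}(\shf U_{r,1+n})$), which is what \eqref{E:cohom.Thom} asserts once one checks that the identification is given by cup product with $p_{r}$ rather than some other module generator.

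The point requiring care is the identification of the connecting map in the cohomology exact sequence with multiplication by $p_{r}$. Here I would argue geometrically using Theorem \ref{T:HGr/HGr} and Theorem \ref{T:normal.geom}: the Thom class of $\shf U_{r,E}$ restricts, under the zigzag of weak equivalences, to the class whose pullback to $HGr(r,1+n)$ is (up to sign) the Pontryagin class $p_{r}(\shf U_{r,1+n})$, because the Euler class of a rank $2r$ symplectic bundle is its top Pontryagin class. Thus the composite
\[
A^{*,*}(HGr(r,n)) \cong A^{*-4r,*-2r}(\Th\shf U_{r,n}) \to A^{*,*}(HGr(r,1+n))
\]
given by "cup with Thom class, then push along the quotient map's splitting" is $\cup\, p_{r}(\shf U_{r,1+n})$, matching the inclusion $e_{r}\cdot({-})$ of Proposition \ref{P:sym.func.2} under the isomorphisms \eqref{E:HGr.cohom.1}. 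The module structure over $A^{*,*}(HGr(r,n))$ in \eqref{E:cohom.Thom} is inherited from the $A^{*,*}(HGr(r,1+n))$-module structure on the relative cohomology group via the restriction $A^{*,*}(HGr(r,1+n)) \twoheadrightarrow A^{*,*}(HGr(r,n))$, which on Pontryagin classes is the identity on $p_{1},\dots,p_{r}$ (it is $\alpha_{r,n}^{*}$ composed with the identity in the relevant range), so the module identifications are compatible.

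For the colimit statement \eqref{E:A(MSp.2r)}, I would apply Theorem \ref{T:hocolim} to $\MSp_{2r} = \hocolim_{n} \Th\shf U_{HGr(r,n)}$. The connecting maps of the inverse system are the restrictions $A^{*,*}(\Th\shf U_{HGr(r,1+n)}) \to A^{*,*}(\Th\shf U_{HGr(r,n)})$; under \eqref{E:cohom.Thom} these correspond to multiplication-by-$p_{r}$ versions of the surjections $\alpha_{r,n}^{*}$ of Theorem \ref{T:HGr.lim.cohom.1}, hence are surjective. Therefore the relevant $\varprojlim^{1}$ vanishes and $A^{*+4r,*+2r}(\MSp_{2r}) \cong \varprojlim_{n} A^{*+4r,*+2r}(\Th\shf U_{HGr(r,n)})$. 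Taking $\varprojlim$ of the isomorphisms \eqref{E:cohom.Thom} — and using, exactly as in the proof of Theorem \ref{T:cohom.BSp}, that the ideals $(h_{n-r+1},\dots,h_{n})$ are cofinal with the ideals $I_{d}$ — identifies this with $p_{r}\cdot A^{*,*}(\pt)[[p_{1},\dots,p_{r}]]^{\homog}$ inside $A^{*,*}(BSp_{2r}) \cong A^{*,*}(\pt)[[p_{1},\dots,p_{r}]]^{\homog}$, where the module structure is the evident one. I expect the main obstacle to be the sign-and-generator bookkeeping in the previous paragraph: verifying that the finite-level identification is literally $\cup\, p_{r}$ and is natural in $n$, so that the passage to the limit is clean. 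The rest is formal application of results already in hand.
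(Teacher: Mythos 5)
Your argument is essentially the paper's: apply $A^{*,*}$ to the cofiber sequence $HGr(r-1,n)\xra{\beta_{r-1,n}} HGr(r,1+n)\to \Th\shf U_{HGr(r,n)}$ coming from Theorem \ref{T:BSp/BSp}, use the surjectivity of $\beta_{r-1,n}^{*}$ (Theorem \ref{T:HGr.lim.cohom.2}) to split the long exact sequence and identify $A^{*,*}(\Th\shf U_{HGr(r,n)})$ with $\ker\beta_{r-1,n}^{*}$, identify that kernel by Proposition \ref{P:sym.func.2}, note the module is annihilated by $h_{n-r+1}$ so the $A^{*,*}(HGr(r,1+n))$-module structure descends to $A^{*,*}(HGr(r,n))$, and then pass to the limit: the connecting maps of the tower are surjective, so the $\varprojlim^{1}$ in Theorem \ref{T:hocolim} vanishes, and the cofinality argument of Theorem \ref{T:cohom.BSp} gives $p_{r}A^{*,*}(\pt)[[p_{1},\dots,p_{r}]]^{\homog}$. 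One caveat about your second paragraph: the verification you propose there appeals to the Thom class of the rank $2r$ bundle $\shf U_{r,E}$ and to the relation between its Euler class and its top Pontryagin class, but at this stage only a rank $2$ symplectic Thom structure is assumed; the higher-rank symplectic Thom classes and the identity \eqref{E:top.p} are constructed later (Theorems \ref{T:cohom.MSp.2r.ideal}, \ref{T:d.delta}, \ref{T:alpha.delta}) using precisely the present theorem, so that detour would be circular. It is also unnecessary (and there is no connecting homomorphism to identify, since the sequence splits into short exact sequences): the map labelled $\cup p_{r}$ in \eqref{E:cohom.Thom} is by definition the multiplication by $e_{r}=p_{r}$ of Proposition \ref{P:sym.func.2} transported through the kernel identification, which is exactly what your first paragraph, like the paper, already establishes.
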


\begin{proof}
Applying $A^{*,*}$ to the cofiber sequence
\[
HGr(r-1,n) \xra{\beta_{r-1,n}} HGr(r,1+n) \lra \Th \shf U_{HGr(r,n)}
\]
gives a long exact sequence of cohomology.  The map
$\beta_{r-1,n}$ induces a surjection of cohomology groups by Theorem \ref{T:HGr.lim.cohom.2}, so we
have $A^{*,*}(\Th \shf U_{HGr(r,n)}) \cong \ker \beta_{r-1,n}^{*}$.
This kernel is identified by Proposition \ref{P:sym.func.2}, giving the isomorphism \eqref{E:cohom.Thom}.
In principle this is an isomorphism of two-sided modules over
$A^{*,*}(HGr(r,1+n)) \cong A^{*,*}(\pt)[b_{1},\dots,b_{r}]/(h_{n-r+2},\dots,h_{n+1})$.  But the modules
are annihilated by $h_{n-r+1}$, so they are also two-sided modules over the quotient ring
$A^{*,*}(HGr(r,n))$.

The inclusion $\Th \shf U_{HGr(r,n)} \hra \Th \shf U_{HGr(r,n+1)}$ induces a commutative
diagram
\begin{equation*}
\vcenter{\xymatrix @M=5pt @C=35pt {
A^{*,*}(\pt)[b_{1},\dots,b_{r}]/(h_{n-r+2},\dots,h_{n},h_{n+1})
\ar[r]_-{\cong}^-{\cup b_{r}} \ar@{->>}[d]
& A^{*+4r,*+2r}(\Th \shf U_{HGr(r,n+1)}) \ar@{->>}[d]
\\
A^{*,*}(\pt)[b_{1},\dots,b_{r}]/(h_{n-r+1},h_{n-r+1}\dots,h_{n})
\ar[r]_-{\cong}^-{\cup b_{r}}
& A^{*+4r,*+2r}(\Th \shf U_{HGr(r,n)}).
}}
\end{equation*}
The inverse limit gives the righthand isomorphism of \eqref {E:A(MSp.2r)}.
The vertical map on the left is surjective, so the vertical map on the right is as well.
Therefore the $\varprojlim^{1}$ vanishes in the exact sequence
\[
0 \to \sideset{}{^{1}}\varprojlim_{n\in\NN} A^{*-1,*}(\Th \shf U_{HGr(r,n)})
\to A^{*,*}(\MSp_{2r}) \to
\varprojlim_{n\in\NN} A^{*,*}(\Th \shf U_{HGr(r,n)}) \to 0.
\]
obtained from Theorem \ref{T:hocolim}.
\end{proof}

For any $r$ let $z_{2r} \colon BSp_{2r} \to \Th \shf U_{BSp_{2r}} = \MSp_{2r}$
be the structure map induced by
the zero section of the tautological symplectic bundle $\shf U_{BSp_{2r}} \to BSp_{2r}$.

\begin{thm}
\label{T:cohom.MSp.2r.ideal}
Let $(A,\mu,e)$ be a commutative $T$-ring spectrum with a symplectic Thom structure on $A^{*,*}$.
The map $z_{2r}^{*} \colon A^{*,*}(\MSp_{2r}) \to A^{*,*}(BSp_{2r})$ of two-sided
$A^{*,*}(BSp_{2r})$-modules is injective
and identifies $A^{*,*}(\MSp_{2r})$ with the two-sided principal ideal generated by
$b_{r} \in A^{4r,2r}(BSp_{2r})$.
\end{thm}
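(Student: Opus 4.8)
The plan is to compute $z_{2r}^{*}$ levelwise along the presentations $\MSp_{2r} = \colim_{n} \Th \shf U_{\HGr(r,n)}$ and $BSp_{2r} = \colim_{n} \HGr(r,n)$, and then pass to the inverse limit. By Theorems \ref{T:cohom.BSp} and \ref{T:cohom.MSp.2r} both $A^{*,*}(BSp_{2r})$ and $A^{*,*}(\MSp_{2r})$ are the inverse limits over $n$ of the finite-level cohomology groups, with vanishing $\varprojlim^{1}$; and the structure map $z_{2r}$ is the colimit of the finite-level zero-section structure maps $z_{2r}^{(n)} \colon \HGr(r,n) \to \Th \shf U_{\HGr(r,n)}$. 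Hence $z_{2r}^{*} = \varprojlim_{n} (z_{2r}^{(n)})^{*}$, and it suffices to describe each $(z_{2r}^{(n)})^{*}$ in terms of the presentations \eqref{E:HGr.cohom.1} and \eqref{E:cohom.Thom}, compatibly in $n$.

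The heart of the argument is the identification of $(z_{2r}^{(n)})^{*}$. First I would invoke Theorem \ref{T:BSp/BSp}, specifically the finite-level diagram \eqref{E:amazing.1}: it shows that in $H_{\bullet}(S)$ the zero-section structure map $z_{2r}^{(n)}$ agrees, under the canonical motivic equivalences, with the composite $\HGr(r,n) \hookrightarrow \HGr(r,1+n) \xrightarrow{q_{n}} \HGr(r,1+n)/\HGr(r-1,n)$ of the usual inclusion $\alpha_{r,n}$ with the quotient map $q_{n}$. Therefore $(z_{2r}^{(n)})^{*} = \alpha_{r,n}^{*} \circ q_{n}^{*}$. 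Next I would recall how \eqref{E:cohom.Thom} was constructed: applying $A^{*,*}$ to the cofiber sequence $\HGr(r-1,n) \xrightarrow{\beta_{r-1,n}} \HGr(r,1+n) \xrightarrow{q_{n}} \Th \shf U_{\HGr(r,n)}$ and using the surjectivity of $\beta_{r-1,n}^{*}$ (Theorem \ref{T:HGr.lim.cohom.2}) identifies $q_{n}^{*}$ with the inclusion of $\ker \beta_{r-1,n}^{*}$ into $A^{*,*}(\HGr(r,1+n))$, while Proposition \ref{P:sym.func.2} identifies this kernel with the principal ideal generated by $p_{r}(\shf U_{r,1+n})$; thus \eqref{E:cohom.Thom} is ``multiply by $p_{r}(\shf U_{r,1+n})$ and view the result on $\Th \shf U_{\HGr(r,n)}$ via $q_{n}^{*}$''. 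Finally, by Theorem \ref{T:HGr.lim.cohom.1} the map $\alpha_{r,n}^{*}$ is the natural surjective ring map onto $A^{*,*}(\HGr(r,n))$, carrying $p_{r}(\shf U_{r,1+n})$ to $p_{r}(\shf U_{r,n})$. Composing these three facts, under \eqref{E:cohom.Thom} and \eqref{E:HGr.cohom.1} the map $(z_{2r}^{(n)})^{*}$ becomes multiplication by $p_{r}(\shf U_{r,n})$, viewed as a map $A^{*,*}(\HGr(r,n)) \to A^{*+4r,*+2r}(\HGr(r,n))$.

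These levelwise descriptions are compatible with the transition maps $\alpha_{r,n}^{*}$ (which send $p_{r}(\shf U_{r,1+n}) \mapsto p_{r}(\shf U_{r,n})$), so passing to the inverse limit and invoking Theorems \ref{T:cohom.BSp} and \ref{T:cohom.MSp.2r} (in particular \eqref{E:A(MSp.2r)}) turns the statement into: under the isomorphism $A^{*,*}(BSp_{2r}) \cong A^{*,*}(\pt)[[p_{1},\dots,p_{r}]]^{\homog}$, the map $z_{2r}^{*}$ is multiplication by the power-series variable $p_{r}$. Since $p_{r}$ is a non-zero-divisor in the homogeneous power-series ring (it is one in $A^{*,*}(\pt)[p_{1},\dots,p_{r}]$, by inspecting leading terms in $p_{r}$ within each fixed bidegree), $z_{2r}^{*}$ is injective, and its image is the principal ideal generated by $p_{r}$; this ideal is two-sided because $p_{r}$ lies in the even bidegree $(4r,2r)$ and is therefore central in $A^{*,*}(BSp_{2r})$ by the remark following Theorem \ref{BO**asringcohomology}. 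This gives exactly the assertion of the theorem.

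The step I expect to be the main obstacle is the very first one of the key computation: verifying that the zero-section structure map $z_{2r}^{(n)}$ really is the composite $q_{n} \circ \alpha_{r,n}$ in $H_{\bullet}(S)$. This is the content of the commutative diagram \eqref{E:amazing.1} of Theorem \ref{T:BSp/BSp}, but one must be careful about which ``section'' is used there and check that it induces the same morphism in $H_{\bullet}(S)$ as the zero section; once this is in hand, the remainder is bookkeeping with the symmetric-function presentations of Section~\ref{S:cohom.BSp.MSp} together with a routine inverse-limit argument.
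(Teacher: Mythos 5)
Your proposal is correct, and it rests on the same geometric input as the paper (the identification of the Thom-space structure map with the quotient map $BSp_{2r} \to BSp_{2r}/BSp_{2r-2}$, plus the symmetric-function presentations), but the homological mechanism is different. The paper argues entirely at the colimit level: Theorem \ref{T:MSp=BSp/BSp} gives the cofiber sequence $BSp_{2r-2} \xra{i_{2r}} BSp_{2r} \xra{z_{2r}} \MSp_{2r}$, and in the resulting long exact sequence \eqref{E:long.exact} the map $i_{2r}^{*}$ is identified, via Theorem \ref{T:HGr.lim.cohom.2} and the power-series computations, with the surjection $A^{*,*}[[p_{1},\dots,p_{r}]] \onto A^{*,*}[[p_{1},\dots,p_{r-1}]]$ killing $p_{r}$; exactness then gives in one stroke that $z_{2r}^{*}$ is injective with image $\ker i_{2r}^{*} = (p_{r})$. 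You instead work at finite level with \eqref{E:amazing.1}, identify $(z_{2r}^{(n)})^{*}$ with multiplication by $p_{r}(\shf U_{r,n})$ using how \eqref{E:cohom.Thom} was built, pass to the inverse limit, and finish with the non-zero-divisor property of $p_{r}$ in $A^{*,*}(\pt)[[p_{1},\dots,p_{r}]]^{\homog}$ (two-sidedness from centrality of an even-bidegree class). Your route costs more bookkeeping but yields the slightly sharper statement that, under the presentations, $z_{2r}^{*}$ literally is multiplication by $p_{r}$, and it needs only the finite-level diagram rather than the stabilized Theorem \ref{T:MSp=BSp/BSp} (hence not the shift-homotopy lemma); the paper's version is shorter because the surjectivity of $i_{2r}^{*}$ does all the work.

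The step you flagged is indeed harmless: the ``section map'' of \eqref{E:amazing.1} is the composite of some section $s$ of $\shf U_{\HGr(r,n)}$ with the projection to the Thom space, and the scaling homotopy $t \cdot s$, $t \in \Aff^{1}$, shows that in $H_{\bullet}(S)$ it coincides with the zero-section structure map, which is what $z_{2r}^{(n)}$ is. One further small point: to be compatible with the shift map in \eqref{E:amazing.2}, the top arrow of \eqref{E:amazing.1} is the inclusion of $\HGr(r,n)$ obtained by adding the new symplectic summand $(\OO^{\oplus 2},\omega_{2})$ in front, which need not literally be the inclusion $\alpha_{r,n}$ of Theorem \ref{T:HGr.lim.cohom.1}; but both inclusions pull the tautological bundle back to the tautological bundle, so by Theorem \ref{T:Grass} they induce the same homomorphism on $A^{*,*}$ (alternatively, they differ by an integral symplectic permutation matrix, which is $\Aff^{1}$-homotopic to the identity), so your identification of $(z_{2r}^{(n)})^{*}$ and the compatibility with the transition maps are unaffected.
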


\begin{proof}
By Theorem \ref{T:MSp=BSp/BSp} we have cofiber sequence
$BSp_{2r-2} \xra{i_{2r}} BSp_{2r} \xra{z_{2r}} \MSp_{2r}$ yielding a long exact sequence of cohomology groups.
By the previous theorems these are isomorphic to (in simplified notation)
\begin{equation}
\label{E:long.exact}
\cdots \to A^{*-4r,*-2r}[[b_{1},\dots,b_{r}]] \xra{z_{2r}^{*}}
A^{*,*}[[b_{1},\dots,b_{r}]] \xra{i_{2r}^{*}}
A^{*,*}[[b_{1},\dots,b_{r-1}]] \to \cdots
\end{equation}
Since $i_{2r}$ is the colimit of the inclusion maps $\beta_{r-1,n} \colon A(r-1,n) \to A(r,1+n)$
of Theorem \ref{T:HGr.lim.cohom.2}, $i_{2r}^{*}$ is the quotient by the ideal generated by $b_{r}$.
It is surjective in all bidegrees.
It follows that $z_{2r}^{*}$ is injective in all bidegees and is the inclusion of that ideal.
\end{proof}

The direct sum of symplectic bundle induces compatible monoid structures on the
$BSp_{2r}$ and the $\MSp_{2r}$.  So the following diagram commutes for all $r$ and $s$
\begin{equation}
\label{E:mult.BSp.MSp}
\vcenter{\xymatrix @M=5pt @C=35pt {
BSp_{2r} \times BSp_{2s} \ar[r]^-{m_{rs}} \ar[d]_-{z_{2r} \times z_{2s}}
& BSp_{2r+2s} \ar[d]^-{z_{2r+2s}}
\\
\MSp_{2r} \wedge \MSp_{2s} \ar[r]^-{\mu_{rs}}
& \MSp_{2r+2s}.
}}
\end{equation}

\begin{thm}
\label{T:cohom.BSp.BSp}
Let $(A,\mu,e)$ be a commutative $T$-ring spectrum with a symplectic Thom structure on $A^{*,*}$.
Then the isomorphisms
\begin{align*}
BSp_{2r} \times BSp_{2s} & = \colim\nolimits_{n} \bigl( HGr(r,rn) \times HGr(s,sn) \bigr)
\\
\MSp_{2r} \wedge \MSp_{2s} & = \colim\nolimits_{n}
(\Th \shf U_{HGr(r,rn)} \wedge \Th \shf U_{HGr(s,sn)})
\end{align*}
induces a commutative diagram of isomorphisms and monomorphisms of two-sided graded
$A^{*,*}(BSp_{2r} \times BSp_{2s})$-modules
\begin{small}
\begin{equation*}
%\label{E:A(BSp.2r)}
\xymatrix @M=5pt @C=10pt {
A^{*,*}(\MSp_{2r} \wedge \MSp_{2s})
\ar[r]^-{\cong} \ar@{_{(}->}[d]_-{(z_{2r} \times z_{2s})^{*}}
&
\raisebox{-15pt}{$\varprojlim\limits_{n \to \infty} A^{*,*}(\Th \shf U_{r,rn} \wedge \Th \shf U_{s,sn})$}
\ar@{_{(}->}[d]
& \ar[l]_-{\cong}
p'_{r}p''_{s}A^{*,*} [[p'_{1},\dots,p'_{r},p''_{1},\dots,p''_{s}]]
\ar@{_{(}->}[d]^-{\text{\textup{inclusion}}}
\\
A^{*,*}(BSp_{2r} \times BSp_{2s})
\ar[r]^-{\cong}
&
\raisebox{-15pt}{$\varprojlim\limits_{n \to \infty} A^{*,*}(\HGr(r,rn) \times  \HGr(s,sn))$}
& \ar[l]_-{\cong}
A^{*,*} [[p'_{1},\dots,p'_{r},p''_{1},\dots,p''_{s}]].
}
%\\
%\label{E:HGr(infty,2.infty)}
%A(S)[[b_{1},p_{2},  p_{3}, \dots ]]
%\xrightarrow{\cong}
%\varprojlim
%\limits_{n\to \infty}
%A(\HGr(n,2n) \times S)
\end{equation*}
\end{small}%
Moreover, these isomorphisms identify the diagram obtained by applying $A^{*,*}$ to
\eqref{E:mult.BSp.MSp} with the diagram of rings and ideals
\[
\xymatrix @M=5pt @C=30pt {
p_{r+s}A^{*,*}(\pt)[[b_{1},\dots,p_{r+s}]]^{\homog} \ar[r] \ar@{_{(}->}[d]
&
p'_{r}p''_{s} A^{*,*}(\pt) [[p'_{1},\dots,p'_{r},p''_{1},\dots,p''_{s}]]^{\homog} \ar@{_{(}->}[d]
\\
A^{*,*}(\pt)[[b_{1},\dots,p_{r+s}]]^{\homog} \ar[r]
&
A^{*,*}(\pt) [[p'_{1},\dots,p'_{r},p''_{1},\dots,p''_{s}]]^{\homog}
}
\]
where the horizontal maps send $b_{i} \mapsto p'_{i} + \sum_{j=1}^{i-1} p'_{i-j}p''_{j}+ p''_{i}$.
Moreover, the horizontal maps of the last diagram are also injective.
\end{thm}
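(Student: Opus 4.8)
The plan is to reduce everything to the quaternionic projective bundle theorem and the symmetric-function lemmas of \S\ref{S:projective.bundle}, following closely the proofs of Theorems \ref{T:cohom.BSp}, \ref{T:cohom.MSp.2r} and \ref{T:cohom.MSp.2r.ideal}.

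First I would compute the bottom row. Applying Theorem \ref{T:Grass} with base $X = \HGr(s,sn)$ and then substituting the computation of $A^{*,*}(\HGr(s,sn))$ gives, at each finite level, an isomorphism $A^{*,*}(\HGr(r,rn) \times \HGr(s,sn)) \cong A^{*,*}(\pt)[e'_1,\dots,e'_r,e''_1,\dots,e''_s]/(h'_{rn-r+1},\dots,h'_{rn},h''_{sn-s+1},\dots,h''_{sn})$ sending $e'_i \mapsto p_i(\shf U_{r,rn})$ and $e''_j \mapsto p_j(\shf U_{s,sn})$. The transition maps of the inverse system are surjective, being compositions of the relative surjections of Theorem \ref{T:HGr.lim.cohom.1} (one for each $\HGr$ factor at each step), so $\varprojlim{}^{1} = 0$ by Theorem \ref{T:hocolim}; and the ideal-sandwiching argument of Theorem \ref{T:cohom.BSp} identifies $\varprojlim_n A^{*,*}(\HGr(r,rn) \times \HGr(s,sn))$ with $A^{*,*}(\pt)[[p'_1,\dots,p'_r,p''_1,\dots,p''_s]]^{\homog}$, with $p'_i,p''_j$ the inverse systems of Pontryagin classes. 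This gives the bottom row.

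The heart of the proof is the top row, which I would get by running the computation of Theorem \ref{T:cohom.MSp.2r} twice. Using $\Th \shf U_{\HGr(a,an)} \simeq \HGr(a,1{+}an)/\HGr(a{-}1,an)$ from Theorem \ref{T:BSp/BSp}, the smash $\Th \shf U_{\HGr(r,rn)} \wedge \Th \shf U_{\HGr(s,sn)}$ is the cofiber of $\Th \shf U_{\HGr(r,rn)} \wedge \HGr(s{-}1,sn)_+ \hra \Th \shf U_{\HGr(r,rn)} \wedge \HGr(s,1{+}sn)_+$, and for any smooth $Z$ one has $\Th \shf U_{\HGr(r,rn)} \wedge Z_+ \simeq (\HGr(r,1{+}rn) \times Z)/(\HGr(r{-}1,rn) \times Z)$, itself a cofiber. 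Applying $A^{*,*}$ and using the relative surjectivity of Theorem \ref{T:HGr.lim.cohom.2} one factor at a time, each of these cofiber sequences yields a short exact sequence of cohomology groups (the connecting maps vanish); Proposition \ref{P:sym.func.2}, applied to $e_r$ and then to $e_s$ exactly as in Theorem \ref{T:cohom.MSp.2r}, identifies the successive kernels, so that $A^{*,*}(\Th \shf U_{\HGr(r,rn)} \wedge \Th \shf U_{\HGr(s,sn)}) \cong p'_r p''_s \, A^{*,*}(\HGr(r,rn) \times \HGr(s,sn))$ up to the degree shift $(4r{+}4s,2r{+}2s)$, the isomorphism being cup with $p'_r p''_s$, equivalently pullback along the finite-level zero section $z_{2r} \times z_{2s}$. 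The transition maps of this inverse system are again surjective, so passing to $\varprojlim_n$ and invoking Theorem \ref{T:hocolim} gives the top row and the left two columns, with vertical maps $(z_{2r} \times z_{2s})^{*}$ injective onto the two-sided principal ideal $(p'_r p''_s)$; all arrows are maps of two-sided $A^{*,*}(BSp_{2r} \times BSp_{2s})$-modules, so the large diagram is as asserted.

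For the last diagram, $m_{rs}$ classifies the external direct sum $\shf U_{BSp_{2r}} \oplus \shf U_{BSp_{2s}}$, so the Cartan sum formula (Theorem \ref{T:Cartan}) gives $m_{rs}^{*} p_i = \sum_{j=0}^{i} p'_j p''_{i-j}$ under the identifications above and Theorem \ref{T:cohom.BSp}; in particular $m_{rs}^{*} p_{r+s} = p'_r p''_s$ since every other summand vanishes because the Pontryagin classes of $\shf U_{BSp_{2r}}$, resp.\ $\shf U_{BSp_{2s}}$, vanish above index $r$, resp.\ $s$. Hence $m_{rs}^{*}$ carries $(p_{r+s})$ into $(p'_r p''_s)$, and together with the descriptions of $z_{2r+2s}^{*}$ (Theorem \ref{T:cohom.MSp.2r.ideal}) and $(z_{2r} \times z_{2s})^{*}$ above, the commutativity of \eqref{E:mult.BSp.MSp} identifies its image under $A^{*,*}$ with the stated diagram of rings and ideals. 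For injectivity of the horizontal maps it suffices to treat the bottom one, $p_i \mapsto \sum_{j=0}^{i} p'_j p''_{i-j}$ (the top being its restriction to $(p_{r+s})$): composing with the injection $A^{*,*}(\pt)[[p'_\bullet,p''_\bullet]]^{\homog} \hra A^{*,*}(\pt)[[x_1,\dots,x_{r+s}]]$, $p'_j \mapsto e_j(x_1,\dots,x_r)$, $p''_k \mapsto e_k(x_{r+1},\dots,x_{r+s})$, turns it into $p_i \mapsto e_i(x_1,\dots,x_{r+s})$, which is manifestly injective, and in each bidegree the underlying homomorphism of finite free $\mathbb{Z}$-modules is split (a standard fact about symmetric polynomials, cf.\ \cite{Macdonald:1995zl}), so injectivity persists after $\otimes_{\mathbb{Z}} A^{*,*}(\pt)$. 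The main obstacle is the middle step: carrying out the iterated cofiber-sequence computation while keeping the finite-level indices straight (distinguishing $\HGr(a,1{+}an)$ from $\HGr(a,an)$ and the corresponding ideals of complete symmetric polynomials) and checking that every arrow really is the claimed relative surjection or the claimed pullback $(z_{2r} \times z_{2s})^{*}$ and is compatible with the inverse systems; the algebra involved is routine, but this bookkeeping is delicate.
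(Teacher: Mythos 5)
Your proposal is correct and follows essentially the same route as the paper: the paper's own proof simply says the first diagram is constructed ``much the same as in the previous theorems'' (the limit/Mittag-Leffler and Proposition \ref{P:sym.func.2} arguments you spell out), notes the second diagram follows (your Cartan-formula computation of $m_{rs}^{*}$), and proves injectivity exactly as you do, by composing with the map sending $p'_i, p''_j$ to elementary symmetric polynomials in independent indeterminates so that the bottom map becomes the inclusion of symmetric homogeneous power series. Your extra remarks (the split $\ZZ$-module argument for base change, and the bookkeeping of finite-level indices) are just more detailed versions of what the paper leaves implicit.
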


\begin{proof}
The construction of the first diagram is much the same as in the previous theorems.
The second diagram follows.  For the last statement of the theorem, let
$t_{1},\dots,t_{r+s}$ be independent indeterminates of bidegree $(2,1)$.  Then the composition of
the bottom horizontal map with the map
\[
A^{*,*}(\pt) [[p'_{1},\dots,p'_{r},p''_{1},\dots,p''_{s}]]^{\homog}
\to
A^{*,*}(\pt) [[t_{1},\dots,t_{r+s}]]^{\homog}
\]
sending $p'_{i} \mapsto e_{i}(t_{1},\dots,t_{r})$ and $p''_{j} \mapsto e_{j}(t_{r+1},\dots,t_{r+s})$
is the inclusion of the ring of symmetric homogeneous power series in the ring of homogeneous
power series.  That is injective.
\end{proof}

The final calculation in this section ought to be that of $A^{*,*}(\MSp)$.  However, we will
put this off until Theorem \ref{T:cohom.MSp} because we wish to make the calculation using
a symplectic Thom classes theory and not just a symplectic Thom structure.

\section{Tautological Thom elements}

Suppose that $(A,\mu,e)$ is a commutative $T$-ring spectrum.
Let $\vartheta \in A^{4,2}(\MSp_{2})$.
%an element with $\vartheta |_{T^{\wedge 2}} = \Sigma_{T}^{2}1_{A} \in A^{4,2}(T^{\wedge 2})$.

We associate to $\vartheta$ and a symplectic bundle $(E,\phi)$ of rank $2$ over an
$X$ in $\Sm/S$ a class $\thom^{\vartheta}(E,\phi)$ defined as follows.  By assumption
the scheme $X$ admits an ample family of line bundles.  So there exists an
affine bundle $f \colon Y \to X$ with $Y$ an affine scheme.  Then for some $p$ there exist global sections
$s_{1},\dots,s_{p}$ of $f^{*}E^{\vee}$ generating $f^{*}E^{\vee}$.  There then exist global functions
$a_{ij}$ on $Y$ such that $f^{*} \phi = \sum_{1\leq i < j \leq p} a_{ij} s_{i} \wedge s_{j}$.
We set $t_{i} = \sum_{j=i+1}^{p} a_{ij}s_{j}$ so that we have
$\sum_{i} s_{i} \wedge t_{i} = f^{*}\phi$.  The map
$(s_{1},t_{1},\dots, s_{p},t_{p}) \colon f^{*}E \to \OO_{Y}^{\oplus 2p}$
%\[
%f^{*}E \xra{(s_{1},t_{1},\dots, s_{np},t_{np})} \OO_{Y}^{\oplus 2np}
%\]
embeds $(f^{*}E, f^{*}\phi)$ as a symplectic subbundle of $(\OO_{Y}^{\oplus 2p}, \omega_{2p})$.
So it is classified by a map $\psi \colon Y \to HGr(1,p) = HP^{p-1}$ such that
$\psi^{*}(\TSp{1,p},\phi_{1,p}) = f^{*}(E,\phi)$.
This gives us maps of (ind)-schemes
\begin{equation}
\label{E:X.to.BSp}
X \xla[\text{$\Aff^{N}$-bundle}]{f} Y
\xra{\psi}
HP^{p-1}
\xra{\text{inclusion}} BSp_{2} = HP^{\infty}
\end{equation}
and of pointed motivic spaces
\begin{equation}
\label{E:ThE.to.A.1}
\Th E \xla[\sim\text{mot}]{\overline f} \Th f^{*}E
\, \cong \, \Th \widetilde\psi^{*} \TSp{1,p}
\xra{\overline{\psi}}
\Th \TSp{1,p}
\end{equation}
of pointed motivic spaces, which can be composed with the maps
\begin{equation}
\label{E:ThE.to.A.2}
\Th \TSp{1,p} \xra{\text{inclusion}}
 \MSp_{2} \xra{\vartheta} A \wedge T^{\wedge 2}.
\end{equation}
in $SH(S)$.
The composition of \eqref{E:ThE.to.A.1} and \eqref{E:ThE.to.A.2} gives a class
\[
\thom^{\vartheta}(E,\phi) \in Hom_{SH(S)}(\Th E, A \wedge T^{\wedge 2}) = A^{4,2}(E,E-X).
\]
The following lemma is proven in the same way as Lemma \ref{L:well.defined.SL}.

\begin{lem}
\label{L:well.defined.Sp.1}
The classes $\thom^{\vartheta}(E,\phi)$ depend only on the rank $2$ symplectic bundle $(E,\phi)$
and the morphism  $\vartheta \colon \Sigma_{T}^{\infty}\MSp_{2}(-2) \to A$ in $SH(S)$.
\end{lem}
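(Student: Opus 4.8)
The plan is to run the proof of Lemma~\ref{L:well.defined.SL} essentially verbatim, adding the small amount of extra bookkeeping forced by two features special to the symplectic construction. First, $\thom^{\vartheta}(E,\phi)$ depends \emph{a priori} not only on a generating family $(s_{1},\dots,s_{p})$ of $f^{*}E^{\vee}$ but also on the choice of functions $a_{ij}$ with $f^{*}\phi = \sum_{1\le i<j\le p} a_{ij}\, s_{i}\wedge s_{j}$. Second, the classifying maps land in $HP^{p-1} = HGr(1,p)$, so every $\Aff^{1}$-homotopy we use must stay inside the locus of symplectic rank~$2$ subbundles of $(\OO^{\oplus 2p},\omega_{2p})$. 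The one genuinely new point — and the only place any thought is needed beyond Lemma~\ref{L:well.defined.SL} — is checking in each case that the symplectic form pulled back along the moving embedding $f^{*}E \hra \OO^{\oplus 2m}$ stays equal to $f^{*}\phi$; this follows every time from the explicit formula for that form, so it is ``the hard part'' only in the sense of being the thing one must not forget.

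First I would fix the affine bundle $f\colon Y\to X$ and the family $(s_{1},\dots,s_{p})$ and vary the $a_{ij}$. Two valid choices $(a_{ij})$ and $(a'_{ij})$ are joined by the linear path $a_{ij}(\tau)=(1-\tau)a'_{ij}+\tau a_{ij}$; by bilinearity of $\wedge$ one has $\sum_{i<j}a_{ij}(\tau)\,s_{i}\wedge s_{j}=f^{*}\phi$ for every $\tau$, so the embeddings $(s_{1},t_{1}(\tau),\dots,s_{p},t_{p}(\tau))\colon f^{*}E\hra\OO_{Y}^{\oplus 2p}$ form an $\Aff^{1}$-family of symplectic subbundles, hence an $\Aff^{1}$-homotopy of maps $Y\to HP^{p-1}$. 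Thus the induced map $Y\to HP^{\infty}=BSp_{2}$, and a fortiori $\thom^{\vartheta}(E,\phi)$, is independent of the $a_{ij}$.

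Next I would vary the generating family. Given $(s_{1},\dots,s_{p})$ and $(s'_{1},\dots,s'_{q})$, pass through their concatenation, for which (by the previous step) any valid choice of auxiliary functions may be used. Choosing them supported on the first block gives the embedding $(s_{1},t_{1},\dots,s_{p},t_{p},s'_{1},0,\dots,s'_{q},0)$; scaling the coordinates $s'_{k}$ down to $0$ is an $\Aff^{1}$-homotopy through symplectic subbundles (the pulled-back form stays $\sum_{i\le p}s_{i}\wedge t_{i}=f^{*}\phi$) which identifies this, in $H_{\bullet}(S)$, with the composite $Y\to HP^{p-1}\hra HP^{\infty}$ attached to $(s_{i})$ alone. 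Choosing the auxiliary functions supported on the second block instead, and scaling the $s_{i}$ down to $0$, identifies the same $H_{\bullet}(S)$-class with the map attached to $(s'_{k})$ but with its subbundle placed in the \emph{last} $2q$ coordinates; applying the block permutation of $(\OO^{\oplus 2},\omega_{2})^{\oplus(p+q)}$ that moves it back to the first $2q$ coordinates changes the classifying map only by an element of $Sp_{2(p+q)}(\ZZ)$, which — being a product of elementary symplectic matrices, just as in Remark~\ref{epsilontimesepsilon} and the lemma preceding Theorem~\ref{T:MSp=BSp/BSp} — is joined to the identity by a path in $Sp_{2(p+q)}(\ZZ[t])$ and hence acts on $HGr(1,p+q)$ by a map $\Aff^{1}$-homotopic to the identity. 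Chaining these identifications — and invoking the first step once more to move between the two choices of auxiliary functions on the concatenation — shows that $(s_{i})$ and $(s'_{k})$ induce the same map in $H_{\bullet}(S)$, hence the same class.

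Finally I would vary the affine bundle, exactly as in the proof of Lemma~\ref{L:well.defined.SL}: given a second affine bundle $g\colon Z\to X$ with $Z$ affine and a generating family of $g^{*}E^{\vee}$, compare the two constructions through $W=Y\times_{X}Z$, which is affine (an affine bundle over the affine scheme $Y$) and whose projections realise $W\to X$ as a single affine bundle $h$ refining both $f$ and $g$. Pulling the section families and auxiliary functions back along the two projections, the $f$-construction agrees in $H_{\bullet}(S)$ with the $h$-construction for the sections pulled back from $Y$ — by compatibility with the affine-bundle weak equivalence $\Th h^{*}E\xra{\sim}\Th f^{*}E$ and functoriality of everything downstream — this agrees with the $h$-construction for the sections pulled back from $Z$ by the previous paragraph, and that agrees with the $g$-construction. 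Composing with $\vartheta$ and passing to $SH(S)$ turns these equalities in $H_{\bullet}(S)$ into the asserted equality of maps $\Th E\to A\wedge T^{\wedge 2}$.
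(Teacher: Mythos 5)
Your proof is correct and takes essentially the same route as the paper, whose proof of this lemma is simply ``proven in the same way as Lemma~\ref{L:well.defined.SL}'': fix the affine bundle and compare the section data through concatenations and $\Aff^{1}$-homotopies, then compare two affine bundles through their fibre product $Y\times_{X}Z$. The symplectic bookkeeping you add — linear interpolation of the $a_{ij}$, scalings only of coordinates whose $\omega$-partner is zero so the pulled-back form is untouched, and the $Sp_{2(p+q)}(\ZZ)$ elementary-matrix homotopy in place of a naive linear homotopy of embeddings (which would not preserve the form) — is exactly the detail the paper leaves implicit, and that last ingredient is the same fact the paper already invokes for the shift endomorphism before Theorem~\ref{T:MSp=BSp/BSp}.
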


Recall the inclusion $e_{2}^{Sp} \colon T^{\wedge 2} \to \MSp_{2}$ of \eqref{E:e.Sp}.

\begin{thm}
\label{T:a.alpha}
Let $(A,\mu,e)$ be a commutative $T$-ring spectrum.
Then the map which assigns to a class $\vartheta$ as above the family of classes
$\thom^{\vartheta}(E,\phi)$ is a bijection between the sets of

\parens{$\alpha$} classes $\vartheta \in A^{4,2}(\MSp_{2})$ with
$\vartheta |_{T^{\wedge 2}} = \Sigma_{T}^{2}1_{A}$ in $A^{4,2}(T^{\wedge 2})$, and

\parens{a} symplectic Thom structures on the bigraded $\epsilon$-commutative ring cohomology theory
$(A^{*,*},\partial,\times,1_{A})$ such that for the trivial rank $2$
bundle $\Aff^{2} \to \pt$ we have
$\thom(\Aff^{2},\omega_{2}) = \Sigma_{T}^{2}1_{A}$ in $A^{4,2}(T^{\wedge 2})$.
\end{thm}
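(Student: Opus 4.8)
The plan is to show that the assignment $\vartheta \mapsto (\thom^{\vartheta}(E,\phi))$ takes values in the set \parens{a}, and that it is injective and surjective. By Theorem~\ref{BO**asringcohomology} the datum $(A^{*,*},\partial,\times,1_{A})$ attached to $(A,\mu,e)$ is already a bigraded $\epsilon$-commutative ring cohomology theory, so the only thing to check for well-definedness is that, for each $\vartheta$ with $\vartheta|_{T^{\wedge 2}} = \Sigma_{T}^{2}1_{A}$, the family $\thom^{\vartheta}$ satisfies the three axioms of Definition~\ref{D:sp.thom} together with the normalization $\thom^{\vartheta}(\Aff^{2},\omega_{2}) = \Sigma_{T}^{2}1_{A}$.

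First I would verify well-definedness. The functoriality axioms \parens{1} and \parens{2} follow immediately from the construction \eqref{E:ThE.to.A.1}--\eqref{E:ThE.to.A.2}, exactly as in Lemma~\ref{L:well.defined.Sp.1}. For the normalization I would take $X = \pt$, observe that the classifying map of $(\Aff^{2},\omega_{2})$ factors through a point of $HP^{\infty} = BSp_{2}$, and use an $\Aff^{1}$-homotopy of the kind already exploited in Lemma~\ref{L:well.defined.Sp.1} to identify the zigzag computing $\thom^{\vartheta}(\Aff^{2},\omega_{2})$ with $T^{\wedge 2} \xrightarrow{e_{2}^{Sp}} \MSp_{2} \xrightarrow{\vartheta} A \wedge T^{\wedge 2}$; since $\vartheta|_{T^{\wedge 2}} = \Sigma_{T}^{2}1_{A}$ this gives the normalization, and then by functoriality every trivial rank $2$ symplectic bundle has Thom class the pullback of $\Sigma_{T}^{2}1_{A}$, so that the Thom isomorphism axiom \parens{3} holds for trivial rank $2$ symplectic bundles. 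Since $Sp_{2} = SL_{2}$, every rank $2$ symplectic bundle is Zariski-locally trivial, so \parens{3} follows in general by the Mayer--Vietoris argument already used in the proof of Theorem~\ref{T:thom.phi}. Thus $\thom^{\vartheta}$ is a symplectic Thom structure as in \parens{a}.

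The bijectivity then rests on Theorem~\ref{T:cohom.MSp.2r} with $r = 1$: once we know $\thom^{\vartheta}$ (resp.\ a given $\thom$ as in \parens{a}) is a symplectic Thom structure, that theorem identifies $A^{4,2}(\MSp_{2})$ with $\varprojlim_{n} A^{4,2}(\Th \TSp{1,n})$, the $\varprojlim^{1}$ term vanishing because the restriction maps are surjective. Unwinding the construction \eqref{E:X.to.BSp}--\eqref{E:ThE.to.A.2} for the tautological bundle over $HP^{n-1}$ --- pulling it back along an affine bundle with affine total space and using the tautological embedding as classifying data --- together with Lemma~\ref{L:well.defined.Sp.1} and the functoriality of $\thom^{\vartheta}$, one sees that the $n$-th component of $\vartheta$ under this identification is precisely $\thom^{\vartheta}(\TSp{1,n},\phi_{1,n})$. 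Hence $\vartheta$ is recovered from the symplectic Thom structure it induces, which gives injectivity. Conversely, given $\thom$ as in \parens{a}, the classes $\thom(\TSp{1,n},\phi_{1,n})$ are compatible under the inclusions $\Th \TSp{1,n} \hookrightarrow \Th \TSp{1,n+1}$ by functoriality, so Theorem~\ref{T:cohom.MSp.2r} yields a $\vartheta \in A^{4,2}(\MSp_{2})$ with these components; its restriction along $e_{2}^{Sp}$ is the $n = 1$ component $\thom(\TSp{1,1},\phi_{1,1}) = \thom(\Aff^{2},\omega_{2}) = \Sigma_{T}^{2}1_{A}$, so $\vartheta$ lies in \parens{$\alpha$}. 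That $\thom^{\vartheta} = \thom$ for all rank $2$ symplectic bundles follows by precomposing $\thom^{\vartheta}(E,\phi)$ with the motivic equivalence $\overline f$ of \eqref{E:ThE.to.A.1} and using $\psi^{*}(\TSp{1,p},\phi_{1,p}) \cong f^{*}(E,\phi)$ and the functoriality of $\thom$ twice, to get $\overline{\psi}^{*}\thom(\TSp{1,p},\phi_{1,p}) = \thom(f^{*}E,f^{*}\phi) = \overline f^{*}\thom(E,\phi)$; since $\overline f$ is invertible in $SH(S)$ this forces $\thom^{\vartheta}(E,\phi) = \thom(E,\phi)$.

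The hard part will not be any single deep step: the genuinely substantial computation, that of $A^{*,*}(\MSp_{2})$, is already available as Theorem~\ref{T:cohom.MSp.2r}. Rather, the real work lies in two bookkeeping-heavy verifications --- establishing axiom \parens{3} for arbitrary rank $2$ symplectic bundles by bootstrapping from trivial ones via Zariski-local triviality and Mayer--Vietoris, and checking that the classifying-map construction of $\thom^{\vartheta}$ is compatible with the colimit description $\MSp_{2} = \colim_{n}\Th \TSp{1,n}$ so that the $n$-th component of $\vartheta$ is $\thom(\TSp{1,n},\phi_{1,n})$.
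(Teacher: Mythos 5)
Your proposal is correct and follows essentially the same route as the paper: well-definedness by the argument of Theorem \ref{T:thom.phi} (using Zariski-local triviality of symplectic bundles), and the bijection via the isomorphism $A^{4,2}(\MSp_{2}) \cong \varprojlim_{n} A^{4,2}(\Th \shf U_{HP^{n-1}})$ of Theorem \ref{T:cohom.MSp.2r} applied to the compatible system of Thom classes of the tautological bundles, exactly as in the paper's proof. The only cosmetic difference is that your Mayer--Vietoris step is more than is literally required, since axiom (3) of Definition \ref{D:sp.thom} concerns only the trivial bundle $(\Aff^{2},\omega_{2})$ and so follows directly from the normalization and the suspension isomorphism.
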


\begin{proof}
A proof similar to that of Theorem \ref{T:thom.phi} shows that for a $\vartheta$ as in ($\alpha$),
the family of classes $\thom^{\vartheta}(E,\phi)$ form a symplectic Thom structure with the stated
normalization condition.  Note that this uses the fact that all symplectic bundles are locally
trivial in the Zariski topology.

Now suppose we have a symplectic Thom structure with the stated normalization condition.
For every $n$ the tautological rank $2$ symplectic bundle bundle over $HP^{n-1}$ has a Thom class
which we will abbreviate as
\[
\thom_{n} =
\thom(\shf U_{HP^{n-1}},\phi_{HP^{n-1}}) \in A^{4,2}(\Th \shf U_{HP^{n-1}}).
\]
Pullback along the inclusion $\Th \shf U_{HP^{n-1}} \to \Th \shf U_{HP^{n}}$ sends
$\thom_{n+1} \mapsto \thom_{n}$.
So as $n$ varies, we get an element
\[
\bar\vartheta = (\thom_{n} )_{n\in\NN}
\in \varprojlim A^{4,2}(\Th \shf U_{HP^{n-1}}).
\]
We have $\MSp_{2} = \colim \Th \shf U_{HP^{n-1}}$, and by Theorem \ref{T:cohom.MSp.2r}
the natural map
\begin{equation*}
%\label{E:symp.thom.surj}
A^{*,*}(\MSp_{2}) \xra{\cong} \varprojlim A^{*,*}(\Th \shf U_{HP^{n-1}}).
\end{equation*}
is an isomorphism.
Let $\vartheta \in A^{4,2}(\MSp_{2})$ be the unique class lifting $\bar\vartheta$.  As in the proof of
Theorem \ref{T:univ.SL} we have $\thom^{\vartheta}(E,\phi) = \thom(E,\phi)$ for all rank $2$
symplectic bundles.  Moreover, for $\vartheta$ and $\xi$ in $A^{4,2}(\MSp_{2})$ we have
$\thom^{\vartheta}(E,\phi) = \thom^{\xi}(E,\phi)$ for all symplectic bundles if and only if
$\vartheta$ and $\xi$ have the same image in the inverse limit.  But that happens only for
$\vartheta = \xi$.
\end{proof}

\begin{defn}
The class $\vartheta \in A^{4,2}(\MSp_{2})$ is the \emph{tautological Thom element}
of the symplectic orientation on $A^{*,*}$ whose
rank $2$ symplectic Thom classes are the $\thom^{\vartheta}(E,\phi)$.
\end{defn}

The canonical morphism $u_{2} \colon \Sigma_{T}^{\infty} \MSp_{2}(-2) \to \MSp$ which is part of the
counit of the adjunction between $\Sigma_{T}^{\infty}(-2)$ and its right adjoint the forgetful
functor $\Ev_{2}$ defines an element $\vartheta_{\MSp} \in \MSp^{4,2}(\MSp_{2})$.  It satisfies
$\vartheta_{\MSp} |_{T^{\wedge 2}} = \Sigma_{T}^{2}1_{\MSp}$ because both elements correspond to the
composition $u_{2} \circ e_{2}^{Sp} \colon T^{\wedge 2} \to \MSp \wedge T^{\wedge 2}$.

\begin{defn}
The \emph{standard symplectic Thom structure} on $\MSp^{*,*}$ is the one whose universal
Thom element is the $\vartheta_{\MSp}$ we have just described.
\end{defn}

\section{Tautological Borel elements}

The bigraded version of the definition of a Borel structure
\cite[Definition 12.1]{Panin:2010fk} is as follows.

\begin{defn}
\label{D:Pont}
A \emph{Borel structure} on a bigraded $\epsilon$-commutative
ring cohomology theory $(A^{*,*},\partial,\times,1_{A})$ on $\SmOp/S$ is a
rule which assigns to each rank $2$ symplectic bundle $(E,\phi)$ over an $X$
in $\Sm/S$ an element $b_{1}(E,\phi) \in A^{4,2}(X)$ with the following properties:

\begin{enumerate}
\item
For $(E_{1},\phi_{1}) \cong (E_{2},\phi_{2})$ we have $b_{1}(E_{1},\phi_{1}) = b_{1}(E_{2},\phi_{2})$.

\item
For a morphism $f \colon Y \to X$ we have $f^{*}b_{1}(E,\phi)) = b_{1}(f^{*}E,f^{*}\phi)$.

\item
For the tautological rank $2$ symplectic subbundle
$(\shf U_{HP^{1}},\phi_{\HP^{1}})$ on $\HP^{1}$ the map
\[
(1,b_{1}(\shf U_{HP^{1}},\phi_{\HP^{1}})) \colon A^{*,*}(X) \oplus A^{*-4,*-2}(X) \to
A^{*,*}(\HP^{1} \times X)
\]
is an isomorphism for all $X$ in $\Sm/S$.

\item
For the trivial rank $2$ symplectic bundle $(\Aff^{2},\omega_{2})$ over $\pt$
we have $b_{1}(\Aff^{2},\omega_{2}) = 0$ in $A^{4,2}(\pt)$.

\end{enumerate}
\end{defn}

The Borel classes associated to a symplectic Thom structure by the formula
$b_{1}(E,\phi) = -z^{*} \thom(E,\phi) \in A^{4,2}(X)$ of Definition \ref{D:sp.thom}
form a Borel structure because of the functoriality of the Thom classes, the
quaternionic projective bundle theorem \ref{T:H.proj.bdl.1} and
Corollary \ref{C:pont.triv}.

For $r=1$ the diagram \eqref{E:BSp/BSp} of morphisms in $H_{\bullet}(S)$ becomes
\begin{equation}
\label{E:can.htpy.equiv}
\vcenter{
\xymatrix @M=5pt @C=40pt {
T^{\wedge 2} \ar[r]^-{e_{1}^{Sp}} \ar[d]_-{\cong}
& \MSp_{2} \ar[d]^-{\cong}
\\
(HP^{1},h_{\infty}) \ar[r]_-{\text{\textup{inclusion}}}
& (HP^{\infty},h_{\infty})
}}
\end{equation}
with $h_{\infty} = \pt \to HP^{1}$ a point such that $h_{\infty}^{*}(\shf U_{HP^{1}},\phi_{HP^{1}})$
is a trivial symplectic bundle.
We will call the two vertical arrows the \emph{canonical motivic homotopy equivalences}.

Suppose that $(A,\mu,e)$ is a commutative $T$-ring spectrum.
Let $\varrho \in A^{4,2}(HP^{\infty},h_{\infty})$.
For a rank $2$ symplectic bundle $(E,\phi)$ over $X$ the construction
of \eqref{E:X.to.BSp} composed with the quotient by the pointing and with $\varrho$
gives us a zigzag
\begin{equation}
X \xla[\sim]{\text{$\Aff^{N}$-bundle}} Y \to
HP^{\infty} \to (HP^{\infty},h_{\infty})  \xra{\varrho} A \wedge T^{\wedge 2}
\end{equation}
in which the pullbacks to $Y$ of $(E,\phi)$ and of $(\shf U_{HP^{\infty}}, \phi_{HP^{\infty}})$
are isomorphic.  The composition is a class $b_{1}^{\varrho}(E,\phi) \in A^{4,2}(X)$.
This class depends only on $(E,\phi)$ and $\varrho$ by arguments similar to those of
Lemmas \ref{L:well.defined.SL} and \ref{L:well.defined.Sp.1}.

\begin{thm}
\label{T:b.beta}
Let $(A,\mu,e)$ be a commutative $T$-ring spectrum.
Then the map which assigns to a class $\varrho$ as above the family of classes
$b_{1}^{\varrho}(E,\phi)$ is a bijection between the sets of

\parens{$\beta$} classes $\varrho \in A^{4,2}(HP^{\infty},h_{\infty})$ with
$\varrho|_{HP^{1}} \in A^{4,2}(HP^{1},h_{\infty})$ corresponding to
$-\Sigma_{T}^{2}1_{A} \in A^{4,2}(T^{\wedge 2})$ under the canonical motivic homotopy equivalence
$(HP^{1},h_{\infty}) \cong T^{\wedge 2}$, and

\parens{b} Borel structures on
%the bigraded $\epsilon$-commutative ring cohomology theory
$(A^{*,*},\partial,\times,1_{A})$ for which
$b_{1}(\shf U_{HP^{1}},\phi_{HP^{1}}) \in A^{4,2}(HP^{1},h_{\infty}) \subset A^{4,2}(HP^{1})$
corresponds to $-\Sigma_{T}^{2}1_{A}$ in $A^{4,2}(T^{\wedge 2})$ under the canonical
motivic homotopy equivalence $(HP^{1},h_{\infty}) \simeq T^{\wedge 2}$.
\end{thm}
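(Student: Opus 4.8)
The plan is to deduce the theorem from Theorem~\ref{T:a.alpha} together with the equivalence of the sets (a) and (b) established in \cite{Panin:2010fk}, transporting classes from $(HP^{\infty},h_{\infty})$ to $\MSp_{2}$ along the canonical motivic homotopy equivalences of \eqref{E:can.htpy.equiv}. First I would set up the bookkeeping on classes. The commutative square \eqref{E:can.htpy.equiv} shows that the isomorphism $A^{4,2}(HP^{\infty},h_{\infty}) \cong A^{4,2}(\MSp_{2})$ induced by the canonical homotopy equivalence is compatible, via the vertical arrows of \eqref{E:can.htpy.equiv}, with the restrictions to $(HP^{1},h_{\infty})$ and to $T^{\wedge 2}$. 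Hence, writing $\vartheta_{\varrho} \in A^{4,2}(\MSp_{2})$ for $-1$ times the image of $\varrho$ under this isomorphism, the assignment $\varrho \mapsto \vartheta_{\varrho}$ is a bijection from the set $(\beta)$ onto the set $(\alpha)$ of Theorem~\ref{T:a.alpha}, the sign converting the normalization ``$\varrho|_{HP^{1}}$ corresponds to $-\Sigma_{T}^{2}1_{A}$'' into ``$\vartheta_{\varrho}|_{T^{\wedge 2}} = \Sigma_{T}^{2}1_{A}$''.

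The key geometric step is to identify the construction $\varrho \mapsto p_{1}^{\varrho}(E,\phi)$ with the one produced by $\vartheta_{\varrho}$. For a rank $2$ symplectic bundle $(E,\phi)$ over $X$, both $p_{1}^{\varrho}(E,\phi)$ and $z^{*}\thom^{\vartheta_{\varrho}}(E,\phi)$ --- where $z$ is the zero section and $\thom^{\vartheta_{\varrho}}$ is the symplectic Thom structure attached to $\vartheta_{\varrho}$ by Theorem~\ref{T:a.alpha} --- are built from the same classifying data \eqref{E:X.to.BSp}: an affine bundle $f \colon Y \to X$ with $Y$ affine and a morphism $\psi \colon Y \to HP^{p-1}$ with $\psi^{*}(\TSp{1,p},\phi_{1,p}) \cong f^{*}(E,\phi)$. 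I would compare the two composites by tracing \eqref{E:ThE.to.A.1}--\eqref{E:ThE.to.A.2} against their counterparts for $p_{1}^{\varrho}$, using the naturality of the zero section under $f$ and $\psi$ and the fact recorded after Theorem~\ref{T:MSp=BSp/BSp} that in $H_{\bullet}(S)$ the zero-section structure map $HP^{\infty} = BSp_{2} \to \MSp_{2} = \Th\shf U_{BSp_{2}}$ is the pointing map $HP^{\infty} \to (HP^{\infty},h_{\infty})$ followed by the canonical homotopy equivalence $(HP^{\infty},h_{\infty}) \cong \MSp_{2}$. Since the construction of $\thom^{\vartheta}$ is additive in $\vartheta$, this gives $p_{1}^{\varrho}(E,\phi) = z^{*}\thom^{-\vartheta_{\varrho}}(E,\phi) = -z^{*}\thom^{\vartheta_{\varrho}}(E,\phi)$; that is, $p_{1}^{\varrho}(E,\phi)$ is precisely the first Pontryagin class, in the sense of Definition~\ref{D:sp.thom}, of the symplectic Thom structure $\thom^{\vartheta_{\varrho}}$. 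Applied to the tautological bundle on $HP^{1}$ the same computation shows that $p_{1}^{\varrho}(\shf U_{HP^{1}},\phi_{HP^{1}})$ is $\varrho|_{HP^{1}}$ viewed in $A^{4,2}(HP^{1})$, so the normalization in $(\beta)$ passes exactly to the normalization in (b).

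It then remains to assemble the bijections. By Theorem~\ref{T:a.alpha}, $\vartheta_{\varrho} \mapsto \bigl(\thom^{\vartheta_{\varrho}}(E,\phi)\bigr)_{(E,\phi)}$ is a bijection from $(\alpha)$ onto the set (a) of symplectic Thom structures with $\thom(\Aff^{2},\omega_{2}) = \Sigma_{T}^{2}1_{A}$, and by the equivalence of (a) and (b) from \cite{Panin:2010fk} the rule $\thom \mapsto \bigl(-z^{*}\thom(E,\phi)\bigr)_{(E,\phi)}$ is a bijection from (a) onto the set (b) of Pontryagin structures with the stated normalization. Composing,
\[
\varrho \;\longmapsto\; \vartheta_{\varrho} \;\longmapsto\; \thom^{\vartheta_{\varrho}} \;\longmapsto\; \bigl(-z^{*}\thom^{\vartheta_{\varrho}}(E,\phi)\bigr)_{(E,\phi)} = \bigl(p_{1}^{\varrho}(E,\phi)\bigr)_{(E,\phi)}
\]
exhibits the map of the theorem as a composite of bijections, which finishes the proof.

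The main obstacle is the geometric identification in the second paragraph: one must keep careful track of the two canonical homotopy equivalences, of the pointing/quotient maps, and of the overall sign, so as to be sure that $p_{1}^{\varrho}$ really is the Pontryagin class of $\thom^{\vartheta_{\varrho}}$ and not of $\thom^{-\vartheta_{\varrho}}$ nor the negative of it. A more self-contained alternative would verify axioms (1), (2) and (4) of Definition~\ref{D:Pont} directly from the construction of $p_{1}^{\varrho}$, obtain axiom (3) from the quaternionic projective bundle theorem~\ref{T:H.proj.bdl.1} applied to $\thom^{\vartheta_{\varrho}}$, and establish bijectivity from the isomorphism $A^{4,2}(HP^{\infty},h_{\infty}) \cong \varprojlim_{n} A^{4,2}(\Th\shf U_{HP^{n-1}})$ coming from Theorem~\ref{T:cohom.MSp.2r} as in the proof of Theorem~\ref{T:a.alpha}; but since surjectivity in that approach amounts to reconstituting a symplectic Thom structure from a Pontryagin structure, it offers no real saving over routing through \cite{Panin:2010fk}.
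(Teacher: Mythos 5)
Your proposal is correct, but it proves the theorem by a genuinely different route from the paper. The paper's own proof is a direct replay of the proof of Theorem~\ref{T:a.alpha} entirely on the $HP^{\infty}$ side: axioms (1), (2) and (4) of Definition~\ref{D:Pont} are read off from the construction of $p_{1}^{\varrho}$ (in particular $p_{1}^{\varrho}(\Aff^{2},\omega_{2}) = h_{\infty}^{*}\varrho = 0$), axiom (3) comes from an argument like Lemma~\ref{L:triv.Thom} together with the equivalence $(HP^{1},h_{\infty}) \cong T^{\wedge 2}$, and the inverse map assembles the classes $p_{1}(\shf U_{HP^{n}},\phi_{HP^{n}})$ into an element of $\varprojlim A^{4,2}(HP^{n},h_{\infty})$ and lifts it to $A^{4,2}(HP^{\infty},h_{\infty})$ via the case $r=1$ of Theorem~\ref{T:cohom.BSp}. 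You instead factor the map through \parens{$\alpha$} and \parens{a}, which forces you to prove up front the two facts that the paper defers to Theorem~\ref{T:alpha.beta}: that the canonical equivalence $\MSp_{2} \cong (HP^{\infty},h_{\infty})$ plus a sign is a bijection \parens{$\beta$} $\leftrightarrow$ \parens{$\alpha$}, and that $p_{1}^{\varrho}(E,\phi) = -z^{*}\thom^{\vartheta_{\varrho}}(E,\phi)$ via Theorem~\ref{T:MSp=BSp/BSp}; your sign bookkeeping there is right. The cost is the remaining link \parens{a} $\leftrightarrow$ \parens{b}, which you import from \cite{Panin:2010fk}. That is legitimate as an external citation, but it is stated there ``in a different axiomatic context,'' so strictly you must check that it transports to bigraded $\epsilon$-commutative representable theories and that it matches the two normalization conditions (which again needs your geometric identification); moreover its hard direction --- reconstituting a symplectic Thom structure from a Pontryagin structure --- is exactly the point the present paper's architecture is designed to reprove, and within this paper that reconstitution is \emph{obtained} as the composite \parens{b} $\to$ \parens{$\beta$} $\to$ \parens{$\alpha$} $\to$ \parens{a}, so the paper cannot afford your dependency without circularity, whereas you, treating it as a black box, can.

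One small correction to your closing remark: surjectivity onto \parens{b} in the direct approach does \emph{not} require reconstituting a symplectic Thom structure. The exact sequence of Theorem~\ref{T:hocolim} gives a surjection $A^{4,2}(HP^{\infty},h_{\infty}) \to \varprojlim A^{4,2}(HP^{n},h_{\infty})$ unconditionally, so a compatible system of classes $p_{1}(\shf U_{HP^{n}},\phi_{HP^{n}})$ always lifts to some $\varrho$; only the \emph{uniqueness} of $\varrho$ needs the vanishing of the $\varprojlim^{1}$, and by that point one already has a class in \parens{$\beta$}, hence via Theorem~\ref{T:a.alpha} a symplectic Thom structure making Theorem~\ref{T:cohom.BSp} available. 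So the self-contained alternative you dismiss is in fact the paper's proof, and it is cheaper than routing through \cite{Panin:2010fk}.
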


The proof is like that of Theorem \ref{T:a.alpha}.  The classes $b_{1}^{\varrho}(E,\phi)$ satisfy
condition (3) of Definition \ref{D:Pont} because of an argument like Lemma \ref{L:triv.Thom}
and the isomorphism
$T^{\wedge 2} \cong (HP^{1},h_{\infty})$.  They satisfy condition (4) because
$b_{1}^{\varrho}(\Aff^{2},\omega_{2}) = h_{\infty}^{*}\varrho = 0$.
The proof that there is a unique $\varrho$ corresponding to each Borel structure invokes
the isomorphism $A^{*,*}(HP^{\infty}) \cong \varprojlim A^{*,*}(HP^{n})$ which is the case
$r=1$ of Theorem \ref{T:cohom.BSp}.

\begin{defn}
The class $\varrho \in A^{4,2}(HP^{\infty},h_{\infty})$ is the \emph{tautological Borel element}
of the symplectic orientation on $A^{*,*}$ whose
rank $2$ Borel classes are the $b_{1}^{\varrho}(E,\phi)$.
\end{defn}

\begin{thm}
\label{T:alpha.beta}
Let $(A,\mu,e)$ be a commutative $T$-ring spectrum.
Then the canonical motivic homotopy equivalence
$\MSp_{2} \cong (HP^{\infty},h_{\infty})$ plus change-of-sign
gives a bijection between the sets of

\parens{$\alpha$} the tautological Thom elements $\vartheta$ of
Theorem \ref{T:a.alpha} and

\parens{$\beta$} the tautological Borel elements $\varrho$ of Theorem \ref{T:b.beta}.

The composition
\parens{a} $\leftrightarrow$ \parens{$\alpha$} $\leftrightarrow$ \parens{$\beta$} $\leftrightarrow$ \parens{b} with the bijections of Theorems \ref{T:a.alpha} and \ref{T:b.beta}
is the same as the rule which assigns to a symplectic Thom structure with classes $\thom(E,\phi)$
the Borel structure with classes $b_{1}(E,\phi) = -z^{*}\thom(E,\phi)$ for
$z \colon X \to \Th E$ the structural map of the Thom space.
\end{thm}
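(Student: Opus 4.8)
The plan is to produce the bijection $(\alpha)\leftrightarrow(\beta)$ by transporting classes along the canonical motivic homotopy equivalence $c\colon \MSp_{2}\cong(HP^{\infty},h_{\infty})$ of diagram~\eqref{E:can.htpy.equiv} together with multiplication by $-1$, and then to identify the composite bijection $(a)\leftrightarrow(\alpha)\leftrightarrow(\beta)\leftrightarrow(b)$ with the Pontryagin-from-Thom rule. First I would set $\varrho = -c_{*}\vartheta$, i.e.\ $\varrho = -(c^{-1})^{*}\vartheta \in A^{4,2}(HP^{\infty},h_{\infty})$; this is manifestly a bijection of the ambient cohomology groups, so the only content is that it restricts to a bijection between the normalized subsets. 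Commutativity of the square~\eqref{E:can.htpy.equiv} says precisely that the left vertical equivalence $(HP^{1},h_{\infty})\cong T^{\wedge 2}$ is the restriction of $c$ along the inclusions, so $\vartheta|_{T^{\wedge 2}} = \Sigma_{T}^{2}1_{A}$ if and only if $(c_{*}\vartheta)|_{HP^{1}}$ corresponds to $\Sigma_{T}^{2}1_{A}$, hence $\varrho|_{HP^{1}}$ corresponds to $-\Sigma_{T}^{2}1_{A}$, which is exactly the normalization in $(\beta)$. This gives the first assertion.

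For the second assertion I would trace an arbitrary rank $2$ symplectic bundle $(E,\phi)$ over $X\in\Sm/S$ through both routes. Fix an affine bundle $f\colon Y\to X$ with $Y$ affine and sections $s_{1},\dots,s_{p}$ of $f^{*}E^{\vee}$ as in the construction preceding Lemma~\ref{L:well.defined.Sp.1}, giving the classifying map $\psi\colon Y\to HP^{p-1}$ with $\psi^{*}(\TSp{1,p},\phi_{1,p})\cong f^{*}(E,\phi)$. Going via $(\alpha)$: the associated symplectic Thom structure has $\thom^{\vartheta}(E,\phi)$ defined by \eqref{E:ThE.to.A.1}--\eqref{E:ThE.to.A.2}, and the Pontryagin class assigned by the composition $(a)\leftrightarrow(b)$ is $-z^{*}\thom^{\vartheta}(E,\phi)$ where $z\colon X\to\Th E$ is the zero section. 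Pulling back along $z$ turns the zigzag~\eqref{E:ThE.to.A.1} into the zigzag $X\xla{f}Y\xra{\psi}HP^{p-1}$ of base spaces (the zero section of $\TSp{1,p}$ composed with $\overline\psi$ is $\psi$ composed with the zero section of the bundle, by naturality of zero sections), followed by the structural map $HP^{p-1}\to\Th\TSp{1,p}\to\MSp_{2}$ and then $\vartheta$. Going via $(\beta)$: $p_{1}^{\varrho}(E,\phi)$ is by definition the composite $X\xla{f}Y\xra{\psi}HP^{p-1}\hra HP^{\infty}\to(HP^{\infty},h_{\infty})\xra{\varrho}A\wedge T^{\wedge 2}$. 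So I must compare, as morphisms $HP^{p-1}\to A\wedge T^{\wedge 2}$ in $SH(S)$, the composite $HP^{p-1}\to\MSp_{2}\xra{\vartheta}$ (through the Thom-space structural map) with $HP^{p-1}\hra HP^{\infty}\to(HP^{\infty},h_{\infty})\xra{\varrho}$, and check the sign.

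The key step, and the main obstacle, is exactly that comparison: the structural map of the Thom space $\Th\TSp{1,p}=\Th\shf U_{HP^{p-1}}$ (i.e.\ the ``section map'' / zero section) followed by the inclusion into $\MSp_{2}$ must be identified, in $H_{\bullet}(S)$, with $HP^{p-1}\hra HP^{\infty}=BSp_{2}\to BSp_{2}/BSp_{0}=(HP^{\infty},h_{\infty})$ composed with $c^{-1}$. This is precisely the content of the three-sided diagram of Theorem~\ref{T:MSp=BSp/BSp} in the case $r=1$ (taking the colimit over $n$ of~\eqref{E:amazing.2}): the structure map $BSp_{2}\to\MSp_{2}$ equals the horizontal equivalence $c^{-1}\colon(HP^{\infty},h_{\infty})\xra{\cong}\MSp_{2}$ precomposed with the quotient map $HP^{\infty}\to(HP^{\infty},h_{\infty})$. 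Granting this, the route via $(\alpha)$ computes $-z^{*}\thom^{\vartheta}(E,\phi)$ as $X\xla{f}Y\xra{\psi}HP^{p-1}\to(HP^{\infty},h_{\infty})\xra{c^{-1}}\MSp_{2}\xra{\vartheta}$ with an overall sign $-1$, which is exactly $X\xla{f}Y\xra{\psi}HP^{p-1}\to(HP^{\infty},h_{\infty})\xra{-c_{*}\vartheta=\varrho}$, namely $p_{1}^{\varrho}(E,\phi)$. Since the classes $\thom^{\vartheta}(E,\phi)$ and $p_{1}^{\varrho}(E,\phi)$ are independent of the auxiliary choices (Lemma~\ref{L:well.defined.Sp.1} and the analogous statement for $\varrho$), this identity for all $(E,\phi)$ shows that the composite $(a)\leftrightarrow(\alpha)\leftrightarrow(\beta)\leftrightarrow(b)$ is the rule $\thom(E,\phi)\mapsto p_{1}(E,\phi)=-z^{*}\thom(E,\phi)$, completing the proof. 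The remaining bookkeeping --- that the normalization conditions match up, and that $h_{\infty}^{*}\varrho=p_{1}^{\varrho}(\Aff^{2},\omega_{2})$ vanishes as needed for~(b) --- is routine and already implicitly handled by Theorems~\ref{T:a.alpha} and~\ref{T:b.beta}.
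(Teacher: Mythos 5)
Your proposal is correct and follows essentially the same route as the paper: define $\varrho=-c_{*}\vartheta$ via the canonical equivalence of \eqref{E:can.htpy.equiv}, and for the composite $(a)\leftrightarrow(b)$ reduce to the commutativity of the three-sided diagram of Theorem~\ref{T:MSp=BSp/BSp} (the structure map $BSp_{2}\to\MSp_{2}$ factoring through the quotient $(HP^{\infty},h_{\infty})$) together with naturality of Thom-space structural maps under pullback. The paper packages this as a single commutative diagram with two squares and two triangles, but the content is identical.
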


\begin{proof}
The first statement follows from the existence and compatibility of the canonical motivic homotopy
equivalences of \eqref{E:can.htpy.equiv}.  For the second, given a rank $2$ symplectic bundle
$(E,\phi)$ on $X$ we have a diagram
\[
\xymatrix @M=5pt {
X \ar[d]_-{z}
& Y \ar[l]_-{f}^-{\sim} \ar[r] \ar[d]
& HP^{\infty} \ar[r] \ar[d]
&  (HP^{\infty},h_{\infty})  \ar[d]^-{\varrho} \ar[ld]_-{\cong}
%& A \wedge T^{\wedge 2} \ar@{=}[d]
\\
\Th E
&  \Th f^{*}E \ar[l]_-{\sim} \ar[r]
& \MSp_{2} \ar[r]^-{-\vartheta}
& A \wedge T^{\wedge 2}
}
\]
in which the squares commute by compatibility of the structural maps of Thom spaces with
pullbacks, the upper triangle commutes by Theorem \ref {T:MSp=BSp/BSp}, and the lower triangle
commutes because of the rule giving the bijection ($\alpha$) $\leftrightarrow$ ($\beta$).
We deduce the equality $b_{1}^{\varrho}(E,\phi) = -z^{*}\thom^{\vartheta}(E,\phi)$
in $Hom_{SH(S)}(X, A \wedge T^{\wedge 2}) = A^{4,2}(X)$.
\end{proof}

The bigraded version of the definition of a Borel classes theory
\cite[Definition 14.1]{Panin:2010fk} is as follows.

\begin{defn}
\label{D:Pont.classes}
A \emph{Borel classes theory} on a bigraded $\epsilon$-commutative
ring cohomology theory $(A^{*,*},\partial,\times,1_{A})$ on $\SmOp/S$
is a rule assigning to every
symplectic bundle $(F,\psi)$ over every $X$ in $\Sm/S$
elements $b_{i}(F,\psi) \in A^{4i,2i}(X)$ for all
$i \geq 1$ satisfying
\begin{enumerate}
\item
For $(F_{1},\psi_{1}) \cong (F_{2},\psi_{2})$ we have $b_{i}(F_{1},\psi_{1}) = b_{i}(F_{2},\psi_{2})$
for all $i$.

\item
For a morphism $f \colon Y \to S$ we have $f^{*}b_{i}(F,\psi) = b_{i}(f^{*}E,f^{*}\psi)$ for all $i$.

\item
For the tautological rank $2$ symplectic subbundle
$(\shf U_{HP^{1}},\phi_{HP^{1}})$ on $\HP^{1}$ the maps
\[
(1,b_{1}(\shf U_{HP^{1}},\phi_{HP^{1}})) \colon A^{*,*}(X) \oplus A^{*-4,*-2}(X) \to
A^{*,*}(\HP^{1} \times X)
\]
are isomorphisms for all $X$.

\item
For the trivial rank $2$ symplectic bundle $(\Aff^{2},\omega_{2})$ over $\pt$
we have $b_{1}(\Aff^{2},\omega_{2}) = 0$ in $A^{4,2}(\pt)$.

\item
For an orthogonal direct sum of symplectic bundles $(F,\psi) \cong (F_{1},\psi_{1}) \oplus (F_{2},\psi_{2})$
we have $b_{i}(F,\psi) = b_{i}(F_{1},\psi_{1}) + \sum_{j=1}^{i-1} b_{i-j}(F_{1},\psi_{1}) b_{j}(F_{2},\psi_{2})
+ b_{i}(F_{2},\psi_{2})$ for all $i$.

\item
For $(F,\psi)$ of rank $2r$ we have $b_{i}(F,\psi) = 0$ for $i > r$.
\end{enumerate}
\end{defn}

One may also set $p_{0}(F,\psi) = 1$ and even $b_{i}(F,\psi) = 0$ for $i < 0$.

Definition \ref{D:PontryaginClasses} associates Borel classes to a symplectic Thom structure on
$(A^{*,*},\partial,\times, 1_{A})$.  They form a Borel classes theory because the
quaternionic projective bundle Theorems \ref{T:H.proj.bdl.1} and \ref{T:H.proj.bdl.2},
Corollary \ref{C:pont.triv} and the Cartan sum formula (Theorem \ref{T:Cartan}).

\begin{thm}
\label{T:b.c}
Let $(A,\mu,e)$ be a commutative $T$-ring spectrum.
Then the forgetful map
gives a bijection between the sets of

\parens{c} Borel classes theories on $(A^{*,*},\partial,\times, 1_{A})$
with the normalization condition on $b_{1}(\shf U_{HP^{1}},\phi_{HP^{1}})$ of Theorem \ref{T:b.beta} and

\parens{b} Borel structures on $(A^{*,*},\partial,\times, 1_{A})$
with the same normalization condition.

The inverse bijection is given by assigning to a Borel structure first the symplectic
Thom structure associated to it by Theorem \ref{T:alpha.beta} and then the
the Borel classes theory associated to the symplectic Thom structure by
Definition \ref{D:PontryaginClasses}.
\end{thm}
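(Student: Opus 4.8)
The plan is to introduce names for the two maps in the statement — let $F$ be the forgetful map from the set (c) to the set (b), and let $G \colon (\mathrm b) \to (\mathrm c)$ be the composite described in the statement (first Theorem \ref{T:alpha.beta}, then Definition \ref{D:PontryaginClasses}) — and to show $F \circ G = \id$ and $G \circ F = \id$. First I would record that $F$ is well-defined: restricting a Pontryagin classes theory to rank $2$ bundles and retaining only $p_{1}$ produces data satisfying axioms (1)--(4) of Definition \ref{D:Pont}, because these are exactly axioms (1)--(4) of Definition \ref{D:Pont.classes} specialized to rank $2$, and the normalization on $p_{1}(\shf U_{HP^{1}},\phi_{HP^{1}})$ is part of the definition of (c). That $G$ is well-defined, i.e. that its output is a Pontryagin classes theory with the correct normalization, is the content of the paragraph just before the theorem (the quaternionic projective bundle Theorems \ref{T:H.proj.bdl.1} and \ref{T:H.proj.bdl.2}, Corollary \ref{C:pont.triv}, and the Cartan sum formula \ref{T:Cartan}), together with the fact that the rank $2$ Pontryagin class attached by Definition \ref{D:PontryaginClasses} to the Thom structure produced by Theorem \ref{T:alpha.beta} equals the $p_{1}$ of the Pontryagin structure one started from.

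The identity $F \circ G = \id$ on (b) is then pure bookkeeping. Given a Pontryagin structure $p_{1}$ in (b), Theorem \ref{T:alpha.beta} produces a symplectic Thom structure $\thom$ with $-z^{*}\thom(E,\phi) = p_{1}(E,\phi)$ for every rank $2$ symplectic bundle, and by the remark following Definition \ref{D:PontryaginClasses} the rank $2$ Pontryagin class that Definition \ref{D:PontryaginClasses} attaches to $\thom$ is precisely $-z^{*}\thom(E,\phi)$; hence $F(G(p_{1})) = p_{1}$.

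The substantive point, and the main obstacle, is $G \circ F = \id$ on (c), equivalently the assertion that a Pontryagin classes theory is determined by its restriction to rank $2$ bundles. Here I would invoke the symplectic splitting principle \cite[Theorem 10.2]{Panin:2010fk}: for a symplectic bundle $(F,\psi)$ of rank $2r$ over an $X$ in $\Sm/S$ there is a morphism $q \colon Y \to X$ inducing a (split) injection $q^{*} \colon A^{*,*}(X) \hra A^{*,*}(Y)$ such that $q^{*}(F,\psi)$ is an orthogonal direct sum of $r$ rank $2$ symplectic bundles $(L_{1},\phi_{1}),\dots,(L_{r},\phi_{r})$. This uses only the symplectic Thom structure underlying the theory — which is available, being obtained via Theorem \ref{T:alpha.beta} from the rank $2$ part of $p_{1}$ — and so is legitimate. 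If $\{p_{i}\}$ and $\{p_{i}'\}$ are two Pontryagin classes theories agreeing on every rank $2$ symplectic bundle, then axioms (1), (2), (6) and repeated use of the Cartan sum formula (axiom (5)) give
\[
q^{*}p_{i}(F,\psi) = e_{i}\bigl(p_{1}(L_{1},\phi_{1}),\dots,p_{1}(L_{r},\phi_{r})\bigr) = q^{*}p_{i}'(F,\psi)
\]
for all $i$, whence $p_{i}(F,\psi) = p_{i}'(F,\psi)$ by injectivity of $q^{*}$.

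To finish, I would apply this with $\{p_{i}\}$ an arbitrary member of (c) and $\{p_{i}'\} = G(F(\{p_{i}\}))$. These two theories have the same rank $2$ $p_{1}$: the rank $2$ $p_{1}$ of $G(F(\{p_{i}\}))$ equals $-z^{*}\thom$ of the Thom structure produced by Theorem \ref{T:alpha.beta} from $F(\{p_{i}\})$, which by Theorem \ref{T:alpha.beta} is $F(\{p_{i}\})$ itself, i.e. the rank $2$ $p_{1}$ of $\{p_{i}\}$. Hence $G(F(\{p_{i}\})) = \{p_{i}\}$, so $G \circ F = \id$ on (c). Combined with $F \circ G = \id$ on (b), this shows $F$ is a bijection with inverse $G$, which is the statement of the theorem.
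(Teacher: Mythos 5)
Your proposal is correct, and the easy halves (well-definedness of both maps, and $F\circ G=\id$ via the coincidence of the two definitions of $p_{1}$ on rank $2$ bundles) match the paper. For the substantive half, $G\circ F=\id$, you take a genuinely different route. The paper works on a single quaternionic projective bundle: for $(F,\psi)$ of rank $2r$ it pulls back along $\pi\colon HP(F,\psi)\to X$, uses the orthogonal decomposition $\pi^{*}(F,\psi)=(\shf U,\phi)\oplus(\shf U,\phi)^{\perp}$ together with axioms (5) and (6) to show that the original classes $p_{i}(F,\psi)$ satisfy the relation $\zeta^{r}-\pi^{*}p_{1}\cup\zeta^{r-1}+\cdots+(-1)^{r}\pi^{*}p_{r}=0$ with $\zeta=p_{1}(\shf U,\phi)$, and then concludes by the uniqueness of the coefficients in Definition \ref{D:PontryaginClasses}, which rests on the basis statement of the quaternionic projective bundle Theorem \ref{T:H.proj.bdl.2}. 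You instead prove the stronger uniqueness statement that a Pontryagin classes theory is determined by its rank $2$ part, by invoking the symplectic splitting principle and comparing elementary symmetric expressions after an injective pullback; your observation that the Thom structure needed to run the splitting principle is available from the shared rank $2$ data (via Theorem \ref{T:alpha.beta}) is exactly the point that keeps this from being circular, and the same point underlies the paper's use of Theorem \ref{T:H.proj.bdl.2}. The trade-off: the paper's argument is more economical, using one projective bundle and the relation already built into Definition \ref{D:PontryaginClasses}, whereas yours leans on the heavier splitting principle (itself obtained by iterating projective bundles and already used to prove the Cartan formula) but yields a cleaner conceptual statement, namely that restriction to rank $2$ is injective on Pontryagin classes theories, from which the theorem follows formally.
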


\begin{proof}
The chain of associations (\textit{b}) $\to$ (\textit{a}) $\to$ (\textit{c}) $\to$ (\textit{b})
gives the identity because for rank $2$ symplectic bundles the classes $b_{1}(E,\phi)$ given in
Definitions \ref{D:sp.thom} and \ref{D:PontryaginClasses} coincide.

The chain of associations (\textit{c}) $\to$ (\textit{b}) $\to$ (\textit{a}) $\to$ (\textit{c})
gives the identity because for a symplectic bundle $(F,\psi)$ of rank $2r$ on $X$ if we let
$\pi \colon HP(F,\psi) \to X$ be the associated quaternionic projective bundle with rank $2$
tautological subbundle $(\shf U,\phi)$, then from the orthogonal direct sum
$\pi^{*}(F,\psi) = (\shf U,\phi) \oplus (\shf U,\phi)^{\perp}$ and the axioms we get
\[
0 = (-1)^{r} b_{r}\bigl( (\shf U,\phi)^{\perp} \bigr) = b_{1}(\shf U,\phi)^{r} -
\pi^{*}b_{1}(F,\psi) \cup b_{1}(\shf U,\phi)^{r-1} + \cdots + (-1)^{r} \pi^{*} b_{r}(F,\psi).
\]
Hence the Borel classes defined by (\textit{c}) $\to$ (\textit{b}) $\to$ (\textit{a}) $\to$ (\textit{c})
coincide with the original ones.
\end{proof}

\section{Higher rank symplectic Thom classes}

The bigraded version of the definition of a symplectic Thom classes theory
\cite[Definition 14.2]{Panin:2010fk} is as follows.

\begin{defn}
\label{D:thom.class}
A \emph{symplectic Thom classes theory}
on a bigraded $\epsilon$-commutative
ring cohomology theory $(A^{*,*},\partial,\times,1_{A})$ on $\SmOp/S$
is a rule assigning to every
symplectic bundle $(F,\psi)$ over every scheme $X$ in $\Sm/S$
an element $\thom(F,\psi) \in A^{4r,2r}(F,F-X)$ with $2r = \rk F$
with the following properties\textup{:}
\begin{enumerate}

\item For an isomorphism $u \colon (F,\psi) \cong (F_{1},\psi_{1})$
we have $\thom(F,\psi) = u^{*}\thom(F_{1},\psi_{1})$.

\item For $f \colon Y \to X$, writing $f_{F} \colon f^{*}F \to F$ for the pullback,
we have $f_{F}^{*}\thom(F,\psi) = \thom(f^{*}F,f^{*}\psi)) \in A^{4r,2r}(f^{*}F,f^{*}F-Y)$.

\item The maps $\cup \thom(F,\psi) \colon A^{*,*}(X) \to A^{*+4r,*+2r}(F,F-X)$ are isomorphisms.

\item We have $\thom \bigl( (F_{1},\psi_{1}) \oplus (F_{2},\psi_{2}) \bigr)
= q_{1}^{*}\thom(F_{1},\psi_{1}) \cup q_{2}^{*}\thom(F_{2},\psi_{2})$,
where $q_{1},q_{2}$ are the projections from $F_{1} \oplus F_{2}$
onto its factors.  Moreover, for the zero bundle $\boldsymbol 0 \to \pt$ we have
$\thom(\boldsymbol 0) = 1_{A} \in A^{0,0}(\pt)$.
\end{enumerate}

The classes $\thom(F,\psi)$ are \emph{symplectic Thom classes}.
\end{defn}

Let  $(A,\mu,e)$ be a commutative $T$-ring spectrum.
Suppose we have a sequence of classes
$\boldsymbol{\vartheta} = (\vartheta_{1},\vartheta_{2},\vartheta_{3},\dots)$
with  $\vartheta_{r} \in A^{4r,2r}(\MSp_{2r})$ for each $r$.
Then for any symplectic bundle $(F,\psi)$ of rank $2r$ over $X$ one can use $\vartheta_{r}$
to define a class $\thom^{\boldsymbol\vartheta}(F,\psi)$ by the construction already described
in \eqref {E:X.to.BSp}--\eqref{E:ThE.to.A.2} for rank $2$.  For a rank $0$ bundle
$\boldsymbol 0_{X} \to X$ we set $\thom(\boldsymbol{0}_{X}) = 1_{X} \in A^{0,0}(X)$.
These classes are well-defined by the same argument as in Lemma \ref{L:well.defined.SL} and
\ref{L:well.defined.Sp.1}.

Recall the inclusion $e_{2}^{Sp} \colon T^{\wedge 2} \to \MSp_{2}$ of \eqref{E:e.Sp} and
the monoid maps $\mu_{rs} \colon \MSp_{2r} \wedge \MSp_{2s} \to \MSp_{2r+2s}$
of \eqref{E:monoid.Sp}.

\begin{thm}
\label{T:d.delta}
Let $(A,\mu,e)$ be a commutative $T$-ring spectrum.
Then the map which assigns to a sequence of classes
$\boldsymbol{\vartheta} = (\vartheta_{1},\vartheta_{2},\vartheta_{3},\dots)$
as above the family of classes
$\thom^{\boldsymbol\vartheta}(F,\psi)$ is a bijection between the sets of

\parens{$\delta$} sequences of classes
$\boldsymbol{\vartheta} = (\vartheta_{1},\vartheta_{2},\vartheta_{3},\dots)$
with $\vartheta_{r} \in A^{4r,2r}(\MSp_{2r})$ for each $r$ satisfying
$\mu_{rs}^{*} \vartheta_{r+s} = \vartheta_{r} \times \vartheta_{s}$ for all $r,s$, and
$\vartheta_{1} |_{T^{\wedge 2}} = \Sigma_{T}^{2}1_{A}$, and

\parens{d} symplectic Thom classes theories on
%the bigraded $\epsilon$-commutative ring cohomology theory
$(A^{*,*},\partial,\times,1_{A})$ such that for the trivial rank $2$
bundle $\Aff^{2} \to \pt$ we have
$\thom(\Aff^{2},\omega_{2}) = \Sigma_{T}^{2}1_{A}$ in $A^{4,2}(T^{\wedge 2})$.
\end{thm}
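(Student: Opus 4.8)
The plan is to imitate the structure of the proof of Theorem \ref{T:a.alpha}, using the cohomology computations of \S\ref{S:cohom.BSp.MSp} to control the bookkeeping in higher rank. First I would check that given a sequence $\boldsymbol\vartheta$ as in ($\delta$), the classes $\thom^{\boldsymbol\vartheta}(F,\psi)$ really form a symplectic Thom classes theory. Functoriality conditions (1) and (2) and the rank-$0$ normalization are immediate from the construction in \eqref{E:X.to.BSp}--\eqref{E:ThE.to.A.2}. The isomorphism condition (3) holds for trivial bundles because $\vartheta_{1}|_{T^{\wedge 2}} = \Sigma_{T}^{2}1_{A}$ together with the multiplicativity relation $\mu_{rs}^{*}\vartheta_{r+s} = \vartheta_{r}\times\vartheta_{s}$ forces $\thom^{\boldsymbol\vartheta}$ of a trivial rank $2r$ bundle to be the pullback of $\Sigma_{T}^{2r}1_{A}$; it then holds for general symplectic bundles by a Mayer--Vietoris argument, using that symplectic bundles are Zariski-locally trivial. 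The multiplicativity condition (4) follows from the relation $\mu_{rs}^{*}\vartheta_{r+s} = \vartheta_{r}\times\vartheta_{s}$ via the compatibility \eqref{E:mult.BSp.MSp} of the monoid maps with the structural maps of Thom spaces, exactly as multiplicativity of $\thom^{\varphi}$ was deduced in the proof of Theorem \ref{T:thom.phi}.

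Conversely, starting from a symplectic Thom classes theory with the stated normalization, for each $r$ and $n$ the tautological symplectic bundle $(\shf U_{r,n},\phi_{r,n})$ over $HGr(r,n)$ has a Thom class $\thom_{r,n} \in A^{4r,2r}(\Th\shf U_{r,n})$, and by functoriality these are compatible under the inclusions $HGr(r,n)\hookrightarrow HGr(r,n+1)$, hence give an element of $\varprojlim_{n} A^{4r,2r}(\Th\shf U_{r,n})$. By Theorem \ref{T:cohom.MSp.2r} the natural map $A^{4r,2r}(\MSp_{2r}) \xrightarrow{\cong} \varprojlim_{n} A^{4r,2r}(\Th\shf U_{r,n})$ is an isomorphism, so there is a unique $\vartheta_{r} \in A^{4r,2r}(\MSp_{2r})$ lifting it. As in the proof of Theorem \ref{T:univ.SL} one checks $\thom^{\vartheta_{r}}(F,\psi) = \thom(F,\psi)$ for all rank $2r$ symplectic bundles, and injectivity of the inverse-limit map gives that distinct sequences $\boldsymbol\vartheta$ give distinct families of Thom classes, so the two assignments are mutually inverse on the level of sets.

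It remains to verify that the sequence $\boldsymbol\vartheta = (\vartheta_{1},\vartheta_{2},\dots)$ so produced actually satisfies the constraints of ($\delta$): $\vartheta_{1}|_{T^{\wedge 2}} = \Sigma_{T}^{2}1_{A}$ is exactly the normalization hypothesis on the Thom classes theory, while $\mu_{rs}^{*}\vartheta_{r+s} = \vartheta_{r}\times\vartheta_{s}$ must be extracted from the multiplicativity axiom (4). The point is that $\thom_{r,n}\times\thom_{s,n} = m_{rs}^{*}\thom_{r+s,n'}$ (for a suitable $n'$) as elements of $A^{4r+4s,2r+2s}(\Th\shf U_{r,n}\wedge\Th\shf U_{s,n})$, by axiom (4) applied to the external direct sum of the two tautological bundles, pulled back along the classifying map; passing to the limit over $n$ and using the injectivity and compatibility statements of Theorem \ref{T:cohom.BSp.BSp} identifies this with $\mu_{rs}^{*}\vartheta_{r+s} = \vartheta_{r}\times\vartheta_{s}$ in $A^{*,*}(\MSp_{2r}\wedge\MSp_{2s})$. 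The main obstacle is precisely this last step: matching the external product of the finite-level Thom classes with the pullback along $\mu_{rs}$ requires knowing that the multiplication map on $BSp$ is compatible with the classifying maps of direct sums at finite level and that the resulting identifications survive the inverse limit without a $\varprojlim^{1}$ obstruction --- both of which are supplied by Theorem \ref{T:cohom.BSp.BSp}, so the argument is then a diagram chase rather than a new computation.
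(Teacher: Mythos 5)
Your proposal is correct and follows essentially the same route as the paper, which simply states that the proof is ``essentially the same as that of Theorem \ref{T:a.alpha}'': lift the compatible system of tautological Thom classes through the $\varprojlim$-isomorphism of Theorem \ref{T:cohom.MSp.2r}, and use injectivity to see the assignments are mutually inverse. Your explicit verification of the constraint $\mu_{rs}^{*}\vartheta_{r+s}=\vartheta_{r}\times\vartheta_{s}$ via axiom (4) and Theorem \ref{T:cohom.BSp.BSp} correctly supplies the one step the paper leaves implicit.
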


The proof is essentially the same as that of Theorem \ref{T:a.alpha}.  The class $\vartheta_{r}$
is the \emph{tautological symplectic Thom element} of rank $2r$.

Recall
that for a commutative $T$-ring spectrum $(A,\mu,e)$ with a symplectic Thom structure
on $(A^{*,*},\partial,\times,1_{A})$ the Thom space structural map $z_{r} \colon BSp_{2r} \to \MSp_{2r}$
has the property that $z_{r}^{*} \colon A^{*,*}(\MSp_{2r}) \to A^{*,*}(BSp_{2r})$ is injective, and
that the isomorphism
\begin{equation}
\label{E:iso}
A^{*,*}(BSp_{2r}) \xla{\cong} A^{*,*}(\pt)[[b_{1},\dots,b_{r}]]
\end{equation}
derived from the symplectic Thom structure identifies the image of $z_{r}^{*}$ with the two-sided ideal
generated by $b_{r}$
(Theorems \ref{T:cohom.BSp}, \ref{T:cohom.MSp.2r} and \ref{T:cohom.MSp.2r.ideal}).

\begin{thm}
\label{T:alpha.delta}
Let $(A,\mu,e)$ be a commutative $T$-ring spectrum.
Then the assignment
$\boldsymbol{\vartheta} = (\vartheta_{1},\vartheta_{2},\vartheta_{3},\dots) \mapsto \vartheta_{1}$
gives a bijection between the sets of

\parens{$\delta$} sequences of classes
$\boldsymbol{\vartheta} = (\vartheta_{1},\vartheta_{2},\vartheta_{3},\dots)$ satisfying the
conditions of Theorem \ref{T:d.delta} and

\parens{$\alpha$} the tautological rank $2$ Thom elements $\vartheta$ of
Theorem \ref{T:a.alpha}.

The inverse bijection sends
$\vartheta \mapsto \boldsymbol{\vartheta} = (\vartheta_{1},\vartheta_{2},\vartheta_{3},\dots)$ where
$z_{r}^{*}\vartheta_{r} \in A^{4r,2r}(BSp_{2r})$ is the element corresponding to $(-1)^{r}b_{r}$
under the isomorphism \eqref{E:iso} derived from the symplectic Thom structure associated to $\vartheta$
by Theorem \ref{T:a.alpha}.
\end{thm}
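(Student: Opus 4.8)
The plan is to exhibit the inverse of the forward map $\Phi\colon \boldsymbol\vartheta \mapsto \vartheta_1$ explicitly and to check that $\Phi$ and this inverse are mutually inverse bijections landing in the stated sets. The map $\Phi$ clearly sends $(\delta)$ into $(\alpha)$, since $\vartheta_1|_{T^{\wedge 2}} = \Sigma_T^{2}1_A$ is one of the defining conditions of $(\delta)$. For the reverse construction, given $\vartheta$ in $(\alpha)$ I would first equip $A^{*,*}$ with the symplectic Thom structure furnished by Theorem \ref{T:a.alpha}. By Theorems \ref{T:cohom.BSp}, \ref{T:cohom.MSp.2r} and \ref{T:cohom.MSp.2r.ideal}, the Pontryagin classes of that structure give an isomorphism $A^{*,*}(BSp_{2r}) \cong A^{*,*}(\pt)[[p_1,\dots,p_r]]^{\homog}$ under which $z_{2r}^{*}\colon A^{*,*}(\MSp_{2r}) \hookrightarrow A^{*,*}(BSp_{2r})$ is the inclusion of the two-sided ideal generated by $p_r$. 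Since $(-1)^r p_r$ lies in that ideal and in bidegree $(4r,2r)$, there is a unique class $\vartheta_r \in A^{4r,2r}(\MSp_{2r})$ with $z_{2r}^{*}\vartheta_r = (-1)^r p_r$; I set $\Psi(\vartheta) = \boldsymbol\vartheta = (\vartheta_1,\vartheta_2,\dots)$.

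Next I would verify $\boldsymbol\vartheta \in (\delta)$. For $r = 1$ the Pontryagin class of a rank $2$ symplectic bundle is minus its Euler class (Definition \ref{D:sp.thom}), and restricting $\vartheta$ along the zero section $z_{2}\colon BSp_2 \to \MSp_2$ identifies $z_{2}^{*}\vartheta$ with that Euler class, so $z_{2}^{*}\vartheta = -p_1 = z_{2}^{*}\vartheta_1$; injectivity of $z_{2}^{*}$ gives $\vartheta_1 = \vartheta$, and in particular $\vartheta_1|_{T^{\wedge 2}} = \Sigma_T^{2}1_A$. For the relations $\mu_{rs}^{*}\vartheta_{r+s} = \vartheta_r\times\vartheta_s$ I would apply $A^{*,*}$ to the commuting square \eqref{E:mult.BSp.MSp} and use the explicit form of $m_{rs}^{*}$ from Theorem \ref{T:cohom.BSp.BSp}: pulling $\mu_{rs}^{*}\vartheta_{r+s}$ back along the injection $(z_{2r}\times z_{2s})^{*}$ yields $m_{rs}^{*}\bigl((-1)^{r+s}p_{r+s}\bigr) = (-1)^{r+s}p'_r p''_s$, while the same map carries $\vartheta_r\times\vartheta_s$ to $((-1)^r p_r)\times((-1)^s p_s) = (-1)^{r+s}p'_r p''_s$; injectivity of $(z_{2r}\times z_{2s})^{*}$ then forces $\mu_{rs}^{*}\vartheta_{r+s} = \vartheta_r\times\vartheta_s$. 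So $\Psi$ indeed takes values in $(\delta)$.

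It remains to see that $\Phi$ and $\Psi$ are mutually inverse. One composite is already done, since $\Phi\Psi(\vartheta) = \vartheta_1 = \vartheta$. For $\Psi\Phi = \id$ I would take an arbitrary $\boldsymbol\vartheta \in (\delta)$, equip $A^{*,*}$ with the Thom structure of $\vartheta_1$, and show $z_{2r}^{*}\vartheta_r = (-1)^r p_r$; as such a class is unique this yields $\Psi(\vartheta_1) = \boldsymbol\vartheta$. Iterating the relations $\mu_{rs}^{*}\vartheta_{r+s} = \vartheta_r\times\vartheta_s$ gives $\mu_r^{*}\vartheta_r = \vartheta_1^{\times r}$ for the $r$-fold multiplication $\mu_r\colon \MSp_2^{\wedge r} \to \MSp_{2r}$; pulling back along the iterated zero section and using the iterated square \eqref{E:mult.BSp.MSp} gives $m_0^{*}z_{2r}^{*}\vartheta_r = (z_{2}^{*}\vartheta_1)^{\times r} = (-1)^r p_1^{(1)}\cdots p_1^{(r)}$, where $m_0\colon BSp_2^{\times r} \to BSp_{2r}$ is the iterated direct sum and $p_1^{(k)}$ is the first Pontryagin class on the $k$-th factor. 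Iterating the Cartan formula of Theorem \ref{T:cohom.BSp.BSp} gives $m_0^{*}p_r = p_1^{(1)}\cdots p_1^{(r)}$, hence $m_0^{*}z_{2r}^{*}\vartheta_r = m_0^{*}\bigl((-1)^r p_r\bigr)$; since $m_0^{*}$ is a composite of the injective maps of Theorem \ref{T:cohom.BSp.BSp} it is injective, so $z_{2r}^{*}\vartheta_r = (-1)^r p_r$.

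The step I expect to demand the most care --- routine, but not purely formal --- is making $z_{2}^{*}\vartheta_1 = -p_1$ and the Cartan computation of $m_0^{*}p_r$ rigorous at the ind-scheme level. Since $\vartheta_1$ lives on $\MSp_2 = \colim_n \Th \shf U_{HP^{n-1}}$, one must observe that its restriction to each finite stage $\Th \shf U_{HP^{n-1}}$ is the Thom class $\thom^{\vartheta_1}(\shf U_{HP^{n-1}},\phi_{HP^{n-1}})$ underlying Theorem \ref{T:a.alpha}, whose pullback along the zero section is $-p_1(\shf U_{HP^{n-1}},\phi_{HP^{n-1}})$, and then pass to the inverse limit via Theorem \ref{T:cohom.BSp}; the analogous finite-level compatibility of Pontryagin classes with orthogonal direct sums (Theorem \ref{T:Cartan}) is what gives $m_0^{*}p_r = e_r(p_1^{(1)},\dots,p_1^{(r)})$ in the colimit.
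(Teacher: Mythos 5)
Your proposal is correct and its core coincides with the paper's proof: the inverse is defined by requiring $z_{2r}^{*}\vartheta_{r}=(-1)^{r}p_{r}$ inside the principal ideal $p_{r}A^{*,*}(\pt)[[p_{1},\dots,p_{r}]]^{\homog}$ (Theorems \ref{T:cohom.BSp}, \ref{T:cohom.MSp.2r}, \ref{T:cohom.MSp.2r.ideal}), multiplicativity is read off from the second diagram of Theorem \ref{T:cohom.BSp.BSp} via $p_{r+s}\mapsto p'_{r}p''_{s}$ together with injectivity of the zero-section pullbacks, and $\vartheta_{1}=\vartheta$ gives one composite for free. The differences are in the remaining steps. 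For $\vartheta_{1}=\vartheta$ you argue directly at finite stages $\Th\shf U_{HP^{n-1}}$ and pass to the inverse limit, whereas the paper simply quotes Theorem \ref{T:alpha.beta} (the $(\alpha)\to(\beta)\to(\alpha)$ round trip), which packages exactly that computation. More substantively, for $(\delta)\to(\alpha)\to(\delta)=\mathrm{id}$ the paper runs an induction on $r$: both sequences satisfy the relation $\mu_{r,1}^{*}\vartheta_{r+1}=\vartheta_{r}\times\vartheta_{1}$, so the two-factor injectivity already contained in Theorem \ref{T:cohom.BSp.BSp} immediately propagates $\vartheta_{r}=\vartheta'_{r}$ to $r+1$. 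You instead pull back along the $r$-fold product $m_{0}\colon BSp_{2}^{\times r}\to BSp_{2r}$ and use $m_{0}^{*}p_{r}=t_{1}\cdots t_{r}$; this is a valid splitting-principle-style argument, but your justification that $m_{0}^{*}$ is injective as ``a composite of the injective maps of Theorem \ref{T:cohom.BSp.BSp}'' is not literally available, since the intermediate maps are of the form $m_{k,1}\times\id_{BSp_{2}^{\times(r-1-k)}}$ and their pullbacks require knowing $A^{*,*}$ of products of three or more $BSp$'s as homogeneous power series rings --- a routine but unstated multi-factor extension of Theorem \ref{T:cohom.BSp.BSp} (provable by the same colimit argument, as you anticipate in your final paragraph). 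The paper's induction buys economy --- it never leaves the two-factor statement --- while your route, once the multi-factor computation is recorded, gives the explicit formula $z_{2r}^{*}\vartheta_{r}=(-1)^{r}p_{r}$ for an arbitrary element of $(\delta)$ in one stroke rather than by recursion.
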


\begin{proof}
Clearly the mapping $(\delta)\to (\alpha)$ is well-defined.

We will show that $(\alpha) \to (\delta)$ is well-defined.
Suppose $\vartheta$ satisfies the conditions of $(\alpha)$.
The classes $(\vartheta_{1},\vartheta_{2},\vartheta_{3},\dots)$ verify the condition
$\mu_{rs}^{*} \vartheta_{r+s} = \vartheta_{r} \times \vartheta_{s}$
because in Theorem \ref {T:cohom.BSp.BSp} the classes in the second
diagram verify $p_{r+s} \mapsto p'_{r} p''_{s}$.  The class $\vartheta_{1}$ is obtained
by the construction corresponding to the assignments $(\alpha) \to (\beta) \to (\alpha)$
of Theorem \ref{T:alpha.beta}.  So by that theorem we have $\vartheta_{1} = \vartheta$.
So we have $\vartheta_{1}|_{T^{\wedge 2}} = \Sigma_{T}^{2}1_{A}$.  Therefore
$(\alpha) \to (\delta)$ is well-defined.
In addition this shows that $(\alpha) \to (\delta) \to (\alpha)$ is the identity.

Now suppose given
$\boldsymbol{\vartheta} = (\vartheta_{1},\vartheta_{2},\vartheta_{3},\dots)$
satisfying $(\delta)$, and let
$\boldsymbol{\vartheta}' = (\vartheta'_{1},\vartheta'_{2},\vartheta'_{3},\dots)$
be the result obtained by applying $(\delta) \to (\alpha) \to (\delta)$.  We have already seen
that we have $\vartheta_{1} = \vartheta'_{1}$.  The equalities $\vartheta_{r} = \vartheta'_{r}$
follow by induction using the injectivity of the maps $m_{rs}^{*}$ of Theorem \ref{T:cohom.BSp.BSp}.
\end{proof}

For a symplectic bundle $(F,\psi)$ of rank $2r$ over $X$ the Borel classes $(\gamma)$
and the symplectic Thom classes $(\delta)$ are related by
\begin{equation}
\label{E:top.p}
b_{r}(F,\psi) = (-1)^{r} z^{*}\thom(F,\psi).
\end{equation}

\section{The universality of
%\texorpdfstring
{$\MSp$}{MSp}}

\begin{thm}
\label{T:cohom.MSp}
Let $(A,\mu,e)$ be a commutative $T$-ring spectrum with
a symplectic Thom classes theory on $A^{*,*}$.  Then we have isomorphisms of bigraded rings
\begin{gather*}
%\label{E:A(MSp)}
A^{*,*}(\MSp)
\xrightarrow{\cong}
\varprojlim
%\limits_{r \to \infty}
A^{*+4r,*+2r}(\MSp_{2r})
\xleftarrow{\cong}
A^{*,*}(\pt) [[b_{1},b_{2},b_{3},\dots]]^{\homog},
\\
A^{*,*}(\MSp \wedge \MSp)
\xrightarrow{\cong}
\varprojlim
%\limits_{r \to \infty}
A^{*+8r,*+4r}(\MSp_{2r} \wedge \MSp_{2r})
\xleftarrow{\cong}
A^{*,*}(\pt) [[b'_{1},b'_{2},\dots,b''_{1},b''_{2},\dots]]^{\homog}.
\end{gather*}
\end{thm}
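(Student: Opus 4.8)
The plan is to compute $A^{*,*}(\MSp)$ and $A^{*,*}(\MSp \wedge \MSp)$ by writing $\MSp$ as a sequential homotopy colimit and invoking Theorem \ref{T:hocolim}. First I would fix the levelwise-weak-equivalence model $\MSp^{\finite}$ of $\MSp$, exactly as was done for $\MSL$ in Section 5: set $MSp_{2r}^{\finite} = \Th \shf U_{HGr(r,rn)}$ on the diagonal, so that $\MSp = \hocolim_{r}\, \Sigma_{T}^{\infty} MSp_{2r}^{\finite}(-2r)$ as a $T$- (or $T^{\wedge 2}$-) spectrum. Applying Theorem \ref{T:hocolim} with $E^{\angles{r}} = \Sigma_{T}^{\infty}\MSp_{2r}(-2r)$ gives a short exact sequence
\[
0 \to \sideset{}{^{1}}\varprojlim_{r} A^{*+4r-1,*+2r}(\MSp_{2r}) \to A^{*,*}(\MSp) \to \varprojlim_{r} A^{*+4r,*+2r}(\MSp_{2r}) \to 0,
\]
where the transition maps are the desuspensions of the bonding maps $\sigma_{2r}\colon \MSp_{2r} \wedge T^{\wedge 2} \to \MSp_{2r+2}$. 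The key point is that, under the symplectic Thom classes theory hypothesis, each $A^{*,*}(\MSp_{2r})$ is identified by Theorems \ref{T:cohom.BSp}, \ref{T:cohom.MSp.2r} and \ref{T:cohom.MSp.2r.ideal} with the principal ideal $p_{r}\, A^{*,*}(\pt)[[p_{1},\dots,p_{r}]]^{\homog} \subset A^{*,*}(BSp_{2r})$, and one must check that the transition map $A^{*,*}(\MSp_{2r+2}) \to A^{*,*}(\MSp_{2r})$ corresponds, under $z_{2r}^{*}$ and the projective-bundle isomorphisms, to the natural map sending $p_{i}\mapsto p_{i}$ for $i\le r$ and $p_{r+1}\mapsto 0$ — i.e. the obvious surjection of ideals $p_{r+1}A^{*,*}[[p_{1},\dots,p_{r+1}]]^{\homog} \to p_{r}A^{*,*}[[p_{1},\dots,p_{r}]]^{\homog}$. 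This identification is where I would need the Thom classes theory (and the Cartan formula behind Theorem \ref{T:cohom.BSp.BSp}) rather than merely a symplectic Thom structure, in order to know that the Thom class of the rank-$2r$ bundle restricts compatibly. Granting this, the transition maps are surjective, so the $\varprojlim^{1}$ term vanishes, $A^{*,*}(\MSp) \cong \varprojlim_{r} A^{*+4r,*+2r}(\MSp_{2r})$, and the right-hand inverse limit is $\varprojlim_{r} p_{r}A^{*,*}(\pt)[[p_{1},\dots,p_{r}]]^{\homog}$, which is canonically $A^{*,*}(\pt)[[p_{1},p_{2},p_{3},\dots]]^{\homog}$ by the same degreewise-finiteness bookkeeping as in the proof of Theorem \ref{T:cohom.BSp} (each homogeneous component involves only finitely many $p_{i}$, and the condition of lying in the ideal generated by $p_{r}$ in the $r$-th term is automatically satisfied in the limit). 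The ring-isomorphism statement then follows because all the maps in sight are ring maps.

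For the second isomorphism the argument is formally identical, using the ``diagonal'' presentation $\MSp \wedge \MSp = \hocolim_{r}\, \Sigma_{T}^{\infty}(MSp_{2r}^{\finite} \wedge MSp_{2r}^{\finite})(-4r)$ and Theorem \ref{T:hocolim} applied to this colimit. The input is Theorem \ref{T:cohom.BSp.BSp}, which identifies $A^{*,*}(\MSp_{2r} \wedge \MSp_{2r})$ with the ideal $p'_{r}p''_{r}\, A^{*,*}(\pt)[[p'_{1},\dots,p'_{r},p''_{1},\dots,p''_{r}]]^{\homog}$ and identifies the transition maps with $p'_{i},p''_{i}\mapsto p'_{i},p''_{i}$ for $i\le r$ and $p'_{r+1},p''_{r+1}\mapsto 0$. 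These transition maps are again surjective, so the $\varprojlim^{1}$ vanishes, and $\varprojlim_{r} p'_{r}p''_{r}A^{*,*}(\pt)[[p'_{1},\dots,p'_{r},p''_{1},\dots,p''_{r}]]^{\homog} \cong A^{*,*}(\pt)[[p'_{1},p'_{2},\dots,p''_{1},p''_{2},\dots]]^{\homog}$ by the same power-series bookkeeping.

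The main obstacle I anticipate is not the Milnor-sequence formalism, which is routine given Theorem \ref{T:hocolim}, but rather pinning down precisely that the bonding map $\sigma_{2r}$ induces on cohomology the expected ``forget the top variable'' surjection of ideals. This requires unwinding the definition of $\sigma_{2r}$ as the composite $\MSp_{2r}\wedge T^{\wedge 2} \xra{1\wedge e_{2}^{Sp}} \MSp_{2r}\wedge \MSp_{2} \xra{\mu_{r1}} \MSp_{2r+2}$, tracing it through the commutative square \eqref{E:mult.BSp.MSp} and the identification of $\MSp_{2r}$ with $BSp_{2r}/BSp_{2r-2}$ from Theorem \ref{T:MSp=BSp/BSp}, and using that under the Thom-classes normalization the class $e_{2}^{Sp}$ pulls back the tautological rank-$2$ Thom element to $\Sigma_{T}^{2}1_{A}$; the multiplicativity of the Thom classes (condition (4) of Definition \ref{D:thom.class}), equivalently the Cartan formula, then forces the surjectivity and the exact form of the transition map. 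Once that compatibility is in hand, everything else is bookkeeping with homogeneous power series identical to the proof of Theorem \ref{T:cohom.BSp}.
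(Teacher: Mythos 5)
Your overall strategy coincides with the paper's: compute $A^{*,*}(\MSp)$ via the Milnor-type exact sequence of Theorems \ref{T:inverse.lim}/\ref{T:hocolim} (your detour through a finite diagonal model is harmless but unnecessary, since Theorem \ref{T:inverse.lim} applies to the spectrum $\MSp$ directly and the groups $A^{*,*}(\MSp_{2r})$ are already known), identify the terms using \S\ref{S:cohom.BSp.MSp}, pin down the transition maps using multiplicativity of the tautological Thom classes and the normalization of $e_{2}^{Sp}$, deduce surjectivity and hence vanishing of $\varprojlim^{1}$, and identify the limit with homogeneous power series; the same scheme handles $\MSp\wedge\MSp$. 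The paper's only real difference is the choice of coordinates: it identifies $A^{*+4r,*+2r}(\MSp_{2r})$ with $A^{*,*}(\pt)[[p_{1},\dots,p_{r}]]^{\homog}$ by the Thom isomorphism ${-}\cup\vartheta_{r}$ (condition (3) of Definition \ref{D:thom.class} for the rank-$2r$ tautological class, which is precisely where the Thom classes theory, rather than merely a Thom structure, enters), and then, since $\sigma^{*}\vartheta_{r}=\vartheta_{r-1}\times\Sigma_{T}^{2}1_{A}$, the transition map is literally $i_{2r}^{*}$, the surjection $A^{*,*}[[p_{1},\dots,p_{r}]]\to A^{*,*}[[p_{1},\dots,p_{r-1}]]$ killing the top variable.

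The one substantive slip is your description of the transition map in the ``ideal'' picture you chose. Having identified $A^{*,*}(\MSp_{2r+2})$ with $p_{r+1}A^{*,*}(\pt)[[p_{1},\dots,p_{r+1}]]^{\homog}$ via $z_{2r+2}^{*}$ (Theorem \ref{T:cohom.MSp.2r.ideal}), you assert the transition is ``the natural map sending $p_{i}\mapsto p_{i}$, $p_{r+1}\mapsto 0$''. Restricted to the ideal $(p_{r+1})$ that ring map is identically zero, so taken literally it would force $\varprojlim=0$; the parenthetical about membership in the ideal being ``automatic in the limit'' inherits this confusion. The correct transition in the ideal picture is, up to the sign coming from $z^{*}\vartheta_{r}=(-1)^{r}p_{r}$ (cf.\ \eqref{E:top.p}), the map $p_{r+1}f\mapsto p_{r}\bar f$ with $\bar f=f|_{p_{r+1}=0}$: one must first divide out the Thom class and only then kill the top variable, which is exactly what the paper's Thom-isomorphism formulation does automatically. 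Your own cited ingredients ($\mu_{r,1}^{*}\vartheta_{r+1}=\vartheta_{r}\times\vartheta_{1}$ and $\vartheta_{1}|_{T^{\wedge 2}}=\Sigma_{T}^{2}1_{A}$) do produce this map, so this is a misstatement rather than a missing idea; with it corrected, the surjectivity, the vanishing of $\varprojlim^{1}$, the identification of the limit with $A^{*,*}(\pt)[[p_{1},p_{2},\dots]]^{\homog}$, and the parallel computation for $\MSp\wedge\MSp$ go through as you describe.
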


\begin{proof}
By Theorem \ref{T:d.delta} the symplectic Thom classes theory has associated to it a sequence
$\boldsymbol{\vartheta} = (\vartheta_{1},\vartheta_{2},\vartheta_{3},\dots)$ of tautological
symplectic Thom classes with the property that $\vartheta_{r} \in A^{4r,2r}(\MSp_{2r})$
is the Thom class of the tautological symplectic subbundle $(\shf U_{BSp_{2r}},\phi_{BSp_{2r}})$
over $BSp_{2r} = HGr(r,\infty)$ and with $\vartheta_{1}|_{T^{\wedge 2}} = \Sigma_{T}^{2}1_{A}$.
Set also $\vartheta_{0} = 1_{A} \in A^{0,0}(\pt) = A^{0,0}(\MSp_{0})$.

By
Theorem \ref{T:inverse.lim}
the group
$A^{*,*}(\MSp)$
fits into the short exact sequence
\[
0 \to {\varprojlim}^{1}A^{\ast+4r-1,\ast+2r}(\MSp_{2r})
\to A^{\ast,\ast}(\MSp)
\to \varprojlim A^{\ast+4r,\ast+2r}(\MSp_{2r}) \to 0
\]
where the connecting maps in the tower are given by the top
line of the commutative diagram
\begin{equation*}
%\label{di:connecting-maps}
\vcenter{
\xymatrix @C=20pt @M=5pt {
A^{\ast+4r-4,\ast+2r-2}(\MSp_{2r-2}) &
A^{\ast+4r,\ast+2r}(\MSp_{2r-2} \wedge T^{\wedge 2}) \ar[l]_-{\Sigma^{-2}_{T}} &
A^{\ast+4r,\ast+2r}(\MSp_{2r}) \ar[l]_-{\sigma^*} \\
A^{\ast,\ast}(BSp_{2r-2}) \ar[u]_-{{-}\cup \vartheta_{r-1}}^-{\cong} &
A^{\ast,\ast}(BSp_{2r-2}) \ar[l]_-{\id}
\ar[u]_-{{-}\cup \sigma^{*}\vartheta_{r}}^-{\cong} &
A^{\ast,\ast}(BSp_{2r}) \ar[l]_-{i_{2r}^\ast} \ar[u]_-{{-}\cup \vartheta_{r}}^-{\cong}.
}}
\end{equation*}
The map $\sigma^{*}$ is the pullback along the bonding map
\[
\MSp_{2r-2} \wedge T^{\wedge 2} \xra{1 \times e_{1}} \MSp_{2r-2} \wedge \MSp_{2}
\xra{\mu_{r-1,1}} \MSp_{2r}
\]
of the symmetric $T^{\wedge 2}$-spectrum.
Thus we have $\sigma^{*}\vartheta_{r} = \vartheta_{r-1} \times \Sigma_{T}^{2}1_{A}$.
The diagram therefore commutes.  The vertical maps are isomorphisms by condition (3)
of the definition of a symplectic Thom classes theory.  The map $i_{2r}^{*}$ is the surjection
$A^{*,*}[[b_{1},\dots,b_{r}]] \onto A^{*,*}[[b_{1},\dots,b_{r-1}]]$ of \eqref{E:long.exact}.
This gives us the second isomorphism of the theorem, while the surjectivity of the connecting
maps in the inverse system gives the vanishing of the $\varprojlim^{1}$ and the first isomorphism.

The calculations for $A^{*,*}(\MSp \wedge \MSp)$ are similar.
\end{proof}

Let $\varphi \colon \MSp \to A$ be a morphism in $SH(S)$.  For each $r \geq 1$ let
$\vartheta_{r}^{\varphi} \in A^{4r,2r}(\MSp_{2r})$ be the composition
\[
\Sigma_{T}^{\infty}\MSp_{2r}(-2r) \xra{u_{2r}} \MSp \xra{\varphi} A,
\]
and let $\boldsymbol\vartheta^{\varphi} =
(\vartheta_{1}^{\varphi},\vartheta_{2}^{\varphi},\vartheta_{3}^{\varphi},\dots)$.

\begin{thm}
\label{T:daleth.delta}
Suppose $(A,\mu,e)$ is a commutative monoid in $(SH(S), \wedge, \boldsymbol 1)$.
Then the assignment $\varphi \mapsto \boldsymbol\vartheta^{\varphi} =
(\vartheta_{1}^{\varphi},\vartheta_{2}^{\varphi},\vartheta_{3}^{\varphi},\dots)$
gives a bijection between the sets of

\parens{$\varepsilon$} morphisms $\varphi \colon (\MSp,\mu^{Sp},e^{Sp}) \to (A,\mu,e)$
of commutative monoids
in $SH(S)$, and

\parens{$\delta$} sequences of classes
$\boldsymbol{\vartheta} = (\vartheta_{1},\vartheta_{2},\vartheta_{3},\dots)$ satisfying the
conditions of Theorem \ref{T:d.delta}.
\end{thm}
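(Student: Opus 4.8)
The plan is to convert both sides of the claimed bijection into inverse limits of cohomology groups of the spaces $\MSp_{2r}$ and $\MSp_{2r}\wedge\MSp_{2r}$ by means of Theorem~\ref{T:cohom.MSp}, and then to match the condition that $\varphi$ be a morphism of monoids with the multiplicativity condition $(\mu^{Sp}_{rs})^{*}\vartheta_{r+s} = \vartheta_{r}\times\vartheta_{s}$ on the sequence. Throughout one uses the following: if $A$ carries a symplectic Thom classes theory on $A^{*,*}$, then Theorem~\ref{T:cohom.MSp} gives isomorphisms $Hom_{SH(S)}(\MSp,A) \cong \varprojlim_{r}A^{4r,2r}(\MSp_{2r})$ and $Hom_{SH(S)}(\MSp\wedge\MSp,A) \cong \varprojlim_{r}A^{8r,4r}(\MSp_{2r}\wedge\MSp_{2r})$, where the transition maps in the first tower are $\Sigma^{-2}_{T}\circ\sigma^{*}$ along the bonding maps $\MSp_{2r}\wedge T^{\wedge 2}\to\MSp_{2r+2}$, and both $\varprojlim^{1}$-terms vanish; equivalently, by Theorem~\ref{T:inverse.lim}, the natural maps to these inverse limits are injective. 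The maps to the inverse limits are precisely $\varphi\mapsto(\varphi\circ u_{2r})_{r}$ and $\Phi\mapsto(\Phi\circ(u_{2r}\wedge u_{2r}))_{r}$.

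First I would check that $\varphi\mapsto\boldsymbol{\vartheta}^{\varphi} = (\vartheta^{\varphi}_{1},\vartheta^{\varphi}_{2},\dots)$ produces a sequence satisfying $(\delta)$. Pulling back $\vartheta^{\varphi}_{r+s} = \varphi\circ u_{2r+2s}$ along $\mu^{Sp}_{rs}\colon\MSp_{2r}\wedge\MSp_{2s}\to\MSp_{2r+2s}$ and using the $\MSp$-analogue of Theorem~\ref{T:MSL.structure} --- the commutative square \eqref{E:MSLm.wedge.MSLn} with $\MSp$ in place of $\MSL$, which holds by the same formal argument since $2r$ and $2s$ are the native indices of the symmetric $T^{\wedge 2}$-spectrum $\MSp$ and so no parity issue arises --- one gets $(\mu^{Sp}_{rs})^{*}\vartheta^{\varphi}_{r+s} = \varphi\circ\mu^{Sp}\circ(u_{2r}\wedge u_{2s})$. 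Since $\varphi\circ\mu^{Sp} = \mu\circ(\varphi\wedge\varphi)$, this equals $\mu\circ(\vartheta^{\varphi}_{r}\wedge\vartheta^{\varphi}_{s}) = \vartheta^{\varphi}_{r}\times\vartheta^{\varphi}_{s}$ by the definition of the $\times$-product. The normalization $\vartheta^{\varphi}_{1}|_{T^{\wedge 2}} = \Sigma^{2}_{T}1_{A}$ follows because $\vartheta^{\varphi}_{1}|_{T^{\wedge 2}}$ is the image under $\varphi$ of the class $u_{2}\circ e^{Sp}_{2}$, which represents $\Sigma^{2}_{T}1_{\MSp}$, together with $\varphi\circ e^{Sp} = e$. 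So $(\varepsilon)\to(\delta)$ is well-defined.

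Next I would construct the inverse. Given $\boldsymbol{\vartheta}$ satisfying $(\delta)$, Theorem~\ref{T:d.delta} equips $A^{*,*}$ with a symplectic Thom classes theory, so the inverse-limit descriptions above apply. The $s=1$ case of $(\delta)$ reads $(\mu^{Sp}_{r,1})^{*}\vartheta_{r+1} = \vartheta_{r}\times\vartheta_{1}$; restricting along $1\wedge e^{Sp}_{2}$ and using $(e^{Sp}_{2})^{*}\vartheta_{1} = \vartheta_{1}|_{T^{\wedge 2}} = \Sigma^{2}_{T}1_{A}$ gives $\sigma^{*}\vartheta_{r+1} = \vartheta_{r}\times\Sigma^{2}_{T}1_{A}$, that is $\Sigma^{-2}_{T}\sigma^{*}\vartheta_{r+1} = \vartheta_{r}$, so $(\vartheta_{r})_{r}$ (together with $\vartheta_{0} = 1_{A}$) defines an element of $\varprojlim_{r}A^{4r,2r}(\MSp_{2r})$. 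Let $\varphi\colon\MSp\to A$ be the morphism in $SH(S)$ it represents; then $\varphi\circ u_{2r} = \vartheta_{r}$ for all $r$, i.e.\ $\boldsymbol{\vartheta}^{\varphi} = \boldsymbol{\vartheta}$. It remains to see that $\varphi$ is a morphism of monoids. The equality $\varphi\circ e^{Sp} = e$ is the $r=0$ component, namely $\vartheta_{0} = 1_{A} = e$. For $\varphi\circ\mu^{Sp} = \mu\circ(\varphi\wedge\varphi)$, by the injectivity of $Hom_{SH(S)}(\MSp\wedge\MSp,A)\hookrightarrow\varprojlim_{r}A^{8r,4r}(\MSp_{2r}\wedge\MSp_{2r})$ it suffices to compare the two sides after pullback along each $u_{2r}\wedge u_{2r}$; by the $\MSp$-analogue of \eqref{E:MSLm.wedge.MSLn} the left side becomes $(\mu^{Sp}_{rr})^{*}\vartheta_{2r}$ and the right side $\vartheta_{r}\times\vartheta_{r}$, which agree by the $(\delta)$-condition with $s=r$.

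Finally, the two assignments are mutually inverse: $(\delta)\to(\varepsilon)\to(\delta)$ is the identity because $\boldsymbol{\vartheta}^{\varphi} = \boldsymbol{\vartheta}$ by construction, and $(\varepsilon)\to(\delta)\to(\varepsilon)$ is the identity because the morphism rebuilt from $\boldsymbol{\vartheta}^{\varphi}$ agrees with the original $\varphi$ on every $u_{2r}$ and hence equals it by the injectivity coming from the vanishing $\varprojlim^{1}$. The main obstacle is the third step, the passage from the sequence $\boldsymbol{\vartheta}$ to a genuine \emph{monoid} homomorphism: this is exactly where one needs the inverse-limit computation of $A^{*,*}(\MSp\wedge\MSp)$ from Theorem~\ref{T:cohom.MSp} (hence a symplectic Thom classes theory on $A$, which Theorem~\ref{T:d.delta} supplies from $(\delta)$) together with the compatibility of the tautological classes $u_{2r}$ with the monoid structure of $\MSp$; the remaining verifications are routine diagram chases.
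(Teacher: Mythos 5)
Your proposal is correct and follows essentially the same route as the paper: the paper proves this by running the argument of Theorem \ref{T:univ.SL} and noting that, thanks to Theorem \ref{T:cohom.MSp} (available via Theorem \ref{T:d.delta}), the maps $Hom_{SH(S)}(\MSp,A)\to\varprojlim A^{4r,2r}(\MSp_{2r})$ and $Hom_{SH(S)}(\MSp\wedge\MSp,A)\to\varprojlim A^{8r,4r}(\MSp_{2r}\wedge\MSp_{2r})$ are isomorphisms, so the $\varprojlim^{1}$ indeterminacy and the monoid obstruction both vanish. Your write-up just makes explicit the same ingredients (the compatibility of the $u_{2r}$ with $\mu^{Sp}$, the $s=1$ case giving compatibility with the bonding maps, and injectivity into the inverse limits).
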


The proof of this theorem is substantially the same as that of Theorem \ref{T:univ.SL}.  The differences are,
first,
that the $\boldsymbol \vartheta$ comes from a unique $\varphi \colon \MSp \to A$ because the map
$A^{0,0}(\MSp) \to \varprojlim A^{4r,2r}(\MSp_{2r})$ of Theorem \ref {T:cohom.MSp}
is an isomorphism.  Second, the obstruction to $\varphi$ being a morphism of monoids vanishes
because
$A^{0,0}(\MSp \wedge \MSp) \to \varprojlim
%\limits_{r \to \infty}
A^{8r,4r}(\MSp_{2r} \wedge \MSp_{2r})$
is also an isomorphism.

%\bibliographystyle{siam}
%\bibliography{../HPn}

\end{document}